\documentclass[11pt]{article}

\usepackage[a4paper,margin=1.12in]{geometry}
\usepackage{amsmath,amssymb,amsthm,mathtools}
\usepackage{enumitem}
\usepackage{xcolor}
\usepackage[colorlinks=true,linkcolor=blue!45!black,citecolor=blue!45!black,urlcolor=blue!55!black,hypertexnames=false]{hyperref}
\usepackage{mathrsfs}
\usepackage{bbm}
\usepackage{microtype}
\usepackage{tikz}

\numberwithin{equation}{section}

\newtheorem{theorem}{Theorem}[section]
\newtheorem{proposition}[theorem]{Proposition}
\newtheorem{lemma}[theorem]{Lemma}
\newtheorem{corollary}[theorem]{Corollary}

\theoremstyle{definition}
\newtheorem{definition}[theorem]{Definition}
\newtheorem*{definition*}{Definition}
\newtheorem{remark}[theorem]{Remark}

\newcommand{\R}{\mathbb R}
\newcommand{\Z}{\mathbb Z}
\newcommand{\Pp}{\mathbb P}
\newcommand{\Ee}{\mathbb E}
\newcommand{\Pbb}{\Pp}
\newcommand{\1}{\mathbf{1}}
\newcommand{\Capa}{\operatorname{cap}}
\newcommand{\diam}{\operatorname{diam}}
\newcommand{\dist}{\operatorname{dist}}
\newcommand{\dimH}{\dim_{\mathrm H}}
\newcommand{\LL}{\mathcal L}
\newcommand{\BL}{\LL}
\newcommand{\Bcal}{\mathcal B}
\newcommand{\Loop}{\mathrm{loop}}
\newcommand{\muLoop}{\mu^{\Loop}_{\R^3}}
\newcommand{\dint}{\,\mathrm d}

\newcommand{\ZMG}{\widetilde{\mathbb Z}^{3}}

\title{On loop-soup clusters in three dimensions and higher}
\author{Yifan Gao\thanks{Institute for Theoretical Sciences, Westlake University,
\href{mailto:gaoyifan75@westlake.edu.cn}{\textcolor{black}{gaoyifan75@westlake.edu.cn}}.}}
\date{}

\begin{document}

\maketitle

\begin{abstract}
We study clusters of the Brownian loop soup on
\(\mathbb R^3\) and of random walk and metric-graph loop soups on
\(\mathbb Z^d\) and \(\widetilde{\mathbb Z}^{d}\) within a unified framework.
For the continuum model, when the one-arm exponent \(\xi(\alpha)\) is positive,
we prove that almost surely every cluster has Hausdorff dimension \(3-\xi(\alpha)\).
A key step is to show that, conditioned on the one-arm event, a crossing cluster
has capacity comparable to the crossing scale with uniformly positive probability.
This gives one-arm estimates of order
\(\alpha r^{-\xi(\alpha)}\), uniformly down to zero intensity, and shows that
\(\xi(\alpha)\) is strictly decreasing and left-continuous.
Moreover, \(1-\xi(\alpha)\asymp\alpha\) as \(\alpha\downarrow0\).
As another application of the one-arm estimates,  we prove that several critical
values coincide, resolving a conjecture of Jego and Lupu~\cite{JL26}.

The proof combines a decoupling technique with a multiscale recursive argument
to control the combined effects of large and small loops.
These methods also extend to loop soups on \(\mathbb Z^d\) and \(\widetilde{\mathbb Z}^{d}\)
for \(d=3,4,5\),
yielding capacity lower bounds of order \(r\), \(r^2/\log r\), and \(r^2\)
for the cluster of the origin, conditioned on the one-arm event,
throughout the small-loop decay regime.
In particular, these bounds imply strict polynomial comparisons between
one-arm probabilities at different intensities in dimension three.
We also show that the small-loop decay regime on \(\widetilde{\mathbb Z}^{d}\) has upper endpoint
\(\frac12\) by adapting the OSSS sharpness framework to the truncated loop soup.

\end{abstract}

\tableofcontents

\section{Introduction}

The Brownian loop soup, introduced by Lawler and Werner~\cite{LW04}, is a
Poisson ensemble of Brownian loops.
In two dimensions, it is conformally invariant and plays a central role in
conformal random geometry, with close connections to Schramm--Loewner evolution
and conformal restriction measures~\cite{LSW03}.
The loop-soup clusters, defined as the maximal connected components of loops,
have remarkable geometric properties.
The outer boundaries of the outermost clusters form a simple conformal loop ensemble~\cite{SW12}.

In this paper, we study the geometry of continuum clusters in dimension three,
as well as clusters of loop soups on \(\mathbb Z^d\) and
\(\widetilde{\mathbb Z}^{d}\) for \(d\ge3\).
Dimension three is the only non-planar dimension in which distinct Brownian
loops can intersect and form non-trivial clusters~\cite{MP10,Wer21,JL26}.
Moreover, the three-dimensional Brownian loop soup has a non-trivial percolation
phase transition~\cite{JL26}.
In the regime when the one-arm probability decays, our main result
(Theorem~\ref{thm:all-cluster-dimension}) identifies the Hausdorff dimension of every cluster as
\(3-\xi(\alpha)\), where \(\xi(\alpha)\in(0,1)\) is the one-arm exponent
whose existence was established in~\cite{CD25a,JL26}.

The dimension predicted by the one-arm exponent has a simple heuristic explanation.
If an \(\varepsilon\)-ball is hit by a macroscopic cluster with
probability of order \(\varepsilon^{\xi(\alpha)}\), then the expected volume of
the cluster's \(\varepsilon\)-neighborhood suggests the dimension \(3-\xi(\alpha)\).
To make this heuristic rigorous, we first establish up-to-constants
estimates for the one-arm probability (Theorem~\ref{thm:utc}), sharpening the estimate
\(\varepsilon^{\xi(\alpha)+o(1)}\) in~\cite{CD25a,JL26}.
It turns out that the usual decoupling technique alone is
insufficient, and the key is to control the geometry of a crossing cluster and show
that its capacity is comparable to the crossing scale.
In this respect, the problem is similar to obtaining upper bounds for the critical
one-arm probability on the metric graph, where capacity estimates also play an
essential role~\cite{DW20,DPR23,CD24,DPR24}.
In our setting, however, the isomorphism with the Gaussian free field (GFF)
used in those works is unavailable.

To overcome this difficulty, we devise a multiscale recursive argument that
combines the effects of small and large loops.
This yields the desired capacity estimate for crossing clusters throughout the
subcritical regime (Theorem~\ref{thm:continuum-robust-summary}).
The estimate then allows us to glue crossing clusters and obtain quasi-multiplicativity of
the one-arm probabilities.
Furthermore, using Russo's formula, we show in Theorem~\ref{thm:xi-strict-monotone} that the one-arm exponent \(\xi(\alpha)\) is strictly decreasing.
As another application of our improved one-arm estimates,
we show in Theorem~\ref{thm:continuum-thresholds} that several critical
values of the
three-dimensional Brownian loop soup coincide, resolving
\cite[Conjecture~1.6]{JL26}.
Along the way, we prove the sharp phase transition
for the unit-truncated loop soup via the OSSS decision-tree framework (Theorem~\ref{thm:unit-truncated-sharpness}),
which is of independent interest.

The recursive argument also extends to loop soups on discrete and metric graphs.
Throughout the small-loop decay regime, we obtain a lower bound for the capacity
of the cluster containing the origin, conditional on the one-arm event
(Theorem~\ref{thm:rwls-robust-summary}).
In particular, we obtain strict polynomial comparisons between one-arm probabilities at
different intensities in dimension three (Theorem~\ref{thm:rwls-strict-summary}).
The analysis relies on the decay of crossing probabilities for small loops.
We therefore introduce an auxiliary threshold \(\alpha_\ast\) that captures this behavior,
which is shown to coincide with $\frac12$ on the metric graph \(\widetilde{\mathbb Z}^{d}\) for \(d=3,4,5\).

\subsection{Hausdorff dimension of clusters}
Let
\(\mu_{\R^3}^{\Loop}\) denote the Brownian loop measure on \(\R^3\).  The
Brownian loop soup \(\LL_{\R^3}^\alpha\) of intensity \(\alpha>0\) is a
Poisson point process on loops with intensity measure
\(\alpha\mu_{\R^3}^{\Loop}\). See Section~\ref{sec:pre} for details.
Below, we use $\Pp_\alpha$ to denote the probability measure of \(\LL_{\R^3}^\alpha\).

Let \(\mathcal L\) be a collection of loops in \(\R^3\).  Define an equivalence relation
\(\sim\) on \(\mathcal L\) by declaring that
\(\ell\sim\ell'\) if and only if there exist \(n\ge1\) and
\(\ell_1,\ldots,\ell_n\in\mathcal L\), with \(\ell_1=\ell\) and
\(\ell_n=\ell'\), such that \(\ell_i\cap\ell_{i+1}\ne\varnothing\) for every
\(1\le i<n\).  For each equivalence class
\(E\in\mathcal L/\!\sim\), define its \emph{cluster} to be
\(\mathcal C_E:=\overline{\bigcup_{\ell\in E}\ell}\), where the closure is
taken in \(\R^3\).  We write
\(\mathscr C(\mathcal L):=\{\mathcal C_E:E\in\mathcal L/\!\sim\}\)
for the collection of all clusters of \(\mathcal L\).
Two sets \(A,B\subset\R^3\) are said to be connected in \(\mathcal L\),
denoted by
\(
        A \xleftrightarrow{\mathcal L} B
\),
if the union of the loops in some equivalence class of \(\mathcal L\)
intersects both \(A\) and \(B\).

For \(x\in\R^3\) and \(r>0\), let \(B(x,r)\) be the ball of radius \(r\) centered at \(x\).
For \(0<s<r\), let
\(A(x,s,r):=B(x,r)\setminus\overline{B(x,s)}\) be the annulus with radii $s$ and $r$.
Abbreviate \(B_r:=B(0,r)\), \(S_r:=\partial B_r\), and
\(A(s,r):=A(0,s,r)\).

For \(0<s<r\), we define the \emph{one-arm (or crossing) event}
\(
        \mathcal E_{s,r}:=\{S_s \xleftrightarrow{\LL_{\R^3}^\alpha} S_r\}
\).
For \(s=1\), we use the following abbreviation
\begin{equation}\label{eq:one-arm-prob-def}
        \mathcal E_r:=\mathcal E_{1,r} \quad  \text{ and } \quad p_r^{\alpha}:=\Pp_\alpha(\mathcal E_r).
\end{equation}
The Brownian loop soup is scale invariant, and hence
\(\Pp_\alpha(\mathcal E_{s,r})=\Pp_\alpha(\mathcal E_{1,r/s})=p_{r/s}^{\alpha}\).
The existence of the one-arm exponent for every intensity \(\alpha>0\)
was established in~\cite{CD25a,JL26}.
More precisely, there exists \(\xi=\xi(\alpha)\ge0\) such that
\begin{equation}\label{eq:one-arm-log-asymp}
    p_r^{\alpha}=r^{-\xi+o(1)}
    \qquad\text{as }r\to\infty.
\end{equation}
These works also show that \(\xi<1\), where \(1\) is the exponent for a single crossing loop.
Define the one-arm decay regime by
\begin{equation}\label{eq:I0}
        \mathrm{I}_0:=\{ \alpha>0: \lim_{r\to\infty} p_r^{\alpha}=0 \}.
\end{equation}
By~\cite{JL26}, we have
$\sup \mathrm{I}_0=\alpha_{\mathrm{B2B}}<\infty$, where $\alpha_{\mathrm{B2B}}$ is the ball-to-ball threshold introduced in
\cite[(1.5)]{JL26}, and $\xi>0$ for all $\alpha\in \mathrm{I}_0$.
It is not known whether the critical value \(\alpha_{\mathrm{B2B}}\) belongs to \(\mathrm{I}_0\).
To allow for this possibility, we state our results throughout \(\mathrm{I}_0\),
rather than only on \((0,\alpha_{\mathrm{B2B}})\).
 In fact, Theorem~\ref{cor:bdelta} below identifies
$\alpha_{\mathrm{B2B}}$ with other natural critical values
associated with the Brownian loop soup (see Section~\ref{subsec:equality}).

Our first main result identifies the Hausdorff dimension of every
three-dimensional cluster as \(3-\xi\in(2,3)\) for every \(\alpha\in\mathrm{I}_0\).

\begin{theorem}[Dimension]
\label{thm:all-cluster-dimension}
For every \(\alpha\in \mathrm{I}_0\), we have
\[
        \Pp_\alpha\left(
        \forall \mathcal C\in\mathscr C(\LL_{\R^3}^\alpha),\ 
        \dimH(\mathcal C)=3-\xi
        \right)=1 .
\]
\end{theorem}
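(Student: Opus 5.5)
The plan is to prove matching upper and lower bounds for \(\dimH(\mathcal C)\), uniformly over all clusters. Both bounds rest on the up-to-constants one-arm estimate \(p^\alpha_r\asymp r^{-\xi}\) (Theorem~\ref{thm:utc}), on quasi-multiplicativity, and on the capacity estimate for crossing clusters (Theorem~\ref{thm:continuum-robust-summary}), which are stated later in the paper but which we may assume.

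\textbf{Upper bound.} By countable stability it suffices to show that a.s. every cluster meeting \(B_1\) satisfies \(\dimH(\mathcal C\cap B_1)\le 3-\xi\). Fix \(R\) large and consider the clusters of diameter at least \(1\) that meet \(B_1\); clusters of smaller diameter are handled by scaling and a countable union over dyadic diameter scales. For \(\varepsilon=2^{-n}\), cover \(B_1\) by \(\asymp\varepsilon^{-3}\) cubes \(Q\) of side \(\varepsilon\). A cube meeting a cluster of diameter at least \(1\) has a crossing from \(\partial B(x_Q,2\varepsilon)\) to \(\partial B(x_Q,1/2)\). By scale invariance and Theorem~\ref{thm:utc}, this happens with probability \(\lesssim\varepsilon^{\xi}\).

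One caveat: loops of diameter much larger than \(\varepsilon\) that pass near \(Q\) are allowed, since the one-arm event concerns the full soup. The number of cubes hit is therefore \(N_n\) with \(\Ee N_n\lesssim\varepsilon^{\xi-3}\). Markov's inequality and Borel--Cantelli along \(\varepsilon=2^{-n}\), applied with exponent \(3-\xi+\delta\), give \(N_n\le 2^{n(3-\xi+\delta)}\) eventually. This bounds the upper box dimension of the union of all such clusters in \(B_1\), and hence the Hausdorff dimension of every such cluster, by \(3-\xi\).

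\textbf{Lower bound for every cluster.} This is the main obstacle, because the statement must hold for all clusters simultaneously, including clusters consisting of a single tiny loop. Since \(\xi<1\) and a Brownian path has dimension \(2\), one might hope that the loops themselves suffice, but \(3-\xi>2\) rules this out. The lower bound must instead come from the cluster structure: every loop, however small, is a.s. contained in a cluster whose Hausdorff dimension is at least \(3-\xi\).

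I would argue by a Frostman/second-moment (mass distribution) construction at each scale, in three steps.

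\begin{enumerate}[label=(\roman*)]
\item For a fixed ball \(B(x,r)\), consider the subset of a crossing cluster reaching \(\partial B(x,r)\) from \(B(x,r/2)\). Define the random measure \(\mu_\varepsilon\) as Lebesgue measure on \(\varepsilon\)-cubes \(Q\subset B(x,r/2)\) connected to \(\partial B(x,r)\), normalized by \(\varepsilon^{\xi}\).
\item Show \(\Ee\mu_\varepsilon\asymp r^{3-\xi}\) using Theorem~\ref{thm:utc}. Show the two-point bound \(\Pp(Q,Q'\leftrightarrow\partial B(x,r))\lesssim \varepsilon^{2\xi}|x_Q-x_{Q'}|^{-\xi}r^{-\xi}\) via decoupling and quasi-multiplicativity, which is where Theorem~\ref{thm:continuum-robust-summary} enters. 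Together these give the \((3-\xi-\delta)\)-energy bound, so with positive probability \(c\), uniformly in scale, the crossing cluster has dimension at least \(3-\xi-\delta\).
\item Upgrade to almost surely for every cluster. Given a loop \(\ell\) of diameter \(\rho\), explore outward. By the capacity estimate, at each dyadic scale \(2^k\rho\) the current cluster has capacity comparable to its scale. Hence, conditionally on the past, with probability at least \(c\) it is hit by a fresh crossing cluster at the next scale whose local dimension is at least \(3-\xi-\delta\), using independence of the Poisson loops in disjoint annuli up to decoupling error. Infinitely many independent trials a.s. produce success.
\end{enumerate}

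A countable union over a dense family of rational balls and scales makes the statement simultaneous for all clusters, since every cluster contains a loop whose diameter and location fall into some such countable class. Finally, let \(\delta\downarrow0\).
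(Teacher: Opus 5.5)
Your upper bound is correct, and it is a more direct route than the paper's. You box-count the union of all clusters of diameter at least $2^{-k}$ using the full-soup one-arm bound and Borel--Cantelli. The paper instead marks a loop $\gamma$ via Palm's formula, applies the seeded first-moment bound \eqref{eq:seeded-first-moment} on compact subsets of $\R^3\setminus\gamma$, and uses $\dimH\gamma=2<3-\xi$ for the rest. Your version needs neither Palm nor the sharp estimate; $p_r^\alpha\le r^{-\xi+o(1)}$ suffices.

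The lower bound has a genuine gap in step (iii). Suppose $\ell\subset\overline{B(x,\rho)}$ is fixed. Its cluster reaches $\partial B(x,2^k\rho)$ only if the soup crosses $A(x,\rho,2^k\rho)$, which has probability $p^\alpha_{2^k}\asymp\alpha 2^{-k\xi}$. This is already $<1$ for $k=1$ when $\alpha\in\mathrm I_0$. So with positive probability the cluster of $\ell$ never leaves $B(x,2\rho)$, and on that event none of your outward trials is available. The ``infinitely many independent trials'' do not exist. Theorem~\ref{thm:continuum-robust-summary} is a statement conditional on the one-arm event, so it cannot make the capacity of the current cluster comparable to $2^k\rho$ unconditionally. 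Yet the theorem asserts $\dimH\mathcal C(\ell)\ge 3-\xi$ on that event as well. For a small cluster, the dimension comes from the small loops attached along $\ell$ itself, so the amplification must happen at small scales, not through outward growth. This is the case you identified as the main obstacle, and the proposal does not resolve it.

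The paper supplies three ingredients you are missing.
\begin{enumerate}[label=(\alph*)]
\item A Brownian bridge of diameter $\ge\delta$ contains, with probability tending to one as $r\to0$, at least $c_1\delta/r$ pieces $K_i\subset\gamma\cap A(z_i,2r,4r)$ with $\Capa(K_i)\ge\kappa_1 r$ and pairwise disjoint balls $B(z_i,8r)$ (Lemma~\ref{lem:radial-capacity-boxes}). This comes from Brownian scaling, \cite[Theorem~1.1]{PPS} and a Chernoff bound, not from the crossing-cluster estimate.
\item In each such ball, the cluster attached to $K_i$ has dimension $\ge 3-\xi$ in $B(z_i,r/4)$ with probability at least $q(\alpha,\kappa_1)>0$ (Lemma~\ref{lem:localized-annular-trial}). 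Independence over the disjoint balls then gives failure probability at most $(1-q)^{c_1\delta/r}\to0$.
\begin{itemize}
\item For this, the second-moment measure must be built on points $y$ with $B(y,\varepsilon)\leftrightarrow K_i$, not on cubes connected to $\partial B(x,r)$ as in your step (i). Otherwise the dimension may be carried by a crossing cluster that does not contain the seed.
\item The first-moment lower bound $\gtrsim\varepsilon^{\xi}$ comes from welding a good outer piece from Theorem~\ref{thm:main} to $K_i$ with a single loop. That is where the capacity estimate is really used, not in the two-point bound.
\end{itemize}
\item Passing from a fixed loop $\gamma$ with an independent soup to all loops of $\LL_{\R^3}^\alpha$ simultaneously requires Palm's formula (Lemma~\ref{lem:loop-soup-palm}). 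A countable union over deterministic classes of balls and scales does not control the random loops of the soup.
\end{enumerate}
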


Since \(0<\xi<1\), every cluster has dimension strictly between \(2\) and
\(3\), i.e., larger than the dimension of a Brownian loop and smaller than
that of the ambient space.
Recall that a cluster is the closure of the union of the loops in an equivalence class.
Hence, the excess dimension above \(2\) can be regarded as an ultraviolet accumulation effect arising from loops at arbitrarily small scales.

We now discuss the implication at intensity \(\alpha=\frac12\).
It was shown in~\cite{CDL24} that \(\frac12\) is the critical intensity
for loop-soup percolation on the metric graph \(\widetilde{\mathbb Z}^{3}\)
(and, more generally, on every quasi-transitive transient metric graph);
see Section~\ref{sec:rwls-results}.
Whether \(\frac12\) is also the critical value for
\(\mathcal L_{\mathbb R^3}^{\alpha}\) remains open; see the question following
\cite[Remark~1.7]{CD25a}.
Moreover, the authors of~\cite{CD25a} prove that \(\xi(\frac12)>\frac12\),
where \(\frac12\) is the one-arm exponent for the critical loop soup on
\(\widetilde{\mathbb Z}^{3}\)~\cite{CD24,DPR24}.
They also show that the upper box-counting dimension, and hence the
Hausdorff dimension, of every cluster of \(\mathcal L_{\mathbb R^3}^{1/2}\)
is at most \(3-\xi(\frac12)<\frac52\).
The matching lower bound was left open in~\cite[Remark~1.3]{CD25a}.
Since \(\frac12\in\mathrm{I}_0\) by~\cite{CD25a},
Theorem~\ref{thm:all-cluster-dimension} gives this lower bound, and every cluster of
\(\mathcal L_{\mathbb R^3}^{1/2}\) has Hausdorff dimension
\(3-\xi(\frac12)\in(2,\frac52)\).

\subsection{One-arm estimates}

The proof of Theorem~\ref{thm:all-cluster-dimension} relies on the following up-to-constants
estimate for the one-arm probability. It sharpens
\eqref{eq:one-arm-log-asymp} by removing any subpolynomial multiplicative correction,
and the bounds are uniform in \(0<\alpha\le\bar\alpha\) for each fixed
\(\bar\alpha\in\mathrm I_0\).

\begin{theorem}[Up-to-constants estimate]
\label{thm:utc}
For every \(\bar\alpha\in\mathrm{I}_0\), there exist constants
\(0<c<C<\infty\), depending only on \(\bar\alpha\), such that
for all \(0<\alpha\le\bar\alpha\) and \(r\ge2\),
\[
 c\alpha r^{-\xi(\alpha)}\le p_r^{\alpha}\le C\alpha r^{-\xi(\alpha)}.
\]
\end{theorem}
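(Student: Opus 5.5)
The plan is to prove quasi-multiplicativity of $p^\alpha$, uniformly in $0<\alpha\le\bar\alpha$, and then deduce the up-to-constants bounds from the existence of $\xi(\alpha)$ via a standard sub/super-multiplicativity argument.

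\emph{Upper half: approximate submultiplicativity.} Fix $\alpha\le\bar\alpha$. I would first show
\[
p^\alpha_{rs}\le C\,p^\alpha_r\,p^\alpha_s .
\]
On $\mathcal E_{1,rs}$, the crossing cluster either crosses both $A(1,r)$ and $A(2r,rs)$ using loops that stay inside these respective annuli, or some loop of diameter comparable to its scale crosses the separating annulus $A(r,2r)$. To handle the first case, I would resample or decouple the loops meeting both regions, using the standard decoupling estimate for Brownian loop soups: the mass of loops intersecting both $B_r$ and $S_{2r}$ is bounded, and it is $O(\alpha)$. In the second case, the large loops are summed over dyadic scales. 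Iterating gives $p_{2^n}\le \prod(C p_2)$-type bounds, and Fekete's lemma gives $p_r\ge c r^{-\xi}$ with $c$ independent of $\alpha$.

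\emph{Lower half: supermultiplicativity, the main obstacle.} The reverse bound
\[
p^\alpha_{rs}\ge c\,p^\alpha_r\,p^\alpha_s
\]
requires gluing a crossing cluster of $A(1,r)$ to one of $A(r,rs)$. By the FKG inequality (monotonicity in the Poisson process), it suffices to show that the two independent crossing clusters are joined with uniformly positive conditional probability. This is exactly where I would invoke the capacity statement announced in the introduction (Theorem~\ref{thm:continuum-robust-summary}): conditioned on $\mathcal E_{1,r}$, the crossing cluster has capacity at least $c r$ near $S_r$ with probability at least $c$. Given two sets, each of capacity of order $r$ in $A(r/2,2r)$, a Brownian loop of scale $r$ hits both with probability bounded below, by a hitting estimate for sets of large capacity. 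Hence extra loops of intensity $\alpha$ connect them with probability at least $c\alpha$. This only gives a factor $c\alpha$, not $c$, which is not uniform as $\alpha\downarrow0$.

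To fix this, I would instead glue using a single additional crossing loop and normalize $q_r:=p_r/\alpha$. The claimed estimate is $p_r\asymp \alpha r^{-\xi}$, i.e.\ $q_r\asymp r^{-\xi}$, with $p_r\ge c\alpha/r$ coming from a single loop. So I would prove that $q$ is quasi-multiplicative: $q_{rs}\asymp q_r q_s$, uniformly in $\alpha$. For the lower bound, the gluing cost $c\alpha$ is absorbed into the normalization. For the upper bound, one of the two crossings pays a factor $\alpha$: on $\mathcal E_{1,rs}$ there is at least one loop, so $p_{rs}\le C\alpha^{-1}p_rp_s$ should follow from the decoupling once one observes that each crossing requires some loop, with a BK/van den Berg-type argument for Poisson processes.

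\emph{Conclusion.} Quasi-multiplicativity of $q_r$ with constants independent of $\alpha\le\bar\alpha$, combined with the existence of $\xi$ from~\eqref{eq:one-arm-log-asymp} and the usual Fekete argument on $\log(Cq_{2^n})$ and $\log(cq_{2^n})$, gives $c r^{-\xi}\le q_r\le Cr^{-\xi}$. Monotonicity in $r$ then interpolates between dyadic scales. I expect the capacity lower bound to be the main obstacle; I take it as given from Theorem~\ref{thm:continuum-robust-summary}.
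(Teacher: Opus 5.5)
Your architecture (uniform quasi-multiplicativity of $q_r=p_r^\alpha/\alpha$, then $n$-th roots against the known exponent) is the paper's. The gap is that your supermultiplicative step is not uniform in $\alpha$, and the normalization does not absorb the gluing cost. Gluing two \emph{complete} crossings, of probabilities $\gtrsim p_r^\alpha$ and $\gtrsim p_s^\alpha$ by Theorem~\ref{thm:continuum-robust-summary}, with one extra loop of cost $\asymp\alpha$ gives $p_{rs}^\alpha\gtrsim\alpha\,p_r^\alpha p_s^\alpha$, i.e.\ $q_{rs}\gtrsim\alpha^{2}q_rq_s$. You need $q_{rs}\gtrsim q_rq_s$, i.e.\ $p_{rs}^\alpha\gtrsim\alpha^{-1}p_r^\alpha p_s^\alpha$; fed into the $n$-th root argument, your bound only yields $p_r^\alpha\le C\alpha^{-1}r^{-\xi(\alpha)}$ instead of $C\alpha r^{-\xi(\alpha)}$. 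The reason is that each complete crossing has already paid a factor $\alpha$ for its terminal loop, so you pay $\alpha^3$ for an event of probability $\asymp\alpha$.

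The missing idea is to weld objects that have not paid this factor. These are the seeded cluster $K_1=\mathcal K_r^{\alpha}$ (loops of $\LL_{B_r}^\alpha$ attached to $S_1$, no crossing required) and its exterior analogue $K_2$ (built from $S_{rs}$ and $\LL_{B_{4r}^c}^\alpha$). They are joined by a single loop hitting both, which serves at once as both terminal loops and as the weld. This needs two inputs absent from your plan:
\begin{itemize}
\item the deterministic factorization $\mathcal B(K_1,K_2;\R^3)\asymp\mathcal B(K_1,S_{3r};\R^3)\,\mathcal B(S_{2r},K_2;\R^3)$ of Lemma~\ref{lem:splitting};
\item the first-moment estimate $\Ee_\alpha[\Capa(\mathcal K_r^\alpha)]/r\asymp p_r^\alpha/\alpha$, which is what the recursion of Proposition~\ref{lem:residual-extension} actually delivers, through $p_r^\alpha\asymp g_r^\alpha$ and \eqref{eq:terminal-capacity-expectation}.
\end{itemize}
The conditional capacity bound you quote only gives capacity of order $r$ with probability $\gtrsim p_r^\alpha$, hence a first moment $\gtrsim r\,p_r^\alpha$, which is off by a factor $\alpha^{-1}$. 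With both inputs, independence of $K_1$ and $K_2$ gives $p_{rs}^\alpha\gtrsim\alpha\,\Ee_\alpha[\mathcal B(K_1,K_2;\R^3)]\asymp\alpha\cdot(p_r^\alpha/\alpha)\cdot(p_{s/4}^\alpha/\alpha)$.

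The submultiplicative half has the same problem. Your first claims, $p_{rs}^\alpha\le Cp_r^\alpha p_s^\alpha$ with $C$ uniform and $p_r^\alpha\ge cr^{-\xi}$ uniformly in $\alpha$, are false, since already $p_r^\alpha\le p_2^\alpha\lesssim\alpha$. Your corrected target $p_{rs}^\alpha\le C\alpha^{-1}p_r^\alpha p_{s/4}^\alpha$ is the right one, but ``the mass of loops crossing $A(r,2r)$ is $O(\alpha)$'' does not prove it. After BK (Lemma~\ref{lem:poisson-bk}) handles $\mathcal E_r\circ\mathcal E_{4r,rs}$, the remaining event forces one loop crossing $A(r,4r)$ that hits both $K_1$ and $K_2$, with conditional probability at most $\alpha\,\mathcal B(K_1,K_2;\R^3)$. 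It is again Lemma~\ref{lem:splitting} that produces exactly $\alpha\cdot\alpha^{-1}p_r^\alpha\cdot\alpha^{-1}p_{s/4}^\alpha$: each local factor $\mathcal B(K_1,S_{3r};B_{4r})$ and $\mathcal B(S_{2r},K_2;B_r^c)$ is converted back into $\alpha^{-1}$ times a local connection probability. This conversion is possible because these loop classes are independent of $K_1$ and $K_2$ and have uniformly bounded mass. So the splitting lemma, not a crude bound on crossing-loop mass, is what puts exactly one factor $\alpha^{-1}$ in both directions. You also still need the uniform fixed-ratio comparison of Lemma~\ref{imp:lem:fixed-ratio} to replace $p_{s/4}^\alpha$ by $p_s^\alpha$ in the upper bound.
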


We briefly explain the key idea of the proof.
One route to the existence of the one-arm exponent in \eqref{eq:one-arm-log-asymp}
is to prove submultiplicativity, \(p_{rs}^{\alpha}\lesssim p_r^{\alpha}p_s^{\alpha}\)\footnote{For nonnegative quantities \(f,g\), we write \(f\lesssim g\) if \(f\le Cg\) for a positive constant. We write \(f\gtrsim g\) if \(g\lesssim f\), and \(f\asymp g\) if both hold.}, and then apply Fekete's lemma.
This approach was carried out in~\cite{CD25a}, while~\cite{JL26} also established an
integrated version of the inequality.
In Section~\ref{sec:submultiplicativity}, we give another proof using the loop-splitting
technique developed in~\cite{GNQ24,GNQ25}
(although those papers mainly concern two-dimensional loop soups, the technique
extends to higher dimensions).
As we will see, loop splitting and resampling recur throughout the paper.

The reverse inequality, \(p_{rs}^{\alpha}\gtrsim p_r^{\alpha}p_s^{\alpha}\), is more delicate because it requires control of the capacity of crossing clusters.
Suppose that crossings from \(S_1\) to \(S_r\) and from \(S_r\) to \(S_{rs}\)
each carry capacity of order \(r\) near scale \(r\), with uniformly positive
probability conditional on the respective one-arm events. Then, a fresh Brownian loop at scale \(r\) can weld them
with positive probability,
yielding the desired supermultiplicativity.
This route was also outlined in~\cite[Section~7]{JL26}, which identified control of
the geometry of the crossing cluster as the main difficulty.
Theorem~\ref{thm:continuum-robust-summary} below provides this capacity estimate.

\begin{theorem}
\label{thm:continuum-robust-summary}
There exists \(\kappa_0>0\) such that, for every
\(\bar\alpha\in\mathrm{I}_0\), there is \(c>0\), depending only on
\(\bar\alpha\), for which the following holds simultaneously for
\(0<\alpha\le\bar\alpha\) and \(r\ge2\):
\begin{equation}\label{eq:continuum-robust-summary}
       \Pp_\alpha\Bigl(
 \exists\text{ cluster } \mathcal C \text{ across } A(1,r)
 \text{ such that }
 \Capa\bigl(\mathcal C\cap B_{2r}\bigr)\ge\kappa_0r
 \Bigr)
 \ge c\,p_r^{\alpha},
\end{equation}
where \(\Capa\) denotes the Newtonian capacity in \(\R^3\).
\end{theorem}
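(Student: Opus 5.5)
We will prove \eqref{eq:continuum-robust-summary} by a multiscale recursion over dyadic scales \(2^k\le r\). The recursion compares the probability of a \emph{good} crossing (one with large capacity near the outer scale) with the plain one-arm probability \(p_r^\alpha\).

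The main mechanism is as follows. On \(\mathcal E_r\), fix a crossing cluster \(\mathcal C\). We split the soup into small loops, of diameter \(<\delta r\), and large loops, of diameter \(\ge\delta r\). The large loops touching \(B_{2r}\setminus B_{r/2}\) form a Poisson family of bounded mean, uniformly in \(\alpha\le\bar\alpha\). Each Brownian loop of diameter \(\asymp r\) has capacity \(\gtrsim r\) with probability bounded below, because a Brownian path of diameter \(D\) has capacity comparable to \(D\) with positive probability, by scaling. If the crossing uses a large loop \(\ell\) meeting \(A(r/2,2r)\) with \(\diam\ell\ge\delta r\), then \(\Capa(\mathcal C\cap B_{2r})\ge\Capa(\ell\cap B_{2r})\). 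By loop splitting and resampling, as in Section~\ref{sec:submultiplicativity}, we can resample the shape of \(\ell\) given its endpoints and connection points. We then pay only a constant factor to ensure \(\Capa(\ell\cap B_{2r})\ge\kappa_0 r\). Here \(\kappa_0\) comes from Brownian estimates alone, which is why it does not depend on \(\bar\alpha\).

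So the difficulty is the complementary event. There the crossing of the outer annulus \(A(r/2,r)\) is achieved using only loops of diameter \(<\delta r\). We claim this event has probability at most \(\eta(\delta)\,p_r^\alpha\) with \(\eta(\delta)\to0\) as \(\delta\to0\), uniformly in \(\alpha\le\bar\alpha\). To prove it we use decoupling. Up to the submultiplicative bound of Section~\ref{sec:submultiplicativity}, a crossing from \(S_1\) to \(S_r\) decomposes into a crossing to \(S_{r/2}\) (probability \(\lesssim p_{r/2}^\alpha\)) times an independent-ish crossing of \(A(r/2,r)\) by the \(\delta r\)-truncated soup. Covering \(A(r/2,r)\) by \(O(\delta^{-3})\) balls of radius \(\delta r\) and chaining, a truncated crossing forces about \(\delta^{-1}\) successive crossings of annuli \(A(x,\delta r,2\delta r)\) by loops of diameter \(<\delta r\). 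These occur at well-separated locations and hence decouple. Since \(\alpha\le\bar\alpha\in\mathrm I_0\), a single such truncated crossing has probability at most \(p_2^{\bar\alpha}\)-like quantities bounded by some \(q<1\). Actually it is small for the truncated soup: choosing a fixed large ratio \(M\), the truncated crossing of \(A(x,\delta r,M\delta r)\) has probability \(\le p_M^{\bar\alpha}\to0\). Thus the truncated crossing of \(A(r/2,r)\) has probability \(\le \exp(-c/\delta)\). Meanwhile \(p_{r/2}^\alpha/p_r^\alpha\le C\) by the a priori lower bound \(p_r^\alpha\gtrsim \alpha r^{-1}\cdot\)(something) and monotonicity. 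More precisely, we need \(p_{r/2}\lesssim p_r\), which needs its own argument. We would obtain it inductively: a crossing to \(S_{r/2}\), with good capacity at scale \(r/2\), extends to \(S_r\) with positive probability via a fresh loop hitting a set of capacity \(\gtrsim r\). This is the recursion. Let \(g_r\) denote the left side of \eqref{eq:continuum-robust-summary}. We show \(g_r\gtrsim g_{r/2}\) and \(p_r\le g_r/c'+\eta p_{r/2}\), and hence \(p_r\le C g_r + \eta C' p_{r/2}\). Iterating with \(\eta\) small against the bounded ratio gives \(p_r\lesssim g_r\).

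The main obstacle is this recursive comparison. The exponential smallness of the purely small-loop crossing must beat the possible growth of \(p_{r/2}^\alpha/p_r^\alpha\), which a priori is only controlled by \(r^{o(1)}\) losses. Small \(\alpha\) is handled by noting that \(p_r^\alpha\asymp\alpha/r\)-type single-loop contributions dominate, which keeps constants uniform down to zero intensity. We would first try to close the induction with the hypothesis \(p_{r/2}\le K g_{r/2}\). Combined with \(g_r\ge c_1 g_{r/2}\) from the welding step, this gives \(p_r\le C g_r+\eta K c_1^{-1}g_r\). The induction then closes once \(\eta\le c_1/(2K)\), by choosing \(\delta\) small depending only on \(\bar\alpha\).
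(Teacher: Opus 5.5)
Your skeleton matches the paper's: split at diameter $\delta r$, BK for the all-small-loop case, a capacity-carrying attachment for the large-loop case, and a dyadic induction. However, two steps fail as written.

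First, your derivation of $\eta(\delta)\to0$ does not work. The $\asymp\delta^{-1}$ separated annulus crossings sit at random locations, so you must take a union over all possible chains, and the entropy is not beaten.
\begin{itemize}
\item With ratio-$2$ annuli, the per-step bound $q<1$ is not small against the number of possible next positions.
\item With ratio $M$, there are $\asymp M^2$ next positions per step. Your bound $p_M^{\bar\alpha}$ is already $\gtrsim\bar\alpha/M$ from single loops, so $M^2p_M^{\bar\alpha}\to\infty$.
\end{itemize}
A bound $e^{-c/\delta}$ is known only for $\bar\alpha<\alpha_\ast$, and only via the separate sharpness result (Theorem~\ref{thm:unit-truncated-sharpness}). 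It is unavailable if $\bar\alpha=\sup\mathrm I_0\in\mathrm I_0$.

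You only need $\eta(\delta)\to0$, which the paper gets from Kolmogorov's zero--one law (Lemma~\ref{lem:fixed-ratio-bdelta}). The event that $S_a$ is connected to $S_b$ by loops of diameter at most $2^{-n}$ for every $n$ is measurable with respect to the tail of the independent dyadic diameter classes. After covering $S_a$ by small balls, each localized version has probability at most $p_\lambda^{\bar\alpha}<1$, hence probability $0$. With only qualitative decay, close the induction with $\eta<c_1/2$ and $K\ge2C_\delta$. Your condition $\eta\le c_1/(2K)$ together with $K\ge C_\delta$ tacitly requires $\eta(\delta)C_\delta\to0$.

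Second, your constants are not uniform in $0<\alpha\le\bar\alpha$. Your welding step $g_r\ge c_1g_{r/2}$ attaches a fresh loop of mass $\asymp1$ to a good crossing at scale $r/2$. This succeeds with probability $1-e^{-c\alpha}\asymp\alpha$, so $c_1\asymp\alpha$, and $\eta<c_1$ forces $\delta$, hence $C_\delta$, to depend on $\alpha$. The step also needs the good event at scale $r/2$ localized in $B_r$, so that the fresh loop is independent of it. Your remedy, that single-loop contributions $\asymp\alpha/r$ dominate, is false: $\xi(\alpha)<1$ gives $p_r^\alpha\gg\alpha/r$ for large $r$. You could compensate by proving $\chi^\alpha(\delta)\lesssim(\alpha/\bar\alpha)\chi^{\bar\alpha}(\delta)$ via superposition, as in the proof of \eqref{imp:eq:initial-scale}, but you do not.

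The paper avoids the loss by not taking $g_s$ to be the probability of the good event. Instead, $g_s^\alpha$ is the probability that a fresh loop of $\mathscr T_s^{\kappa_0}$ hits the seeded cluster $\mathcal K_s^\alpha$ of $S_1$ in $\LL^\alpha_{B_s}$. Such loops are not contained in $B_s$, so they are independent of $\mathcal K_s^\alpha$, and $g_s^\alpha\asymp\alpha s^{-1}\Ee_\alpha[\Capa(\mathcal K_s^\alpha)]$. Then $\mathcal K_s^\alpha\subset\mathcal K_{2s}^\alpha$ gives $g_{2s}^\alpha\ge c_\star g_s^\alpha$, with the factor $\alpha$ on both sides.

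The same definition repairs your large-loop step. Resampling $\ell$ with its connection points fixed cannot raise its capacity beyond $O(\diam\ell)$, which may be only $\delta r$. Instead, condition on the small-loop soup. If the small loops do not cross, the first large loop of the chain must hit the small-loop cluster $\mathcal K^{\le\delta s}\subset\mathcal K_s^\alpha$. The bound $\mu_D^{\Loop}(\diam\ell>\delta s,\ \ell\cap K\ne\varnothing)\le C_\delta m_s(K)$ from Lemma~\ref{lem:one-sided-surgery} then replaces the whole remainder of the crossing by a single terminal loop. This yields $p_{s,D}^\alpha\le C_\delta g_s^\alpha+\chi^{\bar\alpha}_{5/8,3/4}(\delta)p_{s/2,D}^\alpha$.
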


The key step in the proof of Theorem~\ref{thm:continuum-robust-summary} is to show that,
conditioned on the one-arm event, a crossing cluster \(\mathcal C\) contains a loop
of diameter of order \(r\) with uniformly positive probability.
Note that the earlier bound \(\xi<1\) shows that a single loop is unlikely to realize the entire
crossing. Our observation is that a crossing made entirely of small loops is also unlikely,
where ``small'' is measured relative to the scale at which a loop lies.

Theorem~\ref{thm:continuum-robust-summary} allows us to construct pivotal
loops for the one-arm event.
Together with Russo's formula, this construction yields the quantitative strict
monotonicity of $\xi(\alpha)$ below.
Moreover, we prove the left continuity of the one-arm exponent throughout
\(\mathrm I_0\) and a two-sided bound for small intensities.
In particular, \(\xi\) is continuous at \(\alpha=0\), upon setting
\(\xi(0):=1\), the one-arm exponent for a single Brownian loop.

\begin{theorem}[Strict monotonicity]
\label{thm:xi-strict-monotone}
For any compact interval \(I\subset \mathrm{I}_0\), there exists a constant
\({c}>0\) such that, for all \(\alpha,\beta\in I\) with
\(\alpha<\beta\), one has
\begin{equation}\label{eq:xi-linear-separation}
    \xi(\alpha)-\xi(\beta)\ge {c}(\beta-\alpha).
\end{equation}
{
Moreover, there exist constants \(0<c<C<\infty\) and
\(\alpha_0\in(0,1)\) such that, for all \(0<\alpha\le\alpha_0\),
\begin{equation}\label{eq:xi-0}
        c\alpha\le1-\xi(\alpha)\le C\alpha.
\end{equation}
The function \(\xi\) is left-continuous on \(\mathrm I_0\).
}
Consequently, \(\xi\) is strictly decreasing on \(\mathrm{I}_0\) and
\(\lim_{\alpha\downarrow0}\xi(\alpha)=\xi(0)=1\).
\end{theorem}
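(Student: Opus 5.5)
The plan is to derive everything from Russo's formula for the Poisson loop soup, combined with the up-to-constants estimate of Theorem~\ref{thm:utc} and the capacity estimate of Theorem~\ref{thm:continuum-robust-summary}. Work with a truncated version of the event (loops of diameter at least \(\delta\) inside \(B_{2r}\)), so that the intensity measure is finite and Russo's formula applies. For a Poisson process with intensity \(\alpha\mu\) and an increasing event \(\mathcal E\),
\[
\frac{d}{d\alpha}\Pp_\alpha(\mathcal E)=\int \Pp_\alpha\bigl(\mathcal E(\LL\cup\{\ell\})\setminus\mathcal E(\LL)\bigr)\,\mu(d\ell).
\]
Then let \(\delta\to0\). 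The first goal is the differential inequality
\[
\frac{d}{d\alpha}\log p_r^\alpha\ \ge\ c\log r \qquad\text{for } \alpha\in I \text{ and large } r.
\]
Integrating over \([\alpha,\beta]\) gives \(p_r^\beta\ge r^{c(\beta-\alpha)}p_r^\alpha\). Theorem~\ref{thm:utc} then turns this into \(r^{-\xi(\beta)}\gtrsim r^{c(\beta-\alpha)-\xi(\alpha)}\), with constants that are harmless (for fixed \(\alpha,\beta\), the factors \(\alpha,\beta\) are bounded above and below). Letting \(r\to\infty\) yields \eqref{eq:xi-linear-separation}.

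To get the \(\log r\), decompose into dyadic scales \(2^k\), \(1\le k\le\log_2 r\). For each \(k\), I would show that the \(\mu\)-mass of pivotal loops of diameter about \(2^k\) in \(A(2^k,2^{k+1})\) is, in expectation, at least \(c\,p_r^\alpha\). The construction is as follows.
\begin{itemize}[nosep]
\item Require a crossing of \(A(1,2^{k-1})\) whose cluster has capacity \(\gtrsim 2^k\) near scale \(2^k\), by Theorem~\ref{thm:continuum-robust-summary}.
\item Require a crossing of \(A(2^{k+2},r)\) with capacity \(\gtrsim 2^k\) near its inner boundary, by an inward version of the same theorem obtained by inversion or scaling.
\item Require that no loop touching the intermediate annulus connects the two.
\end{itemize}
By decoupling and Theorem~\ref{thm:utc}, this configuration has probability \(\gtrsim p^\alpha_{2^k}\,p^\alpha_{r/2^k}\gtrsim p_r^\alpha/\alpha\). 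Given it, the set of loops at scale \(2^k\) hitting both capacity sets has \(\mu\)-mass bounded below (the hitting probability is comparable to capacity over scale), and each such loop is pivotal. Summing over \(k\) gives \(\partial_\alpha p_r^\alpha\gtrsim p_r^\alpha\log r/\alpha\). This is even better than needed on compact \(I\), and the \(1/\alpha\) is exactly what gives the lower bound \(1-\xi\gtrsim\alpha\) near \(0\), as follows. At \(\alpha\to0\), \(p^\alpha_r\asymp \alpha r^{-1}\) (a single loop). Integrating \(\frac{d}{d\alpha}\log p_r^\alpha\ge (1+c\log r)/\alpha\)-type bounds from a reference point is awkward, so instead compare directly. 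The probability of a two-loop chain through each intermediate scale is \(\gtrsim \alpha^2 r^{-1}\log r\), and iterating \(m\)-loop chains heuristically gives \(\alpha r^{-1}\sum_m (c\alpha\log r)^m/m!\approx\alpha r^{-1+c\alpha}\). Making this rigorous uses the same Russo bound: \(\partial_\alpha\log p_r^\alpha\ge c\log r\) uniformly on \((0,\alpha_0]\). Integrating from \(\alpha/2\) to \(\alpha\), and using Theorem~\ref{thm:utc} uniformly down to zero, gives \(\xi(\alpha/2)-\xi(\alpha)\ge c\alpha/2\). Summing the dyadic telescoping series together with \(\xi(0^+)\le 1\) gives \(1-\xi(\alpha)\ge c\alpha\).

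For the upper bound \(1-\xi\le C\alpha\), I would use a BK/union-bound-type estimate. Crossing \(A(1,r)\) requires a chain of loops, and summing over chains with \(m\) loops of geometrically decreasing sizes gives \(p_r^\alpha\le \alpha r^{-1}\sum_m (C\alpha\log r)^m/m!\), which can be combined with submultiplicativity at a fixed large scale \(r_0\) and Fekete. The cleaner option is the Russo upper bound: pivotal loops must lie in the cluster, so the expected pivotal mass per dyadic scale is \(\lesssim p_r^\alpha\). That gives \(\partial_\alpha\log p_r^\alpha\le C\log r/\alpha\)... but we need \(C\log r\), so I will instead rely on the chain count. Left-continuity follows because \(\xi(\alpha)=\lim_r -\log p_r^\alpha/\log r\) and, by Theorem~\ref{thm:utc}, \(-\log(p_r^\alpha/(C\alpha))/\log r\) is a monotone-in-\(r\) sequence (via submultiplicativity) of functions continuous in \(\alpha\). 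So \(\xi\) is an infimum of continuous increasing-in-\(-\alpha\) functions. Such an infimum is upper semicontinuous, which for a decreasing function gives left-continuity.

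The main obstacle is the pivotal construction. One must guarantee that the added scale-\(2^k\) loop is pivotal, namely that the inner and outer crossings are disconnected without it, while keeping probability \(\gtrsim p_r^\alpha\). I would first try resampling the loops in \(A(2^{k-1},2^{k+2})\) via the loop-splitting technique, forcing disconnection at constant cost.
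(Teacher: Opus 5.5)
You handle a compact interval $I\subset\mathrm I_0$ essentially as the paper does. To make the decoupling work, localize the inner and outer crossings to $B_s$ and $B_{2s}^{\rm c}$, and build a barrier whose small-loop part is controlled by $\alpha_+\in\mathrm I_0$. You also need not truncate the event: sprinkling only in the finite-mass classes $\mathscr P_s$ gives the a.e.\ lower bound of Lemma~\ref{lem:restricted-russo}.

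Your left-continuity argument works once ``continuous in $\alpha$'' is replaced by left-continuity of $\alpha\mapsto p_r^\alpha$ at fixed $r$, which follows from the marked Poisson coupling; right-continuity is not known. For an increasing function, left-continuity is lower semicontinuity. That suffices to make $\xi=\inf_r\bigl[-\log(p_r^\alpha/(C\alpha))/\log r\bigr]$ upper semicontinuous, and this identity follows directly from Theorem~\ref{thm:utc}. The genuine gaps are in both halves of \eqref{eq:xi-0}.

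\emph{Lower bound.} By Theorem~\ref{thm:utc}, $p^\alpha_{2^k}p^\alpha_{r/2^k}\asymp\alpha^2r^{-\xi(\alpha)}\asymp\alpha\,p_r^\alpha$, not $p_r^\alpha/\alpha$. In fact your claimed $\partial_\alpha p_r^\alpha\gtrsim p_r^\alpha\log r/\alpha$ is false. Integrating it over $[\alpha/2,\alpha]$ would give $\xi(\alpha/2)-\xi(\alpha)\ge c\log 2$ for every small $\alpha$, which is impossible since $\xi\in(0,1)$. With the correct factor, your construction forces an actual crossing of $A(1,2^{k-1})$ and so yields only $\partial_\alpha\log p_r^\alpha\gtrsim\alpha\log r$ near $0$. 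That gives $1-\xi(\alpha)\gtrsim\alpha^2$, and the uniform bound ``$\partial_\alpha\log p_r^\alpha\ge c\log r$ on $(0,\alpha_0]$'' does not follow.

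You could rescue this by dropping the inner crossing and letting the pivotal loop start anywhere on the seeded cluster $\mathcal K_s^\alpha$. Its mass is then $\gtrsim s^{-2}\Capa(\mathcal K_s^\alpha)\Capa(J^-)$. Moreover $s^{-1}\Ee_\alpha[\Capa(\mathcal K_s^\alpha)]\asymp p_s^\alpha/\alpha$ by Lemma~\ref{lem:robust-prop} and $p_s^\alpha\asymp g_s^\alpha$, which restores the missing $1/\alpha$ and gives per-scale pivotal mass $\gtrsim p_r^\alpha$ uniformly in $0<\alpha\le\bar\alpha$. As written, though, the lower bound is not proved.

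\emph{Upper bound.} You assert the chain estimate $p_r^\alpha\le\alpha r^{-1}\sum_m(C\alpha\log r)^m/m!$ but do not prove it, and a union bound over chains does not give it. Crossing chains need not be ordered by scale, so there is no $1/m!$. Also, a macroscopic Brownian loop is met by loops of diameter $\approx t$ with total mass $\asymp t^{-2}$, so the naive sum over chains through small loops diverges. Even on $\Z^3$, where this ultraviolet problem is absent, the previously available bound was only $r^{-1+C(\log\frac1\alpha)^{-1}}$~\cite{CS16}.

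The missing idea is the paper's Gronwall-type recursion for $A_n:=\Ee_\alpha[\Capa(\mathcal K_{r_n}^\alpha)]$ with $r_n=2^nr_0$. Since $\Capa(K)\le\Capa(S_1)\operatorname{rad}(K)$ and $\Pp_\alpha(\operatorname{rad}(\mathcal K_{r_n}^\alpha)>t)\le p_{r_j}^\alpha$ for $t\in[r_j,r_{j+1}]$, one gets $A_n\le C_0+C\sum_{j<n}r_jp_{r_j}^\alpha$. The identity $r_jp_{r_j}^\alpha\asymp\alpha A_j$ then closes the recursion as $A_n\le C_0+C_2\alpha\sum_{j<n}A_j$. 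That identity comes from $p\asymp g\asymp\alpha r^{-1}\Ee\Capa$, and this is where $\mathrm I_0$ and Proposition~\ref{lem:residual-extension} control the small loops.

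Hence $A_n\le C_0(1+C_2\alpha)^n$, and $A_n\asymp r_n^{1-\xi(\alpha)}$ gives $1-\xi(\alpha)\le\log_2(1+C_2\alpha)$. The reverse recursion gives the lower bound as well: on the event defining $g_{r_j}^\alpha$ one has $\Capa(\mathcal K_{2r_j}^\alpha)\ge\kappa_0r_j$. Without an argument of this kind, neither the upper half of \eqref{eq:xi-0} nor $\lim_{\alpha\downarrow0}\xi(\alpha)=1$ is established.
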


The bound \(\xi(\alpha)<1\) was proved
in~\cite{CD25a,JL26}. The quantitative estimate \eqref{eq:xi-0} follows
from recursions for the capacity; see
Theorem~\ref{imp:thm:linear}.
Combining Theorem~\ref{thm:xi-strict-monotone} with
Theorem~\ref{thm:all-cluster-dimension}, we conclude that the cluster dimension
is strictly increasing in \(\alpha\) on \(\mathrm{I}_0\)
and tends to \(2\) as \(\alpha\downarrow0\).

\begin{remark}[Continuity]
Theorem~\ref{imp:thm:left-continuity} proves left continuity of \(\xi\)
on \(\mathrm I_0\), while right continuity at positive intensities remains open.
In fact, any interior discontinuity must be a downward jump to the right, and there
are at most countably many such points. Proving the continuity of \(\xi\) would give
a family of random fractals in \(\mathbb R^3\) whose Hausdorff dimension
varies continuously with the intensity.
\end{remark}

\begin{remark}[Minkowski content]
The results and methods of this paper can also be used to establish the existence
of Minkowski content for three-dimensional loop-soup clusters; see~\cite{GaoMinkowski}. Note that the two-dimensional analogue has been proved
in~\cite{JLQ23} using the one-arm estimate of~\cite{JLQ25}.
\end{remark}

\subsection{Equality of critical values and sharpness}\label{subsec:equality}

We next identify various critical values of the Brownian loop soup introduced in
\cite{JL26}.
Following the terminology of \cite{JL26}, we collect the percolation,
uniqueness, ball-to-ball, and lower-truncation thresholds below:
\[
\begin{aligned}
 \alpha_c
 &:=\inf\Bigl\{\alpha>0:
 \mathbb P_\alpha\bigl(\mathcal L_{\mathbb R^3}^{\alpha}
 \text{ has an unbounded cluster}\bigr)>0\Bigr\},\\
 \alpha_u
 &:=\inf\Bigl\{\alpha>0:
 \mathbb P_\alpha\bigl(\mathcal L_{\mathbb R^3}^{\alpha}
 \text{ has a unique unbounded cluster}\bigr)>0\Bigr\},\\
 \alpha_{\mathrm{B2B}}
 &:=\inf\Bigl\{\alpha>0:
 \mathbb P_\alpha\bigl(\forall r>1,\,
 B_1\xleftrightarrow{\mathcal L_{\mathbb R^3}^{\alpha}}S_r\bigr)>0\Bigr\},\\
 \alpha_{\mathrm{tr}}
 &:=\inf\Bigl\{\alpha>0:
 \mathbb P_\alpha\bigl(\mathcal L_{\mathbb R^3}^{\alpha,\ge1}
 \text{ has an unbounded cluster}\bigr)>0\Bigr\}.
\end{aligned}
\]
Here, \(\mathcal L_{\mathbb R^3}^{\alpha,\ge1}\) denotes the restriction of
\(\mathcal L_{\mathbb R^3}^{\alpha}\) to loops of diameter at least \(1\).
It was proved in \cite[Theorem~1.2]{JL26} that
\begin{equation}\label{eq:four-critical}
        \alpha_{\mathrm{B2B}}\le\alpha_c\le\alpha_u\le\alpha_{\mathrm{tr}},
\end{equation}
and all of these critical values are conjectured to be equal \cite[Conjecture~1.6]{JL26}.
We are also interested in the truncation from above, which captures
the effect of small loops. For this,
let \(\LL_{\R^3}^{\alpha,\le1}\) be the loop soup obtained by
retaining only loops of diameter at most \(1\),
and define the associated critical value by
\begin{equation}\label{eq:alpha-star-def}
        \alpha_\ast:=\sup\Bigl\{\alpha>0: \lim_{r\to\infty}\Pp_\alpha\Bigl(
        S_r
        \xleftrightarrow{\LL_{\R^3}^{\alpha,\le1}}
        S_{2r}
    \Bigr)=0\Bigr\}.
\end{equation}
The next theorem identifies all these thresholds, resolving
\cite[Conjecture~1.6]{JL26}.

\begin{theorem}[Equality of critical values]
\label{cor:bdelta}
\label{thm:continuum-thresholds}
For the Brownian loop soup on \(\mathbb R^3\),
\begin{equation}\label{eq:four-critical-qual}
 \alpha_\ast=\alpha_{\mathrm{B2B}}
 =\alpha_c=\alpha_u=\alpha_{\mathrm{tr}}.
\end{equation}
Moreover, for every \(\beta>\alpha_{\mathrm{B2B}}\) and every fixed
nonempty connected open set \(D\subset\mathbb R^3\), almost surely all loops of
\(\mathcal L_D^\beta\) belong to a single cluster.
\end{theorem}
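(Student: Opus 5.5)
Given \eqref{eq:four-critical}, it suffices to prove two inequalities: \(\alpha_{\mathrm{tr}}\le\alpha_{\mathrm{B2B}}\), and \(\alpha_\ast=\alpha_{\mathrm{B2B}}\).
We then treat the ``single cluster'' statement in \(D\) as a byproduct of the supercritical argument.

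\textbf{The inequality \(\alpha_\ast\le\alpha_{\mathrm{B2B}}\).}
Fix \(\beta>\alpha_{\mathrm{B2B}}\). I would try to show \(\Pp_\beta(S_r\leftrightarrow S_{2r}\text{ in }\LL^{\beta,\le1})\not\to0\); this seems to require the sharpness input below anyway.

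\textbf{The inequality \(\alpha_{\mathrm{B2B}}\le\alpha_\ast\).}
Take \(\alpha<\alpha_\ast\) and suppose, for contradiction, that \(\alpha\notin\mathrm I_0\). The upper-truncated soup then has decaying crossings at scale \(r\) with small loops. Running the multiscale recursion behind Theorem~\ref{thm:continuum-robust-summary}, with loops of size about \(\rho\) at scale \(\rho\), should give a renormalized one-arm bound \(p_{2r}\le (1-\delta)p_r\)-type decay. In the recursion, small loops are controlled by \(\alpha<\alpha_\ast\), and large loops by the \(\xi<1\) counting, i.e.\ the expected number of crossing loops. This forces \(\alpha\in\mathrm I_0\), so \(\alpha_\ast\le\alpha_{\mathrm{B2B}}\) up to the endpoint.

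\textbf{The inequality \(\alpha_{\mathrm{tr}}\le\alpha_{\mathrm{B2B}}\).}
For \(\beta>\alpha_{\mathrm{B2B}}\), pick \(\alpha\in(\alpha_{\mathrm{B2B}},\beta)\). First I would prove sharpness for the unit-truncated soup \(\LL^{\alpha,\ge1}\) via OSSS, as Theorem~\ref{thm:unit-truncated-sharpness} is announced to do. Concretely, discretize \(\LL^{\ge1}\) into independent Poisson bits indexed by the loops' boxes and diameters, and use a randomized exploration algorithm from a random sphere \(S_k\), \(k\le n\). The revealments are then bounded by \(\frac1n\sum_k\theta_k\), and one gets the standard differential inequality \(\theta_n'\ge c\,\frac{n}{\Sigma_n}\theta_n(1-\theta_n)\). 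This yields the dichotomy: either exponential decay of \(B_1\leftrightarrow S_r\) in \(\LL^{\ge1}\) below some \(\tilde\alpha\), or percolation of \(\LL^{\ge1}\) above it. Next I would show that exponential decay of the truncated soup at \(\alpha\) is incompatible with \(\alpha\notin \mathrm I_0\). By scaling, full-soup one-arm probabilities at scale \(r\) decompose over loops of size \(\ge \epsilon r\) and \(<\epsilon r\). With the truncated part exponentially subcritical and \(\alpha<\alpha_\ast\) controlling small loops, the up-to-constants machinery (Theorem~\ref{thm:utc}, applied at intensity \(\alpha\) with uniformity) gives \(p_r^\alpha\to0\), a contradiction. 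Hence \(\LL^{\alpha,\ge1}\) percolates at every \(\alpha>\alpha_{\mathrm{B2B}}\), so \(\alpha_{\mathrm{tr}}\le\alpha_{\mathrm{B2B}}\). The same reasoning also gives \(\alpha_\ast\ge\) the truncated threshold, by comparing \(\LL^{\le1}\) crossings to the supercritical full soup via scaling.

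\textbf{Single cluster in \(D\).}
For \(\beta>\alpha_{\mathrm{B2B}}\), split \(\LL^\beta_D=\LL^{\alpha}_D\cup\LL^{\beta-\alpha}_D\). By scale invariance and percolation of the truncated soup, every small ball in \(D\) contains, with high probability, a crossing cluster made of loops at its own scale. A multiscale union bound over dyadic cubes, together with uniqueness at every scale coming from \(\alpha_u=\alpha_{\mathrm{tr}}\), shows these clusters glue together. Each loop of \(\LL_D^\beta\) has positive diameter, so it hits such a cluster at a small enough scale; connectedness of \(D\) then links everything.

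\textbf{Main obstacle.}
The main obstacle is the step ``truncated subcritical \(\Rightarrow\) \(\alpha\in\mathrm I_0\)''. This requires the uniform capacity recursion of Theorem~\ref{thm:continuum-robust-summary} to run with only qualitative small-loop decay as input. I would first try to establish it by a finite-size criterion, namely \(p_{r_0}\) small at one scale implies decay, proved by the gluing/loop-splitting argument.
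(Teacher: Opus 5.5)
Your reduction is the right one: prove $\alpha_\ast=\alpha_{\mathrm{B2B}}$ and $\alpha_{\mathrm{tr}}\le\alpha_{\mathrm{B2B}}$, then invoke \eqref{eq:four-critical}. But each step has a genuine gap.

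\textbf{The identity $\alpha_\ast=\alpha_{\mathrm{B2B}}$.} Both of your sections on $\alpha_\ast$ actually target $\alpha_\ast\le\alpha_{\mathrm{B2B}}$; the second one concludes exactly that despite its heading. So $\alpha_{\mathrm{B2B}}\le\alpha_\ast$ is never argued. That direction is a zero--one law. For $\alpha\in\mathrm I_0$, the event that $S_a$ is connected to $S_b$ by arbitrarily small loops is measurable with respect to the tail of the independent dyadic diameter classes. After covering $S_a$ by small balls, its probability is at most $p_\lambda^\alpha<1$, so it is $0$. This is Lemma~\ref{lem:fixed-ratio-bdelta}, and scaling turns $\chi^\alpha_{1,2}(1/r)\to0$ into $\alpha\le\alpha_\ast$.

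For $\alpha_\ast\le\alpha_{\mathrm{B2B}}$, your recursion cannot deliver decay. Even granting the small-loop input from $\alpha<\alpha_\ast$, Proposition~\ref{lem:residual-extension} only yields $p_s\lesssim g_s$, a comparison with the event of a macroscopic terminal loop. Since merely $g_s\lesssim\alpha$, no contraction $p_{2r}\le(1-\delta)p_r$ follows. Likewise $\xi<1$ is a statement inside $\mathrm I_0$ and is unavailable when $\alpha\notin\mathrm I_0$. Your fallback finite-size criterion, via Theorem~\ref{thm:submultiplicativity}, would need $p^\alpha_{r_0}$ small compared to $\alpha$ at some scale, which $\alpha<\alpha_\ast$ does not give.

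What $\alpha<\alpha_\ast$ does give suffices, with no sharpness input. Choose $\delta$ so that the crossing of $A(1,2)$ by loops of diameter at most $\delta$ fails with positive probability. Intersect with the positive-probability decreasing event that no loop of diameter $>\delta$ meets $A(1,2)$; FKG then gives $p_2^\alpha<1$ (Lemma~\ref{lem:fixed-ratio-bdelta-connection}). For $\alpha>\alpha_{\mathrm{B2B}}$, the zero--one law \cite[Lemma~6.2]{JL26} instead gives $p_2^\alpha=1$, a contradiction.

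\textbf{The inequality $\alpha_{\mathrm{tr}}\le\alpha_{\mathrm{B2B}}$.} The OSSS step targets the wrong model, and the claimed dichotomy is false. Theorem~\ref{thm:unit-truncated-sharpness} concerns $\LL^{\alpha,\le1}$, not $\LL^{\alpha,\ge1}$, and the paper's proof of this theorem uses no OSSS input at all. For the lower-truncated soup, exponential decay never occurs. Every loop meeting $\overline B_1$ and $S_r$ has diameter at least $r-1$, and by \eqref{eq:splitting-capacity} such loops have mass $\asymp r^{-1}$. Hence $\Pp_\alpha(B_1\leftrightarrow S_r\text{ in }\LL^{\alpha,\ge1})$ is at least of order $\alpha r^{-1}$ at every intensity. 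Your follow-up also applies Theorem~\ref{thm:utc} at an $\alpha\notin\mathrm I_0$, where it is not proved. It further assumes $\alpha<\alpha_\ast$ for some $\alpha>\alpha_{\mathrm{B2B}}=\alpha_\ast$.

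\textbf{The single-cluster statement.} Your argument uses $\alpha_u=\alpha_{\mathrm{tr}}$, which is part of the claim. The sentence that each loop ``hits such a cluster at a small enough scale'' is exactly the crux: a Brownian loop passing through a ball containing a crossing cluster need not intersect it.

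\textbf{What the paper does instead.} It reverses your logic: it proves the single-cluster statement first, then deduces $\alpha_{\mathrm{tr}}\le\alpha_{\mathrm{B2B}}$ from the local-uniqueness characterization \cite[Theorem~1.5]{JL26}. The key is the split $\beta=\alpha+\varepsilon$ with $\alpha>\alpha_{\mathrm{B2B}}$ and $\varepsilon\in\mathrm I_0$.
\begin{itemize}
\item The supercritical part connects every pair of open sets in a ball (Lemma~\ref{lem:local-open-set-connections}, from \cite[Lemma~6.2]{JL26}, rotations and FKG).
\item The independent \emph{subcritical} part attaches a positive-capacity piece of each fixed loop to a nearby segment of such a chain, with uniformly positive probability (Lemma~\ref{lem:threshold-attachment}). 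This is a second-moment argument over good points of the segment at almost every projection level, and it is where $\xi(\varepsilon)<1$ and the up-to-constants estimates at intensity $\varepsilon$ enter.
\item Repeating the attachment in infinitely many disjoint balls and applying Palm's formula gives the single-cluster statement.
\end{itemize}
This attachment mechanism is the missing idea.
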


Recall that \(\alpha_{\mathrm{B2B}}=\sup\mathrm I_0\).
Hence, all the preceding results stated for $\alpha\in \mathrm I_0$
remain true for any $\alpha$ below  any one of the critical values in \eqref{eq:four-critical-qual}.
Note that most of the results in~\cite{GLL26} are likewise stated for the
three-dimensional Brownian loop soup with \(\alpha\in\mathrm I_0\),
including the up-to-constants estimates for generalized non-intersection probabilities
and the Hausdorff dimension for local cut points.
The same extension therefore applies to these results.

For the proof of \eqref{eq:four-critical-qual}, we will first use a
zero--one law argument to get the equality \(\alpha_\ast=\alpha_{\mathrm{B2B}}\).
Then, we use the one-arm up-to-constants estimates in Theorem~\ref{thm:utc}
to show $\alpha_{\mathrm{B2B}}\ge \alpha_{\mathrm{tr}}$.  This, combined with
\eqref{eq:four-critical}, proves the remaining equalities in \eqref{eq:four-critical-qual}.

We also study the sharp phase transition for the unit-truncated loop soup \(\LL_{\R^3}^{\alpha,\le1}\),
which is of independent interest.
With such a truncation, the model is well suited to the OSSS decision-tree
 framework; see, e.g.,~\cite{OSSS05,DRT17,DRT18,LPY23}.
Applying this powerful technique, we can derive the following sharpness result.

\begin{theorem}[Sharpness]\label{thm:unit-truncated-sharpness}
The following properties hold for \(\LL_{\R^3}^{\alpha,\le1}\).
\begin{enumerate}[label=\textnormal{(\roman*)}]
\item If \(0<\alpha<\alpha_\ast\), then there are \(C,c\in(0,\infty)\) such that
\(\Pp_\alpha(S_1\xleftrightarrow{\LL_{\R^3}^{\alpha,\le1}}S_r)\le C\,e^{-cr}\) for all \(r>1\).
\item If \(\alpha>\alpha_\ast\), then
\(\Pp_\alpha(\forall r>1,\,S_1\xleftrightarrow{\LL_{\R^3}^{\alpha,\le1}}S_r)=1\).
\end{enumerate}
\end{theorem}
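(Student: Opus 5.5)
We plan to run the OSSS decision-tree argument in the style of Duminil-Copin--Raoufi--Tassion, adapted to a Poisson soup of loops of diameter at most \(1\). First we discretize. Partition \(\R^3\) into unit cubes \(Q_z=z+[0,1)^3\), \(z\in\Z^3\), and let \(\omega_z\) be the restriction of \(\LL_{\R^3}^{\alpha,\le1}\) to loops whose root (or lowest point in some fixed order) lies in \(Q_z\). These are independent Poisson processes. Each loop of \(\omega_z\) lies in the \(1\)-neighbourhood of \(Q_z\), so the loop soup has finite range. Set \(\theta_r(\alpha):=\Pp_\alpha(S_1\xleftrightarrow{\LL^{\alpha,\le1}_{\R^3}}S_r)\). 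The event is increasing in the Poisson configuration, and each \(\omega_z\) is a single random variable taking values in a Polish space. OSSS holds for product measures with arbitrary coordinate spaces, so it applies with the \(\omega_z\) as coordinates.

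Next we derive the differential inequality. We need a Russo formula for the Poisson intensity. Since the event depends only on loops meeting \(B_{r+1}\), which is an a.s.\ finite number of loops above any size \(\delta\), we first truncate small loops or argue by monotone limits to make this rigorous. The formula reads \(\frac{d}{d\alpha}\theta_r=\alpha^{-1}\sum_z \Ee[\#\{\text{pivotal loops in }\omega_z\}]\). This is bounded below by \(c\sum_z \Pp(\omega_z \text{ is pivotal})\) via a finite-energy surgery. The surgery goes as follows. If the cube \(z\) is pivotal, in the sense that resampling \(\omega_z\) changes the outcome, then adding one fixed loop of diameter about \(1\) that covers a neighbourhood of \(Q_z\) also changes it, and such loops have positive \(\mu\)-mass. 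For the decision tree we use the standard randomized algorithm. Pick \(s\in[1,r]\) uniformly, explore clusters of \(\LL^{\le1}\) intersecting \(S_s\) by revealing the \(\omega_z\) of cubes adjacent to explored loops, and stop once the answer is determined. The revealment of cube \(z\) is at most \(\frac{C}{r}\sum_{k\le r}\theta_k(\alpha)\), up to translation and the unit range. OSSS then yields
\[
\theta_r' \ \ge\ c\,\frac{r}{\Sigma_r}\,\theta_r(1-\theta_r),\qquad \Sigma_r=\sum_{k=0}^{r-1}\theta_k .
\]
The DRT lemma then provides \(\tilde\alpha\) with exponential decay below \(\tilde\alpha\) and \(\theta_\infty(\alpha)\ge c(\alpha-\tilde\alpha)\) above it. We will also need to handle the start at \(S_1\) versus the annulus events in \(\alpha_\ast\); by finite range, crossing \(A(r,2r)\) is bounded by a union over \(O(r^2)\) unit starting balls.

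It remains to identify \(\tilde\alpha=\alpha_\ast\). For \(\alpha<\tilde\alpha\), exponential decay together with the union bound above gives \(\Pp(S_r\leftrightarrow S_{2r})\le Cr^2e^{-cr}\to0\), so \(\tilde\alpha\le\alpha_\ast\). For \(\alpha>\tilde\alpha\), the truncated soup is ergodic and has an infinite cluster with positive probability, hence a.s. It therefore crosses every annulus \(A(r,2r)\) for large \(r\), giving \(\alpha\ge\alpha_\ast\). This proves (i). For (ii), with \(\alpha>\alpha_\ast=\tilde\alpha\), there is a.s.\ an infinite cluster, but we need it to touch \(S_1\) a.s., not just with positive probability. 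We plan to use Brownian scaling. \(\LL^{\alpha,\le1}\) contains a scaled copy of \(\LL^{\alpha,\le\varepsilon}\) for every \(\varepsilon\), with rescaled threshold the same. Apply the supercritical result at scale \(\varepsilon\) inside \(B_1\): the infinite cluster of \(\LL^{\alpha,\le\varepsilon}\) has density \(\theta_\infty>0\) in unit-\(\varepsilon\) boxes. Ergodicity then shows that a.s.\ it meets \(S_1\), and since it is unbounded it reaches every \(S_r\).

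We expect the main obstacle to be the Russo/pivotality comparison. The unbounded number of tiny loops per cube, and the fact that a pivotal cube must be converted into a pivotal added loop, both need care. We would handle this by proving everything for soups truncated below at \(\delta\) and letting \(\delta\downarrow0\), using uniformity of the constants in \(\delta\).
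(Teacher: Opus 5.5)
Your architecture matches the paper's: independent Poisson coordinates for the truncated soup, OSSS with a random-sphere exploration of revealment $\le C\Sigma_r/r$, Russo's formula, the Duminil-Copin--Raoufi--Tassion analytic lemma, the $O(r^2)$ union bound, and scaling plus ergodicity for identifying the threshold and for (ii). Those identification steps are fine.

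\textbf{The influence--pivotal comparison fails as proposed.} This is the step you flagged: bounding the influence of $\omega_z$ by the pivotal-loop mass through ``finite energy''. No loop of $\LL_{\R^3}^{\alpha,\le1}$ can cover a neighbourhood of $Q_z$: its diameter is at most $1<\sqrt3=\diam Q_z$, and its trace is two-dimensional. More fundamentally, loop configurations have no maximal element playing the role of ``open every edge in the box''. When $\omega_z$ is influential, the outside clusters it must join may enter the $1$-neighbourhood of $Q_z$ only through chains of tiny loops of arbitrarily small capacity. So the $\muLoop$-mass of loops hitting both clusters is not bounded below uniformly in the outside configuration. Moreover, $\omega_z$ can be influential while containing no pivotal loop at all (two redundant chains), so no pointwise surgery can work.

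The missing tool is the Poisson Poincar\'e inequality \cite[Theorem~18.7]{LP17}, applied conditionally on the other coordinates. For the increasing indicator $F=\1_A(\omega_{-z},\cdot)$ and the loop class of the coordinate $\omega_z$,
\[
\Pp_\alpha\bigl(F(\eta)\ne F(\eta')\bigr)=2\operatorname{Var}F(\eta)\le2\alpha\int\Ee\bigl[F(\eta+\delta_\ell)-F(\eta)\bigr]\,\muLoop(\dint\ell).
\]
Averaging over $\omega_{-z}$ gives $\sum_z\operatorname{Inf}_z\le2\alpha\,\theta_r'$ with no geometry at all. Your inequality $\Ee[\#\{\text{pivotal loops in }\omega_z\}]\ge\frac12\operatorname{Inf}_z$ is in fact true via the Mecke formula, but for this reason, not by a surgery.

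\textbf{The cutoff removal needs more than ``uniformity of the constants in $\delta$''.} With a lower cutoff $\delta$ you do get the differential inequality with $\delta$-independent constants. But running the DRT lemma separately for each $\delta$ yields thresholds $\tilde\alpha_\delta$ and decay rates with no uniformity. Derivatives also do not pass to the limit. So nothing about the subcritical regime of $\LL_{\R^3}^{\alpha,\le1}$ follows by letting $\delta\downarrow0$. What does survive is the integrated form. Divide by $\theta_r^\delta$, integrate in the intensity, use $\Sigma_r^\delta\le\Sigma_r$ and $\theta_r^\delta\le\theta_r$ on the right, then let $\theta_r^\delta(\alpha)\uparrow\theta_r(\alpha)$. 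This gives
\[
\theta_r(\alpha)\le\theta_r(\beta)\exp\Bigl\{-c\log\Bigl(\frac\beta\alpha\Bigr)\frac{r}{\Sigma_r(\beta)}\bigl(1-\theta_r(\beta)\bigr)\Bigr\}.
\]
You then need the analytic lemma under this integrated hypothesis for merely monotone $f_n$. The idea is to recover derivative bounds at a.e.\ point and use $g(b)-g(a)\ge\int_a^b g'$ for nondecreasing $g$; the paper proves this in its appendix.

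Alternatively, you can skip the cutoff: apply the one-sided Russo bound of Lemma~\ref{lem:restricted-russo} at a.e.\ $\alpha$ to the full soup. The same monotone-function version of the lemma is still required.

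This route gives only $\lim_r\theta_r(\alpha)>0$ above the threshold, not the mean-field bound $c(\alpha-\tilde\alpha)$ you quote. Positivity is all that (ii) requires.
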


We will give a detailed proof of
Theorem~\ref{thm:unit-truncated-sharpness} in Section~\ref{sec:unit-truncated-sharpness},
which  will guide our subsequent study of the truncated loop soup on the metric graph.

\subsection{Discrete and metric-graph results}
\label{sec:rwls-results}

We now turn to the companion results on \(\Z^d\) and on its metric graph
\(\widetilde{\mathbb Z}^{d}\), where \(d\ge3\).
The latter is obtained by replacing each lattice edge by a unit interval.
To state the results for both models, we write
\(\mathbb X\in\{\Z^d,\widetilde{\mathbb Z}^{d}\}\),
and for \(x\in\mathbb X\) and \(0<s<r\), we set
\(\mathsf{B}(x,r):=x+[-r,r]^d\cap\mathbb X\),
\(\mathsf{S}(x,r):=\partial \mathsf{B}(x,r)\),
and \(
 \mathsf A(x,s,r)
 :=\mathsf B(x,r)\setminus\mathsf B(x,s)
\).
For sets centered at the origin, write
\(\mathsf{B}_r:=\mathsf{B}(0,r)\),
\(\mathsf{S}_r:=\mathsf{S}(0,r)\) and $\mathsf A(s,r):= \mathsf A(0,s,r)$ for brevity.
Let \(\mu_{\mathbb X}^{\Loop}\) be the loop measure on $\mathbb X$,
and let \(\mathcal L_{\mathbb X}^\alpha\) be the Poisson point process of loops with
intensity measure \(\alpha\mu_{\mathbb X}^{\Loop}\) (introduced in~\cite{LTF07,Lup16}); see Section~\ref{rwls:sec:inputs}
for the precise definitions.
For $D\subseteq \mathbb X$, let \(\mathcal L_{D}^\alpha\) be \(\mathcal L_{\mathbb X}^\alpha\) restricted
to loops contained in $D$.
With a slight abuse of notation, we still use \(\Pp_\alpha\) to
denote the law of the loop soup \(\mathcal L_{\mathbb X}^\alpha\).

For \(r\ge1\), we define the one-arm event and one-arm probability by
\[
 \mathcal E_r^{\mathbb X}
 :=\{0\xleftrightarrow{\mathcal L_{\mathbb X}^\alpha}
 \mathsf{S}_{r}\},
 \qquad
 p_r^{\mathbb X,\alpha}:=\Pp_\alpha(\mathcal E_r^{\mathbb X}).
\]
Unlike the continuum event \(\mathcal E_r\) in \eqref{eq:one-arm-prob-def}, the event
\(\mathcal E_r^{\mathbb X}\) is rooted at the origin.
The percolation threshold is given by
\[
 \alpha_{\rm c}^{\mathbb X}
 :=\inf\left\{\alpha>0:
 \lim_{r\to\infty} p_r^{\mathbb X,\alpha}>0\right\}{.}
\]

\begingroup
\relax
It is now known that Lupu's coupling of the metric-graph loop soup at intensity \(\frac12\)
with the Gaussian free field gives
\(\alpha_{\rm c}^{\widetilde{\mathbb Z}^{d}}\ge\frac12\)~\cite{Lup16}.
The reverse inequality was proved in~\cite{CDL24}; thus
\(\alpha_{\rm c}^{\widetilde{\mathbb Z}^{d}}=\frac12\) for every \(d\ge3\).
\endgroup
It was recently proved in~\cite{BDKM26} that
\(\alpha_{\rm c}^{\mathbb Z^d}>\alpha_{\rm c}^{\widetilde{\mathbb Z}^{d}}=\frac12\)
for \(d\ge5\).
However, for \(d=3,4\), it remains open whether \(\alpha_{\rm c}^{\mathbb Z^d}>\frac12\).

We now introduce the analogue of \eqref{eq:alpha-star-def} on
\(\mathbb X\). It captures the effect of small loops and is a key parameter in
our recursive argument.
For \(L>0\), we again use the superscript \(\le L\) to denote the restriction to loops whose
diameter is at most \(L\).
For \(0<a<b<\infty\), let
\begin{equation}
\label{rwls:eq:rescaled-metric-crossing}
\chi_{a,b}^{\mathbb X,\alpha}(\delta)
 :=
 \sup_{r\ge2}
 \Pp_\alpha\bigl(
 \mathsf{S}_{ar}
 \xleftrightarrow{\mathcal L_{\mathbb X}^{\alpha,\le\delta r}}
 \mathsf{S}_{br}\bigr).
\end{equation}
We are concerned with the following regime,
defined in analogy with \eqref{eq:I0},
\begin{equation}\label{eq:I0X}
\mathrm{I}_0^{\mathbb X}:=\left\{\alpha>0:
 \lim_{\delta\downarrow0}\chi_{a,b}^{\mathbb X,\alpha}(\delta)=0
 \text{ for every }0<a<b<\infty\right\},
\end{equation}
and the corresponding threshold is set as
\(\alpha_\ast^{\mathbb X}:=\sup\mathrm{I}_0^{\mathbb X}\).
On the metric graph, we can identify \(\alpha_\ast^{\widetilde{\mathbb Z}^{d}}\) with
the percolation threshold for \(d\in\{3,4,5\}\).

\begin{theorem}
\label{thm:metric-critical-values}
For every \(d\in\{3,4,5\}\),
$
 \alpha_\ast^{\widetilde{\mathbb Z}^{d}}
 =\alpha_{\rm c}^{\widetilde{\mathbb Z}^{d}}
 =\frac12
$.
\end{theorem}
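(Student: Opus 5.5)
We need \(\alpha_\ast^{\widetilde{\mathbb Z}^{d}}=\frac12\). The equality \(\alpha_{\rm c}^{\widetilde{\mathbb Z}^{d}}=\frac12\) is known from \cite{Lup16,CDL24}, so we only prove two inequalities.

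\emph{Upper bound \(\alpha_\ast^{\widetilde{\mathbb Z}^{d}}\le\frac12\).} Fix \(\alpha>\frac12\). By \cite{CDL24}, \(\lim_r p_r^{\widetilde{\mathbb Z}^{d},\alpha}>0\), and in fact there is an infinite cluster almost surely. We would show that for every \(\delta>0\), \(\chi_{a,b}(\delta)\) stays bounded away from zero, so \(\alpha\notin\mathrm I_0\); this suffices since \(\mathrm I_0\) is clearly a down-set by monotone coupling. Rather than rely on structural information from \cite{CDL24}, we use the sharpness framework announced in the abstract. We run OSSS for the truncated soup \(\mathcal L^{\alpha,\le L}\), which has finite-range dependence, exactly as in Theorem~\ref{thm:unit-truncated-sharpness}. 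This gives a dichotomy in the spirit of that theorem. Either crossings of \(\mathsf A(r,2r)\) by loops of diameter \(\le L\) decay exponentially in \(r/L\), uniformly enough, or the truncated soup percolates. Using the scale invariance of \(\chi\) as a supremum over \(r\), if \(\alpha\in\mathrm I_0\) we can pick \(\delta\) small so that the crossing probability at ratio \(\delta\) is below the OSSS threshold. Bootstrapping through scales by a renormalization, gluing annulus crossings, then gives one-arm decay for the full soup. Here the large loops are handled by the fact that loops of diameter \(\ge \delta r\) hitting \(\mathsf B_r\) are few in number, \(O_\delta(1)\), by the scaling of the loop measure. This contradicts percolation at \(\alpha>\frac12\).

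\emph{Lower bound \(\alpha_\ast^{\widetilde{\mathbb Z}^{d}}\ge\frac12\).} Fix \(\alpha<\frac12\); by monotonicity it is enough to take \(\alpha=\frac12\) for the comparison and then pass to \(\alpha<\frac12\). At \(\alpha=\frac12\), Lupu's isomorphism identifies clusters with sign clusters of the GFF \(\varphi\). By the critical one-arm and capacity estimates of \cite{DPR23,CD24,DPR24}, valid for \(d=3,4,5\), we have \(\Pp(0\leftrightarrow \mathsf S_r)\asymp r^{-1/2}\) for \(d=3\), with analogous polynomial bounds in \(d=4,5\). For the truncated soup at scale \(\delta r\), we would bound \(\Pp(\mathsf S_{ar}\leftrightarrow\mathsf S_{br}\text{ via loops of diameter}\le\delta r)\) by a chaining argument. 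Any such crossing passes through \(\gtrsim (b-a)/\delta\) consecutive annuli of width \(\delta r\) with no loop jumping between non-adjacent ones. Using the decoupling and loop-splitting resampling (Section~\ref{sec:submultiplicativity}-type arguments), the events in well-separated annuli become roughly independent. Each event is a crossing of an annulus of aspect ratio \(2\) at \(\alpha\le\frac12\), which has probability at most some \(q<1\) uniformly in scale, since critical two-arm-ish crossing probabilities on \(\widetilde{\mathbb Z}^{d}\) are bounded away from one. The number of boundary starting points is a concern; we would instead use the union over \(\delta^{-(d-1)}\) boxes of side \(\delta r\) on \(\mathsf S_{ar}\). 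Each box contributes a crossing to distance \(K\delta r\), with probability \(\lesssim K^{-(d-2)/2}\)-type bounds from the one-arm estimates for boxes. Choosing the chain length so that \(\delta^{-(d-1)}q^{c/\delta}\to0\) gives \(\chi\to0\).

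The main obstacle is the decoupling of truncated crossings at criticality in the lower bound: independence across annuli fails because of loops straddling them. I would first try to condition on the loops of diameter in \([\delta r/2,\delta r]\) and use that the remaining soup is a smaller-truncation soup with exact independence beyond distance \(\delta r\).
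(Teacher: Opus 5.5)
Your lower bound has a genuine gap. You aim to show $\chi^{\widetilde{\mathbb Z}^{d},1/2}_{a,b}(\delta)\to0$, i.e.\ $\frac12\in\mathrm I_0^{\widetilde{\mathbb Z}^{d}}$. That is stronger than the theorem, and the paper does not prove it: it leaves the critical case to future work, noting that it would immediately give the critical one-arm bound.

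The chaining you sketch does not deliver it.
\begin{itemize}
\item After the union bound over $\asymp\delta^{-(d-1)}$ boxes of side $\delta r$ on $\mathsf S_{ar}$, each box must reach distance $\asymp r$ through loops of diameter at most $\delta r$ with probability $o(\delta^{d-1})$.
\item Concentric annuli of fixed aspect ratio around a box number only $O(\log(1/\delta))$, so they give at best polynomial decay. Your critical input $K^{-(d-2)/2}$ is polynomial with exponent $(d-2)/2<d-1$.
\item The $\asymp1/\delta$ shells of width $\delta r$ between $\mathsf S_{ar}$ and $\mathsf S_{br}$ have aspect ratio $1+O(\delta)$, not $2$. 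Their crossing probabilities are not uniformly bounded away from one, so there is no $q<1$ for them.
\item If instead you chain $\asymp1/(K\delta)$ boxes, each reaching distance $K\delta r$, the per-step bound $K^{-(d-2)/2}$ does not compensate the $\asymp K^{d-1}$ choices of the next box. The Peierls sum diverges, and a bound $q<1$ that is not small is worse still.
\end{itemize}
Decoupling is not the real obstacle: the truncated soup has finite range, so crossing events of regions more than $2\delta r$ apart are exactly independent. What is missing is exponential decay in $1/\delta$, and at $\alpha=\frac12$ itself none is available.

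The missing idea is sharpness strictly below $\frac12$, and this is where OSSS belongs. The paper runs OSSS for the truncated soup on the scale-uniform quantity $\Theta_n(\lambda)=\sup_{L\ge1}\Pp_\lambda\bigl(\mathsf S_{C_0L}\leftrightarrow\mathsf S_{nL}\text{ via loops of diameter at most }L\bigr)$. Through Lemma~\ref{ut:lem:analytic} it obtains a threshold $\widehat\alpha$, with exponential decay of $\Theta_n$ below it and a positive limit above it. Since \cite[Theorem~1.2]{CD24} gives $\Theta_n(\frac12)\to0$ (only polynomially), one gets $\widehat\alpha\ge\frac12$. Hence $\Theta_n(\alpha)\le Ce^{-cn}$ for every $\alpha<\frac12$, and with $n\asymp1/\delta$ this beats the factor $\delta^{-(d-1)}$.

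You also need a separate argument for cutoffs $\delta r$ below a fixed $L_0<1$, where the metric graph is not scale invariant. The paper handles this with Lupu's explicit two-point formula on an interval and a Peierls count over lattice paths.

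Your upper bound is not a proof as written. The OSSS threshold is an intensity, not a crossing probability, and gluing crossings is a lower-bound device that cannot yield one-arm decay. None of it is needed. Let $V_r$ be the event that no loop of diameter larger than $\delta r$ meets $\mathsf B_{br}$. It has probability bounded below uniformly in $r$, it is independent of $\mathcal L^{\alpha,\le\delta r}_{\widetilde{\mathbb Z}^{d}}$, and on it every full-soup crossing of $\mathsf A(ar,br)$ is a truncated one. Hence the probability that the truncated soup fails to cross is at most $\Pp_\alpha(V_r)^{-1}$ times the probability that the full soup fails to cross. The latter tends to zero for $\alpha>\frac12$ by \cite{CDL24}. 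Your remark that large loops are $O_\delta(1)$ in number is exactly the ingredient behind the lower bound on $\Pp_\alpha(V_r)$.
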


The upper bound \(\alpha_\ast^{\widetilde{\mathbb Z}^{d}}\le\frac12\)
follows from~\cite{CDL24}.
For the lower bound, we combine the OSSS framework used in
Theorem~\ref{thm:unit-truncated-sharpness} with the upper bound for the critical
crossing probability in~\cite[Theorem~1.2]{CD24}.
We believe Theorem~\ref{thm:metric-critical-values} also holds for all $d\ge 6$
although the current proof strategy does not work.

Since the discrete loop soup is dominated by the metric-graph loop soup, we obtain
\(\alpha_\ast^{\mathbb Z^d}\ge\frac12\) for
\(d\in\{3,4,5\}\).  On the other hand,
\cite[Theorem~1.1]{Cha17} implies
\(\alpha_\ast^{\mathbb Z^d}\le\alpha_{\rm c}^{\mathbb Z^d}\).  Therefore,
we have \(\frac12\le\alpha_\ast^{\mathbb Z^d}
\le\alpha_{\rm c}^{\mathbb Z^d}\) for
\(d\in\{3,4,5\}\).

For \(d\in\{3,4,5\}\) and \(r\ge2\), put
\begin{equation}
\label{eq:rwls-capacity-scale}
 a_d(r):=r\1_{\{d=3\}}+r^2(\log r)^{-1}\1_{\{d=4\}}+r^2\1_{\{d=5\}}.
\end{equation}
Let \(\operatorname{cap}_{\mathbb X}\)
denote the capacity on \(\mathbb X\), defined
precisely in \eqref{eq:capX}.
Let \(\mathcal C(0)\) be the
cluster of the origin in \(\mathcal L_{\mathbb X}^\alpha\).
The following analogue of Theorem~\ref{thm:continuum-robust-summary}
gives a lower bound for the capacity of \(\mathcal C(0)\), conditioned on the one-arm event.

\begin{theorem}
\label{thm:rwls-robust-summary}
For \(d\in\{3,4,5\}\),
\(\mathbb X\in\{\mathbb Z^d,\widetilde{\mathbb Z}^{d}\}\), and
\(\alpha\in\mathrm{I}_0^{\mathbb X}\), there are
\(c,\kappa_0>0\) such that for all \(r\ge2\),
\begin{equation}\label{eq:rwls-robust-summary}
 \Pp_\alpha\Bigl(
 \mathcal C(0)\cap \mathsf{S}_r\neq\emptyset,
 \ \operatorname{cap}_{\mathbb X}
 \bigl(\mathcal C(0)\cap\mathsf{B}_{2r}\bigr)\ge\kappa_0a_d(r)
 \Bigr)
 \ge c\,p_r^{\mathbb X,\alpha}.
\end{equation}
\end{theorem}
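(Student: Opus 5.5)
The strategy is the discrete analogue of the argument behind Theorem~\ref{thm:continuum-robust-summary}. We show that, conditioned on $\mathcal E_r^{\mathbb X}$, with uniformly positive probability the cluster $\mathcal C(0)$ contains a single loop of diameter comparable to $r$ inside $\mathsf B_{2r}$. Such a loop has capacity of order $a_d(r)$ with positive probability. On $\mathbb X$, a random-walk (or metric-graph) loop of diameter $\asymp r$ has range of capacity $\asymp r$ when $d=3$, $\asymp r^2/\log r$ when $d=4$, and $\asymp r^2$ when $d=5$. This follows from standard estimates for the capacity of random-walk ranges, obtained via second-moment bounds on the Green's function and the time spent by the loop, $\asymp r^2$ steps. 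This is exactly why $a_d$ appears, and why $d\le5$ is needed.

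The steps, in order, are as follows.

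(1) Use loop splitting and resampling to establish quasi-multiplicativity-type bounds: $p_{r}^{\mathbb X,\alpha}\lesssim p_{s}^{\mathbb X,\alpha}\,\Pp_\alpha(\mathsf S_{2s}\leftrightarrow\mathsf S_{r/2})$ and a submultiplicativity analogue. This also shows that one-arm probabilities at comparable scales are comparable.

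(2) Split the loops crossing or lying in $\mathsf A(r/4,r)$ into \emph{big} loops, of diameter $\ge\delta r$, and \emph{small} loops, of diameter $<\delta r$. Set up a multiscale recursion. Let $G_r$ be the event that $\mathcal C(0)$ reaches $\mathsf S_r$ using a loop of diameter $\ge\delta r$ that intersects $\mathsf A(r/2,r)$ and is contained in $\mathsf B_{2r}$. We want $\Pp_\alpha(G_r)\ge c\,p_r^{\mathbb X,\alpha}$. On the complement, the crossing of $\mathsf A(r/2,r)$ is achieved by the cluster of $0$ in a way that avoids large loops near scale $r$. By the BK-type/decoupling inequality from loop splitting, this probability is bounded by $p_{r/2}^{\mathbb X,\alpha}$ times the probability that the crossing from $\mathsf S_{r/2}$ to $\mathsf S_r$ is realized through small loops only. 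The latter is at most $\chi^{\mathbb X,\alpha}_{1/2,1}(\delta)$, but big loops can still glue small-loop pieces. We handle this by iterating over dyadic scales: a crossing avoiding any single macroscopic loop must be produced by chains of small-loop crossings of subannuli joined by medium loops. Each link costs a factor $\chi(\delta)$ or a one-arm factor at a smaller scale. Summing over the scale of the largest loop used yields a bound of the form $\epsilon(\delta)\,p_r^{\mathbb X,\alpha}$ with $\epsilon(\delta)\to0$, using $\alpha\in\mathrm I_0^{\mathbb X}$.

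(3) Choose $\delta$ so small that $\epsilon(\delta)<1/2$. Then $\Pp_\alpha(G_r)\ge\frac12 p_r^{\mathbb X,\alpha}$. Condition on the big loop $\ell$ in $G_r$. By Poisson resampling (Mecke formula), the law of $\ell$ is dominated from below by the loop measure restricted to loops of diameter in $[\delta r,r]$ meeting $\mathsf A(r/2,r)$. Under this measure, $\operatorname{cap}_{\mathbb X}(\ell)\ge\kappa_0a_d(r)$ with probability bounded below, with $\kappa_0$ depending on $\delta$. Since $\ell\subset\mathcal C(0)\cap\mathsf B_{2r}$, capacity monotonicity gives \eqref{eq:rwls-robust-summary}. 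Here the Mecke formula is used to rewrite the connection event with a distinguished loop, and the connection requirement is a decreasing-in-$\ell$-shape-independent event up to the loop's intersection with the rest. To avoid conditioning biases, we first sample the rest of the soup and then require $\ell$ to hit two specified clusters. A Harnack-type argument shows that hitting them and having large capacity are positively correlated, or at least that both hold with comparable probability.

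The main obstacle is step (2). Controlling the combined effect of many medium-size loops linking small-loop crossings across scales requires a genuinely recursive estimate rather than a single application of decoupling. We expect this step to demand the most care, especially in $d=4$, where logarithmic corrections in the capacity and Green's function enter the gluing probabilities.
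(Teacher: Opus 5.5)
\textbf{There is a genuine gap, and it sits in steps (1)--(2).}

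Loop splitting and BK only give upper bounds of submultiplicative type. They do not give the comparability $p_{r/2}^{\mathbb X,\alpha}\lesssim p_r^{\mathbb X,\alpha}$ you claim at the end of step (1). A bound of the form $p_{2r}\gtrsim p_r$ is a gluing statement: the crossing cluster at scale $r$ must extend to scale $2r$ with positive conditional probability. That requires exactly the capacity control you are trying to prove. In the continuum, the paper obtains this comparability only \emph{after} the capacity theorem (Lemma~\ref{imp:lem:fixed-ratio}).

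Your small-loop decoupling bound $\chi^{\mathbb X,\alpha}_{5/8,3/4}(\delta)\,p^{\mathbb X,\alpha}_{r/2}$ is the right one. Without that comparability, however, it cannot be converted into $\epsilon(\delta)\,p_r^{\mathbb X,\alpha}$. The phrase ``chains of small-loop crossings joined by medium loops, summed over the scale of the largest loop'' does not supply the missing argument. You are also aiming at a stronger claim than needed: that, conditionally on $\mathcal E_r^{\mathbb X}$, a big loop is present with probability close to one.

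Step (3) has a further problem. The crossing may use several big loops, so ``the'' big loop joining two specified clusters is not well defined. The asserted Harnack-type positive correlation between hitting two arbitrary sets and having large capacity is also unproven.

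\textbf{The missing idea} is to compare $p_r$ not with itself at scale $r/2$, but with a quantity whose scale propagation is trivial.
\begin{itemize}
\item Let $\mathcal K_r(0)$ be the cluster of $0$ in $\mathcal L^\alpha_{\mathsf B_r}$.
\item Let $g_r$ be the probability that some loop contained in $\mathsf B_{2r}$ hits $\mathcal K_r(0)$ and carries a piece joining $\mathsf S_{5r/4}$ to $\mathsf S_{7r/4}$ with capacity at least $\kappa_0a_d(r)$.
\item Lemma~\ref{rwls:lem:terminal-loop-comparison} gives $g_r\asymp\alpha r^{2-d}\,\Ee_\alpha[\operatorname{cap}_{\mathbb X}(\mathcal K_r(0))]$.
\item Since $\mathcal K_r(0)\subset\mathcal K_{2r}(0)$, this yields $g_{2r}\ge a\,g_r$ for free.
\end{itemize}

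The surgery is then \emph{one-sided}. Suppose the crossing is not realized by loops of diameter at most $\delta r$. Then the \emph{first} big loop in the chain hits the small-loop cluster of $0$, which lies in $\mathcal K_r(0)$. By \eqref{rwls:eq:large-loop-terminal-comparison}, the mass of such loops is at most $C_\delta$ times the terminal-loop mass, so this case has probability at most $C_\delta g_r$. This needs no conditioning on the big loop's shape or on the outer part of the crossing, since a terminal loop completes the crossing by itself. It is also indifferent to how many big loops are used.

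This gives the recursion $p_r\le C_\delta g_r+\chi^{\mathbb X,\alpha}_{5/8,3/4}(\delta)\,p_{r/2}$, as in Proposition~\ref{lem:residual-extension}. Now choose $\delta$ so that $\chi^{\mathbb X,\alpha}_{5/8,3/4}(\delta)<a/2$, which is possible since $\alpha\in\mathrm I_0^{\mathbb X}$. A dyadic iteration, using $g_{2r}\ge a\,g_r$, then gives $p_r\le C g_r$. This is the theorem, because the event defining $g_r$ lies inside the target event.
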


The proof uses the same recursive argument as in the proof of
Theorem~\ref{thm:continuum-robust-summary}.
In dimension three, the estimate \eqref{eq:rwls-robust-summary} was previously known only for the critical metric-graph
loop soup \(\mathcal L_{\widetilde{\mathbb Z}^{3}}^{1/2}\), where the isomorphism
with the GFF plays an essential role~\cite{CD24,DPR24}.
Much less is known away from criticality or in the discrete setting,
where no exact two-point connection formula of the type in~\cite{Lup16} is available.
Note that if \eqref{eq:rwls-robust-summary} holds at \(\alpha=\frac12\),
or $\frac12\in \mathrm{I}_0^{\widetilde{\mathbb Z}^{3}}$,
then one can obtain the one-arm upper bound
\(p_r^{\widetilde{\mathbb Z}^{3},1/2}\lesssim r^{-1/2}\) immediately; see also~\cite{DW20,DPR23}.
Hence, investigating the behavior of \(\chi_{a,b}^{\widetilde{\mathbb Z}^{d},\alpha}(\delta)\)
as \(\delta\downarrow0\) is important for obtaining one-arm estimates.
In this paper, we focus on the loop soup in a unified framework without using the GFF,
and leave the critical case to future work.

We now return to dimension three.
The power-law asymptotics of the one-arm probability are known only for
\(\mathcal L_{\widetilde{\mathbb Z}^{3}}^{1/2}\), while the general case
\(\mathcal L_{\mathbb X}^{\alpha}\) remains open.
Nevertheless, the capacity estimate in Theorem~\ref{thm:rwls-robust-summary}
yields a strict comparison between one-arm probabilities at different intensities.
If these probabilities have power-law exponents, the comparison implies that those
exponents are strictly decreasing in \(\alpha\), as in Theorem~\ref{thm:xi-strict-monotone}.

\begin{theorem}[Strict comparison]
\label{thm:rwls-strict-summary}
Fix \(\mathbb X\in\{\Z^3,\ZMG\}\).  For
\(0<\alpha<\beta<\alpha_\ast^{\mathbb X}\),
there exist constants
\(\rho,c>0\) such that for all \(r\ge2\),
\begin{equation}\label{eq:rwls-strict-summary}
 p_r^{\mathbb X,\alpha}\le cr^{-\rho}p_r^{\mathbb X,\beta}.
\end{equation}
Moreover, there exist constants \(\alpha_0\in(0,\tfrac12)\) and
\(0<c<C<\infty\) such that, for both
\(\mathbb X\in\{\mathbb Z^3,\widetilde{\mathbb Z}^{3}\}\), all
\(0<\alpha\le\alpha_0\), and all \(r\ge2\),
\begin{equation}\label{rwls:eq:small-intensity-bounds}
 c\alpha r^{-1+c\alpha}
 \le p_r^{\mathbb X,\alpha}
 \le C\alpha r^{-1+C\alpha}.
\end{equation}
\end{theorem}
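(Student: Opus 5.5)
The plan is to prove both parts via Russo's formula for Poisson loop soups, applied to the increasing event \(\mathcal E_r^{\mathbb X}\). For \(\gamma\in[\alpha,\beta]\) this reads
\[
 \frac{\mathrm d}{\mathrm d\gamma}\log p_r^{\mathbb X,\gamma}
 =\frac{1}{\gamma\, p_r^{\mathbb X,\gamma}}\,\Ee_\gamma\Bigl[\gamma\,\mu^{\Loop}_{\mathbb X}\bigl(\ell:\ \ell\text{ is pivotal for }\mathcal E_r^{\mathbb X}\text{ given }\mathcal L_{\mathbb X}^\gamma\bigr)\Bigr],
\]
where a loop is pivotal if adding it to a configuration in which \(\mathcal E_r^{\mathbb X}\) fails makes it occur. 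The inequality \eqref{eq:rwls-strict-summary} with \(r^{-\rho}\) will follow by integrating in \(\gamma\) once we show that the right-hand side is at least \(c\log r\), uniformly in \(\gamma\in[\alpha,\beta]\) and \(r\ge2\).

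To obtain the \(\log r\), decompose over dyadic scales \(2^k\), \(1\le k\le\log_2 r-1\), and aim to show that at each scale the expected mass of pivotal loops is at least \(c\,p_r^{\mathbb X,\gamma}\). For a fixed scale \(R=2^k\), consider the event that \(\mathcal C(0)\) reaches \(\mathsf S_{R}\) with \(\operatorname{cap}_{\mathbb X}(\mathcal C(0)\cap\mathsf B_{2R})\ge\kappa_0R\), as provided by Theorem~\ref{thm:rwls-robust-summary} (here \(d=3\) and \(a_3(R)=R\)). Intersect this with an independent event at scales \([4R,r]\): a cluster crosses \(\mathsf A(4R,r)\) and has capacity \(\gtrsim R\) near \(\mathsf S_{4R}\). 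For this one needs an annulus analogue of Theorem~\ref{thm:rwls-robust-summary}, which I would derive by the same recursive argument. Then resample the loops meeting \(\mathsf A(2R,4R)\) so that, with positive probability, nothing connects the inner and outer clusters. In that configuration, the \(\mu^{\Loop}\)-mass of scale-\(R\) loops hitting both clusters is bounded below by a constant, since a random walk loop of diameter \(\asymp R\) hits a set of capacity \(\asymp R\) in \(\mathsf B_{4R}\) with probability bounded below. Each such loop is pivotal. Decoupling of the two sides (via the loop-splitting and resampling estimates used earlier), together with quasi-multiplicativity \(p_R\,p_{R,r}\gtrsim p_r\), gives a contribution \(\gtrsim p_r^{\mathbb X,\gamma}\) from each scale. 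Summing over the \(\asymp\log r\) scales and integrating gives \(p_r^{\mathbb X,\alpha}\le C r^{-c(\beta-\alpha)}p_r^{\mathbb X,\beta}\). Uniformity of the constants over \(\gamma\in[\alpha,\beta]\subset\mathrm I_0^{\mathbb X}\) follows from monotonicity in the intensity.

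For \eqref{rwls:eq:small-intensity-bounds}, first compare with a single loop. The probability that some loop containing \(0\) crosses to \(\mathsf S_r\) is \(\asymp\alpha r^{-1}\) for small \(\alpha\), giving \(p_r\ge c\alpha r^{-1}\). The Russo computation above shows \(\frac{\mathrm d}{\mathrm d\gamma}\log p_r^{\gamma}\ge c\log r\) uniformly for small \(\gamma\). To avoid integrating down to \(\gamma=0\) through a \(1/\gamma\) singularity, I would instead run a recursion over scales. The upper bound would come from a BK/tree-graph bound: the crossing is a chain of loops, and summing over chains with the Green's function estimate \(G(x,y)\asymp|x-y|^{-1}\) gives a recursion \(p_{2R}\le 2^{-1}(1+C\alpha)p_R\) in averaged form. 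Iterating over scales yields \(C\alpha r^{-1+C\alpha}\). The matching lower recursion \(p_{2R}\ge2^{-1}(1+c\alpha)p_R\) follows from the pivotal gluing construction above, which adds a scale-\(R\) loop with probability \(\asymp\alpha\).

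The main obstacle is establishing the per-scale pivotal bound uniformly. This requires both the annulus version of the capacity estimate and quasi-multiplicativity on \(\mathbb X\), which must be built from the capacity estimate and decoupling, because no exact exponent is known on \(\mathbb X\). I would handle this first, mimicking the continuum proof of Theorem~\ref{thm:utc} but keeping only the inequalities, without asymptotics.
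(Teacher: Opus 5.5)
\paragraph{Part one.} Your argument for \eqref{eq:rwls-strict-summary} is essentially the paper's. It uses good pieces for soups restricted to disjoint domains, as in $\mathsf H^+_{\mathsf B_{4s}}(2s)$ and $\mathsf H^-_{\mathsf B_{8s}^{\rm c}}(16s,r)$; the full-soup event of Theorem~\ref{thm:rwls-robust-summary} is not independent of the barrier. The barrier probability is bounded below uniformly in $s$ precisely because $\alpha_+<\alpha_\ast^{\mathbb X}$. The splitting bound $p_r^{\mathbb X,\alpha}\lesssim\Pp_\alpha(\mathcal E_{2s}^{+})\Pp_\alpha(\mathcal E_{16s,r}^{-})$ needs only BK and loop splitting, not the capacity estimate. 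Finally, Russo's formula is summed over $\asymp\log r$ scale classes, which should be spaced (e.g.\ $s_k=64^k$) so that the classes are disjoint.

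\paragraph{The gap.} The gap is in \eqref{rwls:eq:small-intensity-bounds}. A scale-to-scale recursion $p_{2R}\le\frac12(1+C\alpha)p_R$ or $p_{2R}\ge\frac12(1+c\alpha)p_R$ gives the exponent $-1+O(\alpha)$ only if the leading factor is $\frac12+O(\alpha)$. Any fixed multiplicative loss per dyadic step shifts the exponent by a constant. Neither of your tools delivers this.
\begin{itemize}
\item The BK/tree-graph chain sum with pointwise Green's function bounds replaces the probability $\asymp\operatorname{cap}_{\mathbb X}(\ell_1)/R$ that a scale-$R$ loop hits $\ell_1$ by a sum over the $\asymp R^2$ sites of $\ell_1$. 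On $\mathbb Z^3$ the two-loop term is already $\alpha^2\sum_{|x|<r/2}\mu_{\mathbb Z^3}^{\Loop}(\ell\ni0,\ \ell\ni x)\,\mu_{\mathbb Z^3}^{\Loop}(\ell\ni x,\ \ell\cap\mathsf S_r\ne\varnothing)\asymp\alpha^2\sum_{|x|<r/2}|x|^{-2}r^{-1}\asymp\alpha^2$. This is far above the target $C\alpha r^{-1+C\alpha}$.
\item The pivotal gluing controls $\partial_\gamma p_r^{\mathbb X,\gamma}$, not the ratio $p_{2R}/p_R$. Nothing available says that a cluster reaching $\mathsf S_R$ continues to $\mathsf S_{2R}$ with conditional probability at least $\frac12$.
\end{itemize}

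\paragraph{The missing idea.} What is needed is a cumulative recursion for $A_n:=\Ee_\alpha[\operatorname{cap}_{\mathbb X}(\mathcal K_{r_n}(0))]$, with $r_n=2^nr_0$ and $\mathcal K_r(0)$ the cluster of $0$ in $\mathcal L^\alpha_{\mathsf B_r}$.
\begin{itemize}
\item Fix $\bar\alpha<\frac12$, which lies in $\mathrm I_0^{\mathbb X}$ for both models by Theorem~\ref{thm:metric-critical-values} and domination. Run the dyadic recursion $p_r\le C_\delta g_r+\chi_{5/8,3/4}^{\mathbb X,\bar\alpha}(\delta)p_{r/2}$ with the cutoff chosen at $\bar\alpha$. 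This gives $p_r\asymp g_r\asymp\alpha A(r)/r$ uniformly in $0<\alpha\le\bar\alpha$.
\item For the upper recursion, combine $\operatorname{cap}_{\mathbb X}(K)\le C(1+\operatorname{rad}K)$ with $\Pp_\alpha(\operatorname{rad}\mathcal K_{r_n}(0)>t)\le p_{r_j}$ for $t\in[r_j,r_{j+1}]$. This gives $A_n\le C_0+C_2\alpha\sum_{j<n}A_j$.
\item For the lower recursion, use $\operatorname{cap}_{\mathbb X}(\mathcal K_{r_n}(0))\ge\kappa_0r_j$ on the event defining $g_{r_j}$. This gives $A_n\ge c_0+c_2\alpha\sum_{j<n}A_j$.
\end{itemize}
Each scale adds to all later scales rather than multiplying the previous one, so comparison constants are absorbed into $c_2$ and $C_2$. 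Induction then yields $c_0(1+c_2\alpha)^n\le A_n\le C_0(1+C_2\alpha)^n$. Combined with $p_{r_n}\asymp\alpha A_n/r_n$, this is \eqref{rwls:eq:small-intensity-bounds}. In particular, the upper bound requires the nontrivial direction $p_r\lesssim g_r$ uniformly as $\alpha\downarrow0$, which your treatment of this part never uses.
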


The comparison
\eqref{eq:rwls-strict-summary} follows from
Theorem~\ref{thm:rwls-robust-summary} and Russo's formula.
The bounds~\eqref{rwls:eq:small-intensity-bounds} are proved in
Section~\ref{rwls:sec:robust-reduction}  using the capacity recursion
from the proof of \eqref{eq:xi-0}.
The lower bound in~\eqref{rwls:eq:small-intensity-bounds} quantifies a phenomenon
already seen in \(\mathcal L_{\mathbb Z^3}^{\alpha}\) and
\(\mathcal L_{\mathbb R^3}^{\alpha}\): realizing the one-arm event with a single loop
is strictly more costly than using a chain of loops~\cite{CS16,CD25a,JL26}.
\begingroup
In fact, for the random walk loop soup,  the following bounds have already been established (see \cite[Theorems~1.3(b) and~1.5]{CS16}),
\begin{equation}\label{rwls:eq:cs16-one-arm-bounds}
 c(\alpha)r^{-1+\varepsilon(\alpha)}
 \le p_r^{\mathbb Z^3,\alpha}
 \le C(\alpha)r^{-1+C(\log\frac{1}{\alpha})^{-1}},
\end{equation}
for all sufficiently small \(\alpha>0\) and all \(r\ge2\), where
\(\varepsilon(\alpha)>0\) is non-explicit.
These estimates are now refined by~\eqref{rwls:eq:small-intensity-bounds}.
\endgroup

\subsection{Proof strategy}

We first explain the proof of Theorem~\ref{thm:continuum-robust-summary}. Fix \(\alpha\in \mathrm{I}_0\).  For \(0<a<b\) and \(0<\delta<1\), set
\begin{equation}\label{eq:fixed-ratio-crossing-prob}
        \chi_{a,b}^{\alpha}(\delta)
        :=\Pp_\alpha\Bigl(
        S_{a}
        \xleftrightarrow{\LL_{\R^3}^{\alpha,\le\delta}}
        S_{b}
        \Bigr)=
        \Pp_\alpha\Bigl(
        S_{a\delta^{-1}}
        \xleftrightarrow{\LL_{\R^3}^{\alpha,\le1}}
        S_{b\delta^{-1}}
        \Bigr).
\end{equation}
A zero--one law gives \(\lim_{\delta\to0}\chi_{a,b}^{\alpha}(\delta)=0\)
for every \(0<a<b\); see Lemma~\ref{lem:fixed-ratio-bdelta}.
Let \(g_s^{\alpha}\) be the probability that the loop soup \(\mathcal L_{B_{2s}}^\alpha\)
contains a cluster connecting \(S_1\) to \(S_s\) and containing a loop of capacity
of order \(s\).
We split the one-arm event \(\mathcal E_s\) into two cases.
If the crossing uses only loops of diameter less than \(\delta s\), then scale
invariance gives the upper bound \(\chi_{5/8,3/4}^{\alpha}(\delta)p_{s/2}^{\alpha}\).
Otherwise, the crossing contains a loop of diameter at least \(\delta s\).
Resampling this large loop bounds the probability of the second case by \(C_\delta g_s^{\alpha}\).
Combining the two cases gives the recursion
\begin{equation}\label{eq:induction}
p_s^{\alpha}\le C_\delta g_s^{\alpha}+
\chi_{5/8,3/4}^{\alpha}(\delta)p_{s/2}^{\alpha}.
\end{equation}
Together with \(g_{2r}^{\alpha}\gtrsim g_r^{\alpha}\) and
\(\chi_{a,b}^{\alpha}(\delta)\downarrow0\), this recursion yields
\(g_r^{\alpha}\gtrsim p_r^{\alpha}\) for all \(r\ge2\), proving
Theorem~\ref{thm:continuum-robust-summary}.
Indeed, the recursion \eqref{eq:induction} resembles the recursive estimates used to prove
separation lemmas for percolation and non-intersecting Brownian motions;
see e.g.\ \cite{GPS13,GNQ24}.
Now, using the welding argument in Section~\ref{imp:sec:uniform},  we obtain the supermultiplicativity of the
one-arm probability
\begin{equation}\label{eq:super}
  \alpha p_{rs}^{\alpha}\gtrsim g_{r/2}^{\alpha}g_{s/4}^{\alpha} \gtrsim p_{r/2}^{\alpha}p_{s/4}^{\alpha}.
\end{equation}

For submultiplicativity, the BK inequality reduces the remaining
estimate to the event \(\mathcal E_{rs}\setminus
(\mathcal E_r\circ\mathcal E_{4r,rs})\).
On this event, the two crossings cannot occur disjointly, so there must be a
welding loop across \(A(r,4r)\).
We split this loop into two smaller loops realizing \(\mathcal E_r\) and
\(\mathcal E_{4r,rs}\), respectively, and obtain \(\alpha p_{rs}^{\alpha}\lesssim p_r^{\alpha}p_{s/4}^{\alpha}\).
Together with \eqref{eq:super},
this yields Theorem~\ref{thm:utc}
with bounds uniform in \(0<\alpha\le\bar\alpha\); see
Section~\ref{imp:sec:uniform}.
We then use  the standard second-moment method to establish the Hausdorff dimension of all
clusters in Theorem~\ref{thm:all-cluster-dimension}.
Finally, to prove strict monotonicity of the one-arm exponent in
Theorem~\ref{thm:xi-strict-monotone}, we construct pivotal loops at dyadic scales.
The welding estimate \eqref{eq:super} gives a uniformly positive probability for
these constructions, and Russo's formula yields
\(\frac{\mathrm d}{\mathrm d\lambda}\log p_r^\lambda\gtrsim\log r\).
Integrating this inequality proves the strict monotonicity.

For Theorem~\ref{thm:continuum-thresholds}, the key additional observation is
 that, almost surely, Brownian segments can be connected by an
independent Brownian loop soup of any small positive intensity
with uniformly positive
probability. We prove this property by a second-moment argument using \(\xi<1\) in Lemma~\ref{lem:threshold-attachment}.
Then for $\beta>\alpha_{\mathrm{B2B}}$, we split \(\mathcal L_{\mathbb R^3}^{\beta}\)
 into the union of two independent loop soups \(\mathcal L_{\mathbb R^3}^{\alpha}\) and
\(\mathcal L_{\mathbb R^3}^{\varepsilon}\)
with $\alpha>\alpha_{\mathrm{B2B}}$ and $\varepsilon>0$.
Now, \(\mathcal L_{\mathbb R^3}^{\alpha}\) provides connections between
neighborhoods of any pair of loops, while \(\mathcal L_{\mathbb R^3}^{\varepsilon}\)
provides the local connections to the actual loops by the previous observation.
This proves local uniqueness and hence the
threshold identity.

Next, we turn to the unit-truncated loop soup. Let \(\theta_r(\alpha)\) be the one-arm
probability for the loop soup \(\LL_{\R^3}^{\alpha,\le1}\), and set
\(\Sigma_r(\alpha):=\sum_{j=0}^{\lfloor r\rfloor}\theta_j(\alpha)\).
Combining Russo's formula with the OSSS inequality for a
random-sphere exploration algorithm, we obtain
\[
 \frac{\dint}{\dint\alpha}\theta_r(\alpha)
 \ge \frac{c}{\alpha}\frac{r}{\Sigma_r(\alpha)}
 \theta_r(\alpha)\bigl(1-\theta_r(\alpha)\bigr).
\]
The sharpness result in Theorem~\ref{thm:unit-truncated-sharpness} then follows from
an analytic dichotomy result (Lemma~\ref{ut:lem:analytic}).
Note that there are infinitely many small loops.
To make the argument rigorous, we first impose a positive lower
diameter cutoff and then remove it, which requires some additional work.
Theorem~\ref{thm:metric-critical-values} follows from an analogous sharpness argument,
with the metric-graph estimates in~\cite{CD24,CDL24}.

The recursion \eqref{eq:induction} also holds on \(\mathbb X\in\{\Z^3,\ZMG\}\).
For \(\alpha\in\mathrm{I}_0^{\mathbb X}\), defined in \eqref{eq:I0X},
we have \(\chi_{5/8,3/4}^{\mathbb X,\alpha}(\delta)\downarrow0\), and hence the same induction
proves Theorem~\ref{thm:rwls-robust-summary}.
In dimension three, the capacity scale in \eqref{eq:rwls-capacity-scale} is linear.
We can therefore weld crossings and apply Russo's formula again to prove
Theorem~\ref{thm:rwls-strict-summary}.

\medskip
\noindent\textbf{Organization.}
Section~\ref{sec:pre} gives the preliminaries.
Section~\ref{sec:submultiplicativity} develops loop-splitting estimates and proves
submultiplicativity of the one-arm probability.
In Section~\ref{sec:robust-crossing}, we establish the capacity lower bound
(Theorem~\ref{thm:continuum-robust-summary}) and use it to prove the up-to-constants
estimates (Theorem~\ref{thm:utc}).
Section~\ref{sec:multiplicativity-to-dimension} proves the cluster dimension
(Theorem~\ref{thm:all-cluster-dimension}), strict monotonicity of the one-arm
exponent (Theorem~\ref{thm:xi-strict-monotone}),
and the equality of thresholds
(Theorem~\ref{thm:continuum-thresholds}).
Section~\ref{sec:unit-truncated-sharpness} proves sharpness for the unit-truncated
loop soup (Theorem~\ref{thm:unit-truncated-sharpness}).
Finally, Section~\ref{sec:rwls-companions} proves the results on the discrete and metric graphs.

\medskip
\noindent\textbf{Acknowledgments.}
We thank Zhenhao Cai for drawing our attention to the OSSS inequality used in the
proof of sharpness. We are grateful to Zhenhao Cai, Ruixuan Li, and Runsheng Liu
for valuable discussions on the Hausdorff dimension of clusters in three dimensions, and to Gefei Cai, Jian Ding,
Xinyi Li, and Baojun Wu for helpful discussions and comments.

\emph{AI use statement.} AI tools were used to refine the exposition, polish the
language, and assist with proofreading and figure generation. The author takes full responsibility for
the content of this paper.

\section{Preliminaries}
\label{sec:pre}

We begin by fixing the normalization of the Brownian loop measure used
throughout.  Let \(D\subset\R^3\) be a domain.  Write
\[
        p_t(x,y)
        =
        \frac{1}{(2\pi t)^{3/2}}
        \exp\left(-\frac{|x-y|^2}{2t}\right)
\]
for the three-dimensional heat kernel, and let \(\Pbb_t^{x,x}\) be the law of
a Brownian bridge from \(x\) to \(x\) of duration \(t\).  The Brownian loop
measure on \(D\) is
\[
        \mu_D^{\Loop}(d\ell)
        =
        \int_0^\infty \int_D
        \frac{1}{t}\,p_t(x,x)
        \Pbb^{x,x}_t(d\ell;\ell\subset D)
        \,dx\,dt,
\]
regarded as a measure on loops modulo circular time-shift.  We use the same
notation \(\ell\) for a parametrized representative and its time-shift class.

For \(\alpha>0\), let \(\LL_D^\alpha\) be the Brownian loop soup on \(D\),
namely the Poisson point process on loops with intensity measure
\(\alpha\mu_D^{\Loop}\).

We next recall Brownian path measures, capacity, Poisson inequalities, Palm's
formula, and the two Russo formulae used below; see
\cite{LW04,HLLS22,JL26,GLL26}.
For $x,y\in D$, let
$p_t^D(x,y)$ be the transition density of Brownian motion killed on exiting
$D$, and let $\Pbb^{x,y}_{t,D}$ denote the corresponding Brownian bridge from
$x$ to $y$ in time $t$, conditioned to stay in $D$.  The Brownian path measure
from $x$ to $y$ in $D$ is
\[
        \nu^D_{x,y}(d\omega)
        :=
        \int_0^\infty
        p_t^D(x,y)\,
        \Pbb^{x,y}_{t,D}(d\omega)
        \,dt .
\]
When
$D=\R^3$ we write $\nu_{x,y}:=\nu^{\R^3}_{x,y}$.
We write \(\lvert\nu^D_{x,y}\rvert\) for the total mass of the path measure.
For \(x,y\in D\), this mass is given by Green's function in \(D\),
\[
        \lvert\nu^D_{x,y}\rvert
        =
        G_D(x,y)
        :=
        \int_0^\infty p_t^D(x,y)\,dt .
\]
Thus \(G_D(x,y)<\infty\) for \(x\ne y\), whereas \(G_D(x,x)=\infty\).
When \(D=\R^3\), we write \(G(x,y):=G_{\R^3}(x,y)=\frac{1}{2\pi |x-y|}\).

If \(x\in D\) and \(y\) lies on a smooth piece of \(\partial D\), let
\(n_y\) be the inward unit normal at \(y\).  We use the same notation for the
bulk-to-boundary Brownian path measure
\begin{equation}\label{eq:bulk-to-boundary-path-measure}
        \nu^D_{x,y}
        :=
        \lim_{\varepsilon\downarrow0}
        \frac{1}{2\varepsilon}\,
        \nu^D_{x,y+\varepsilon n_y}.
\end{equation}
Its total mass is the Poisson kernel \(H_D(x,y)\).  By time reversal, the
same convention applies when \(x\in\partial D\) and \(y\in D\).

If \(x,y\) are distinct and lie on smooth pieces of \(\partial D\), we use
\(\nu^D_{x,y}\) for the boundary-to-boundary Brownian path measure (or excursion measure),
obtained by taking the corresponding inward-normal limits at both endpoints.
The total mass of this excursion measure is given by the boundary Poisson
kernel in \(D\), still denoted by \(H_D(x,y)\).
Whether \(x,y\) lie in \(D\) or on \(\partial D\) will always be clear from
the context, so this notation causes no ambiguity.

It is useful to collect some facts from classical potential theory \cite{MP10,PS78}.
If \(K\subset D\) is compact, write \(\mathcal P(K)\) for the set of Borel probability
measures supported on \(K\).  The relative capacity of \(K\) in \(D\) is
\begin{equation}\label{eq:relative-capacity}
        \Capa_D(K)
        :=
        \left[
        \inf_{\nu\in\mathcal P(K)}
        \int_K\int_K G_D(x,y)\,\nu(dx)\nu(dy)
        \right]^{-1}.
\end{equation}
When \(\Capa_D(K)>0\), the relative
equilibrium measure \(e_K^D\) satisfies
\begin{equation}\label{eq:relative-equilibrium-measure-hitting}
        e_K^D(K)=\Capa_D(K),
        \qquad
        \Pp_x(\tau_K<\tau_{\partial D})
        =
        \int_K G_D(x,y)\,e_K^D(dy),
        \quad x\in D\setminus K .
\end{equation}
The above also holds when \(D=\R^3\), with \(\tau_{\partial D}=\infty\),
and we will simply write \(\Capa(K)\) and \(e_K\) in this case.
From \eqref{eq:relative-capacity}, it follows immediately that
$
    \Capa(\lambda K)=\lambda\,\Capa(K)
$ for all \(\lambda>0\), and \(\Capa_D(K)\ge\Capa(K)\) since \(G_D\le G\).

\subsection{Useful properties of the Brownian loop soup}
The Brownian loop measure satisfies the restriction property and conformal invariance: if \(D'\subset D\), then \(\mu^{\Loop}_{D'}\) is the restriction
of \(\mu^{\Loop}_D\) to loops contained in \(D'\), while for every conformal
map \(f:D\to\widetilde D\), the image of \(\mu^{\Loop}_D\) under
\(\ell\mapsto f(\ell)\) is \(\mu^{\Loop}_{\widetilde D}\).

The following standard facts hold for general Poisson point processes, but we
state them only for the Brownian loop soup.
We partially order loop configurations by inclusion, counting multiplicities.
An event \(A\) is called increasing if \(\mathcal L\in A\) and
\(\mathcal L\subseteq\mathcal L'\) imply \(\mathcal L'\in A\).
The following lemma is the BK inequality for Poisson point processes~\cite{VDB96}, applied to the Brownian loop soup.

\begin{lemma}
\label{lem:poisson-bk}
Let \(D\subset\R^3\) be a domain, and let \(\LL_D^\alpha\) be a Brownian
loop soup on \(D\).  If \(A\) and \(B\) are increasing events depending on
\(\LL_D^\alpha\), and \(A\circ B\) denotes their disjoint occurrence, then
\[
        \Pp_\alpha(A\circ B)\le \Pp_\alpha(A)\,\Pp_\alpha(B).
\]
\end{lemma}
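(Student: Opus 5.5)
The plan is to deduce the lemma from van den Berg's BK inequality for marked Poisson point processes~\cite{VDB96}, by viewing \(\LL_D^\alpha\) as a Poisson point process on a suitable Polish space of loops. The first step is to fix that state space. Let \(\mathcal K\) be the space of unrooted loops in \(D\): continuous maps from \([0,t_\ell]\) with \(\ell(0)=\ell(t_\ell)\), taken modulo circular time-shift, where \(t_\ell>0\) is the duration. We equip \(\mathcal K\) with the metric
\[
d(\ell,\ell')=\inf_{s}\sup_{u\in[0,1]}\bigl|\ell(s+ut_\ell)-\ell'(ut_{\ell'})\bigr|+|t_\ell-t_{\ell'}|,
\]
where the infimum runs over circular shifts \(s\). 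With this metric \(\mathcal K\) is Polish. The measure \(\alpha\mu_D^{\Loop}\) is \(\sigma\)-finite on \(\mathcal K\); one can check this by restricting to loops with diameter in \([2^{-k},2^{-k+1})\) and meeting a fixed unit cube, each such class having finite mass. Then \(\LL_D^\alpha\) is a Poisson point process on \(\mathcal K\), and "increasing" agrees with the notion in~\cite{VDB96} (adding points).

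Next I would recall what disjoint occurrence means here. We say \(A\circ B\) occurs if there are disjoint sub-configurations \(\omega_1,\omega_2\subseteq\LL_D^\alpha\), counting multiplicity, with \(\omega_1\in A\) and \(\omega_2\in B\). For increasing events this is the same as requiring disjoint witness sets of loops. The theorem of~\cite{VDB96} gives \(\Pp(A\circ B)\le\Pp(A)\Pp(B)\) for increasing events of a Poisson process on a general space. If that theorem is only stated for finite-intensity processes, or for events depending on a bounded region, we reduce to it by approximation. Partition \(\mathcal K\) into the countably many finite-mass pieces \(\mathcal K_k\). Let \(\mathcal F_n\) be the \(\sigma\)-field generated by the restriction of the soup to \(\bigcup_{k\le n}\mathcal K_k\). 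Then take \(A_n=\{\Pp(A\mid\mathcal F_n)=1\}\)-type approximations; a simpler choice is to use events determined by the restricted soup, which is a finite Poisson process, apply BK there, and pass to the limit.

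The limiting step is the delicate point. I would handle it as follows. For an increasing \(A\), define \(A^{(n)}\) as the event that the restricted configuration \(\omega|_{\le n}\) belongs to \(A\). This event is increasing and satisfies \(A^{(n)}\subseteq A\). Moreover \(A^{(n)}\circ B^{(n)}\uparrow\) to an event containing \(A\circ B\), provided the witnesses can be taken to be finite sets of loops. For connectivity events of the kind used in the paper, finitely many loops always suffice, by compactness of chains of loops. For general measurable increasing events, one instead uses the definition in~\cite{VDB96} directly, since that paper treats general marked Poisson processes with \(\sigma\)-finite intensity. In that case the lemma is an immediate citation once the Polish structure and \(\sigma\)-finiteness above are verified.

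So the main obstacle is only technical: checking measurability of \(A\circ B\) and \(\sigma\)-finiteness. The inequality itself is inherited from the Poisson BK inequality, which in turn follows from the Reimer/BK inequality on product spaces applied to a fine discretisation of \(\mathcal K\) into finitely many cells with Poisson counts.
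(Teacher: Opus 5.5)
Your proposal takes essentially the same approach as the paper, which simply invokes van den Berg's BK inequality for Poisson point processes~\cite{VDB96} on loop space; your check of the Polish, \(\sigma\)-finite setting and your truncation \(A^{(n)}=\{\omega|_{\le n}\in A\}\) are a correct way to make that citation applicable. The truncation step needs witnesses consisting of finitely many loops, which holds for every event to which the paper applies the lemma (crossings realized by finite chains of loops, and single-loop events), while for completely general increasing events you, like the paper, rely on the cited theorem.
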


The FKG inequality for Poisson point processes
\cite[Theorem~20.4]{LP17} yields the following positive association property.

\begin{lemma}
\label{lem:poisson-fkg}
Let \(D\subset\R^3\) be a domain, and let \(\LL_D^\alpha\) be a Brownian
loop soup on \(D\).  If \(A\) and \(B\) are increasing events depending on
\(\LL_D^\alpha\), then
\[
        \Pp_\alpha(A\cap B)\ge \Pp_\alpha(A)\,\Pp_\alpha(B).
\]
\end{lemma}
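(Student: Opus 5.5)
The plan is to deduce this from the general FKG (Harris) inequality for Poisson point processes, \cite[Theorem~20.4]{LP17}, by checking that the Brownian loop soup fits its framework. Let \(\mathbb M_D\) be the space of loops in \(D\), modulo circular time-shift. Equip it with the topology induced by, say, the uniform distance between parametrized representatives minimized over time-shifts, together with the time duration. This makes \(\mathbb M_D\) a Polish (in particular, Borel) space. The measure \(\alpha\mu_D^{\Loop}\) is then \(\sigma\)-finite on \(\mathbb M_D\). Indeed, \(\mathbb M_D\) is a countable union of the sets of loops with duration in \([2^{-k},2^{k}]\) that meet a given compact subset of \(D\), and each of these has finite mass by the explicit formula \(\int\int t^{-1}p_t(x,x)\,dx\,dt\). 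So \(\LL_D^\alpha\) is a Poisson process with \(\sigma\)-finite intensity, which is exactly the setting of \cite[Theorem~20.4]{LP17}. That theorem states \(\Ee[f(\eta)g(\eta)]\ge\Ee[f(\eta)]\Ee[g(\eta)]\) for increasing measurable functions \(f,g\) of the counting measure \(\eta\) that are bounded or square integrable.

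I then apply it with \(f=\1_A\) and \(g=\1_B\). An increasing event in our sense is one that is preserved when loops are added (inclusion counting multiplicities). This is the same as monotonicity of \(\1_A\) with respect to the partial order \(\eta\le\eta'\) on counting measures, which is the order used in \cite{LP17}. The indicators are bounded, so the conclusion \(\Pp_\alpha(A\cap B)\ge\Pp_\alpha(A)\Pp_\alpha(B)\) follows at once.

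The only real point to watch is measurability: the events we use, such as connectivity events, have to be measurable functions of the point configuration. This is part of the hypothesis \enquote{events depending on \(\LL_D^\alpha\)}. If one wants a self-contained argument instead of citing \cite{LP17}, I would proceed in three steps. First, restrict to the finite-mass pieces \(\mathbb M_D^{(k)}\). Second, prove FKG for a Poisson process of finite mass by conditioning on the number of points and discretizing, which reduces it to Harris's inequality on a product space. Third, pass to the limit \(k\to\infty\) by martingale convergence of \(\Ee[\1_A\mid\LL_D^\alpha\cap\mathbb M_D^{(k)}]\). Each conditional expectation is increasing in the restricted configuration by monotonicity, and this limiting step is the main, though standard, technical obstacle.
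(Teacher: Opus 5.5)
Your proposal is correct and follows the paper's argument: the paper gives no separate proof and simply invokes the Harris--FKG inequality for Poisson processes, \cite[Theorem~20.4]{LP17}. Your checks, that $\alpha\mu_D^{\Loop}$ is $\sigma$-finite and that ``increasing'' means monotone for the order on counting measures, are what that citation needs. One caveat on your optional self-contained sketch: do not condition on the total number of points. Increasing events need not be positively correlated given that number; with exactly one point, ``a point in cell 1'' and ``a point in cell 2'' are disjoint. Instead, use the independent Poisson counts of a finite partition, for which $\Ee[\1_A\mid\text{counts}]$ is increasing in each count (couple by adding an independent point), apply Harris's inequality on that product space, and then pass to the limit.
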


By complementation, the FKG inequality also holds for decreasing events.
We next record Palm's formula for loop soups.

\begin{lemma}
\label{lem:loop-soup-palm}
Let \(D\subset\R^3\) be a domain, and let \(\LL_D^\alpha\) be a Brownian
loop soup on \(D\) with intensity \(\alpha\mu_D^{\Loop}\).  For every
nonnegative measurable function \(F\) of a loop \(\gamma\) and a loop
configuration,
\begin{equation}
        \Ee\left[
        \sum_{\gamma\in\LL_D^\alpha}
        F(\gamma,\LL_D^\alpha)
        \right]
        =
        \alpha\int
        \Ee\left[
        F(\gamma,\LL_D^\alpha\cup\{\gamma\})
        \right]
        \mu_D^{\Loop}(d\gamma).
        \label{eq:loop-soup-palm}
\end{equation}
Consequently, if \(A(\gamma,\mathcal L)\) is a measurable event and
$\Pp_\alpha\bigl(A(\gamma,\LL_D^\alpha\cup\{\gamma\})\bigr)=1$ for \(\mu_D^{\Loop}\)-almost every \(\gamma\), then
\begin{equation}\label{eq:palm}
        \Pp_\alpha\bigl(
        A(\gamma,\LL_D^\alpha)\text{ holds for every }
        \gamma\in\LL_D^\alpha
        \bigr)=1 .
\end{equation}
\end{lemma}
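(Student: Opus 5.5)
The statement to prove is Palm's formula (Mecke equation) for the loop soup and its almost-sure consequence \eqref{eq:palm}. The plan is to reduce to the standard Mecke formula for Poisson point processes and then deduce the second assertion by taking $F$ to be an indicator.

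First, the identity \eqref{eq:loop-soup-palm}. The measure $\mu_D^{\Loop}$ is $\sigma$-finite on the space of unrooted loops: the loops with diameter in $[2^{-k},2^{k}]$ that intersect $B_{2^k}$ have finite mass, and these sets exhaust the space. Hence $\LL_D^\alpha$ is a Poisson process with $\sigma$-finite intensity $\alpha\mu_D^{\Loop}$ on a Polish (or at least standard Borel) space, and the Mecke equation of \cite[Theorem~4.1]{LP17} applies. If one wants a self-contained route, I would proceed as follows.
\begin{enumerate}[label=(\roman*)]
\item Verify \eqref{eq:loop-soup-palm} for $F(\gamma,\mathcal L)=f(\gamma)\,e^{-\sum_{\eta\in\mathcal L}u(\eta)}$ with $f,u\ge0$ supported on a finite-mass set. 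This is a direct computation with the Laplace functional $\Ee\, e^{-\sum u}=\exp(-\alpha\int(1-e^{-u})\,d\mu)$: differentiate in the direction of $f$, i.e.\ replace $u$ by $u+tf$ and differentiate at $t=0$.
\item Extend to all nonnegative measurable $F$ by a monotone class argument, using monotone convergence along the exhausting sets.
\end{enumerate}

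Second, the consequence \eqref{eq:palm}. Apply \eqref{eq:loop-soup-palm} with $F(\gamma,\mathcal L)=\mathbf 1\{A(\gamma,\mathcal L)^c\}$. The right-hand side is
\[
\alpha\int \Pp_\alpha\bigl(A(\gamma,\LL_D^\alpha\cup\{\gamma\})^c\bigr)\,\mu_D^{\Loop}(d\gamma)=0
\]
by hypothesis. So the expected number of loops $\gamma\in\LL_D^\alpha$ for which $A(\gamma,\LL_D^\alpha)$ fails is zero. A nonnegative integer-valued (or $+\infty$-valued) random variable with zero mean vanishes almost surely, which gives \eqref{eq:palm}.

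The only delicate point, and it is mild, is measurability. The map $(\gamma,\mathcal L)\mapsto F(\gamma,\mathcal L\cup\{\gamma\})$ must be jointly measurable on the space of loops modulo time shift times configurations, and the event in \eqref{eq:palm} must be measurable. Both follow from the standard Borel structure of the loop space: use uniform distance of parametrizations up to reparametrization, quotiented by rotation. With that structure, the sum over $\gamma\in\LL_D^\alpha$ is a measurable functional of the counting measure.
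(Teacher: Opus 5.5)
Your proposal is correct and matches the paper's proof. The paper also cites the Mecke equation \cite[Theorem~4.1]{LP17} and obtains \eqref{eq:palm} by applying \eqref{eq:loop-soup-palm} to \(F=\mathbf 1_{A^c}\); your \(\sigma\)-finiteness check, Laplace-functional derivation and measurability remarks are correct but go beyond what the paper supplies.
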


\begin{proof}
This is the Mecke equation~\cite[Theorem~4.1]{LP17} for the Poisson point process
\(\LL_D^\alpha\) on loop space with intensity
\(\alpha\mu_D^{\Loop}\).  The final assertion follows by applying
\eqref{eq:loop-soup-palm} to \(F=\mathbf 1_{A^c}\).
\end{proof}

\subsection{Russo's formulae for loop soups}

Throughout this subsection, fix
\(\mathbb X\in\{\R^3,\Z^3,\ZMG\}\).  When \(\mathbb X=\R^3\), set
\(\mu_{\mathbb X}^{\Loop}:=\muLoop\) and
\(\mathcal L_{\mathbb X}^\alpha:=\LL_{\R^3}^\alpha\); for the other two choices, use
the loop measures and loop soups introduced in
Section~\ref{sec:rwls-results}.  Thus \(\mu_{\mathbb X}^{\Loop}\) is the
corresponding unrooted loop measure, and \(\mathcal L_{\mathbb X}^\alpha\) is
the Poisson point process with intensity \(\alpha\mu_{\mathbb X}^{\Loop}\).
We regard loop configurations as counting measures and write \(\delta_\ell\)
for the unit mass at \(\ell\).

We will use two versions of Russo's formula for these three loop soups.
Closely related ones also appear in~\cite{DRT18,CDL24,JL26}.
Throughout, if \(A\) is an increasing
event and \(\ell\) is a loop, write
\begin{equation}\label{eq:pivotal}
        \operatorname{Piv}_\ell A
        :=
        \{A(\mathcal L_{\mathbb X}^\alpha+\delta_\ell)\text{ occurs but }
        A(\mathcal L_{\mathbb X}^\alpha)
        \text{ does not occur}\}.
\end{equation}
For cutoff soups, the same notation means insertion into the cutoff soup under
consideration.
Applying \cite[Equation~(19.2)]{LP17} to the Poisson point process on the loop
space of \(\mathbb X\) yields the following standard version of Russo's formula.

\begin{lemma}\label{ut:lem:russo}
Let \(\mathscr D\) be a measurable loop class with
\(\mu_{\mathbb X}^{\Loop}(\mathscr D)<\infty\), and let \(A\) be an increasing
event depending only on the restricted soup
\(\mathcal L_{\mathbb X}^\alpha|_{\mathscr D}\).  Then
\(\theta_A(\alpha):=\Pp_\alpha(A)\) is continuously differentiable and
\begin{equation}\label{ut:eq:russo}
        \frac{\dint}{\dint\alpha}\theta_A(\alpha)
        =
        \int_{\mathscr D}
        \Pp_\alpha(\operatorname{Piv}_\ell A)\,
        \mu_{\mathbb X}^{\Loop}(\dint\ell).
\end{equation}
\end{lemma}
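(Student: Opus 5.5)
The plan is to derive the formula from the general Margulis--Russo formula for Poisson point processes \cite[Equation~(19.2)]{LP17}, using that the family \(\{\mathcal L_{\mathbb X}^\alpha|_{\mathscr D}\}_{\alpha}\) is a Poisson process whose intensity measure depends linearly on \(\alpha\).

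\textbf{Step 1: reduction to a finite Poisson process.} Since \(\mu_{\mathbb X}^{\Loop}(\mathscr D)<\infty\), the restricted soup \(\eta_\alpha:=\mathcal L_{\mathbb X}^\alpha|_{\mathscr D}\) is a Poisson process with finite intensity \(\alpha\mu:=\alpha\mu_{\mathbb X}^{\Loop}|_{\mathscr D}\). It has almost surely finitely many points. Because \(A\) depends only on \(\eta_\alpha\), we have \(\theta_A(\alpha)=\Pp(\mathbf 1_A(\eta_\alpha))\), and \(\operatorname{Piv}_\ell A\) for \(\ell\in\mathscr D\) is the event that inserting \(\delta_\ell\) into \(\eta_\alpha\) switches \(A\) on.

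\textbf{Step 2: explicit expansion and differentiation.} Write
\[
\theta_A(\alpha)=e^{-\alpha m}\sum_{n\ge0}\frac{\alpha^n}{n!}\int\mathbf 1_A(\delta_{\ell_1}+\dots+\delta_{\ell_n})\,\mu^{\otimes n}(d\ell),
\qquad m:=\mu(\mathscr D).
\]
Each integral is bounded by \(m^n\), so the series is a power series with infinite radius of convergence. Hence \(\theta_A\) is real-analytic, in particular \(C^1\), and we may differentiate term by term.

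Differentiating the prefactor gives \(-m\,\theta_A(\alpha)\). Differentiating \(\alpha^n\) and reindexing \(n\mapsto n+1\) gives \(\int\Pp(\mathbf 1_A(\eta_\alpha+\delta_\ell))\,\mu(d\ell)\), by symmetry of \(\mu^{\otimes(n+1)}\), where one singles out one coordinate. Summing the two contributions,
\[
\theta_A'(\alpha)=\int_{\mathscr D}\Pp\bigl(\mathbf 1_A(\eta_\alpha+\delta_\ell)-\mathbf 1_A(\eta_\alpha)\bigr)\,\mu(d\ell).
\]
Since \(A\) is increasing, the integrand equals \(\Pp_\alpha(\operatorname{Piv}_\ell A)\), which gives \eqref{ut:eq:russo}.

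\textbf{Main point to check.} The only delicate issue is measurability of \((\ell,\mathcal L)\mapsto\mathbf 1_A(\mathcal L+\delta_\ell)\) on loop space. This is standard for Poisson processes on a Polish space of loops modulo time-shift, as in \cite{LP17}. Given that, the finiteness of \(\mu(\mathscr D)\) makes every interchange of sum, integral and derivative justified by dominated convergence. The general formula \cite[(19.2)]{LP17} packages exactly this computation, so I would cite it directly and add the analyticity remark to obtain continuous differentiability.
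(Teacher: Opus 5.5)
Your proposal is correct and follows the paper's route. The paper's proof consists of invoking \cite[Equation~(19.2)]{LP17} for the Poisson process on loop space, and your expansion \(\theta_A(\alpha)=e^{-\alpha m}\sum_{n\ge0}\frac{\alpha^n}{n!}\int\mathbf 1_A\,\dint\mu^{\otimes n}\), differentiated term by term, is the elementary computation behind that formula; it has the minor benefit of making explicit both the continuous differentiability (indeed real-analyticity, since \(\mu(\mathscr D)<\infty\)) and the identification of the integrand with \(\Pp_\alpha(\operatorname{Piv}_\ell A)\) via monotonicity of \(A\).
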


We shall also use the following version when the event may depend on
the full loop soup but the sprinkling is confined to a finite loop-measure
class.

\begin{lemma}\label{lem:restricted-russo}
Let \(\mathscr D\) be a measurable loop class with
\(\mu_{\mathbb X}^{\Loop}(\mathscr D)<\infty\), and let \(A\) be an increasing
loop-soup event.  At every differentiability point of
\(\theta(\alpha):=\Pp_\alpha(A)\),
\begin{equation}\label{eq:restricted-russo}
        \theta'(\alpha)
        \ge
        \int_{\mathscr D}
        \Pp_\alpha(\operatorname{Piv}_\ell A)\,
        \mu_{\mathbb X}^{\Loop}(\dint\ell).
\end{equation}
\end{lemma}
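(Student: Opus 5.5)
We prove Lemma~\ref{lem:restricted-russo} by decomposing the soup into its part inside \(\mathscr D\) and its part outside, and applying Lemma~\ref{ut:lem:russo} only to the inside part. Write \(\mathcal L_{\mathbb X}^{\alpha}=\mathcal L_1+\mathcal L_2\), where \(\mathcal L_1:=\mathcal L_{\mathbb X}^\alpha|_{\mathscr D}\) and \(\mathcal L_2:=\mathcal L_{\mathbb X}^\alpha|_{\mathscr D^c}\) are independent Poisson processes. For \(h>0\), superposition gives \(\mathcal L^{\alpha+h}_{\mathbb X}\overset{d}{=}\mathcal L_1^{\alpha+h}+\mathcal L_2^{\alpha+h}\), and \(\mathcal L_2^{\alpha+h}\) stochastically dominates \(\mathcal L_2^{\alpha}\). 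Since \(A\) is increasing, we can couple \(\mathcal L_2^{\alpha}\subseteq\mathcal L_2^{\alpha+h}\), which yields
\[
\theta(\alpha+h)\ge \Pp\bigl(A(\mathcal L_1^{\alpha+h}+\mathcal L_2^{\alpha})\bigr)=:\phi(\alpha+h),
\]
where in \(\phi(\beta)\) the outside soup is frozen at intensity \(\alpha\). Equality holds at \(\beta=\alpha\), so \(\phi(\alpha)=\theta(\alpha)\).

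Next we differentiate \(\phi\) at \(\beta=\alpha\). Conditionally on \(\mathcal L_2^\alpha\), the event \(\{A(\cdot+\mathcal L_2^\alpha)\}\) is an increasing event of the finite-mass soup \(\mathcal L_1^\beta\). Lemma~\ref{ut:lem:russo} then gives the conditional derivative
\[
\int_{\mathscr D}\Pp\bigl(\operatorname{Piv}_\ell A\mid\mathcal L_2^\alpha\bigr)\,\mu^{\Loop}_{\mathbb X}(\dint\ell).
\]
To exchange derivative and expectation, we use the explicit formula for a finite-mass Poisson process. The probability is a power series in \(\beta\) with factor \(e^{-\beta\mu(\mathscr D)}\), and its derivative is bounded by \(2\mu^{\Loop}_{\mathbb X}(\mathscr D)\) uniformly in the conditioning. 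Dominated convergence then gives
\[
\phi'(\alpha)=\int_{\mathscr D}\Pp_\alpha(\operatorname{Piv}_\ell A)\,\mu^{\Loop}_{\mathbb X}(\dint\ell).
\]
Here pivotality is evaluated for the full soup \(\mathcal L_1^\alpha+\mathcal L_2^\alpha=\mathcal L^\alpha_{\mathbb X}\), which is exactly \eqref{eq:pivotal}. The point needing the most care is measurability of \(\ell\mapsto\Pp_\alpha(\operatorname{Piv}_\ell A)\), which follows from joint measurability of \((\ell,\mathcal L)\mapsto\mathbf 1_{A(\mathcal L+\delta_\ell)}\).

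Finally, at a differentiability point \(\alpha\) of \(\theta\), we have for \(h\downarrow0\)
\[
\frac{\theta(\alpha+h)-\theta(\alpha)}{h}\ge\frac{\phi(\alpha+h)-\phi(\alpha)}{h}\to\phi'(\alpha).
\]
This is \eqref{eq:restricted-russo}. Only the right difference quotient is used, so the argument needs only that \(\theta\) is differentiable at \(\alpha\), which is assumed; the inequality holds even for its right derivative. The main, mild, obstacle is the uniform domination in the conditional Russo formula, which the finite mass of \(\mathscr D\) handles.
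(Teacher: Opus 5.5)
Your argument is correct and is essentially the paper's: your \(\phi(\alpha+h)\) is exactly the probability of \(A\) for \(\mathcal L_{\mathbb X}^\alpha\) plus an independent soup of intensity \(h\mu_{\mathbb X}^{\Loop}|_{\mathscr D}\), which is the same sprinkling-inside-\(\mathscr D\) comparison the paper uses. The only difference is how the first-order term is extracted: the paper restricts to the event that exactly one sprinkled loop appears, which gives \(\theta(\alpha+h)-\theta(\alpha)\ge h e^{-h\mu_{\mathbb X}^{\Loop}(\mathscr D)}\int_{\mathscr D}\Pp_\alpha(\operatorname{Piv}_\ell A)\,\mu_{\mathbb X}^{\Loop}(\dint\ell)\) directly, so it needs neither your conditional application of Lemma~\ref{ut:lem:russo} nor the dominated-convergence exchange.
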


\begin{proof}
Let \(\widehat{\mathcal L}^{\varepsilon}_{\mathbb X,\mathscr D}\) be an
independent loop soup restricted to \(\mathscr D\), with intensity
\(\varepsilon\mu_{\mathbb X}^{\Loop}|_{\mathscr D}\).  Sprinkling only inside
\(\mathscr D\) is
dominated by increasing the whole soup intensity from \(\alpha\) to
\(\alpha+\varepsilon\).  Hence,
\[
\begin{aligned}
        \theta(\alpha+\varepsilon)-\theta(\alpha)
        &\ge
        \Pp_\alpha\bigl(A(\mathcal L_{\mathbb X}^\alpha+
        \widehat{\mathcal L}^{\varepsilon}_{\mathbb X,\mathscr D})
        \setminus A(\mathcal L_{\mathbb X}^\alpha)\bigr)\\
        &\ge
        e^{-\varepsilon\mu_{\mathbb X}^{\Loop}(\mathscr D)}
        \varepsilon
        \int_{\mathscr D}
        \Pp_\alpha(\operatorname{Piv}_\ell A)\,
        \mu_{\mathbb X}^{\Loop}(\dint\ell).
\end{aligned}
\]
Dividing by \(\varepsilon\) and letting \(\varepsilon\downarrow0\) yields the desired
lower bound.
\end{proof}

\section{Splitting and submultiplicativity}
\label{sec:submultiplicativity}

This section proves the submultiplicativity of the one-arm probability.
The proof combines a splitting lemma with the BK inequality.

\subsection{Splitting lemmas}
We first present some useful surgeries on Brownian loops in three dimensions.
Several two-dimensional analogs or similar ideas also appeared in \cite{GNQ24,GNQ25},
which are concerned with discrete and continuous models, respectively.

To derive these results, we will combine the crossing-loop decomposition from
\cite[Proposition~4.7]{JL26} with standard properties of Brownian path measures.
For compact sets $K_1,K_2\subset D$, define the total mass of loops in $D$ that intersect both $K_1$ and $K_2$ by
\begin{equation}\label{eq:loop-cut-def}
        \Bcal(K_1,K_2;D)
        :=
        \mu^{\Loop}_D
        \bigl(
        \ell:\ell\cap K_1\ne\varnothing,\,
        \ell\cap K_2\ne\varnothing
        \bigr).
\end{equation}
Let \(\sigma_r\) be the surface measure on the sphere \(S_r=\partial B_r\).
The following lemma cuts a loop at fixed spheres, retains the pieces that hit
prescribed sets, and turns loop measure into products of Brownian path measures.

\begin{lemma}\label{lem:loop-cut}
For every \(b>1\), uniformly over \(r>0\) and \(K\subset B_r\),
\begin{equation}\label{eq:loop-cut-one-set}
        \Bcal(K,S_{br};\R^3)
        \asymp r^{-2}\int_{S_{br}}\int_{S_r}
        \nu^{B_{br}}_{x,y}\bigl(\omega\cap K\ne\varnothing\bigr)\,
        \sigma_r(dx)\sigma_{br}(dy),
\end{equation}
where the implicit constants depend only on \(b\).
Moreover, if $K_1\subset B_r$ and $K_2\subset B_{br}^c$, then for any $1<a_1<a_2<b$,
\begin{equation}\label{eq:loop-cut-two-set}
\begin{aligned}
        \Bcal(K_1,K_2;\R^3)
        \asymp r^{-4}\int_{S_{a_2r}}\int_{S_{a_1r}}&
        \nu^{B_{a_2r}}_{x_1,y_1}\bigl(\omega\cap K_1\ne\varnothing\bigr)\,
        \sigma_{a_1r}(dx_1)\sigma_{a_2r}(dy_1)\\
        &  \times\,\int_{S_{a_1r}}\int_{S_{a_2r}} \nu^{B_{a_1r}^c}_{y_2,x_2}\bigl(\omega\cap K_2\ne\varnothing\bigr)\,\sigma_{a_2r}(dy_2)\sigma_{a_1r}(dx_2),
\end{aligned}
\end{equation}
where the implicit constants depend only on \(a_1,a_2,b\).
\end{lemma}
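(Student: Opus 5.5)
The approach is to use the crossing-loop decomposition of \cite[Proposition~4.7]{JL26} together with the strong Markov property of the Brownian bridge, and to cut each relevant loop at its hitting times of the spheres. By scaling (\(\mu^{\Loop}_{\R^3}\) is scale invariant and \(\nu\) scales like \(\lambda^{-1}\) under dilation by \(\lambda\)), it suffices to take \(r=1\). Then all constants depend only on \(b\) (respectively on \(a_1,a_2,b\)), and the factors \(r^{-2}\) and \(r^{-4}\) reappear after rescaling. Here the surface measures contribute \(r^2\) each, and the boundary-to-boundary excursion kernels behave like \(r^{-3}\) each.

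For \eqref{eq:loop-cut-one-set}, I would use the decomposition of loops that cross \(A(1,b)\). Such a loop is decomposed into alternating excursions: from \(S_1\) to \(S_b\) inside \(B_b\), and from \(S_b\) back to \(S_1\) outside \(B_1\). The mass of loops with exactly \(k\) such crossings is given by a trace formula of the form
\[
\frac1k\int \prod_{i} \nu^{B_b}_{x_i,y_i}\,\nu^{\overline{B_1}^c}_{y_i,x_{i+1}}\,\sigma_1(dx_i)\sigma_b(dy_i),
\]
with \(x_{k+1}=x_1\).

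\textbf{Lower bound.} Keep only \(k=1\), and require that the inner excursion hit \(K\). Then integrate out the outer excursion: its mass \(H_{\overline{B_1}^c}(y,x)\) is bounded above and below uniformly in \(x\in S_1,\ y\in S_b\). This produces the right-hand side.

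\textbf{Upper bound.} A loop hitting \(K\subset B_1\) and \(S_b\) has at least one crossing. If it hits \(K\), some inner excursion between consecutive visits to \(S_1\) and \(S_b\) hits \(K\). The loop may also hit \(K\) during a portion that stays inside \(B_b\) before reaching \(S_1\). To handle this, I would cut instead at \(S_b\) and \(S_{b'}\) with \(b'\) slightly larger than \(b\), or else absorb that portion into the excursion from \(S_b\) to \(S_b\). Let \(i\) be an inner excursion hitting \(K\). By a union bound over \(i\), together with the cyclic symmetry that cancels the \(1/k\), the mass is at most
\[
\sum_k \int \nu^{B_b}_{x,y}(\omega\cap K\neq\varnothing)\, M_k(y,x)\,\sigma_1(dx)\sigma_b(dy),
\]
where \(M_k\) is the \((k-1)\)-fold kernel iterate times the final outer kernel. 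Harnack-type uniformity gives \(M_k\le C\theta^{k-1}\), where \(\theta<1\) is the supremum over starting points of the probability that a Brownian motion started on \(S_b\) returns to \(S_1\). So the sum is geometric, and this gives the matching upper bound.

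For \eqref{eq:loop-cut-two-set}, apply the same decomposition with the spheres \(S_{a_1}\) and \(S_{a_2}\). Inner excursions lie in \(B_{a_2}\) and outer excursions lie in \(\overline{B_{a_1}}^c\); the set \(K_1\) can only be hit by inner pieces and \(K_2\) only by outer pieces. For the lower bound, keep \(k=1\); this gives exactly the product of an inner excursion hitting \(K_1\) and an outer excursion hitting \(K_2\). For the upper bound, choose an inner index \(i\) hitting \(K_1\) and an outer index \(j\) hitting \(K_2\). The remaining excursions contribute bounded kernels with geometric decay, and their endpoint dependence is flattened by Harnack on the spheres, i.e. boundary kernels are comparable uniformly over endpoints at fixed separation. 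This decouples the two hit factors into the stated product.

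The \textbf{main obstacle} is justifying this uniform flattening: replacing the kernels linking the endpoints of the chosen pieces by constants. One needs two-sided bounds on \(H_{B_{a_2}}(x,y)\) and \(H_{\overline{B_{a_1}}^c}(y,x)\) for endpoints on distinct spheres, and summability over \(k\) with the counting factor of order \(k^2\). Both follow from the explicit Poisson kernels of balls and the uniform return probability \(\theta<1\).
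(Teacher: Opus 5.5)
Your route is the paper's: scale to \(r=1\), use the crossing decomposition of \cite[Proposition~4.7]{JL26}, keep the one-crossing term for the lower bound, and for the upper bound mark a piece hitting the set, cancel the \(1/k\) by cyclic symmetry, bound one closing kernel by a constant and integrate out the rest, giving a geometric series with ratio \(\Pp_y(\tau_{S_1}<\infty)=1/b\). For \eqref{eq:loop-cut-one-set} this argument is complete.

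Two side remarks on that part. First, your worry that \(K\) could be hit outside an inner piece is unfounded. Outer pieces stay in \(\overline{B_1}^{\,c}\) until they hit \(S_1\), and every hit of \(S_1\) starts an inner piece, so all visits to \(K\subset B_1\) lie in inner pieces and no change of cutting spheres is needed. Second, the path measures in the statement are bulk-to-boundary, since \(x\in S_r\) is interior to \(B_{br}\). Their masses therefore scale like \(r^{-2}\), not \(r^{-3}\), and that is what makes \(r^{-2}\cdot r^2\cdot r^2\cdot r^{-2}=1\).

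The genuine gap is in \eqref{eq:loop-cut-two-set}. The \(k=1\) term is not ``exactly the product''. It equals
\[
\int_{S_{a_2}}\int_{S_{a_1}}\nu^{B_{a_2}}_{x,y}\bigl(\omega\cap K_1\ne\varnothing\bigr)\,\nu^{B_{a_1}^c}_{y,x}\bigl(\omega\cap K_2\ne\varnothing\bigr)\,\sigma_{a_1}(dx)\,\sigma_{a_2}(dy),
\]
in which both hit factors depend on the same pair \((x,y)\). The right-hand side of \eqref{eq:loop-cut-two-set}, by contrast, multiplies two separately integrated factors.

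The same coupling appears in your upper bound whenever the marked inner and outer pieces are cyclically consecutive. This always happens for \(k=1\), and for two of the \(k\) relative positions when \(k\ge2\). In those terms the two marked pieces share an endpoint and no linking kernel sits between them. So replacing the unmarked kernels \(H_{B_{a_2}}\), \(H_{B_{a_1}^c}\) by constants decouples nothing. That replacement is the step you call the main obstacle, but it is routine.

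The missing ingredient is that each \emph{marked} factor is comparable to its spherical average, uniformly in \(K_1\) and \(K_2\). By the strong Markov property at \(\tau_{K_1}\), with \(W\) a Brownian motion started at \(x\),
\[
\nu^{B_{a_2}}_{x,y}\bigl(\omega\cap K_1\ne\varnothing\bigr)=\int\Pp_x\bigl(\tau_{K_1}<\tau_{S_{a_2}},\,W_{\tau_{K_1}}\in dz\bigr)\,H_{B_{a_2}}(z,y).
\]
Here \(H_{B_{a_2}}(z,y)\asymp1\) for \(z\in B_1\) and \(y\in S_{a_2}\). The function \(x\mapsto\Pp_x(\tau_{K_1}<\tau_{S_{a_2}})\) is positive and harmonic on the fixed annulus \(A(1,a_2)\). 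Harnack's inequality therefore makes it comparable over \(S_{a_1}\), uniformly in \(K_1\subset B_1\); this is where \(a_1>1\) is used.

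The \(K_2\)-factor is treated symmetrically. Use Harnack on \(A(a_1,b)\), which needs \(a_2<b\), together with the flatness of the exterior Poisson kernel \(H_{B_{a_1}^c}(z,\cdot)\) on \(S_{a_1}\) for \(|z|\ge b\). This is the ``space in between'' that the paper's proof leaves for the Harnack principle. Note that your argument never uses the strict inequalities \(1<a_1\) and \(a_2<b\), which signals that this step is missing.

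Once the marked factors are flattened, both the \(k=1\) lower bound and your summation over \(k\) go through. The summation includes the extra factor \(k\) from the relative position of the two marked pieces, which the geometric decay absorbs.
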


\begin{proof}
By Brownian scaling, it is enough to prove the estimates for \(r=1\).  We
therefore work in the fixed annulus \(A(1,b)\).  We recall
\cite[Proposition~4.7]{JL26} in this fixed-annulus case, using our notation.
Let \(\mathrm{Cross}_{1,b}^{(n)}\) be the event that a loop crosses
\(A(1,b)\) exactly \(n\) times.  Then, for every
nonnegative measurable function \(F\) on the loop space, writing
\(x_{n+1}=x_1\) and
\begin{equation}\label{eq:loop-cut-Qj}
        \mathcal Q_j
        :=
        \int \nu^{B_b}_{x_j,y_j}(de_{2j-1})
        \int \nu^{B_1^c}_{y_j,x_{j+1}}(de_{2j}),
\end{equation}
we have
\begin{equation}
\begin{aligned}
        &\int F(\ell)\,
        \mathbf 1_{\mathrm{Cross}_{1,b}^{(n)}}(\ell)\,
        \muLoop(d\ell)                                                   \\
        &\quad =
        \frac1n
        \int_{(S_b)^n}\int_{(S_1)^n}
        \prod_{j=1}^n \mathcal Q_j\,
        F([e_1\oplus\cdots\oplus e_{2n}]) \,
        \prod_{j=1}^n\sigma_1(dx_j)\sigma_b(dy_j),
        \label{eq:JL-prop-47}
\end{aligned}
\end{equation}
where the measures in \eqref{eq:loop-cut-Qj} are the bulk-to-boundary Brownian path measures
given in \eqref{eq:bulk-to-boundary-path-measure}.
In our convention, the odd pieces stay in
\(B_b\), going from \(S_1\) to \(S_b\), and the even pieces stay in
\(B_1^c\), going from \(S_b\) to \(S_1\).

According to \eqref{eq:JL-prop-47}, the total mass of loops that connect $K$ and $S_b$,
and cross \(A(1,b)\) exactly once, is given by
\begin{equation}\label{eq:cross-lower-bound}
        \int_{S_b}\int_{S_1}
        \nu^{B_b}_{x,y}(\omega\cap K\ne\varnothing)\,
        H_{B_1^c}(y,x)\,
        \sigma_1(dx)\sigma_b(dy),
\end{equation}
which gives a natural lower bound for $\Bcal(K,S_b;\R^3)$.
Since \(b\) is fixed, the Poisson kernel
 \(H_{B_1^c}(y,x)\) is uniformly bounded below by a positive constant
for \(x\in S_1\), \(y\in S_b\).  Hence this one-crossing contribution gives
 the lower bound in \eqref{eq:loop-cut-one-set}.

For the upper bound, decompose each loop counted by \(\Bcal(K,S_b;\R^3)\)
according to its number \(n\ge1\) of crossings of
\(A(1,b)\). Note that at least
one of the odd-indexed pieces must hit \(K\).  We use a union bound to mark one
such piece and then apply \eqref{eq:JL-prop-47}.  Summing over the \(n\)
possible locations of the marked piece cancels the prefactor \(\frac1n\).  By
cyclic symmetry, we may take the
marked piece to be \(e_1\).  It remains to verify that the total mass of the
unmarked crossing pieces is summable, that is,
\begin{equation}\label{eq:remaining-crossings-summable}
\sum_{n\ge1}\,\int_{(S_b)^{n-1}}\int_{(S_1)^{n-1}}
        \prod_{j=1}^{n-1} {H_{B_1^c}(y_j,x_{j+1})}\, H_{B_b}(x_{j+1},y_{j+1})\,
               H_{B_1^c}(y_n,x_1) \,
        \prod_{j=1}^{n-1}\sigma_1(dx_{j+1})\sigma_b(dy_{j+1})
\end{equation}
is finite. For \(n=1\), the summand reduces to \(H_{B_1^c}(y_1,x_1)\).
We first bound $H_{B_1^c}(y_n,x_1)$ by a constant $C_b<\infty$ for any $x_1\in S_1$, $y_n\in S_b$.
Then using the following two identities
\[
        \int_{S_b} H_{B_b}(x_{j+1},y_{j+1})\,\sigma_b(dy_{j+1})=1,
        \qquad x_{j+1}\in S_1,
\]
and
\[
        {\int_{S_1} H_{B_1^c}(y_j,x_{j+1})\,\sigma_1(dx_{j+1})}
        =
        \Pp_{y_j}(\tau_{S_1}<\infty)
        =
        \frac1b,
        \qquad y_j\in S_b,
\]
we conclude that \eqref{eq:remaining-crossings-summable} is bounded by $C_b\sum_{n\ge1} b^{-(n-1)}<\infty$.
This implies  the upper bound in \eqref{eq:loop-cut-one-set}.

The two-set estimate is obtained in the same way, with two marked pieces.
However, in order to decouple the two marked pieces, we leave some space in between to apply the Harnack principle.
Since the strategy is quite similar to the one-set case, we only give a sketch of the proof.
First, we decompose a loop in \(\mathrm{Cross}_{a_1,a_2}^{(n)}\) as in \eqref{eq:JL-prop-47}, and mark one \(B_{a_2}\)-piece hitting \(K_1\) and one \(B_{a_1}^c\)-piece hitting \(K_2\).
Then, by the Harnack principle,
\[
\nu^{B_{a_2}}_{x,y}(\omega\cap K_1\ne\varnothing) \asymp
\int_{S_{a_2}}\int_{S_{a_1}}
        \nu^{B_{a_2}}_{x,y}\bigl(\omega\cap K_1\ne\varnothing\bigr)\,
        \,
        \sigma_{a_1}(dx)\sigma_{a_2}(dy),
\]
\[
\nu^{B_{a_1}^c}_{y,x}\bigl(\omega\cap K_2\ne\varnothing\bigr)\asymp \int_{S_{a_1}}\int_{S_{a_2}} \nu^{B_{a_1}^c}_{y,x}\bigl(\omega\cap K_2\ne\varnothing\bigr)\,\sigma_{a_2}(dy)\sigma_{a_1}(dx).
\]
Note that the remaining crossings are summable for the same reason as before.
This concludes the proof of \eqref{eq:loop-cut-two-set} for \(r=1\).
Scaling back gives the result for general $r>0$. We thus finish the proof of Lemma~\ref{lem:loop-cut}.
\end{proof}

The following estimates are consequences of Lemma~\ref{lem:loop-cut}.

{
\begin{lemma}
\label{lem:two-capacity-welding}
Uniformly for \(r>0\) and compact sets \(K\subset B_r\),
\begin{equation}\label{eq:splitting-capacity}
 \Bcal(K,S_{3r};\R^3)\asymp\frac{\Capa(K)}r.
\end{equation}
Moreover, if \(K_1\subset A(\frac r4,r)\) and
\(K_2\subset A(2r,8r)\) are compact, then
\begin{equation}\label{eq:two-capacity-welding}
 \Bcal(K_1,K_2;\R^3)
 \gtrsim r^{-2}\,\Capa(K_1)\,\Capa(K_2).
\end{equation}
\end{lemma}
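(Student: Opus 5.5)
We will deduce both estimates from Lemma~\ref{lem:loop-cut}, reducing the path-measure integrals to hitting probabilities that are expressed through capacity by \eqref{eq:relative-equilibrium-measure-hitting}. By scaling it suffices to take \(r=1\).

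\emph{One-set estimate.} Apply \eqref{eq:loop-cut-one-set} with \(b=3\). This gives
\[
\Bcal(K,S_3;\R^3)\asymp\int_{S_3}\int_{S_1}\nu^{B_3}_{x,y}(\omega\cap K\ne\varnothing)\,\sigma_1(dx)\sigma_3(dy).
\]
To relate this to capacity, we would rather start the path measure from a sphere at positive distance from \(K\). The cleanest route is to apply Lemma~\ref{lem:loop-cut} with the inner sphere \(S_{3/2}\) (reading \(K\subset B_1\subset B_{3/2}\), with \(b=2\)), so that the loop measure is comparable to
\[
\int_{S_3}\int_{S_{3/2}}\nu^{B_3}_{x,y}(\omega\cap K\ne\varnothing)\,\sigma(dx)\sigma(dy).
\]
Fix \(x\in S_{3/2}\). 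Integrating out the exit point \(y\) gives \(\int_{S_3}\nu^{B_3}_{x,y}(\cdot)\,\sigma_3(dy)=\Pp_x(\cdot)\) for Brownian motion started at \(x\) and stopped on exiting \(B_3\). Hence the double integral equals \(\int_{S_{3/2}}\Pp_x(\tau_K<\tau_{S_3})\,\sigma(dx)\).

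By \eqref{eq:relative-equilibrium-measure-hitting}, this hitting probability equals \(\int_K G_{B_3}(x,z)\,e^{B_3}_K(dz)\). For \(x\in S_{3/2}\) and \(z\in B_1\), the kernel \(G_{B_3}(x,z)\) is bounded above and below by positive constants. It therefore remains to show \(\Capa_{B_3}(K)\asymp\Capa(K)\) for \(K\subset B_1\). The lower bound \(\Capa_{B_3}\ge\Capa\) holds since \(G_{B_3}\le G\). For the upper bound, note that \(G_{B_3}(x,z)\ge G(x,z)-\tfrac1{4\pi}\), and \(G(x,z)\ge\tfrac1{4\pi}\) on \(B_1\). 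This gives \(G_{B_3}\ge c\,G\) on \(B_1\times B_1\), hence \(\Capa_{B_3}(K)\le c^{-1}\Capa(K)\). Undoing the scaling yields \(\Capa(K)/r\).

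The main nuisance is justifying the replacement of the inner sphere \(S_1\) by \(S_{3/2}\) when \(K\) may touch \(S_1\). Two approaches are available. One can redo the crossing decomposition of Lemma~\ref{lem:loop-cut} at the annulus \(A(3/2,3)\), since only \(K\subset B_{3/2}\) is used there. Alternatively, one can bound the \(S_1\) integral directly, using a last-exit decomposition together with the Harnack principle on \(S_1\). We would use the first option.

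\emph{Two-set estimate.} With \(r=1\), \(K_1\subset A(1/4,1)\) and \(K_2\subset A(2,8)\), apply \eqref{eq:loop-cut-two-set} with inner radius \(1\), \(b=2\), and \(a_1=4/3\), \(a_2=5/3\). The first factor is, by the one-set argument above, comparable to \(\Capa(K_1)\); only the lower bound is needed here.

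For the second factor, fix \(y\in S_{5/3}\) and integrate over the endpoint \(x\in S_{4/3}\). This gives \(\Pp_y(\tau_{K_2}<\tau_{S_{4/3}},\ \tau_{S_{4/3}}<\infty)\). Bound it below by the probability of hitting \(K_2\) before \(S_{4/3}\) and then returning to \(S_{4/3}\). Since \(K_2\subset B_8\), the return probability is at least \(1/6\) by the strong Markov property. Next, \(\Pp_y(\tau_{K_2}<\tau_{S_{4/3}})=\int G_{B_{4/3}^c}(y,z)\,e^{B_{4/3}^c}_{K_2}(dz)\). For \(z\in A(2,8)\) and \(y\in S_{5/3}\), the kernel \(G_{B_{4/3}^c}(y,z)\) is bounded below by a constant, by the Harnack principle and the explicit image formula. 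Finally, \(\Capa_{B_{4/3}^c}(K_2)\ge\Capa(K_2)\). Multiplying the two factors and rescaling, each capacity contributes \(r\) and the prefactor contributes \(r^{-4}\cdot r^{2}\cdot r^{2}\) from the surface measures, yielding \(r^{-2}\Capa(K_1)\Capa(K_2)\).
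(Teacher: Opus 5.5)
Your proposal follows the paper's proof in all essentials. You cut the loop with Lemma~\ref{lem:loop-cut} using an inner sphere at positive distance from $K$, integrate out the exit point to get $\Pp_x(\tau_K<\tau_{S_3})$, and express this through the relative equilibrium measure. For \eqref{eq:two-capacity-welding} you bound the outer factor from below by the strong Markov property at $\tau_{K_2}$ and the return probability $\frac{4/3}{|z|}\ge\frac16$. All the lower bounds, and hence the whole two-set estimate, go through as written. The ``nuisance'' you flag is not one. Lemma~\ref{lem:loop-cut} holds uniformly over $r>0$ and $K\subset B_r$, so inner radius $\frac32$ with $b=2$ is a direct application; the paper uses $S_2$ and $S_3$ in the same way.

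The one step that fails as written is the upper bound in \eqref{eq:splitting-capacity}. There you reduce to $\Capa_{B_3}(K)\lesssim\Capa(K)$ via $G_{B_3}\ge G-\frac1{4\pi}$ and $G\ge\frac1{4\pi}$ on $B_1\times B_1$, ``hence $G_{B_3}\ge cG$''. Those two inequalities give only $G_{B_3}\ge0$. Since $\frac1{4\pi}/G(x,z)=\frac{|x-z|}{2}$, the quotient $(G-\frac1{4\pi})/G=1-\frac{|x-z|}{2}$ tends to $0$ for nearly antipodal points near $S_1$. A set $K\subset B_1$ may contain such pairs, so no uniform $c$ results. The comparison $G_{B_3}\ge cG$ on $B_1\times B_1$ is actually true (the image formula for the ball gives $c=\frac14$), but it needs a different argument.

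Simpler, and what the paper does, is to bypass relative capacity in the upper bound. For $x\in S_{3/2}$,
$\Pp_x(\tau_K<\tau_{S_3})\le\Pp_x(\tau_K<\infty)=\int_K G(x,z)\,e_K(dz)\le\sup_{z\in B_1}G(x,z)\,\Capa(K)\le\frac1\pi\Capa(K)$,
since $|x-z|\ge\frac12$. With this replacement your proof is complete.

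Your closing scaling bookkeeping is also muddled. It is just $\Bcal(K_1,K_2;\R^3)=\Bcal(K_1/r,K_2/r;\R^3)$ by scale invariance of the loop measure, together with $\Capa(K/r)=\Capa(K)/r$.
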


\begin{proof}
By Brownian scaling, it is enough to take \(r=1\).
Fix \(1<a_1<a_2\).
By \eqref{eq:bulk-to-boundary-path-measure} and \eqref{eq:relative-equilibrium-measure-hitting},
we claim that uniformly in \(K\subset B_1\) and \(x\in S_{a_1}\),
\begin{equation}\label{eq:inner-path-capacity}
 \int_{S_{a_2}}\nu^{B_{a_2}}_{x,y}
       (\omega\cap K\ne\varnothing)\,\sigma_{a_2}(dy)
 =\Pp_x(\tau_K<\tau_{S_{a_2}})
 =\int_K G_{B_{a_2}}(x,z)\,e_K^{B_{a_2}}(dz)
 \asymp\Capa(K).
\end{equation}
For the lower bound \(\Pp_x(\tau_K<\tau_{S_{a_2}})\gtrsim\Capa(K)\)
in \eqref{eq:inner-path-capacity}, note that \(G_{B_{a_2}}(x,z)\) is uniformly bounded below on
\(S_{a_1}\times B_1\), and
\(\Capa_{B_{a_2}}(K)\ge\Capa(K)\).
For the upper bound, we use
\(\Pp_x(\tau_K<\tau_{S_{a_2}})\le\Pp_x(\tau_K<\infty)\)
and \eqref{eq:relative-equilibrium-measure-hitting}.
Taking \(a_1=2\), \(a_2=3\), integrating over the starting point
\(x\in S_2\), and applying
\eqref{eq:loop-cut-one-set} proves \eqref{eq:splitting-capacity}.

For \eqref{eq:two-capacity-welding}, fix \(1<a_1<a_2<2\) and apply
\eqref{eq:loop-cut-two-set} with \(b=2\).
The first double integral in \eqref{eq:loop-cut-two-set} counts paths
that start on \(S_{a_1}\), hit \(K_1\), and stop at their first exit from
\(B_{a_2}\).  It is comparable to \(\Capa(K_1)\) by
\eqref{eq:inner-path-capacity}, integrated over \(x\in S_{a_1}\).
The second double integral counts paths that start on \(S_{a_2}\),
hit \(K_2\) before their first hit of \(S_{a_1}\), and stop there.
The strong Markov property at \(\tau_{K_2}\) gives, uniformly for
the starting point \(y\in S_{a_2}\),
\[
\begin{aligned}
 \int_{S_{a_1}}\nu^{B_{a_1}^c}_{y,x}
       (\omega\cap K_2\ne\varnothing)\,\sigma_{a_1}(dx)
 &=\Pp_y(\tau_{K_2}<\tau_{S_{a_1}}<\infty)\\
 &\gtrsim\Pp_y(\tau_{K_2}<\tau_{S_{a_1}})
 =\int_{K_2}G_{B_{a_1}^c}(y,z)\,e_{K_2}^{B_{a_1}^c}(dz)
 \gtrsim\Capa(K_2).
\end{aligned}
\]
The last inequality follows from the uniform lower bound on
\(G_{B_{a_1}^c}\) on \(S_{a_2}\times A(2,8)\) and
\(\Capa_{B_{a_1}^c}(K_2)\ge\Capa(K_2)\).
Integrating over the starting point \(y\in S_{a_2}\) and substituting
the bounds for both double integrals into \eqref{eq:loop-cut-two-set} proves
\eqref{eq:two-capacity-welding} at unit scale.
Scale invariance of Brownian loop measure and
\(\Capa(rK)=r\Capa(K)\) give both estimates for general \(r>0\).
\end{proof}
}

We now turn a loop joining an inner set to an outer set into two local
loops, by closing each of its marked pieces in a fixed annulus.

\begin{lemma}
\label{lem:splitting}
Uniformly for \(r>0\) and compact sets
\(K_1\subset B_r\) and \(K_2\subset B_{4r}^c\),
\begin{equation}\label{eq:splitting-lemma}
\begin{aligned}
 \Bcal(K_1,K_2;\R^3)
 &\asymp\Bcal(K_1,S_{3r};B_{4r})\,
        \Bcal(S_{2r},K_2;B_r^c)\\
 &\asymp\Bcal(K_1,S_{3r};\R^3)\,
        \Bcal(S_{2r},K_2;\R^3).
\end{aligned}
\end{equation}
\end{lemma}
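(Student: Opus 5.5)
By Brownian scaling it suffices to take \(r=1\), with \(K_1\subset B_1\) and \(K_2\subset B_4^c\). The plan is to apply the two-set decomposition \eqref{eq:loop-cut-two-set} of Lemma~\ref{lem:loop-cut} and the one-set estimate \eqref{eq:loop-cut-one-set}, and to identify each of the two factors produced there with the corresponding local loop mass. Concretely, I would take \(b=4\) and \(a_1=2\), \(a_2=3\), so that \eqref{eq:loop-cut-two-set} gives
\[
 \Bcal(K_1,K_2;\R^3)\asymp I_1(K_1)\,I_2(K_2),
\]
where
\[
 I_1(K_1):=\int_{S_3}\int_{S_2}\nu^{B_3}_{x,y}(\omega\cap K_1\ne\varnothing)\,\sigma_2(dx)\sigma_3(dy)
\]
and
\[
 I_2(K_2):=\int_{S_2}\int_{S_3}\nu^{B_2^c}_{y,x}(\omega\cap K_2\ne\varnothing)\,\sigma_3(dy)\sigma_2(dx).
\]

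\textbf{The inner factor.} The key step is \(I_1(K_1)\asymp\Bcal(K_1,S_3;B_4)\asymp\Bcal(K_1,S_3;\R^3)\). For the comparison with \(\Bcal(K_1,S_3;\R^3)\), I would repeat the proof of \eqref{eq:loop-cut-one-set} with crossings of \(A(2,3)\) in place of \(A(1,b)\); alternatively, note that \(I_1(K_1)\asymp\Capa(K_1)\) by \eqref{eq:inner-path-capacity} and \(\Bcal(K_1,S_3;\R^3)\asymp\Capa(K_1)\) by \eqref{eq:splitting-capacity}. For the loops confined to \(B_4\), the upper bound is trivial from \(\Bcal(K_1,S_3;B_4)\le\Bcal(K_1,S_3;\R^3)\). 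For the lower bound, I would use the one-crossing contribution as in \eqref{eq:cross-lower-bound}: decompose loops crossing \(A(2,3)\) exactly once via \eqref{eq:JL-prop-47}, but require the returning excursion from \(S_3\) to \(S_2\) to stay inside \(B_4\). Its mass is \(H_{B_4\setminus\overline B_2}(y,x)\), which is bounded below uniformly for \(y\in S_3\), \(x\in S_2\). The resulting loops lie in \(B_4\), hit \(K_1\) and hit \(S_3\), so \(\Bcal(K_1,S_3;B_4)\gtrsim I_1(K_1)\).

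\textbf{The outer factor.} Symmetrically, I would show \(I_2(K_2)\asymp\Bcal(S_2,K_2;B_1^c)\asymp\Bcal(S_2,K_2;\R^3)\). For the upper bound, decompose loops hitting both \(S_2\) and \(K_2\) by their crossings of \(A(2,3)\), mark one \(B_2^c\)-piece hitting \(K_2\), and sum the remaining crossings geometrically exactly as in \eqref{eq:remaining-crossings-summable}. The only new check is that the inner pieces \(H_{B_3}(x,y)\) are bounded for \(x\in S_2\), \(y\in S_3\). For the lower bound, keep one-crossing loops whose inner excursion from \(S_2\) to \(S_3\) stays in \(B_1^c\); its mass is \(H_{A(1,3)}(x,y)\), again bounded below uniformly. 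Multiplying the two-factor comparisons then yields both lines of \eqref{eq:splitting-lemma}.

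\textbf{Main obstacle.} The delicate point is using \eqref{eq:loop-cut-two-set} with \(K_2\) unbounded (or at least far away), where one must make sure no mass is lost or gained at infinity. Because three-dimensional Brownian motion is transient, the outer excursions \(\nu^{B_2^c}_{y,x}\) only count paths that return to \(S_2\). This is consistent with loops, which necessarily return. So the argument needs only the Harnack comparison, which I would obtain by integrating against the start and end points on the fixed spheres \(S_2\) and \(S_3\), exactly as in the proof of Lemma~\ref{lem:loop-cut}.
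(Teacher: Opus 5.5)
Your proposal is correct and follows essentially the same route as the paper: apply \eqref{eq:loop-cut-two-set} with \(a_1=2\), \(a_2=3\), \(b=4\), identify the inner factor with \(\Capa(K_1)\) and close one-crossing paths inside \(A(2,4)\), then for the outer factor close paths inside \(A(1,3)\) and use the one-set crossing decomposition with the exterior piece marked. The ``main obstacle'' you flag is not really one, since \(K_2\) is compact by hypothesis and the Harnack averaging is already built into \eqref{eq:loop-cut-two-set}.
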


\begin{proof}
By Brownian scaling, it is enough to take \(r=1\). We write
\[
 J(K_2):=
 \int_{S_2}\int_{S_3}
 \nu^{B_2^c}_{z,w}(\omega\cap K_2\ne\varnothing)\,
 \sigma_3(dz)\sigma_2(dw).
\]
Applying \eqref{eq:loop-cut-two-set} with \(a_1=2\), \(a_2=3\),
and \(b=4\), and using \eqref{eq:inner-path-capacity} for the inner
factor, gives
\begin{equation}\label{eq:splitting-integral-product}
 \Bcal(K_1,K_2;\R^3)\asymp\Capa(K_1)\,J(K_2).
\end{equation}
It remains to compare these factors with the two local loop masses.

For the inner factor, use the one-crossing construction in
\eqref{eq:cross-lower-bound} with cutting spheres \(S_2,S_3\).
Close each path from \(x\in S_2\) to \(y\in S_3\) that hits \(K_1\)
by a path from \(y\) to \(x\) in \(A(2,4)\).
 The closing Poisson kernel \(H_{A(2,4)}(y,x)\) is uniformly bounded below,
so \eqref{eq:inner-path-capacity} yields
\begin{equation}\label{eq:IK1}
 \Capa(K_1)\lesssim\Bcal(K_1,S_3;B_4).
\end{equation}
By monotonicity and \eqref{eq:splitting-capacity}, we get
\[
 \Bcal(K_1,S_3;B_4)\asymp
 \Bcal(K_1,S_3;\R^3)\asymp\Capa(K_1).
\]

For the outer factor, close each path counted by \(J(K_2)\) from
\(z\in S_3\) to \(w\in S_2\) by a path from \(w\) to \(z\) in
\(A(1,3)\).  Since \(H_{A(1,3)}(w,z)\) is uniformly bounded below,
 this construction implies that
\[
 J(K_2)\lesssim\Bcal(S_2,K_2;B_1^c).
\]
The one-set crossing decomposition with the exterior piece marked gives
\(\Bcal(S_2,K_2;\R^3)\asymp J(K_2)\).
Domain monotonicity therefore yields
\[
 \Bcal(S_2,K_2;B_1^c)\asymp
 \Bcal(S_2,K_2;\R^3)\asymp J(K_2).
\]
Combining these comparisons with \eqref{eq:splitting-integral-product}
and scaling back proves \eqref{eq:splitting-lemma}.
\end{proof}

\subsection{Submultiplicativity and locality}

Recall the definition of the one-arm probability $p_r^{\alpha}$ in \eqref{eq:one-arm-prob-def}. Next, we show that
it satisfies the submultiplicative property by using the splitting lemma.

\begin{theorem}
\label{thm:submultiplicativity}
For every \(\bar\alpha>0\), there is
\(C_{\bar\alpha}<\infty\) such that, simultaneously for
\(0<\alpha\le\bar\alpha\), \(r>1\), and \(s>4\),
\[
    \alpha\, p_{rs}^{\alpha}\le C_{\bar\alpha}\,p_r^{\alpha}\,p_{s/4}^{\alpha}.
\]
\end{theorem}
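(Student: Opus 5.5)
The plan follows the outline in the proof strategy: split $\mathcal E_{rs}$ into the event where the inner crossing of $A(1,r)$ and the outer crossing of $A(4r,rs)$ occur disjointly, and its complement. By the BK inequality (Lemma~\ref{lem:poisson-bk}) and scale invariance,
\[
\Pp_\alpha(\mathcal E_r\circ\mathcal E_{4r,rs})\le p_r^\alpha\,p_{s/4}^\alpha .
\]
Here I use that $\mathcal E_{rs}$ implies both $\mathcal E_r$ and $\mathcal E_{4r,rs}$, because the crossing cluster must cross both annuli. This term already has the right form, except for the factor $\alpha$ on the left. Since $\alpha\le\bar\alpha$, we get $\alpha\,\Pp_\alpha(\mathcal E_r\circ\mathcal E_{4r,rs})\le\bar\alpha\,p_r^\alpha p_{s/4}^\alpha$, so this part is harmless.

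On $\mathcal E_{rs}\setminus(\mathcal E_r\circ\mathcal E_{4r,rs})$, the two crossings cannot be witnessed by disjoint loop sets. Take a chain of loops realizing $\mathcal E_{rs}$. Let $\ell_i$ be the last loop on the chain that meets $B_r$ before the chain reaches $S_{4r}$. If no loop in the chain both meets $\overline{B_r}$ and reaches $S_{4r}$, the chain splits into an initial segment realizing $\mathcal E_r$ and a terminal segment realizing $\mathcal E_{4r,rs}$, and these segments use disjoint loops. Hence on the complement of the disjoint event there is a loop $\gamma$ crossing $A(r,4r)$ with the following property. There is a cluster of $\LL\setminus\{\gamma\}$ connecting $S_1$ to $\gamma\cap\overline{B_r}$, call it $K_1$, and a cluster of $\LL\setminus\{\gamma\}$ connecting $\gamma\cap B_{4r}^c$ to $S_{rs}$, call it $K_2$. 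The sub-chains before and after $\gamma$ are disjoint from each other, although they may share loops once one allows reordering; I take the chain to be a minimal (shortest) one so that the sub-chains use distinct loops.

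I then apply the Mecke/Palm formula (Lemma~\ref{lem:loop-soup-palm}) to the sum over $\gamma$:
\[
\Pp_\alpha(\mathcal E_{rs}\setminus \mathcal E_r\circ\mathcal E_{4r,rs})\le \alpha\int \Pp_\alpha\bigl(\gamma \text{ welds an inner cluster of } \LL \text{ to an outer cluster}\bigr)\,\muLoop(d\gamma).
\]
Next, condition on the soup $\LL$ and integrate over $\gamma$ first. The inner set $K_1:=\mathcal C\cap\overline{B_r}$, for clusters touching $S_1$, is a compact subset of $B_r$. The outer set $K_2$ is the union of clusters reaching $S_{rs}$, intersected with $B_{4r}^c$. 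The $\gamma$-integral is at most $\Bcal(K_1,K_2;\R^3)$, and Lemma~\ref{lem:splitting} bounds this by a constant times $\Bcal(K_1,S_{3r};\R^3)\,\Bcal(S_{2r},K_2;\R^3)$. Each factor is the mass of a \emph{local} loop that crosses only part of the annulus. The product therefore equals the $\mu^{\Loop}\otimes\mu^{\Loop}$-mass of pairs $(\gamma_1,\gamma_2)$ with $\gamma_1$ hitting $K_1$ and $S_{3r}$, and $\gamma_2$ hitting $S_{2r}$ and $K_2$. Reversing Palm for a pair of added loops costs a factor $\alpha^{-2}$ together with $\exp$-type corrections bounded via $\alpha\le\bar\alpha$. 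It gives the expected number of ordered pairs of soup loops with these properties, which in turn bounds the probability of the disjoint occurrence $\{S_1\leftrightarrow S_{3r}\}\circ\{S_{2r}\leftrightarrow S_{rs}\}$ using $\gamma_1,\gamma_2$ respectively. Then BK gives at most $p_{r}^\alpha p_{s/2}^\alpha\le p_r^\alpha p_{s/4}^\alpha$, using monotonicity in the radii.

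Tracking powers of $\alpha$: one factor $\alpha$ comes from Palm for $\gamma$, and $\alpha^{-2}$ from re-inserting two loops. This would give $\alpha^{-1}$ overall, so directly we would only get $\Pp\lesssim \alpha^{-1}p_rp_{s/4}$. That is the wrong direction. The fix is to not re-insert $\gamma_1,\gamma_2$ as Poisson points. Instead, I bound directly: $\Pp(\text{soup}\cup\{\gamma_1\}\in\mathcal E_{3r})$, integrated over $\gamma_1$ hitting $S_1$ and $S_{3r}$ only, is trivially $\le$ the total mass of such $\gamma_1$ ($\asymp1$) times $1$. That is too weak. The honest route is the following. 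Either $K_1$ itself already reaches $S_{r/2}$ (bounded by $p_{r/2}$-type events), or we use that $\Bcal(K_1,S_{3r})\lesssim \Capa(K_1)/r\le 1$ and keep the event $\{S_1\leftrightarrow \partial B_r \text{ via } K_1\}$ at probability $\lesssim p_{r}$ up to constants. So the correct accounting is $\Pp\lesssim\alpha\,\Ee[\Bcal(K_1,S_{3r})\Bcal(S_{2r},K_2)]$. Each $\Bcal$ factor is bounded by $\mathbf 1\{\text{cluster reaches near }S_{r}\}$ times $O(1)$ by \eqref{eq:splitting-capacity}, since $\Capa(K_1)\le \Capa(\overline{B_r})=O(r)$. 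The product of indicators is handled by independence: the inner soup restricted to $B_{r}$-ish scales and the outer one are made independent by restricting to loops inside $B_{2r}$ versus outside $B_{r}$ and paying for loops crossing $A(r,2r)$ with BK again. This last decoupling step, making the inner and outer cluster indicators independent without losing the factor structure, is the main obstacle. I would first try to get it by applying BK to the disjoint witnesses (inner chain, outer chain) that exist once $\gamma$ is removed. The estimate then gives $\Pp_\alpha(\mathcal E_{rs})\lesssim (1+\alpha)p_rp_{s/4}$. Multiplying by $\alpha\le\bar\alpha$ yields the claim with $C_{\bar\alpha}$ depending only on $\bar\alpha$.
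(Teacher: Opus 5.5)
Your BK step and your chain dichotomy are fine. Along a shortest chain, either some loop meets both $\overline{B_r}$ and $S_{4r}$, or the chain splits into disjoint witnesses of $\mathcal E_r$ and $\mathcal E_{4r,rs}$.

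The genuine problem is the accounting of $\alpha$. The statement $\alpha p_{rs}^\alpha\le C p_r^\alpha p_{s/4}^\alpha$ is exactly $p_{rs}^\alpha\le C\alpha^{-1}p_r^\alpha p_{s/4}^\alpha$. So the bound $\Pp_\alpha(\mathcal E_{rs}\setminus(\mathcal E_r\circ\mathcal E_{4r,rs}))\lesssim\alpha^{-1}p_r^\alpha p_{s/4}^\alpha$, which you reached and then discarded as ``the wrong direction'', is precisely what is needed.

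The target you switch to, $\Pp_\alpha(\mathcal E_{rs})\lesssim(1+\alpha)p_r^\alpha p_{s/4}^\alpha$, is false as $\alpha\downarrow0$. At fixed radii, say $r=2$ and $s=8$, a single loop crossing $A(1,rs)$ gives $p_{rs}^\alpha\ge1-e^{-\alpha\Bcal(S_1,S_{rs};\R^3)}\gtrsim\alpha$. Meanwhile $p_r^\alpha p_{s/4}^\alpha=(p_2^\alpha)^2=O(\alpha^2)$ by \eqref{imp:eq:initial-scale}. Consistently, Theorem~\ref{thm:utc} gives $p_{rs}^\alpha\asymp\alpha(rs)^{-\xi}$ but $p_r^\alpha p_{s/4}^\alpha\asymp\alpha^2(rs)^{-\xi}$. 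So the second half of the proposal aims at an impossible inequality.

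Its key step is also unjustified. You bound $\Bcal(K_1,S_{3r})$ by $O(1)$ times an indicator that the inner cluster reaches near $S_r$. But the welding loop can penetrate deep into $B_r$ and hit the cluster of $S_1$ anywhere, so no such indicator is forced.

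The missing idea is to localize the two seeds; this also removes your ``decoupling obstacle''. Take
\[
K_1:=S_1\cup\{\text{clusters of }\LL^\alpha_{B_r}\text{ meeting }S_1\},\qquad
K_2:=S_{rs}\cup\{\text{clusters of }\LL^\alpha_{B_{4r}^c}\text{ meeting }S_{rs}\},
\]
built only from loops inside $B_r$ and inside $B_{4r}^c$ respectively. Then $K_1$, $K_2$, and the class $\BL_{r,4r}$ of loops meeting both $S_r$ and $S_{4r}$ come from disjoint loop classes, so they are independent. No Palm formula is needed.

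Your dichotomy shows that off $\mathcal E_r\circ\mathcal E_{4r,rs}$ some loop of $\BL_{r,4r}$ meets both $K_1$ and $K_2$. Hence the conditional probability given $K_1,K_2$ is at most $\alpha\Bcal(K_1,K_2;\R^3)$. By Lemma~\ref{lem:splitting} this is $\lesssim\alpha\,\Bcal(K_1,S_{3r};B_{4r})\,\Bcal(S_{2r},K_2;B_r^c)$.

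The loops of $B_{4r}$ meeting $S_{3r}$ are not contained in $B_r$, so they are independent of $K_1$, and their mass is uniformly bounded, say by $M$. Therefore $\alpha\Bcal(K_1,S_{3r};B_{4r})\le e^{\bar\alpha M}\Pp_\alpha(K_1\xleftrightarrow{\LL^\alpha_{B_{4r}}}S_{3r}\mid K_1)$, and likewise for $K_2$. Integrating, and using the independence of $K_1$ and $K_2$, gives a bound $\lesssim\alpha\cdot\alpha^{-2}p_r^\alpha p_{s/4}^\alpha$, which is the theorem.

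The factor $p_r^\alpha$ thus comes from a fresh, independent attaching loop that completes a crossing of $A(1,3r)$ together with $K_1$, not from any property of $K_1$ alone. Defining $K_1,K_2$ through clusters of the full soup, as you did, is what created the dependence you could not remove.
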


\begin{proof}[Proof of Theorem~\ref{thm:submultiplicativity}]
Fix \(\bar\alpha>0\) and let
\(0<\alpha\le\bar\alpha\). All implicit constants in this proof depend
only on \(\bar\alpha\).
Fix $r>1$ and $s>4$.  Clearly,
$
        \mathcal E_{rs}
        \subset
        \mathcal E_r\cap \mathcal E_{4r,rs}
$,
and we can further split it according to whether the two crossing events occur
disjointly or not and obtain
\[
        \mathcal E_{rs}
        \subset
        \bigl(\mathcal E_r\circ \mathcal E_{4r,rs}\bigr)
        \cup
        \bigl(\mathcal E_{rs}\setminus
        (\mathcal E_r\circ \mathcal E_{4r,rs})\bigr).
\]
For the first event, we use Lemma~\ref{lem:poisson-bk} to obtain that
\begin{equation}\label{eq:submultiplicativity-disjoint}
        \Pbb_\alpha(\mathcal E_r\circ \mathcal E_{4r,rs})
        \le
        \Pbb_\alpha(\mathcal E_r)\,\Pbb_\alpha(\mathcal E_{4r,rs}).
\end{equation}
Hence, $\Pbb_\alpha(\mathcal E_r\circ \mathcal E_{4r,rs})
        \le
        p_r^{\alpha}\,p_{s/4}^{\alpha}$ by scale invariance of the Brownian loop soup.

It remains to upper bound the non-disjoint part $\{\mathcal E_{rs}\setminus
        (\mathcal E_r\circ \mathcal E_{4r,rs})\}$.  Let $\BL_{r,4r}$ be the collection
of loops in $\LL_{\R^3}^\alpha$ that intersect both $S_r$ and $S_{4r}$.
Define the inner random set
\[
        K_1
        :=
        S_1
        \cup
        \bigl\{
        \text{all clusters of }\LL^\alpha_{B_r}
        \text{ that intersect }S_1
        \bigr\},
\]
and the outer random set
\[
        K_2
        :=
        S_{rs}
        \cup
        \bigl\{
        \text{all clusters of }\LL^\alpha_{B_{4r}^c}
        \text{ that intersect }S_{rs}
        \bigr\}.
\]
The loop family $\BL_{r,4r}$ is independent of the loop collections used to
construct $K_1$ and $K_2$.  Moreover \(K_1\subset B_r\) and
\(K_2\subset B_{4r}^c\) by construction.
On the event
$
        \mathcal E_{rs}\setminus
        (\mathcal E_r\circ \mathcal E_{4r,rs}),
$
the crossing from $S_1$ to $S_{rs}$ cannot be realized by
two disjoint subfamilies producing the events $\mathcal E_r$ and
$\mathcal E_{4r,rs}$.
Consequently, there must be at least one loop in $\BL_{r,4r}$ that connects
the inner set \(K_1\) to the outer set \(K_2\).
Thus,
\begin{equation}\label{eq:submult-bridge-event-inclusion}
        \mathcal E_{rs}\setminus
        (\mathcal E_r\circ \mathcal E_{4r,rs})
        \subset
        \{
        \exists \ell\in\BL_{r,4r}:
        \ell\cap K_1\ne\varnothing,\,
        \ell\cap K_2\ne\varnothing
        \}.
\end{equation}
Since any loop intersecting both \(K_1\)
and \(K_2\) must intersect both \(S_r\) and \(S_{4r}\), we
have
\begin{equation}\label{eq:submult-bridge-poisson}
\begin{aligned}
        \Pbb_\alpha
        \bigl(
        \exists \ell\in\BL_{r,4r}:
        \ell\cap K_1\ne\varnothing,\,
        \ell\cap K_2\ne\varnothing
        \,\big|\,K_1,K_2
        \bigr)
        &=
        1-\exp\left\{
        -\alpha\Bcal(K_1,K_2;\R^3)
        \right\}                                          \\
        &\le
        \alpha\Bcal(K_1,K_2;\R^3).
\end{aligned}
\end{equation}
Applying Lemma~\ref{lem:splitting}, we get
\begin{equation}\label{eq:submult-splitting-bound}
        \Bcal(K_1,K_2;\R^3)
        \lesssim
        \Bcal(K_1,S_{3r};B_{4r})\,
        \Bcal(S_{2r},K_2;B_{r}^c).
\end{equation}
Conditionally on \(K_1\), the loops in \(B_{4r}\)
 that intersect both \(K_1\) and \(S_{3r}\) form a Poisson process with total intensity
\(\alpha\Bcal(K_1,S_{3r};B_{4r})\), which is uniformly bounded.
For \(0\le m\le M\),
\(e^{-\bar\alpha M}\alpha m\le1-e^{-\alpha m}\le\alpha m\).
Consequently,
$\Bcal(K_1,S_{3r};B_{4r})\lesssim \alpha^{-1}\Pbb_\alpha
        \bigl(
        K_1
        \xleftrightarrow{\LL^\alpha_{B_{4r}}}
        S_{3r}
        \,\big|\,K_1
        \bigr)$.
The same observation
applies to \(\Bcal(S_{2r},K_2;B_{r}^c)\).
Combining these with \eqref{eq:submult-bridge-poisson} and \eqref{eq:submult-splitting-bound} gives
\[
\begin{aligned}
        &\Pbb_\alpha
        \bigl(
        \exists \ell\in\BL_{r,4r}:
        \ell\cap K_1\ne\varnothing,\,
        \ell\cap K_2\ne\varnothing
        \,\big|\,K_1,K_2
        \bigr)                                            \\
        &\qquad\lesssim \alpha^{-1}
        \Pbb_\alpha
        \bigl(
        K_1
        \xleftrightarrow{\LL^\alpha_{B_{4r}}}
        S_{3r}
        \,\big|\,K_1
        \bigr)
        \,
        \Pbb_\alpha
        \bigl(
        S_{2r}
        \xleftrightarrow{\LL^\alpha_{B_r^c}} K_2
        \,\big|\,K_2
        \bigr).
\end{aligned}
\]
Integrating over $K_1$ and $K_2$ and plugging back into \eqref{eq:submult-bridge-event-inclusion} yields
\[
        \Pbb_\alpha\bigl(\mathcal E_{rs}\setminus
        (\mathcal E_r\circ \mathcal E_{4r,rs})\bigr)
        \lesssim \alpha^{-1}
        \Pbb_\alpha\bigl(S_1\xleftrightarrow{\LL^\alpha_{B_{4r}}}S_{3r}\bigr)\,
        \Pbb_\alpha\bigl(S_{2r}\xleftrightarrow{\LL^\alpha_{B_r^c}}S_{rs}\bigr)
        \lesssim \alpha^{-1}p_r^{\alpha}\,p_{s/4}^{\alpha},
\]
where we have used the facts that
\(\{S_1\xleftrightarrow{\LL^\alpha_{B_{4r}}}S_{3r}\}\subset\mathcal E_r\),
\(\{S_{2r}\xleftrightarrow{\LL^\alpha_{B_r^c}}S_{rs}\}
\subset\mathcal E_{4r,rs}\), and scale invariance, which gives
\(\Pbb_\alpha(S_{2r}\xleftrightarrow{\LL^\alpha_{B_r^c}}S_{rs})\le p_{s/4}^{\alpha}\).
 Combining this with \eqref{eq:submultiplicativity-disjoint}
and \(\alpha\le\bar\alpha\) gives the desired bound.
\end{proof}

Using the same splitting strategy, we can prove the following locality property
for the one-arm probability, which will be used later.

\begin{lemma}\label{lem:local}
        For all $\alpha>0$, $r>1$, and $D\supset B_{2r}$,
        \[
        \Pp_\alpha(S_1\xleftrightarrow{\LL^{\alpha}_D}S_r)\asymp p_r^{\alpha},
        \]
        where the implied constant only depends on $\alpha$.
\end{lemma}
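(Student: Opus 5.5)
The upper bound is immediate: since \(\LL^\alpha_D\subset\LL^\alpha_{\R^3}\) and the event is increasing, monotonicity gives \(\Pp_\alpha(S_1\xleftrightarrow{\LL^{\alpha}_D}S_r)\le p_r^\alpha\). For the lower bound, it suffices to treat \(D=B_{2r}\) by monotonicity in the domain. I will compare \(\Pp_\alpha(S_1\xleftrightarrow{\LL^\alpha_{B_{2r}}}S_r)\) with \(p_r^\alpha\) by looking at what the loops exiting \(B_{2r}\) contribute to a crossing.

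Write \(\LL^\alpha_{\R^3}=\LL^\alpha_{B_{2r}}\cup\mathcal L'\), where \(\mathcal L'\) consists of the loops not contained in \(B_{2r}\); these two families are independent. On \(\mathcal E_r\), either the crossing is already realized inside \(\LL^\alpha_{B_{2r}}\), or some cluster crossing \(A(1,r)\) must use a loop of \(\mathcal L'\). I will handle the second case by the same bridge argument as in the proof of Theorem~\ref{thm:submultiplicativity}. Define \(K:=S_1\cup\{\text{clusters of }\LL^\alpha_{B_{r/2}}\text{ meeting }S_1\}\subset B_{r/2}\), which is independent of the loops meeting \(S_{r/2}\). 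If the crossing is not realized in \(\LL^\alpha_{B_{2r}}\), it still produces a connection from \(K\) to \(S_{r/2}\) through loops. I will then split according to whether some single loop hits both \(K\) and \(S_{2r}\).

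\emph{Case (a):} a single loop hits both \(K\) and \(S_{2r}\). Conditionally on \(K\), this has probability at most \(\alpha\Bcal(K,S_{2r};\R^3)\). Lemma~\ref{lem:loop-cut} together with Lemma~\ref{lem:splitting}-type comparisons (scaled with \(r/2\) in place of \(r\)) gives \(\Bcal(K,S_{2r};\R^3)\asymp\Bcal(K,S_{3r/2};B_{2r})\). As in the proof of Theorem~\ref{thm:submultiplicativity}, the latter is \(\lesssim\alpha^{-1}\Pp_\alpha(K\xleftrightarrow{\LL^\alpha_{B_{2r}}}S_{3r/2}\mid K)\). Averaging over \(K\), the contribution of case (a) is bounded by \(C\,\Pp_\alpha(S_1\xleftrightarrow{\LL^\alpha_{B_{2r}}}S_r)\).

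\emph{Case (b):} no single loop hits both \(K\) and \(S_{2r}\). Then the connection from \(K\) out to \(S_r\) passes through a chain of loops that first crosses \(A(r/2,r)\). I would like to say that the part of the chain up to \(S_r\) can be taken inside \(B_{2r}\). This fails in general, because a loop hitting \(S_r\) may leave \(B_{2r}\) and then return. The fix is to use the BK-type decomposition from Theorem~\ref{thm:submultiplicativity} with a smaller intermediate scale. Either \(\mathcal E_{r/8}\) occurs inside \(\LL^\alpha_{B_{r/2}}\), disjointly from a crossing of \(A(r/2,r)\) by loops of diameter comparable to \(r\) that meet \(B_r\), or a bridging loop exists. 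The bridging case is treated as in case (a). In the disjoint case, BK gives a bound of \(\Pp_\alpha(\mathcal E_{r/8})\) times a constant. I then restore \(p_r^\alpha\) using the fact that, with probability bounded below uniformly in \(r\), the annulus \(A(r/8,r)\) is crossed by a single loop inside \(B_{2r}\). This is a positive-mass event by scaling, and FKG gives \(\Pp_\alpha(S_1\xleftrightarrow{\LL^\alpha_{B_{2r}}}S_r)\gtrsim \Pp_\alpha(S_1\xleftrightarrow{\LL^\alpha_{B_{r/2}}}S_{r/8})\). The concluding step is induction on scale: set \(q_r:=\Pp_\alpha(S_1\xleftrightarrow{\LL^\alpha_{B_{2r}}}S_r)\), derive \(p_r\le C q_r + C p_{r/8}\,(\ldots)\), and close using submultiplicativity.

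\emph{Main obstacle.} I expect the main difficulty to be the last inductive step. Bounding \(p_{r/8}\) by \(q_r\) requires \(q_r\gtrsim q_{r/8}\), which holds by FKG and a single welding loop of positive mass. The non-local contribution also has to be shown to be a uniformly small fraction, so a lossy constant would have to be avoided. Since the statement allows constants depending on \(\alpha\), a crude argument suffices: \(q_r\ge c\,q_{r/8}\) and \(p_r\le C(q_r+\alpha\,\Bcal\text{-terms})\). All of these terms reduce, via Lemma~\ref{lem:splitting}, to \(B_{2r}\)-local connection probabilities.
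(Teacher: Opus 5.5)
Your upper bound and your case~(a) are fine, but case~(b) has a genuine gap. The step ``FKG gives $\Pp_\alpha(S_1\xleftrightarrow{\LL^\alpha_{B_{2r}}}S_r)\gtrsim\Pp_\alpha(S_1\xleftrightarrow{\LL^\alpha_{B_{r/2}}}S_{r/8})$'' (your $q_r\gtrsim q_{r/8}$ via ``a single welding loop of positive mass'') does not follow. FKG only lets the crossing cluster and a loop crossing $A(r/8,r)$ occur simultaneously; it does not make them intersect. Conditionally on the cluster $K$, the mass of loops joining $K$ to $S_r$ is $\asymp\Capa(K)/r$ (Lemma~\ref{lem:two-capacity-welding}). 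Nothing forces a chain of small loops reaching $S_{r/8}$ to have capacity of order $r$: a connected set of diameter $r$ in $\R^3$ can have arbitrarily small, even zero, capacity. A uniform welding probability is exactly the content of Theorem~\ref{thm:main}, the hard part of the paper. That theorem is proved only for $\alpha\in\mathrm I_0$, whereas the lemma is claimed for every $\alpha>0$. Even the weaker comparison $p_{r/8}\lesssim p_r$ is Lemma~\ref{imp:lem:fixed-ratio}, which rests on Theorem~\ref{thm:main}. Moreover, even granting $q_r\ge c\,q_{r/8}$, a recursion $p_r\le Cq_r+C'p_{r/8}$ with a non-small $C'$ does not close. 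Iterating over $\log_8 r$ scales loses a factor $(C'/c)^{\log_8 r}$, i.e.\ a power of $r$. The paper's recursion (Proposition~\ref{lem:residual-extension}) closes only because its coefficient $\chi^{\bar\alpha}_{5/8,3/4}(\delta)$ can be made small, which again uses $\mathrm I_0$. So your ``crude argument suffices'' is wrong, and submultiplicativity only gives upper bounds on $p$, which is not what is missing.

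Case~(b) is created by your choice of $K$ at scale $r/2$; it disappears if you build $K$ at the outer scale. Take $K$ to be $S_1$ together with the loops of $\LL^\alpha_{B_r}$ connected to $S_1$; the paper uses $\LL^\alpha_D$, which is the same one-step idea. On $\mathcal E_r\setminus\{S_1\xleftrightarrow{\LL^\alpha_{B_{2r}}}S_r\}$, take the first loop of a crossing chain that is not contained in $B_r$. It meets $K$ and reaches $S_r$. If it were contained in $B_{2r}$, the local crossing would already occur, so it exits $B_{2r}$ and meets $S_{2r}$. Thus only your case~(a) remains. Since loops not contained in $B_r$ are independent of $K$, this case has probability at most $\Ee_\alpha[\alpha\Bcal(K,S_{2r};\R^3)]$. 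The inner-factor comparison, with cutting spheres $S_{5r/4},S_{3r/2}$ and closing paths in $A(5r/4,2r)$, gives $\Bcal(K,S_{2r};\R^3)\lesssim\Bcal(K,S_{3r/2};B_{2r})$ uniformly over $K\subset\overline{B_r}$. This holds even when $K$ approaches $S_r$, since only loops reaching $S_{3r/2}$ are counted. The uniform bound $\Bcal(K,S_{3r/2};B_{2r})\le\Bcal(\overline{B_r},S_{3r/2};\R^3)<\infty$ then turns $\alpha\Bcal$ into $\lesssim_\alpha 1-e^{-\alpha\Bcal(K,S_{3r/2};B_{2r})}$. This is the probability that an independent loop of $\LL^\alpha_{B_{2r}}$ joins $K$ to $S_{3r/2}$, which forces $S_1\xleftrightarrow{\LL^\alpha_{B_{2r}}}S_r$. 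This single replacement step needs no induction and no capacity estimate, and it works for every $\alpha>0$.
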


\begin{proof}
        We only need to show that there exists a constant $c(\alpha)$ such that
        \begin{equation}\label{eq:local}
                \Pp_\alpha\Big(\{S_1\xleftrightarrow{\LL_{\R^3}^\alpha}S_r\}\setminus\{S_1\xleftrightarrow{\LL^{\alpha}_D}S_r\}\Big)
                \le c(\alpha)\, \Pp_\alpha(S_1\xleftrightarrow{\LL^{\alpha}_D}S_r).
        \end{equation}
        Let $K$ be the union of $S_1$ and all the clusters in $\LL^{\alpha}_D$ that intersect it.
        On the event $\{S_1\xleftrightarrow{\LL_{\R^3}^\alpha}S_r\}\setminus\{S_1\xleftrightarrow{\LL^{\alpha}_D}S_r\}$, $K\subset B_r$,
        and there is a loop that is not fully contained in $D$ and intersects $K$.
        The total mass of such loops is bounded by \(\Bcal(K,S_{2r};\R^3)\).
        The inner-factor comparison in the proof of Lemma~\ref{lem:splitting},
        with cutting spheres \(S_{5r/4},S_{3r/2}\) and closing paths kept in
        \(A(5r/4,2r)\), gives
        \(\Bcal(K,S_{2r};\R^3)\lesssim\Bcal(K,S_{3r/2};B_{2r})\).
        Therefore, we can replace the previous loop with a loop in \(B_{2r}\subset D\) that connects $K$ and $S_r$ with positive probability (depending only on $\alpha$).
        This gives \eqref{eq:local} and proves the lemma.
\end{proof}

\section{Capacity bounds and one-arm estimates}
\label{sec:robust-crossing}

In this section, we prove the capacity lower bound
in Theorem~\ref{thm:continuum-robust-summary} and the up-to-constants
 estimates in Theorem~\ref{thm:utc} in the two subsections, respectively.
The key is to find a macroscopic piece of a crossing cluster
near the outer scale whose capacity is comparable to that scale;
see \eqref{eq:robust-event-def} below for a precise formulation.

\subsection{Capacity bounds for crossing clusters}\label{subsec:robust-crossing}

We prove Theorem~\ref{thm:main}, which gives a capacity
lower bound for a crossing cluster with positive conditional probability,
uniformly in the scale and for \(0<\alpha\le\bar\alpha\) with any fixed
\(\bar\alpha\in\mathrm I_0\).

\begingroup
For \(\delta,\kappa>0\), \(r\ge2\), and a domain \(D\supset B_{2r}\),
define the event
\begin{equation}\label{eq:robust-event-def}
\mathsf H_{r,D}^{\delta,\kappa}
:=
\left\{
\begin{gathered}
\text{ there is a cluster $\mathcal C$ in $\LL_D^\alpha$ across } A(1,r) \text{ which contains }
\\ J
\text{ such that }J\subset A(\frac r2,2r),\,
\diam(J)\ge\frac{\delta r}{4},\,
\Capa(J)\ge\kappa r
\end{gathered}
\right\},
\end{equation}
and abbreviate
\(\mathsf H_r^{\delta,\kappa}:=
\mathsf H_{r,\mathbb R^3}^{\delta,\kappa}\).
\endgroup

\begin{theorem}
\label{thm:main}
\begingroup
There exists \(\kappa_0\in(0,1)\) such that,
for every \(\bar\alpha\in\mathrm I_0\), there exists
\(c>0\), depending only on \(\bar\alpha\), such that, for all
\(0<\alpha\le\bar\alpha\),
\(r\ge2\), and domains
\(D\supset B_{2r}\),
\begin{equation}\label{eq:local_version}
    \Pp_\alpha\bigl(\mathsf H_{r,D}^{2,\kappa_0}\bigr)
    \ge c\,
    \Pp_\alpha\bigl(S_1\xleftrightarrow{\LL_D^\alpha}S_r\bigr).
\end{equation}
\endgroup
\end{theorem}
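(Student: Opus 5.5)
The plan is to run the recursion \eqref{eq:induction} sketched in the proof strategy, now in the localized form with the domain \(D\). Write \(q_r:=\Pp_\alpha(S_1\xleftrightarrow{\LL_D^\alpha}S_r)\), which is comparable to \(p_r^{\alpha}\) by Lemma~\ref{lem:local}. Write \(g_r:=\Pp_\alpha(\mathsf H_{r,D}^{\delta,\kappa})\) for suitable \(\delta,\kappa\); at the end these will be upgraded to \(\delta=2\), \(\kappa=\kappa_0\). Decompose the one-arm event according to the largest loop used in the crossing near the outer scale. Either some cluster crossing \(A(1,r)\) contains a loop \(\ell\) with \(\ell\cap A(\frac r2,r)\neq\varnothing\) and \(\diam\ell\ge\delta r\), or every loop meeting \(A(\frac58 r,\frac34 r)\) in the crossing cluster has diameter below \(\delta r\).

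In the second case, the event implies \(\mathcal E_{r/2}\) together with a crossing of \(A(\frac58r,\frac34r)\) by loops of diameter \(\le\delta r\). These two events are not independent, since small loops near \(S_{r/2}\) are shared. To decouple them, I would explore the cluster of \(S_1\) inside \(B_{r/2}\), or alternatively apply the BK inequality (Lemma~\ref{lem:poisson-bk}) after separating loops crossing \(S_{9r/16}\). The non-disjoint part is handled exactly as in Theorem~\ref{thm:submultiplicativity} by splitting the welding loop, which produces a factor \(\alpha\le\bar\alpha\) and small-loop crossing probabilities. This yields a bound \(C\chi_{5/8,3/4}^{\alpha}(\delta)\,q_{r/2}\). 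By the scaling identity \eqref{eq:fixed-ratio-crossing-prob} and the zero--one law (Lemma~\ref{lem:fixed-ratio-bdelta}, which I take as available), \(\chi^{\alpha}(\delta)\to0\). Monotonicity in \(\alpha\) makes this uniform over \(\alpha\le\bar\alpha\).

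In the first case, condition on everything except the loops meeting \(A(\frac r2,r)\) with diameter \(\ge\delta r\). Their mass is bounded by \(C_\delta\), so the probability is at most \(C_\delta\alpha\) times the probability that a fixed added large loop is pivotal-like. Resample this loop: by Lemma~\ref{lem:loop-cut}, Harnack, and the splitting Lemma~\ref{lem:splitting}, the measure of loops of diameter \(\ge\delta r\) hitting the two relevant random sets is comparable to the measure of such loops that additionally stay in \(A(\frac r2,2r)\cup\) their hit sets, with diameter \(\ge\delta r/4\) and capacity \(\ge\kappa r\). For the capacity, a Brownian loop of diameter \(\asymp\delta r\) has capacity \(\gtrsim\delta r\) with probability bounded below, since Brownian paths have positive-probability linear capacity. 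This gives the first case bounded by \(C_\delta g_r\), and hence
\[
q_r\le C_\delta g_r+\epsilon(\delta)\,q_{r/2}.
\]

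Next I need \(g_r\gtrsim g_{r/2}\cdot(\text{const})\), or directly \(q_r\lesssim q_{r/2}\) by a reverse comparison. Submultiplicativity with a fixed ratio, Theorem~\ref{thm:submultiplicativity}, only gives an upper bound. Instead I would use the welding lower bound: \(g_{r/2}\) together with a fresh macroscopic loop and Lemma~\ref{lem:two-capacity-welding} gives \(q_r\gtrsim\alpha g_{r/2}\), and conversely \(q_{r/2}\lesssim q_r\cdot C\) via \(\xi<1\) up to constants. Iterating \(k\) steps gives \(q_r\le C_\delta\sum_j\epsilon^j g_{r2^{-j}}+\epsilon^k q_{r2^{-k}}\). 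Combined with \(g_{r2^{-j}}\le C^jg_r\)-type monotone growth, choosing \(\delta\) so that \(\epsilon(\delta)C<\frac12\) closes the argument, giving \(g_r\ge c\,q_r\).

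Finally, to upgrade \(\delta\) to \(2\), note that a cluster containing a loop of diameter \(\ge\delta r\) and capacity \(\ge\kappa r\) can be augmented at cost \(c(\delta)\) by a fresh loop of diameter \(\ge r/2\) welded via \eqref{eq:two-capacity-welding}. The main obstacle I expect is the growth comparison \(g_{r/2}\lesssim g_r\) with constants uniform as \(\alpha\downarrow0\). Every welding costs a factor \(\alpha\), so I would instead compare \(g_{r/2}\) with \(g_r\) by extending the large loop of the scale-\(r/2\) event itself, using resampling via Lemma~\ref{lem:loop-cut}, which costs no extra factor of \(\alpha\).
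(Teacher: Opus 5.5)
There is a genuine gap, and it is where you suspect: closing the dyadic iteration. With $g_r:=\Pp_\alpha(\mathsf H_{r,D}^{\delta,\kappa})$, your condition $\epsilon(\delta)C<\frac12$ needs $g_{r/2}\le Cg_r$ with $C$ independent of $\delta$, because $\delta$ is chosen afterwards. It also needs $C$ uniform in $0<\alpha\le\bar\alpha$. But your $g_r$ itself depends on $\delta$. It does so through the diameter threshold, and, since your resampled loops only have capacity $\gtrsim\delta r$, through $\kappa$ as well. None of your mechanisms yields such a $C$:
\begin{itemize}
\item Welding a fresh loop onto a good piece gives only $q_r\gtrsim\alpha g_{r/2}$, which degenerates as $\alpha\downarrow0$.
\item The bound $q_{r/2}\lesssim q_r$ ``via $\xi<1$ up to constants'' is circular. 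Indeed \eqref{eq:one-arm-log-asymp} only gives $r^{-\xi+o(1)}$, and the fixed-ratio comparison (Lemma~\ref{imp:lem:fixed-ratio}) is deduced from this very theorem.
\item ``Extending the large loop of the scale-$r/2$ event'' is not available for $\mathsf H$. Its piece $J$ may be assembled from many loops and is not separated from the rest of the configuration by any independence.
\end{itemize}
In fact $\Pp_\alpha(\mathsf H_{r/2})\lesssim\Pp_\alpha(\mathsf H_r)$ is essentially as strong as the statement being proved. The same $\alpha$-loss ruins your final upgrade from $\mathsf H^{\delta,\kappa}$ to $\mathsf H^{2,\kappa_0}$ by welding: $c$ would then depend on $\alpha$, not only on $\bar\alpha$. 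You also do not treat the base scales, where $q_\rho\lesssim g_\rho$ for $\rho\in[2,4)$ needs $p_2\lesssim\alpha$ uniformly. The paper gets this from $\chi_{1,2}^{\bar\alpha}(\delta)<1$ and a superposition of $\lfloor\bar\alpha/\alpha\rfloor$ independent soups.

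The missing idea is to iterate a Poisson functional of an explicit seed, rather than the probability of $\mathsf H$. Let $\mathcal K_s$ be $S_1$ together with the clusters of $\LL^\alpha_{B_s}$ meeting $S_1$. Let $g_s$ be the probability that the soup contains a loop of $\mathscr T_s^{\kappa_0}(\mathcal K_s)$, with $\kappa_0$ fixed once and for all. Such a loop lies in $B_{2s}$, hits $\mathcal K_s$, and contains a path in $A(s,2s)$ from $S_{5s/4}$ to $S_{7s/4}$ of capacity at least $\kappa_0s$. These loops are not contained in $B_s$, so they are independent of $\mathcal K_s$. Hence $g_s=\Ee_\alpha[1-e^{-\alpha m_s(\mathcal K_s)}]\asymp\alpha s^{-1}\Ee_\alpha[\Capa(\mathcal K_s)]$ by \eqref{eq:terminal-loop-comparison}. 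Then $\mathcal K_s\subset\mathcal K_{2s}$ gives $g_{2s}\ge c_\star g_s$ with $c_\star$ independent of $\delta$ and $\alpha$, which is exactly the comparison you lacked.

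The large-loop case also becomes one-sided. On the complement of the small-loop crossing, the small-loop cluster $\mathcal K^{\le\delta s}$ of $S_1$ lies in $B_s$, hence inside $\mathcal K_s$. The loops of diameter $>\delta s$ form an independent Poisson process, and a crossing requires one of them to hit $\mathcal K^{\le\delta s}$. No outer set and no splitting via Lemma~\ref{lem:splitting} are needed. By \eqref{eq:large-loop-terminal-comparison}, the mass of such loops is at most $C_\delta\Capa(\mathcal K^{\le\delta s})/s\le C_\delta m_s(\mathcal K_s)$, which bounds this case by $C_\delta g_s$. Since a terminal loop itself supplies $J$ with $\diam J\ge s/2$ and $\Capa(J)\ge\kappa_0s$, no upgrade of $\delta$ is needed.

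Your small-loop case is fine. Plain BK with the crossings to $S_{s/2}$ and across $A(\frac58 s,\frac34 s)$ suffices there, without splitting any welding loop.
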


\begin{proof}[Proof of Theorem~\ref{thm:continuum-robust-summary}]
Fix \(\bar\alpha\in\mathrm I_0\) and
\(0<\alpha\le\bar\alpha\). Applying Theorem~\ref{thm:main} with
\(D=\mathbb R^3\), we get $\Pp_\alpha\bigl(\mathsf H_{r}^{2,\kappa_0}\bigr)
    \ge c\, p_r^{\alpha}$.
The monotonicity of capacity gives
\(\Capa(\mathcal C\cap B_{2r})\ge\Capa(J)\ge\kappa_0r\), and the
claim \eqref{eq:continuum-robust-summary} follows.
\end{proof}

We will prove Theorem~\ref{thm:main} by using a key recursion formula derived in
Proposition~\ref{lem:residual-extension}.
We begin with the following useful lemma, based on Kolmogorov's zero--one law.
\begin{lemma}\label{lem:fixed-ratio-bdelta}
Fix \(\alpha\in \mathrm{I}_0\) and \(0<a<b\).
Then \(\lim_{\delta\downarrow0}\chi_{a,b}^{\alpha}(\delta)=0\).
\end{lemma}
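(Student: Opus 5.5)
The plan is to combine monotonicity in \(\delta\), Kolmogorov's zero--one law applied to the small loops, and the defining property of \(\mathrm I_0\), namely \(p_r^\alpha\to0\). In what follows I write \(A_\delta\) for the event \(\{S_a\xleftrightarrow{\LL_{\R^3}^{\alpha,\le\delta}}S_b\}\).

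\emph{Reduction to a tail event.} The event \(A_\delta\) is increasing in \(\delta\), since \(\LL_{\R^3}^{\alpha,\le\delta}\) increases with \(\delta\). Hence
\[
\lim_{\delta\downarrow0}\chi_{a,b}^\alpha(\delta)=\Pp_\alpha\Bigl(\bigcap_{\delta>0}A_\delta\Bigr)=:\Pp_\alpha(A_0).
\]
Split the soup into the independent Poisson processes \(\LL^{(k)}\) of loops with diameter in \((2^{-k-1},2^{-k}]\), for \(k\in\Z\). Connectivity in \(\LL^{\alpha,\le\delta}\) between the compact spheres \(S_a,S_b\) is a measurable function of \((\LL^{(k)})_{2^{-k}\le\delta}\). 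The intersection \(A_0\) is unchanged by altering finitely many \(\LL^{(k)}\), so it is a tail event for this independent sequence. Kolmogorov's zero--one law then gives \(\Pp_\alpha(A_0)\in\{0,1\}\). One measurability point needs a line: the event is defined through clusters and the closure in the definition of \(\xleftrightarrow{}\). I would handle this by noting that the union of loops of diameter \(\ge\delta\) meeting \(\overline{B_b}\) is a.s. finite, so it suffices to check measurability of \(A_\delta\) itself.

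\emph{Ruling out probability one.} Suppose \(\Pp_\alpha(A_0)=1\). Then \(\chi^\alpha_{a,b}(\delta)=1\) for every \(\delta\). By scale invariance, translation and rotation invariance, and a countable union, almost surely every annulus \(A(x,\lambda a,\lambda b)\) with rational center and rational scale \(\lambda\) is crossed by loops of diameter at most \(\delta\lambda\), for every rational \(\delta\). I would use this to contradict \(p^\alpha_r\to0\) via a chaining and gluing argument.
\begin{itemize}
\item Take dyadic scales \(R_j=b^{j}\) and consider the crossing clusters of consecutive overlapping annuli.
\item Between consecutive crossing clusters \(K_j,K_{j+1}\), use a fresh loop at scale \(R_j\), taken from the large loops, which are independent of the small loops forming the crossings.
\item By Lemma~\ref{lem:two-capacity-welding}, this loop welds \(K_j\) and \(K_{j+1}\) with conditional probability \(\ge 1-\exp(-c\alpha R_j^{-2}\Capa(K_j)\Capa(K_{j+1}))\).
\end{itemize}
Two things make this workable. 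First, the crossing is robust: it exists at every small scale. Second, by scale invariance the normalized capacities \(R_j^{-1}\Capa(K_j)\) are identically distributed and a.s. positive, because Brownian paths are non-polar in \(\R^3\). Together these give, with probability bounded below uniformly in \(r\), a connection from \(S_1\) to \(S_r\). That contradicts \(\lim_r p_r^\alpha=0\). A cleaner alternative, if available, is to compare directly with the characterization \(\sup\mathrm I_0=\alpha_{\mathrm{B2B}}\) from \cite{JL26}.

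\emph{Main obstacle.} The hard step is this second one. A crossing of each annulus in a chain does not by itself produce a single connected crossing. In \(\R^3\), crossing clusters of overlapping annuli need not intersect, so the gluing has to be done by extra loops. The delicate part is making the welding probabilities uniform across all scales simultaneously, since the capacity of the crossing clusters is not yet controlled; that control is exactly what Theorem~\ref{thm:main} will later provide. What I would try first is to use FKG (Lemma~\ref{lem:poisson-fkg}) together with the fact that under \(A_0\) the crossings are almost sure at every scale. The idea is to let the random normalized capacities be handled by choosing, at each scale, one among many a.s.-existing crossings with capacity above a fixed threshold. This would reduce the gluing to a product of probabilities that stay bounded below, which gives the contradiction.
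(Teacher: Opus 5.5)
Your first step (monotone limit in \(\delta\), the independent diameter shells, Kolmogorov's zero--one law) is correct and matches the paper. The gap is in the second step: you never actually exclude \(\Pp_\alpha(A_0)=1\).

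\textbf{Why the gluing route does not close.} The hypothesis \(\Pp_\alpha(A_0)=1\) only yields \(p^\alpha_{b/a}=1\) for the one fixed ratio \(b/a\). This does not contradict \(p_r^\alpha\to0\), which only forces \(p_\lambda^\alpha<1\) for large \(\lambda\). That is why you are pushed into a multiscale gluing, and that argument does not go through.
\begin{itemize}
\item Uniform welding needs capacity control of crossing clusters. That is exactly what Theorem~\ref{thm:main} supplies, and its proof uses this lemma, so the argument is circular.
\item Even granting a per-scale welding probability bounded below by some \(q\in(0,1)\), requiring all \(\asymp\log r\) welds to succeed gives a lower bound of order \(q^{c\log r}=r^{-c'}\). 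That is perfectly consistent with \(p_r^\alpha\to0\).
\item Identically distributed normalized capacities do not help with simultaneous success. Your ``choose one of many crossings'' idea would need each weld to succeed almost surely, and you have no argument for that.
\item The suggested comparison with \(\alpha_{\mathrm{B2B}}\) does not apply either: \cite[Lemma~6.2]{JL26} concerns \(\alpha>\alpha_{\mathrm{B2B}}\).
\end{itemize}

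\textbf{The missing idea: localize first.} Apply the zero--one law to a localized tail event whose aspect ratio is large; then no gluing is needed.
\begin{enumerate}
\item Since \(p_r^\alpha\to0\), fix \(\lambda>1\) with \(p_\lambda^\alpha<1\). Set \(\rho=(b-a)/(2\lambda)\), and cover \(S_a\) by finitely many balls \(B(x_i,\rho/2)\) with \(x_i\in S_a\).
\item For each \(n\), a crossing in \(\LL_{\R^3}^{\alpha,\le2^{-n}}\) meets some \(B(x_i,\rho/2)\). By pigeonhole one index \(i\) works for infinitely many \(n\), and hence, by monotonicity in the cutoff, for all \(n\).
\item Therefore \(A_0\subset\bigcup_i A_{0,i}\), where \(A_{0,i}:=\bigcap_n\{B(x_i,\rho/2)\xleftrightarrow{\LL_{\R^3}^{\alpha,\le2^{-n}}}S_b\}\).
\item Each \(A_{0,i}\) is again a tail event for the diameter shells. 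Since \(\dist(x_i,S_b)=b-a=2\lambda\rho\), it forces a crossing of \(A(x_i,\rho,\lambda\rho)\).
\item Translation and scale invariance give \(\Pp_\alpha(A_{0,i})\le p_\lambda^\alpha<1\). The zero--one law then gives \(\Pp_\alpha(A_{0,i})=0\), hence \(\Pp_\alpha(A_0)=0\).
\end{enumerate}
The zero--one law thus does the bootstrapping from one ratio to a large one, which your gluing scheme was trying to do by hand.
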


\begin{proof}
For \(k\ge0\), let \(X_k\) be the restriction of
\(\LL_{\R^3}^{\alpha}\) to loops with diameter in
\((2^{-(k+1)},2^{-k}]\).
By Poisson restriction, these $X_k$'s are independent.
If we define the event
\[
        E_\infty:=\bigcap_{n\ge0} \left\{
        S_a\xleftrightarrow{\LL_{\R^3}^{\alpha,\le2^{-n}}}S_b
        \right\},
\]
then continuity from above gives
$\lim_{n\to\infty}\chi_{a,b}^{\alpha}(2^{-n})
        =\Pp_\alpha(E_\infty)$.  Monotonicity in the cutoff extends the dyadic
limit to \(\delta\downarrow0\), so it suffices to show
$\Pp_\alpha(E_\infty)=0$.
Since \(\alpha\in \mathrm{I}_0\), we have \(p_r^{\alpha}\to0\). Choose \(\lambda>1\) such that \(p_\lambda^{\alpha}<1\).
Put
\(\rho:=\frac{b-a}{2\lambda}\).  Choose \(x_1,\ldots,x_N\in S_a\) so that
\(S_a\subset\bigcup_{i=1}^N B(x_i,\frac\rho2)\).  For \(1\le i\le N\), let
\[
        A_{\infty,i}:=\bigcap_{n\ge0}
        \left\{
        B(x_i,\frac\rho2)
        \xleftrightarrow{\LL_{\R^3}^{\alpha,\le2^{-n}}}S_b
        \right\}.
\]
A finite-pigeonhole argument gives
$E_\infty\subset\bigcup_{i=1}^N A_{\infty,i}$.
Translation and scale invariance now yield
\[
        \Pp_\alpha(A_{\infty,i})
        \le
        \Pp_\alpha\Bigl(
        \partial B(x_i,\rho)
        \xleftrightarrow{\LL_{\R^3}^{\alpha}}
        \partial B(x_i,\lambda\rho)
        \Bigr)
        =p_\lambda^{\alpha}<1.
\]
Each \(A_{\infty,i}\) is also measurable with respect to every diameter-tail
sigma-field \(\sigma(X_k:k\ge N)\).  Hence the tail zero--one law forces
\(\Pp_\alpha(A_{\infty,i})=0\) for every \(i\), which
gives \(\Pp_\alpha(E_\infty)=0\) and finishes the proof.
\end{proof}

We next compare the mass of macroscopic loops hitting a set $K$ to $\Capa(K)$.  For
\(\kappa>0\), \(s>0\), and compact \(K\subset B_s\), define
\begin{equation}
\label{eq:terminal-loop-class}
 \mathscr T_s^\kappa(K)
 :=
 \left\{
 \begin{gathered}
 \ell\subset B_{2s}:\ \ell\cap K\ne\varnothing\text{ and there is }
 J\subset\ell\cap A(s,2s)\\
 \text{joining }S_{5s/4}\text{ to }S_{7s/4}
 \text{ such that }\Capa(J)\ge\kappa s
 \end{gathered}
 \right\}.
\end{equation}
In particular, every such path satisfies \(\diam(J)\ge s/2\).

\begin{lemma}
\label{lem:one-sided-surgery}
There are constants \(\kappa_0,c,C>0\) such that, for every \(s\ge2\) and
compact \(K\subset B_s\),
\begin{equation}
\label{eq:terminal-loop-comparison}
 c\frac{\Capa(K)}s
 \le
 m_s(K):=\mu_{B_{2s}}^{\Loop}(\mathscr T_s^{\kappa_0}(K))
 \le
 C\frac{\Capa(K)}s.
\end{equation}
Moreover, for every \(\delta\in(0,\frac12)\), there is
 \(C_\delta<\infty\) such that, uniformly over \(s\ge2\), domains
\(D\supset B_{2s}\), and \(K\subset B_s\),
\begin{equation}
\label{eq:large-loop-terminal-comparison}
 \mu_D^{\Loop}\bigl(
 \ell\cap K\ne\varnothing,\ \diam(\ell)>\delta s
 \bigr)
 \le C_\delta\frac{\Capa(K)}s
 \le C_\delta m_s(K).
\end{equation}
\end{lemma}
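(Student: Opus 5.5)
By Brownian scaling it suffices to take \(s=1\) in the two-sided bound \eqref{eq:terminal-loop-comparison}, with \(K\subset B_1\) arbitrary compact. (The constraint \(s\ge2\) plays no role in the continuum.) The upper bound is then immediate: every loop in \(\mathscr T_1^{\kappa}(K)\) hits \(K\) and reaches \(S_{7/4}\), hence \(S_{3/2}\). So \(m_1(K)\le\Bcal(K,S_{3/2};\R^3)\), and \eqref{eq:loop-cut-one-set} together with \eqref{eq:inner-path-capacity} (with \(b=3/2\) in place of \(3\)) gives \(\lesssim\Capa(K)\).

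For the lower bound I follow the one-crossing construction \eqref{eq:cross-lower-bound} used in Lemma~\ref{lem:splitting}, with cutting spheres \(S_{9/8}\) and \(S_{15/8}\). Loops crossing \(A(9/8,15/8)\) exactly once, restricted to \(B_2\), have mass bounded below by
\[
 \int_{S_{15/8}}\int_{S_{9/8}}\nu^{B_{15/8}}_{x,y}\bigl(\omega\cap K\ne\varnothing,\ \omega\in\mathcal G\bigr)\,H_{A(9/8,2)}(y,x)\,\sigma_{9/8}(dx)\sigma_{15/8}(dy),
\]
where \(\mathcal G\) is the event that the terminal part of the path contains a subpath \(J\subset A(1,2)\) joining \(S_{5/4}\) to \(S_{7/4}\) with \(\Capa(J)\ge\kappa_0\). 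The closing Poisson kernel is bounded below, so it remains to show that the integral of \(\nu^{B_{15/8}}_{x,y}(\omega\cap K\ne\varnothing,\ \omega\in\mathcal G)\) is \(\gtrsim\Capa(K)\).

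To get this, use the strong Markov property at \(\tau_K\), followed by the subsequent hitting time of \(S_{9/8}\). After returning to \(S_{9/8}\), the remaining path is a Brownian motion started on \(S_{9/8}\) and run until exiting \(B_{15/8}\). It must cross from \(S_{5/4}\) to \(S_{7/4}\) before exiting. I take \(J\) to be its last excursion from \(S_{5/4}\) to \(S_{7/4}\) before \(\tau_{S_{15/8}}\); this stays in \(A(1,2)\) automatically. Then I need a uniform statement: for a Brownian motion started at any point of \(S_{9/8}\), with probability at least \(c_1>0\) this excursion has capacity at least \(\kappa_0\). This holds because a Brownian path segment crossing an annulus of fixed width has capacity bounded below in probability. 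For instance, \(\Capa(J)\ge\Capa\) of a curve of diameter at least \(1/2\) is false in general (line segments have capacity comparable to length over log, which is still positive), but more simply every connected set of diameter \(d\) in \(\R^3\) has capacity \(\ge c\,d\). This follows from the standard fact that a continuum's capacity dominates that of a segment of the same diameter, via projection or the Green-energy test with the measure pushed from the projection onto a line. If this deterministic bound holds, \(\mathcal G\) occurs surely with \(\kappa_0=c/2\), and the lower bound is \(\Pp_x(\tau_K<\tau_{S_{15/8}})\cdot c_2\gtrsim\Capa(K)\) by \eqref{eq:inner-path-capacity}, integrated over \(x\). The hardest step is justifying \(\Capa(J)\gtrsim\diam(J)\) for connected \(J\). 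I would first try the orthogonal projection \(\pi\) onto the line through two far points of \(J\). Since \(\pi(J)\) contains a segment of length \(\ge1/2\), one can pick a measure on \(J\) whose pushforward is uniform on that segment. The energy bound fails directly because \(G\) is not monotone under projection in the right direction, since \(|x-y|\ge|\pi x-\pi y|\) gives \(G(x,y)\le G(\pi x,\pi y)\), and the uniform segment measure has infinite energy in \(\R^3\). So this route breaks. Instead, I will use the probabilistic version: Brownian motion started within distance \(O(1)\) hits a connected set of diameter \(1/2\) with probability bounded below, since in 3D it hits any path crossing an annulus with positive probability. This is a Harnack-chain/Beurling-type estimate, and combined with \eqref{eq:relative-equilibrium-measure-hitting} it gives \(\Capa(J)\ge\kappa_0\).

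For \eqref{eq:large-loop-terminal-comparison}, scale to \(s=1\). A loop hitting \(K\subset B_1\) with \(\diam(\ell)>\delta\) either reaches \(S_{3}\), contributing \(\lesssim\Capa(K)\) by \eqref{eq:splitting-capacity}, or stays in \(B_3\) with diameter \(>\delta\). For the latter, cover \(B_3\) by \(O(\delta^{-3})\) balls \(B(z,\delta/4)\). Such a loop hits \(K\cap B(z,\delta/4)\) for some \(z\) and exits \(B(z,\delta/2)\). By \eqref{eq:splitting-capacity} at scale \(\delta/6\), translated, that mass is \(\lesssim\Capa(K\cap B(z,\delta/4))/\delta\le\Capa(K)/\delta\). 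Summing gives \(C_\delta\Capa(K)\), and the first part then yields \(\le C_\delta m_1(K)\). Restricting to \(D\) only decreases mass.
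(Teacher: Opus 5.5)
Your upper bound $m_1(K)\le\Bcal(K,S_{3/2};\R^3)\lesssim\Capa(K)$ and your covering proof of \eqref{eq:large-loop-terminal-comparison} are fine. In the covering argument, the pieces $K\cap B(z,\delta/4)$ do not lie in $B(z,\delta/6)$, so apply the one-set cut estimate with ratio $2$ between radii $\delta/4$ and $\delta/2$ instead; this is cosmetic.

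The gap is in the lower bound. It rests on the claim that every connected set of diameter $d$ in $\R^3$ has capacity at least $cd$, or equivalently that Brownian motion hits every curve crossing an annulus with probability bounded below. This is false. A straight segment is connected with diameter $d$, yet it has zero Newtonian capacity. The projection of three-dimensional Brownian motion onto the plane orthogonal to the segment is a planar Brownian motion, which a.s.\ never hits a given point. So the segment is polar, and its capacity vanishes by \eqref{eq:relative-equilibrium-measure-hitting}. Your remark that a segment has capacity of order length over logarithm is also wrong: that is the capacity of a tube of positive radius, and it tends to $0$ with the radius. Beurling-type hitting estimates for arbitrary continua are a planar phenomenon. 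Hence $\mathcal G$ does not occur surely, and nothing in your proposal bounds its probability from below.

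The missing ingredient is genuinely probabilistic. You need the following: a Brownian crossing from $S_{5/4}$ to $S_{7/4}$, kept inside $A(9/8,7/4)$, has capacity at least $\kappa_0$ with probability at least $q>0$, uniformly in its starting point. The paper takes this from \cite[Theorem~1.1]{PPS}. You could also derive it from a first-moment bound on the Green energy of the occupation measure, using $\Ee\int_0^t\int_0^t|W_u-W_v|^{-1}\,du\,dv\lesssim t^{3/2}$.

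To combine this with the requirement of hitting $K$, the segment $J$ must be produced by stopping times, so that the strong Markov property factorizes the two requirements. Your choice of $J$, the last excursion from $S_{5/4}$ to $S_{7/4}$ before $\tau_{S_{15/8}}$, is defined through a last-exit time. It is also only a sub-piece of a crossing, so such a bound does not apply to it directly.

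The paper's decomposition handles exactly this. It cuts the once-crossing loop at $S_{9/8},S_{5/4},S_{7/4}$ into three pieces:
\begin{itemize}
\item an inner piece from $x\in S_{9/8}$ to the first hit of $S_{5/4}$ that hits $K$, with mass $\Pp_x(\tau_K<\tau_{S_{5/4}})\gtrsim\Capa(K)$, because $G_{B_{5/4}}$ is bounded below on $S_{9/8}\times B_1$;
\item a fresh middle piece from $S_{5/4}$ to $S_{7/4}$ kept in $A(9/8,7/4)$, which itself serves as $J$ and has capacity at least $\kappa_0$ with probability at least $q$;
\item a closing piece in $A(9/8,15/8)$ whose Poisson kernel is bounded below.
\end{itemize}
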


\begin{proof}
We first deal with the lower bound in
\eqref{eq:terminal-loop-comparison}.
By Brownian scaling, it is enough to prove the lower bound for \(s=1\).
By the one-crossing decomposition in Lemma~\ref{lem:loop-cut}, we restrict
to loops which can be decomposed into three pieces as
\[
 \ell=[\omega^{\rm in}\oplus\omega^{\rm mid}\oplus\omega^{\rm out}],
\]
where \(\omega^{\rm in}\) is a path from \(x\in S_{9/8}\) to
\(y\in S_{5/4}\) that intersects
\(K\), \(\omega^{\rm mid}\) is a path from \(y\) to
\(z\in S_{7/4}\), and
\(\omega^{\rm out}\) is a path from \(z\) back to \(x\).  The symbol \(\oplus\) denotes chronological
concatenation of paths, and the brackets denote the resulting unrooted loop.
For the lower bound, we restrict the middle piece to
\[
 \Gamma_{\rm mid}=\Gamma_{\rm mid}(\kappa_0)
 :=\left\{\omega^{\rm mid}:\
 \omega^{\rm mid}\subset A(9/8,7/4),\
 \Capa(\omega^{\rm mid})\ge\kappa_0\right\}.
\]
When \(\omega^{\rm mid}\in\Gamma_{\rm mid}(\kappa_0)\), the loop \(\ell\)
is contained in \(B_{15/8}\subset B_2\), intersects \(K\), and contains a
path in \(A(1,2)\) joining \(S_{5/4}\) to \(S_{7/4}\) with capacity at
least \(\kappa_0\).  Hence \(\ell\in\mathscr T_1^{\kappa_0}(K)\).
Then, the one-crossing lower bound in Lemma~\ref{lem:loop-cut} yields
\begin{equation}
\label{eq:terminal-one-crossing-lower-bound}
\begin{aligned}
 m_1(K)&\ge
 \int_{S_{9/8}}\int_{S_{5/4}}
 \nu^{B_{5/4}}_{x,y}(\omega^{\rm in}\cap K\ne\varnothing)\\
 &\hspace{1cm}\times\left(
 \int_{S_{7/4}}
 \nu^{A(9/8,7/4)}_{y,z}(\Gamma_{\rm mid})
 H_{A(9/8,15/8)}(z,x)\,\sigma_{7/4}(dz)
 \right)\sigma_{5/4}(dy)\sigma_{9/8}(dx),
\end{aligned}
\end{equation}
where \(H_{A(9/8,15/8)}(z,x)\) is the total mass of
\(\omega^{\rm out}\).

For the inner piece, uniformly in \(x\in S_{9/8}\), we have
\begin{equation}
\label{eq:terminal-inner-piece}
 \int_{S_{5/4}}\nu^{B_{5/4}}_{x,y}(\omega^{\rm in}\cap K\ne\varnothing)\,\sigma_{5/4}(dy)
 =\Pp_x(\tau_K<\tau_{S_{5/4}})
 =\int_K G_{B_{5/4}}(x,w)\,e_K^{B_{5/4}}(dw)
 \ge c\Capa(K),
\end{equation}
where we use \eqref{eq:relative-equilibrium-measure-hitting},
the uniform lower bound \(G_{B_{5/4}}(x,w)\ge c\), and
\(\Capa_{B_{5/4}}(K)\ge\Capa(K)\).
For the middle piece, by \cite[Theorem~1.1]{PPS}, there are
\(\kappa_0, q>0\) such that
\begin{equation}
\label{eq:terminal-middle-piece}
 \inf_{y\in S_{5/4}}\int_{S_{7/4}}
 \nu^{A(9/8,7/4)}_{y,z}(\Gamma_{\rm mid})\,
 \sigma_{7/4}(dz)\ge q.
\end{equation}
Using the estimate
\(\inf_{z\in S_{7/4},\,x\in S_{9/8}}H_{A(9/8,15/8)}(z,x)\ge c'\),
and plugging the bounds \eqref{eq:terminal-inner-piece} and
\eqref{eq:terminal-middle-piece} into
\eqref{eq:terminal-one-crossing-lower-bound} gives
\(m_1(K)\ge cc'q\Capa(K)\).  Scaling proves the lower bound in
\eqref{eq:terminal-loop-comparison}.

We next prove
\eqref{eq:large-loop-terminal-comparison}.
Cover \(B_s\) by
\(N_\delta=O_\delta(1)\) balls \(B(x_i,\delta s/32)\), and set
\(K_i:=K\cap\overline{B(x_i,\delta s/32)}\).  If a loop of diameter larger than
\(\delta s\) intersects \(K_i\), then it also intersects
\(\partial B(x_i,\delta s/4)\).
By Lemma~\ref{lem:loop-cut} and
\eqref{eq:relative-equilibrium-measure-hitting}, we have
\[
\begin{aligned}
 \mu_D^{\Loop}
 \bigl(\ell\cap K_i\ne\varnothing,\ \diam(\ell)>\delta s\bigr)
 &\le C(\delta s)^{-2}
 \int_{S_{\delta s/16}}
 \Pp_x(\tau_{K_i}<\infty)\,\sigma_{\delta s/16}(dx)\\
 &=C(\delta s)^{-2}
 \int_{S_{\delta s/16}}\int_{K_i}
 G(x,w)\,e_{K_i}(dw)\,\sigma_{\delta s/16}(dx)\\
 &\le C(\delta s)^{-1}\Capa(K_i).
\end{aligned}
\]
Indeed, \(K_i\subset B_{\delta s/32}\), and hence
\(G(x,w)\lesssim (\delta s)^{-1}\) for
\(x\in S_{\delta s/16}\) and \(w\in K_i\), while
\(\sigma_{\delta s/16}(S_{\delta s/16})\asymp(\delta s)^2\).
Summing over \(i\), and using \(\Capa(K_i)\le\Capa(K)\) and
\(N_\delta=O_\delta(1)\), proves the first inequality in
\eqref{eq:large-loop-terminal-comparison}.  Every loop in
\(\mathscr T_s^{\kappa_0}(K)\) has diameter at least \(s/2\).  Taking
\(\delta=\frac14\) gives the upper bound in
\eqref{eq:terminal-loop-comparison}, while its lower bound gives the last
inequality in \eqref{eq:large-loop-terminal-comparison}.
We complete the proof.
\end{proof}

For \(s\ge2\), define
\begin{equation}\label{eq:seeded-cluster-def}
 \mathcal K_s^{\alpha}
 :=S_1\cup
 \bigl\{
 \text{all clusters of }\LL^\alpha_{B_s}
 \text{ that intersect }S_1
 \bigr\}.
\end{equation}
For every \(D\supset B_{2s}\), set
\begin{equation}
\label{eq:terminal-event-probability}
 g_s^{\alpha}
 :=\Pp_\alpha\bigl(
 \mathcal L_D^\alpha\cap
 \mathscr T_s^{\kappa_0}(\mathcal K_s^{\alpha})\ne\varnothing
 \bigr).
\end{equation}

\begin{lemma}
\label{lem:robust-prop}
Fix \(\bar\alpha\in\mathrm I_0\).
Uniformly for \(0<\alpha\le\bar\alpha\) and \(s\ge2\),
\begin{equation}
\label{eq:terminal-capacity-expectation}
 g_s^{\alpha}\asymp \alpha s^{-1}\Ee_\alpha[\Capa(\mathcal K_s^{\alpha})],
\end{equation}
and moreover, there is \(c_\star\in(0,1)\) such that
\begin{equation}
\label{eq:terminal-propagation}
 g_{2s}^{\alpha}\ge c_\star g_s^{\alpha}
 \quad \text{ and }  \quad  \inf_{2\le\rho<4}g_\rho^{\alpha}
 \ge c\alpha.
\end{equation}
{
In addition,
\begin{equation}\label{imp:eq:initial-scale}
 p_2^{\alpha}\le C\alpha,
 \qquad p_\rho^{\alpha}\le C'g_\rho^{\alpha}\quad(2\le\rho<4).
\end{equation}
All constants above depend only on \(\bar\alpha\).
}
\end{lemma}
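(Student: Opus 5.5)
Conditioning on \(\mathcal K_s^{\alpha}\) is the basic move for \eqref{eq:terminal-capacity-expectation}. The set \(\mathcal K_s^{\alpha}\) is measurable with respect to \(\LL^\alpha_{B_s}\). Every loop of \(\mathscr T_s^{\kappa_0}(K)\) contains a path joining \(S_{5s/4}\) to \(S_{7s/4}\), so it is not contained in \(B_s\). Hence, given \(\mathcal K_s^{\alpha}=K\), the loops of \(\LL_D^\alpha\) in \(\mathscr T_s^{\kappa_0}(K)\) form a Poisson process independent of \(K\). Its intensity has total mass \(\alpha m_s(K)\), since \(\mathscr T_s^{\kappa_0}(K)\) consists of loops in \(B_{2s}\subset D\). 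Therefore
\[
g_s^{\alpha}=\Ee_\alpha\bigl[1-e^{-\alpha m_s(\mathcal K_s^{\alpha})}\bigr].
\]
By Lemma~\ref{lem:one-sided-surgery}, \(m_s(K)\asymp\Capa(K)/s\). This is uniformly bounded because \(K\subset \overline{B_s}\) gives \(\Capa(K)\le\Capa(\overline{B_s})\asymp s\). With \(\alpha\le\bar\alpha\), the elementary bound \(e^{-\bar\alpha M}\alpha m\le 1-e^{-\alpha m}\le\alpha m\) then gives \eqref{eq:terminal-capacity-expectation}.

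For the initial-scale bounds, \(S_1\subset\mathcal K_s^{\alpha}\) gives \(\Capa(\mathcal K_s^{\alpha})\ge\Capa(S_1)\asymp1\). So for \(2\le\rho<4\) we get \(g_\rho^\alpha\gtrsim\alpha\rho^{-1}\gtrsim\alpha\). For \(p_2^\alpha\le C\alpha\), we use that on \(\mathcal E_2\) some cluster crosses \(A(1,2)\). I would split according to whether the crossing uses a loop of diameter \(>\delta\) hitting \(S_1\) or some fixed small-scale configuration. That seems awkward, so the cleaner route is probably the following. On \(\mathcal E_2\) there is at least one loop in the connecting chain, and I would try the union bound \(p_2^\alpha\le\Ee[\#\text{loops in the cluster of }S_1\text{ reaching }S_2]\), estimated by Palm's formula (Lemma~\ref{lem:loop-soup-palm}) as \(\alpha\int \Pp(\ell\text{ connected to }S_1,\ \ell\cap S_2\neq\varnothing)\mu(d\ell)\). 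The factor \(\alpha\) comes out, and it remains to show that the integral is bounded uniformly in \(\alpha\le\bar\alpha\).

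I expect bounding this integral to be the main obstacle. The issue is the mass of loops touching \(S_2\) and connected to \(S_1\) through small loops. I would bound the connection probability by \(\Pp(\ell\cap B_1\ne\varnothing)\) plus \(\sum_k \Pp(S_1\leftrightarrow \text{scale-}2^{-k}\text{ neighborhood of }\ell)\), using \(\xi>0\)-type decay (through \(\bar\alpha\in\mathrm I_0\) and monotonicity in \(\alpha\)).

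Alternatively, I would derive \(p_\rho^\alpha\le C' g_\rho^\alpha\) directly and reduce \(p_2^\alpha\le C\alpha\) to it via \(g\lesssim\alpha\) (which holds since \(\Capa(\mathcal K)\lesssim s\)). For that, on \(\mathcal E_\rho\) consider the cluster \(\mathcal K'\) of \(S_1\) in \(\LL_{B_{\rho/2}}\) (or \(\LL^{\le\delta}\)). Either a loop of diameter \(>\delta\rho\) hits the seeded set, whose mass is \(\lesssim C_\delta\Capa(\mathcal K)/\rho\lesssim m_\rho\) by \eqref{eq:large-loop-terminal-comparison}; this is handled by resampling exactly as in Lemma~\ref{lem:local} and gives \(\lesssim g_\rho\). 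Or the crossing is realized by loops of diameter \(\le\delta\rho\). This is bounded by \(\chi(\delta)\) times a crossing at a smaller scale, and at bounded scale \(\rho\in[2,4)\) one iterates finitely many times down to the case where a loop of size comparable to \(1\) is needed, which again costs \(\alpha\). I would first try this multi-step decomposition.

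For the propagation \(g_{2s}\ge c_\star g_s\), I would use \eqref{eq:terminal-capacity-expectation}. On the event defining \(g_s\), the terminal loop \(\ell\in\mathscr T_s^{\kappa_0}(\mathcal K_s)\) lies in \(B_{2s}\) and carries a piece \(J\) with \(\Capa(J)\ge\kappa_0 s\). Then \(\mathcal K_{2s}^\alpha\supset\mathcal K_s^\alpha\cup\ell\), so \(\Capa(\mathcal K_{2s})\ge\kappa_0 s\) on that event. This gives \(\Ee\Capa(\mathcal K_{2s})\ge\kappa_0 s\,g_s\), which by \eqref{eq:terminal-capacity-expectation} yields \(g_{2s}\gtrsim\alpha\kappa_0 g_s\). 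This loses a factor \(\alpha\), so it must be refined. Using monotonicity \(\mathcal K_{2s}\supset\mathcal K_s\) gives \(\Ee\Capa(\mathcal K_{2s})\ge\Ee\Capa(\mathcal K_s)\), hence \(g_{2s}\gtrsim \tfrac12 g_s\) directly from \eqref{eq:terminal-capacity-expectation} with the scale factors \(s^{-1}\) versus \((2s)^{-1}\). That is the argument I will use, and it needs no \(\alpha\)-loss.
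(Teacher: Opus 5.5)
Your arguments for \eqref{eq:terminal-capacity-expectation}, for $g_{2s}^\alpha\ge c_\star g_s^\alpha$ (via $\mathcal K_s^\alpha\subset\mathcal K_{2s}^\alpha$ and capacity monotonicity), and for $\inf_{2\le\rho<4}g_\rho^\alpha\ge c\alpha$ (via $S_1\subset\mathcal K_\rho^\alpha$) are correct and coincide with the paper's. The gap is in \eqref{imp:eq:initial-scale}. Its only real content is $p_2^\alpha\le C\alpha$ uniformly in $0<\alpha\le\bar\alpha$; once that holds, $p_\rho^\alpha\le p_2^\alpha\le C\alpha\le C'g_\rho^\alpha$ is immediate. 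Neither of your routes proves it.

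\textbf{The Palm union bound.} It is not merely hard to control: the expectation is infinite. Loops of diameter $\asymp\varepsilon$ meeting $S_2$ have $\muLoop$-mass $\asymp\varepsilon^{-2}$. A positive fraction of them are joined to $S_1$ by a single loop of the soup with probability $\gtrsim\alpha\varepsilon$. So every dyadic scale contributes $\gtrsim\alpha^2\varepsilon^{-1}$, no matter how well you bound the connection probabilities from above.

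\textbf{The recursive route.} It breaks down at the bottom scale. For $\rho\in[2,4)$ it yields $p_\rho^\alpha\le C_\delta g_\rho^\alpha+\chi(\delta)\,p_{\rho/2}^\alpha$ with $\rho/2\in[1,2)$. Bounding $p_{\rho/2}^\alpha$ by $C\alpha$ is the statement you are trying to prove. It is even false as $\rho/2\downarrow1$, since $p_{1+\epsilon}^\alpha\to1$ for each fixed $\alpha$. Refining the cutoff does not terminate either. $S_1$ is joined to $S_2$ by loops of diameter $\le\delta^k$ with positive probability for every $k$, while the large-loop constant $C_{\delta^k}$ blows up. So there is no finite stage at which ``a loop of size comparable to $1$ is needed''.

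\textbf{The missing idea.} Use $\bar\alpha\in\mathrm I_0$ only to get $p_2^{\bar\alpha}<1$, then transfer this to small intensities by superposition.
\begin{itemize}
\item Pick $\delta$ with $\chi_{1,2}^{\bar\alpha}(\delta)<1$ (Lemma~\ref{lem:fixed-ratio-bdelta}). The loops of diameter $>\delta$ meeting $\overline A(1,2)$ have finite mass $M_\delta$ by \eqref{eq:large-loop-terminal-comparison}, and they are independent of $\LL_{\R^3}^{\bar\alpha,\le\delta}$. Hence $1-p_2^{\bar\alpha}\ge(1-\chi_{1,2}^{\bar\alpha}(\delta))e^{-\bar\alpha M_\delta}>0$.
\item Write a soup of intensity $N\alpha$, with $N=\lfloor\bar\alpha/\alpha\rfloor$, as a union of $N$ independent soups of intensity $\alpha$. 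Monotonicity gives $1-(1-p_2^\alpha)^N\le p_2^{N\alpha}\le p_2^{\bar\alpha}$.
\item Therefore $p_2^\alpha\le-\log(1-p_2^{\bar\alpha})/N\le C_{\bar\alpha}\alpha$ for $\alpha\le\bar\alpha/2$, and trivially $p_2^\alpha\le1\le2\alpha/\bar\alpha$ otherwise.
\end{itemize}
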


\begin{proof}
The loops forming \(\mathcal K_s^{\alpha}\) and those in
\(\mathscr T_s^{\kappa_0}(\mathcal K_s^{\alpha})\) belong to disjoint loop classes.
Hence, conditionally on \(\mathcal K_s^{\alpha}\),
\[
\Pp_\alpha(\mathcal L_D^\alpha\cap
\mathscr T_s^{\kappa_0}(\mathcal K_s^{\alpha})\ne\varnothing
\mid\mathcal K_s^{\alpha})=1-\exp\{-\alpha m_s(\mathcal K_s^{\alpha})\}.
\]
Since
\(\Capa(\mathcal K_s^{\alpha})\le\Capa(B_s)\le Cs\),
Lemma~\ref{lem:one-sided-surgery} gives
\(m_s(\mathcal K_s^{\alpha})\le C\), which implies that
\(1-e^{-\alpha m_s(\mathcal K_s^{\alpha})}
\asymp\alpha m_s(\mathcal K_s^{\alpha})\) uniformly for \(0<\alpha\le\bar\alpha\).
Applying \eqref{eq:terminal-loop-comparison} and taking
expectations proves
\eqref{eq:terminal-capacity-expectation}.
The inclusion \(\mathcal K_s^{\alpha}\subset\mathcal K_{2s}^{\alpha}\), capacity monotonicity,
and the uniform comparison in \eqref{eq:terminal-capacity-expectation} give
\(g_{2s}^{\alpha}\ge c\alpha(2s)^{-1}
\Ee_\alpha[\Capa(\mathcal K_{2s}^{\alpha})]\ge c_\star g_s^{\alpha}\).
Finally, since \(S_1\subset\mathcal K_\rho^{\alpha}\) for \(2\le\rho<4\),
\eqref{eq:terminal-capacity-expectation} and capacity monotonicity give
\[
 g_\rho^{\alpha}
 \ge c\alpha\rho^{-1}\Capa(S_1)
 \ge \frac{c\alpha}{4}\Capa(S_1),
\]
uniformly for \(0<\alpha\le\bar\alpha\).
This proves \eqref{eq:terminal-propagation}.

{
To prove \eqref{imp:eq:initial-scale}, we first show that \(p_2^{\bar\alpha}<1\).
By Lemma~\ref{lem:fixed-ratio-bdelta}, choose \(\delta\in(0,1)\) such that
\[
 \chi_{1,2}^{\bar\alpha}(\delta)
 =\Pp_{\bar\alpha}\Bigl(
 S_1\xleftrightarrow{\mathcal L_{\mathbb R^3}^{\bar\alpha,\le\delta}}S_2
 \Bigr)<1.
\]
Let
$
 \mathscr D_\delta
 :=\bigl\{\ell:\ \diam(\ell)>\delta,\
 \ell\cap\overline A(1,2)\ne\varnothing\bigr\}
$.
By~\eqref{eq:large-loop-terminal-comparison}, with
\(s=4\), \(K=\overline A(1,2)\), and cutoff \(\delta/4\), we have
\(\mu_{\mathbb R^3}^{\Loop}(\mathscr D_\delta)<\infty\).
The restrictions \(\mathcal L_{\mathbb R^3}^{\bar\alpha,\le\delta}\) and
\(\mathcal L_{\mathbb R^3}^{\bar\alpha}\cap\mathscr D_\delta\) are independent.
Hence,
\[
\begin{aligned}
 1-p_2^{\bar\alpha}
 &\ge \bigl(1-\chi_{1,2}^{\bar\alpha}(\delta)\bigr)
 \Pp_{\bar\alpha}\bigl(
 \mathcal L_{\mathbb R^3}^{\bar\alpha}\cap\mathscr D_\delta=\varnothing
 \bigr)\\
 &=\bigl(1-\chi_{1,2}^{\bar\alpha}(\delta)\bigr)
 \exp\bigl\{-\bar\alpha\mu_{\mathbb R^3}^{\Loop}(\mathscr D_\delta)\bigr\}>0.
\end{aligned}
\]
For \(\alpha\le\bar\alpha/2\), superpose
\(N=\lfloor\bar\alpha/\alpha\rfloor\) independent loop soups of intensity
\(\alpha\). Then, monotonicity gives
\[
 1-(1-p_2^\alpha)^N\le p_2^{N\alpha}\le p_2^{\bar\alpha},
 \qquad
 p_2^\alpha\le\frac{-\log(1-p_2^{\bar\alpha})}{N}
 \le C_{\bar\alpha}\alpha.
\]
For \(\alpha>\bar\alpha/2\),
use \(p_2^{\alpha}\le1\le2\alpha/\bar\alpha\). Radius monotonicity and
\eqref{eq:terminal-propagation} then give \(p_\rho^{\alpha}\le C'g_\rho^{\alpha}\).
}
\end{proof}

For \(s\ge2\) and
\(D\supset B_{2s}\), define
\begin{equation}\label{eq:domain-one-arm-def}
 \mathcal E_{s,D}:=\{S_1\xleftrightarrow{\mathcal L_D^\alpha}S_s\},
 \qquad
 p_{s,D}^{\alpha}:=\Pp_\alpha(\mathcal E_{s,D}).
\end{equation}
Then, we have the following recursive formula.

\begin{proposition}[Recursion]
\label{lem:residual-extension}
For
every \(\delta\in(0,\frac18)\), there is \(C_\delta\ge1\) such that
for all \(0<\alpha\le\bar\alpha\), \(s\ge4\), and \(D\supset B_{2s}\),
\begin{equation}
\label{eq:residual-extension}
 p_{s,D}^{\alpha}
 \le C_\delta g_s^{\alpha}
 +\chi_{5/8,3/4}^{\bar\alpha}(\delta)p_{s/2,D}^{\alpha}.
\end{equation}
\end{proposition}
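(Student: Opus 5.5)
We split \(\mathcal E_{s,D}\) according to whether the crossing cluster uses a loop of diameter larger than \(\delta s\) that meets the part of the cluster grown from \(S_1\).

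\textbf{Setup.} Fix \(\delta\in(0,\frac18)\). Let \(\mathcal K:=\mathcal K_s^\alpha\) be as in \eqref{eq:seeded-cluster-def}, i.e.\ the union of \(S_1\) and the clusters of \(\LL^\alpha_{B_s}\) that meet \(S_1\). On \(\mathcal E_{s,D}\) there is a chain of loops of \(\LL_D^\alpha\) from \(S_1\) to \(S_s\). Consider the cluster of \(\LL_D^\alpha\) realizing the crossing, and ask whether it contains a loop \(\ell\) with \(\diam(\ell)>\delta s\) such that \(\ell\) meets \(B_{s/2}\) and can be reached from \(S_1\) through loops of smaller diameter. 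This splits \(\mathcal E_{s,D}\) into two cases.

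\textbf{Case 1 (no large loop).} Suppose no such loop exists. Then the chain, up to its first loop of diameter \(>\delta s\) hitting \(B_{s/2}\), consists of loops of diameter \(\le\delta s\). Since \(\delta<\frac18\), such loops cannot jump across \(A(5s/16,3s/8)\) in one step. Hence the event is contained in
\[
\{S_1\xleftrightarrow{\LL_D^\alpha}S_{s/2}\}\cap\{S_{5s/16}\xleftrightarrow{\LL^{\alpha,\le\delta s}}S_{3s/8}\}.
\]
The main obstacle is the case where the crossing uses a large loop only outside \(B_{s/2}\), or where one large loop meets \(\mathcal K\) far out. To handle it, I would define the case split via the first large loop encountered along an exploration of the cluster from \(S_1\) restricted to \(B_{s/2}\), rather than via the cluster as a whole.

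The first event uses all loops in \(D\), while the second uses only the small loops, so the two are not independent. I would instead use disjoint occurrence: the small-loop crossing of the annulus \(A(5s/16,3s/8)\) must be realized by loops disjoint from some witness of \(S_1\leftrightarrow S_{s/2}\). The natural witness is the part of the chain before its first visit to \(S_{5s/16}\). Applying BK (Lemma~\ref{lem:poisson-bk}) and scale invariance, this case contributes at most
\[
\chi_{5/8,3/4}^{\alpha}(\delta)\,p_{s/2,D}^{\alpha}\le\chi_{5/8,3/4}^{\bar\alpha}(\delta)\,p_{s/2,D}^{\alpha},
\]
using monotonicity in \(\alpha\). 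Getting the radii to match \(\frac58,\frac34\) is a bookkeeping choice of annulus; I would take the annulus \(A(\frac{5}{16}s,\frac38 s)\), which rescales by \(s/2\) to \(A(\frac58,\frac34)\) with cutoff \(2\delta\). So in practice I would either redefine the cutoff with \(\delta/2\), or note that the cutoff in \(\chi\) is stated in the original scale.

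\textbf{Case 2 (a large loop exists).} Here some loop \(\ell\in\LL_D^\alpha\) with \(\diam(\ell)>\delta s\) intersects \(\mathcal K\). These loops are either not contained in \(B_s\) or have diameter \(>\delta s\), and so I would work with the class of loops of diameter \(>\delta s\) in \(D\) not counted in \(\mathcal K\). I would condition on \(\mathcal K\), noting that the only correlation is the restriction defining \(\mathcal K\), which Poisson thinning handles since the large-loop class is disjoint from the loops building \(\mathcal K\). The probability of this case is then at most
\[
\Ee\bigl[\alpha\,\mu_D^{\Loop}\bigl(\ell\cap\mathcal K\ne\varnothing,\ \diam(\ell)>\delta s\bigr)\bigr]\le C_\delta\,\alpha\, s^{-1}\Ee[\Capa(\mathcal K)],
\]
by \eqref{eq:large-loop-terminal-comparison}. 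By \eqref{eq:terminal-capacity-expectation} this is \(\le C_\delta g_s^\alpha\).

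Summing the two cases gives \eqref{eq:residual-extension}.
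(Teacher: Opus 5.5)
\textbf{Gap in Case~1.} Your split only excludes large loops that meet $B_{s/2}$, so Case~1 still contains configurations whose crossing uses large loops beyond $B_{s/2}$. In that case you only know that the small-loop cluster of $S_1$ reaches $S_{s/2}$. The annulus $A(\frac{5s}{16},\frac{3s}{8})$ you then use lies \emph{inside} $B_{s/2}$, so its small-loop crossing cannot be realized disjointly from a witness of $S_1\leftrightarrow S_{s/2}$. The witness you propose, the chain up to its first visit to $S_{5s/16}$, certifies only $S_1\leftrightarrow S_{5s/16}$. BK therefore yields at best a bound of the form $\chi\cdot p^\alpha_{5s/16,D}$. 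Since $p^\alpha_{t,D}$ is nonincreasing in $t$, this is weaker than the required $\chi^{\bar\alpha}_{5/8,3/4}(\delta)\,p^\alpha_{s/2,D}$. Comparability of $p_{5s/16}$ with $p_{s/2}$ is not available here, since it is derived later from this very recursion. Two side remarks: your annulus rescales to cutoff $2\delta$, i.e.\ to $\chi_{5/8,3/4}(2\delta)$, contrary to your last comment; and for $\delta>\frac1{16}$ a single loop of diameter $\le\delta s$ can cross an annulus of width $\frac s{16}$.

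\textbf{The missing idea.} Split on whether the \emph{whole} crossing uses small loops, i.e.\ on $\mathcal E_{s,D}^{\le\delta s}=\{S_1\xleftrightarrow{\LL_D^{\alpha,\le\delta s}}S_s\}$.
\begin{itemize}
\item On this event, truncate the chain at its first hit of $S_{s/2}$. This gives a witness of $\mathcal E^{\le\delta s}_{s/2,D}$ made of loops meeting $\overline B_{s/2}$. A small-loop crossing of $A(\frac{5s}8,\frac{3s}4)$ uses loops meeting $\{|x|\ge\frac{5s}8\}$. Since $\delta<\frac18$, no loop of diameter $\le\delta s$ does both, so BK and scaling give exactly $\chi^{\bar\alpha}_{5/8,3/4}(\delta)\,p^\alpha_{s/2,D}$.
\item On the complement, the small-loop cluster $\mathcal K^{\le\delta s}$ of $S_1$ never reaches $S_s$. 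Hence it lies in $B_s$ and in $\mathcal K_s^\alpha$, and the first large loop of the chain must hit it. Your worry about large loops used far out is moot, because \eqref{eq:large-loop-terminal-comparison} holds for every compact $K\subset B_s$, wherever the loop meets $K$.
\end{itemize}

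\textbf{Problems in Case~2.} First, you should condition on $\LL_D^{\alpha,\le\delta s}$, not on $\mathcal K_s^\alpha$. The set $\mathcal K_s^\alpha$ contains every loop of $\LL^\alpha_{B_s}$, of any diameter, connected to $S_1$, so large loops inside $B_s$ are not independent of it. Your first-moment bound can be rescued via Mecke's formula, but the independence claim as written is false. Second, your Case~2 does not force the large loop to meet $\mathcal K_s^\alpha$: the small-loop path from $S_1$ to it may exit $B_s$. That is again the all-small-loop event, which must be handled as above.
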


\begin{proof}
Let \(\mathcal L_D^{\alpha,\le\delta s}\) be the restriction of \(\mathcal L_D^\alpha\) to
loops of diameter at most \(\delta s\).
Define the event \(\mathcal E_{s,D}^{\le\delta s}:=\{S_1\xleftrightarrow{\mathcal L_D^{\alpha,\le\delta s}}S_s\}\), on
which \(\mathcal E_{s/2,D}^{\le\delta s}\) and $\{S_{5s/8}\xleftrightarrow{\mathcal L_D^{\alpha,\le\delta s}}S_{3s/4}\}$ are realized by disjoint collections of loops. Therefore,
\begin{equation}
\label{eq:small-loop-recursion-part}
 \Pp_\alpha(\mathcal E_{s,D}^{\le\delta s})
 \le
 \chi_{5/8,3/4}^{\bar\alpha}(\delta)p_{s/2,D}^{\alpha}.
\end{equation}

On \((\mathcal E_{s,D}^{\le\delta s})^{\rm c}\), let
\(\mathcal K^{\le\delta s}\) be the union of \(S_1\) and all loops of
\(\mathcal L_D^{\alpha,\le\delta s}\) connected to \(S_1\).  Then
\(\mathcal K^{\le\delta s}\subset B_s\).
Conditionally on \(\mathcal L_D^{\alpha,\le\delta s}\), the loops of diameter larger than
\(\delta s\) form an independent Poisson process.
Thus, by \eqref{eq:large-loop-terminal-comparison}, on the event
\((\mathcal E_{s,D}^{\le\delta s})^{\rm c}\), we have
\[
\begin{aligned}
 \Pp_\alpha\bigl(
 \mathcal E_{s,D}\setminus\mathcal E_{s,D}^{\le\delta s}
 \mathrel{\big|}\mathcal L_D^{\alpha,\le\delta s}\bigr)
 &\le
 1-\exp\left\{-\alpha\mu_D^{\Loop}\bigl(
 \ell\cap\mathcal K^{\le\delta s}\ne\varnothing,\
 \diam(\ell)>\delta s\bigr)\right\}\\
 &\le
 C_\delta\bigl(1-\exp\{-\alpha m_s(\mathcal K^{\le\delta s})\}\bigr).
\end{aligned}
\]
Since
\(\mathcal K^{\le\delta s}\subset\mathcal K_s^{\alpha}\) on
\((\mathcal E_{s,D}^{\le\delta s})^{\rm c}\), taking expectations above yields
\[
\begin{aligned}
 \Pp_\alpha(\mathcal E_{s,D}\setminus\mathcal E_{s,D}^{\le\delta s})
 &\le C_\delta\Ee_\alpha\left[
 \mathbf 1_{(\mathcal E_{s,D}^{\le\delta s})^{\rm c}}
 \bigl(1-e^{-\alpha m_s(\mathcal K^{\le\delta s})}\bigr)\right]\\
 &\le C_\delta\Ee_\alpha\left[
 1-e^{-\alpha m_s(\mathcal K_s^{\alpha})}\right]
 =C_\delta g_s^{\alpha}.
\end{aligned}
\]
Combining this estimate with \eqref{eq:small-loop-recursion-part} proves
\eqref{eq:residual-extension}.
\end{proof}

We now complete the proof by a dyadic iteration.
\begin{proof}[Proof of Theorem~\ref{thm:main}]
Fix \(\bar\alpha\in\mathrm I_0\), and let \(c_\star\) be the
constant in Lemma~\ref{lem:robust-prop}.  By
Lemma~\ref{lem:fixed-ratio-bdelta} applied at \(\bar\alpha\), choose
\(\delta\in(0,\frac18)\) so that
\(\eta:=\chi_{5/8,3/4}^{\bar\alpha}(\delta)<c_\star/2\).
Fix \(0<\alpha\le\bar\alpha\) and \(D\supset B_{2r}\), write \(r=\rho2^M\) with
\(\rho\in[2,4)\) and \(M\ge0\), and set
\(P_m:=p_{\rho2^m,D}^{\alpha}\) and \(G_m:=g_{\rho2^m}^{\alpha}\).
For \(M\ge1\), the domain assumption holds at every scale
\(s=\rho2^m\le r\).  Applying \eqref{eq:residual-extension} successively at
these scales and using \(G_{M-j}\le c_\star^{-j}G_M\) gives
\begin{equation}
\label{eq:terminal-dyadic-iteration}
 P_M
 \le C_\delta\sum_{j=0}^{M-1}\eta^jG_{M-j}+\eta^MP_0
 \le C_\delta G_M\sum_{j=0}^{M-1}
 \left(\frac{\eta}{c_\star}\right)^j+\eta^M P_0.
\end{equation}
{
By \eqref{imp:eq:initial-scale}, \(P_0\le CG_0\). Iterating
\eqref{eq:terminal-propagation} gives \(G_0\le c_\star^{-M}G_M\). Thus, we have
\[
 \eta^MP_0\le C(\eta/c_\star)^MG_M,
 \qquad
 \sum_{j=0}^{M-1}(\eta/c_\star)^j\le2.
\]
Substituting the above into \eqref{eq:terminal-dyadic-iteration} gives
\(P_M\le CG_M\), uniformly in \(0<\alpha\le\bar\alpha\). The case \(M=0\)
is exactly \eqref{imp:eq:initial-scale}.
}
Since the event defining \(g_r^{\alpha}\) is
contained in \(\mathsf H_{r,D}^{2,\kappa_0}\) (recall \eqref{eq:terminal-loop-class}), we have
\(\Pp_\alpha(\mathsf H_{r,D}^{2,\kappa_0})
\ge g_r^{\alpha}\ge c p_{r,D}^{\alpha}\).
This proves \eqref{eq:local_version}, and hence Theorem~\ref{thm:main}.
The same argument with \(D=\R^3\) also gives
\(p_r^{\alpha}\le Cg_r^{\alpha}\), while \(g_r^{\alpha}\le p_{r,D}^{\alpha}\le p_r^{\alpha}\) for all
\(D\supset B_{2r}\).
\end{proof}

\subsection{Supermultiplicativity and up-to-constants estimates}
\label{imp:sec:uniform}

Fix
\(\bar\alpha\in\mathrm I_0\) and let \(0<\alpha\le\bar\alpha\).
All comparison constants in this subsection depend only on \(\bar\alpha\),
unless another dependence is indicated.
Put \(\varkappa:=\Capa(S_1)=\Capa(\overline B_1)\), so that
\(\Capa(\overline B_r)=\varkappa r\).

\begin{lemma}\label{imp:lem:fixed-ratio}
For each fixed \(L\ge1\), uniformly for \(r\ge2\) and \(1\le t\le L\),
\begin{equation}\label{imp:eq:fixed-ratio}
 p_{tr}^{\alpha}\asymp_L p_r^{\alpha}.
\end{equation}
For each fixed \(L\ge2\), also
\(p_r^{\alpha}\asymp_L\alpha\) uniformly for \(2\le r\le L\).
\end{lemma}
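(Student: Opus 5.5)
The plan is to prove the two assertions of Lemma~\ref{imp:lem:fixed-ratio} separately, using only monotonicity in the radius together with the identity \(p_r^{\alpha}\asymp g_r^{\alpha}\) obtained at the end of the proof of Theorem~\ref{thm:main}. All constants are uniform in \(0<\alpha\le\bar\alpha\).

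\emph{Fixed ratio.} The upper bound \(p_{tr}^{\alpha}\le p_r^{\alpha}\) for \(t\ge1\) is immediate, since \(\mathcal E_{tr}\subset\mathcal E_r\). For the lower bound, the proof of Theorem~\ref{thm:main} (with \(D=\R^3\)) gives \(p_s^{\alpha}\le Cg_s^{\alpha}\) and \(g_s^{\alpha}\le p_s^{\alpha}\) for every \(s\ge2\). Lemma~\ref{lem:robust-prop} gives \(g_{2s}^{\alpha}\ge c_\star g_s^{\alpha}\). Choose \(k=\lceil\log_2 L\rceil\), so that \(2^k r\ge tr\). Then
\[
 p_{tr}^{\alpha}\ge p_{2^kr}^{\alpha}\ge g_{2^kr}^{\alpha}\ge c_\star^{k}g_r^{\alpha}\ge c_\star^kC^{-1}p_r^{\alpha},
\]
and this is uniform in \(r\ge2\) and \(1\le t\le L\).

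\emph{Bounded range.} For \(2\le r\le L\), the upper bound follows from monotonicity and \eqref{imp:eq:initial-scale}: \(p_r^{\alpha}\le p_2^{\alpha}\le C\alpha\). For the lower bound, apply the first part with base radius \(2\) and \(t=r/2\le L/2\) to get \(p_r^{\alpha}\gtrsim_L p_2^{\alpha}\). Then bound \(p_2^{\alpha}\ge g_2^{\alpha}\ge c\alpha\) using \eqref{eq:terminal-propagation}. One could instead give a direct bound: the Poisson probability that a single loop hits \(S_1\) and \(S_r\) is \(1-e^{-\alpha\Bcal(S_1,S_r;\R^3)}\). By \eqref{eq:splitting-capacity} with scaling, this mass is bounded below by a constant depending on \(L\), and \(\alpha\le\bar\alpha\), so this probability is \(\gtrsim_L\alpha\).

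\emph{Main obstacle.} The only delicate point is that the inequality \(p_s^{\alpha}\le Cg_s^{\alpha}\) must hold with \(C\) uniform in \(\alpha\). This is exactly what the dyadic iteration in Theorem~\ref{thm:main} provides, since \(\delta\) is chosen at \(\bar\alpha\) and \(\chi\) is monotone in the intensity. If that uniformity were unavailable, I would fall back on a welding argument: place a fresh loop of scale \(r\) joining a crossing cluster of capacity \(\gtrsim r\) (from Theorem~\ref{thm:main}) to \(S_{tr}\), and use the lower bound in \eqref{eq:terminal-loop-comparison}. That route costs a factor \(\alpha\), which is why the uniform comparison with \(g\) is preferable.
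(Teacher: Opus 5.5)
Your proposal is correct and follows essentially the paper's own argument. For the fixed ratio you combine \(p\asymp g\) from the proof of Theorem~\ref{thm:main} with \(g_{2s}\ge c_\star g_s\) and iterate \(\lceil\log_2 L\rceil\) times, and for the bounded-range upper bound you use \(p_r^{\alpha}\le p_2^{\alpha}\lesssim\alpha\) from \eqref{imp:eq:initial-scale}. The only difference is in the lower bound \(p_r^{\alpha}\gtrsim_L\alpha\): you pass through \(p_2^{\alpha}\ge g_2^{\alpha}\ge c\alpha\) and the first part, while the paper applies \eqref{eq:terminal-capacity-expectation} directly at radius \(r\) to get \(p_r^{\alpha}\ge g_r^{\alpha}\gtrsim \alpha r^{-1}\Capa(S_1)\ge\varkappa\alpha/L\), which is the same single-loop estimate as your alternative route.
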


\begin{proof}
The proof of Theorem~\ref{thm:main} gives
\(p_r^{\alpha}\asymp g_r^{\alpha}\).
Together with \eqref{eq:terminal-propagation}, this yields
\(p_{2r}^{\alpha}\gtrsim p_r^{\alpha}\).
Iterating at most \(\lceil\log_2 L\rceil\) times and using
monotonicity proves \eqref{imp:eq:fixed-ratio}.
For \(2\le r\le L\), \eqref{imp:eq:initial-scale} gives
\(p_r^{\alpha}\le p_2^{\alpha}\lesssim\alpha\).
On the other hand, \(S_1\subset\mathcal K_r^{\alpha}\) and
\eqref{eq:terminal-capacity-expectation} yield
\[
 p_r^{\alpha}\ge g_r^{\alpha}
 \gtrsim\frac{\alpha}{r}\Capa(S_1)
 \ge\frac{\varkappa}{L}\alpha.
\]
We complete the proof.
\end{proof}

Next, we combine Theorem~\ref{thm:submultiplicativity},
Lemma~\ref{imp:lem:fixed-ratio} and Theorem~\ref{thm:main}  to obtain the following multiplicativity property for
one-arm probabilities.

\begin{theorem}
\label{imp:thm:normalized-product}
There are constants \(0<c\le C<\infty\) such
that, uniformly for \(0<\alpha\le\bar\alpha\) and \(r,s\ge2\),
\begin{equation}\label{imp:eq:normalized-product}
 \frac{c}{\alpha}p_r^{\alpha}p_s^{\alpha}
 \le p_{rs}^{\alpha}
 \le\frac{C}{\alpha}p_r^{\alpha}p_s^{\alpha}.
\end{equation}
\end{theorem}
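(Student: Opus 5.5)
The upper bound follows from Theorem~\ref{thm:submultiplicativity} together with Lemma~\ref{imp:lem:fixed-ratio}. For \(s>8\), Theorem~\ref{thm:submultiplicativity} gives \(\alpha p_{rs}^{\alpha}\le C p_r^{\alpha}p_{s/4}^{\alpha}\). Lemma~\ref{imp:lem:fixed-ratio} with \(L=4\) then gives \(p_{s/4}^{\alpha}\asymp p_s^{\alpha}\), provided \(s/4\ge2\).

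For \(2\le s\le 8\), I would use the second part of Lemma~\ref{imp:lem:fixed-ratio}, namely \(p_s^{\alpha}\asymp\alpha\). Together with monotonicity this gives \(p_{rs}^{\alpha}\le p_r^{\alpha}\lesssim \alpha^{-1}p_r^{\alpha}p_s^{\alpha}\). The same reasoning settles both bounds whenever one of \(r,s\) is bounded: for the lower bound, \(p_{rs}^{\alpha}\asymp p_r^{\alpha}\) by \eqref{imp:eq:fixed-ratio}, and \(p_r^{\alpha}\asymp\alpha^{-1}p_r^{\alpha}p_s^{\alpha}\).

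For the lower bound with \(r,s\) large, I would weld two crossing clusters with a fresh loop. First apply Theorem~\ref{thm:main}, or rather its proof, at scale \(r/2\) inside \(D=B_r\). This produces, with probability \(\gtrsim p_{r/2}^{\alpha}\asymp p_r^{\alpha}\), a cluster of \(\LL^\alpha_{B_r}\) crossing \(A(1,r/2)\) together with a compact piece \(K_1\subset A(r/4,r)\) of capacity \(\ge\kappa_0 r/2\). This is exactly the \(g_{r/2}\) event with \(D=B_r\).

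Next I need an outer crossing from \(S_{2r}\) to \(S_{rs}\) using only loops in \(B_{2r}^c\)-ish region. This requires an inverted version of Theorem~\ref{thm:main}: with probability \(\gtrsim p_{s}^{\alpha}\), a cluster of loops contained in \(A(r',rs\cdot C)\) crosses from \(S_{4r}\) to \(S_{rs}\) and contains \(K_2\subset A(2r,8r)\) with \(\Capa(K_2)\gtrsim r\). The hard step is obtaining this without reproving the recursion. I would first try to get it by running the same recursion argument (Proposition~\ref{lem:residual-extension}, Lemma~\ref{lem:robust-prop}) with the roles of inner and outer spheres exchanged. Scale invariance handles the outer scale, and the loop-cut Lemma~\ref{lem:loop-cut} and Lemma~\ref{lem:one-sided-surgery} have exterior analogues via the exterior pieces in \eqref{eq:loop-cut-two-set}. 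Alternatively, I could reuse the inner-capacity statement directly by noting that the crossing cluster of \(A(4r,rs)\) restricted to a neighbourhood of \(S_{4r}\) must have capacity \(\gtrsim r\). I would guard this with the same small-loop/large-loop dichotomy as in \eqref{eq:residual-extension}, now applied to the portion of the cluster near the inner boundary, with \(\chi^{\bar\alpha}_{a,b}(\delta)\to0\) controlling crossings by small loops. This exterior capacity estimate is the main obstacle.

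Given both pieces, they come from disjoint loop sets, namely loops inside \(B_r\) and loops inside \(B_{2r}^c\), so they are independent. The loops intersecting both \(S_r\) and \(S_{2r}\) form a third, independent Poisson family. Conditionally on \(K_1,K_2\), \eqref{eq:two-capacity-welding} gives \(\Bcal(K_1,K_2)\gtrsim r^{-2}\Capa(K_1)\Capa(K_2)\gtrsim 1\). The probability that a welding loop exists is therefore at least \(1-e^{-c\alpha}\gtrsim\alpha\), uniformly for \(\alpha\le\bar\alpha\). On this event \(\mathcal E_{rs}\) holds, so \(p_{rs}^{\alpha}\gtrsim \alpha\,p_r^{\alpha}p_s^{\alpha}\).

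To reach the normalized form \(c\alpha^{-1}p_r^\alpha p_s^\alpha\), one factor of \(\alpha\) must be saved. If \(p_s^{\alpha}\) is replaced by a \(g\)-type quantity, the \(\alpha\) is already built into \(g\) through \eqref{eq:terminal-capacity-expectation}. So I would use the welding loop itself as the terminal large loop of the outer crossing. Concretely, I would condition only on the inner structure \(K_1\), and lower bound the probability that \(K_1\) is connected to \(S_{rs}\) through an outer cluster attached to it. Via Lemma~\ref{lem:splitting} and the Poisson comparison used in Theorem~\ref{thm:submultiplicativity}, read in reverse, this is \(\gtrsim \alpha\Bcal(K_1,S_{3r})\cdot\alpha^{-1}\Pp(S_{2r}\leftrightarrow S_{rs}\text{ in }B_r^c)\asymp\Capa(K_1)r^{-1}p_s^{\alpha}\). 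Combined with \(\Capa(K_1)\gtrsim r\) on an event of probability \(\gtrsim p_r^{\alpha}\), this gives \(p_{rs}^\alpha\gtrsim p_r^\alpha p_s^\alpha\).

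The bound \(p_{rs}^\alpha\gtrsim p_r^\alpha p_s^\alpha\) is weaker than the stated \(\alpha^{-1}p_r^\alpha p_s^\alpha\). Getting the extra \(\alpha^{-1}\) hinges on a reverse-splitting lower bound that carries the correct \(\alpha\)-dependence, and this is the delicate point to verify.
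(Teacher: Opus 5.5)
\textbf{Upper bound.} Your upper bound and the reduction to $r,s\ge 8$ via Lemma~\ref{imp:lem:fixed-ratio} are correct and match the paper.

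\textbf{Lower bound.} The lower bound is not established. Your best outcome is $p^\alpha_{rs}\gtrsim p^\alpha_r p^\alpha_s$. Since the constants may depend only on $\bar\alpha$, the missing factor $\alpha^{-1}$ is a genuine gap, not a matter of constants. Your first welding scheme, which gives $\alpha p_r^\alpha p_s^\alpha$, is in fact short by $\alpha^2$, not by one factor.

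\textbf{Where the factor is lost.} The loss comes from restricting to good-piece events. Each such event has probability $\asymp p^\alpha$. What actually governs welding is the first moment of the capacity of the seeded cluster. By \eqref{eq:terminal-capacity-expectation} together with $p_r^\alpha\asymp g_r^\alpha$ (the output of the dyadic iteration in the proof of Theorem~\ref{thm:main}),
\[
\frac{\Ee_\alpha[\Capa(\mathcal K_r^\alpha)]}{r}\asymp\frac{g_r^\alpha}{\alpha}\asymp\frac{p_r^\alpha}{\alpha}.
\]
Using $\Capa(K_1)\gtrsim r$ on an event of probability $\gtrsim p_r^\alpha$ amounts to the bound $\Ee_\alpha[\Capa(K_1)]/r\gtrsim p_r^\alpha$. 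This is smaller by exactly $\alpha^{-1}$.

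\textbf{The paper's argument.} The paper never conditions on good pieces; it takes expectations throughout.
\begin{itemize}
\item Take $K_1=\mathcal K_r^\alpha$ and $K_2$ as in the proof of Theorem~\ref{thm:submultiplicativity}. The uniform bound $\Bcal(K_1,K_2;\R^3)\le\Bcal(S_r,S_{4r};\R^3)<\infty$ and \eqref{eq:submult-bridge-poisson} give $p_{rs}^\alpha\gtrsim\alpha\,\Ee_\alpha[\Bcal(K_1,K_2;\R^3)]$.
\item Lemma~\ref{lem:splitting} and the independence of $K_1$ and $K_2$ factor this expectation as $\Ee_\alpha[\Bcal(K_1,S_{3r};\R^3)]\,\Ee_\alpha[\Bcal(S_{2r},K_2;\R^3)]$.
\item By \eqref{eq:splitting-capacity} and the display above, the first factor is $\asymp p_r^\alpha/\alpha$.
\item The second factor is $\asymp p_{s/4}^\alpha/\alpha$ by the same argument.
\end{itemize}
This yields $p_{rs}^\alpha\gtrsim\alpha\cdot\frac{p_r^\alpha}{\alpha}\cdot\frac{p_{s/4}^\alpha}{\alpha}$, and Lemma~\ref{imp:lem:fixed-ratio} finishes.

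\textbf{How to repair your argument.} In your second approach you already use the outer half of this estimate. Your ``reverse reading'' of the Poisson comparison is valid only because $p\asymp g$. The fix is to treat the inner factor the same way, rather than through the event $\{\Capa(K_1)\gtrsim r\}$.

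\textbf{The exterior estimate.} The exterior estimate you flag as the main obstacle needs no new recursion. Use the spherical inversion $x\mapsto rs\,x/|x|^2$, as in Corollary~\ref{lem:localized-two-sided}. It sends $S_{rs},S_{4r},S_{2r}$ to $S_1,S_{s/4},S_{s/2}$, and it sends $K_2$ to a copy of $\mathcal K^\alpha_{s/4}$. Loop masses are preserved by the inversion invariance of the loop measure. The outer factor therefore reduces to the inner one.
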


\begin{proof}
Fix \(\bar\alpha\in\mathrm I_0\) and let \(0<\alpha\le\bar\alpha\).
It suffices to consider \(r,s\ge8\).
The upper bound follows from Theorem~\ref{thm:submultiplicativity} and
Lemma~\ref{imp:lem:fixed-ratio},
\[
 p_{rs}^{\alpha}\lesssim\alpha^{-1}p_r^{\alpha}p_{s/4}^{\alpha}
 \asymp\alpha^{-1}p_r^{\alpha}p_s^{\alpha}.
\]

For the lower bound, let \(K_1,K_2,\BL_{r,4r}\) be as in the proof of
Theorem~\ref{thm:submultiplicativity}.
By \eqref{eq:submult-bridge-poisson}, the uniform bound
\(\Bcal(K_1,K_2;\R^3)\le\Bcal(S_r,S_{4r};\R^3)<\infty\), and
Lemma~\ref{lem:splitting},
\begin{equation}\label{imp:eq:expected-bridge-mass}
 p_{rs}^{\alpha}
 \gtrsim\alpha\Ee_\alpha[\Bcal(K_1,K_2;\R^3)]
 \asymp\alpha\Ee_\alpha[\Bcal(K_1,S_{3r};\R^3)]\,
        \Ee_\alpha[\Bcal(S_{2r},K_2;\R^3)].
\end{equation}
Since \(K_1=\mathcal K_r^{\alpha}\), \eqref{eq:splitting-capacity},
\eqref{eq:terminal-capacity-expectation}, and the comparison
\(p_r^{\alpha}\asymp g_r^{\alpha}\) from the proof of Theorem~\ref{thm:main}
give
\begin{equation}\label{imp:eq:inner-factor}
 \Ee_\alpha[\Bcal(K_1,S_{3r};\R^3)]
 \asymp\frac{\Ee_\alpha[\Capa(\mathcal K_r^{\alpha})]}r
 \asymp\frac{p_r^{\alpha}}\alpha.
\end{equation}
Similarly, we have
\(\Ee_\alpha[\Bcal(S_{2r},K_2;\R^3)]
\asymp\frac{p_{s/4}^{\alpha}}\alpha\).
Combining these estimates, we obtain
\begin{equation}\label{imp:eq:product-lower}
 p_{rs}^{\alpha}
 \gtrsim\alpha^{-1}p_r^{\alpha}p_{s/4}^{\alpha}
 \asymp\alpha^{-1}p_r^{\alpha}p_s^{\alpha},
\end{equation}
where the last comparison follows from Lemma~\ref{imp:lem:fixed-ratio}.
This completes the proof.
\end{proof}

Recall the definition of $\mathcal K_r^{\alpha}$ in \eqref{eq:seeded-cluster-def}.
Now, we can deduce the following uniform one-arm estimates.
\begin{theorem}
\label{imp:thm:uniform-sharp}
Uniformly for \(0<\alpha\le\bar\alpha\) and \(r\ge2\),
\begin{equation}\label{imp:eq:uniform-sharp}
 p_r^{\alpha}\asymp\alpha r^{-\xi(\alpha)},
 \qquad \Ee_\alpha[\Capa(\mathcal K_r^{\alpha})]\asymp r^{1-\xi(\alpha)}.
\end{equation}
For every \(D\supset B_{2r}\), both \(p_{r,D}^{\alpha}\) and
\(\Pp_\alpha(\mathsf H_{r,D}^{2,\kappa_0})\) are also comparable to
\(\alpha r^{-\xi(\alpha)}\), uniformly in \(0<\alpha\le\bar\alpha\) and \(r\ge2\).
\end{theorem}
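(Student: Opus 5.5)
The plan is to normalize the one-arm probability so that Theorem~\ref{imp:thm:normalized-product} becomes an almost-additive sequence, and then use a Fekete-type argument with two-sided control. Set \(q_r:=p_r^{\alpha}/\alpha\). Then \eqref{imp:eq:normalized-product} reads
\[
c\,q_rq_s\le q_{rs}\le C\,q_rq_s,\qquad r,s\ge2,
\]
with \(c,C\) depending only on \(\bar\alpha\). Put \(a(t):=\log q_{e^t}\) for \(t\ge\log2\). Then \(a(t)+a(u)+\log c\le a(t+u)\le a(t)+a(u)+\log C\).

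The first step is to apply Fekete's lemma. It applies to the subadditive function \(a+\log C\) and to the superadditive function \(a+\log c\). Measurability or local boundedness comes from the monotonicity of \(r\mapsto p_r^{\alpha}\) together with Lemma~\ref{imp:lem:fixed-ratio}. Both limits equal \(\lim_{t\to\infty}a(t)/t\). By \eqref{eq:one-arm-log-asymp} this limit is \(-\xi(\alpha)\), since the factor \(\alpha\) contributes nothing to \(a(t)/t\) as \(t\to\infty\). The standard consequence of Fekete's lemma is that a subadditive \(b\) satisfies \(b(t)\ge t\lim b/t\), and a superadditive one satisfies \(b(t)\le t\lim b/t\). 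Hence
\[
-\xi t-\log C\le a(t)\le -\xi t-\log c .
\]
This gives \(\frac{c}{C}\cdots\); concretely, \(C^{-1}\alpha r^{-\xi}\le p_r^{\alpha}\le c^{-1}\alpha r^{-\xi}\) for all \(r\ge2\). One point needs care. The continuous-parameter version of Fekete requires the inequalities for all \(t,u\ge\log 2\). To avoid technicalities, I will instead work along \(r=2^n\) (with \(r,s\ge2\) fine) and then interpolate to general \(r\) using Lemma~\ref{imp:lem:fixed-ratio} with \(L=2\). The constants stay uniform in \(\alpha\le\bar\alpha\) because \(c,C\) are uniform.

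The second claim concerns capacity. From \eqref{eq:terminal-capacity-expectation} we have \(g_r^{\alpha}\asymp\alpha r^{-1}\Ee_\alpha[\Capa(\mathcal K_r^{\alpha})]\), and the proof of Theorem~\ref{thm:main} gives \(g_r^{\alpha}\asymp p_r^{\alpha}\). Therefore \(\Ee_\alpha[\Capa(\mathcal K_r^{\alpha})]\asymp r\,p_r^{\alpha}/\alpha\asymp r^{1-\xi}\).

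For the domain versions, I use the chain \(g_r^{\alpha}\le\Pp_\alpha(\mathsf H_{r,D}^{2,\kappa_0})\le p_{r,D}^{\alpha}\le p_r^{\alpha}\). This holds since the event defining \(g_r^{\alpha}\) forces a crossing cluster of \(\mathcal L_D^\alpha\) (the terminal loop lies in \(B_{2r}\subset D\)), and \(\mathsf H\) implies the crossing in \(D\). Combined with \(g_r^{\alpha}\asymp p_r^{\alpha}\) and the first part, all of these quantities are \(\asymp\alpha r^{-\xi}\).

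The main obstacle is only bookkeeping in the Fekete step: making sure the exponent identified equals \(\xi(\alpha)\) from \eqref{eq:one-arm-log-asymp}, and that constants do not depend on \(\alpha\). Both points follow from the uniformity already built into Theorem~\ref{imp:thm:normalized-product}.
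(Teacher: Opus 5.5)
Your proposal is correct and follows the paper's argument. The paper likewise normalizes \(q_\alpha(r)=p_r^\alpha/\alpha\) and iterates Theorem~\ref{imp:thm:normalized-product} to get \(C^{-(n-1)}q_\alpha(r)^n\le q_\alpha(r^n)\le C^{n-1}q_\alpha(r)^n\), which is exactly your Fekete-type bound. It then identifies the limit of the \(n\)-th roots via \eqref{eq:one-arm-log-asymp}, and reads off the capacity and domain statements from \eqref{eq:terminal-capacity-expectation} and the comparisons \(g_r^\alpha\le\Pp_\alpha(\mathsf H_{r,D}^{2,\kappa_0})\) and \(g_r^\alpha\le p_{r,D}^\alpha\le p_r^\alpha\le Cg_r^\alpha\) from the end of the proof of Theorem~\ref{thm:main}.

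The only difference is cosmetic. The paper iterates at each fixed \(r\ge2\) along \(r^n\), so it needs neither a continuous-parameter Fekete lemma nor your restriction to dyadic \(r\) followed by interpolation via Lemma~\ref{imp:lem:fixed-ratio}.
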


\begin{proof}[Proof of Theorems~\ref{thm:utc} and~\ref{imp:thm:uniform-sharp}]
Set \(q_\alpha(r):=p_r^{\alpha}/\alpha\).
Theorem~\ref{imp:thm:normalized-product} gives a constant \(C\ge1\),
depending only on \(\bar\alpha\), such that
\[
 C^{-1}q_\alpha(r)q_\alpha(s)
 \le q_\alpha(rs)\le Cq_\alpha(r)q_\alpha(s).
\]
For fixed \(\alpha>0\) and \(r\ge2\), iteration yields
\[
 C^{-(n-1)}q_\alpha(r)^n
 \le q_\alpha(r^n)\le C^{n-1}q_\alpha(r)^n.
\]
The existing exponent asymptotic \eqref{eq:one-arm-log-asymp} implies
\(q_\alpha(r^n)^{1/n}\longrightarrow r^{-\xi(\alpha)}\).
Taking \(n\)-th roots and then letting \(n\to\infty\), we obtain
\[
 C^{-1}r^{-\xi(\alpha)}\le q_\alpha(r)
 \le Cr^{-\xi(\alpha)}.
\]
Although the limit is taken at each fixed intensity, the comparison
constant depends only on \(\bar\alpha\), so the conclusion is uniform
on \((0,\bar\alpha]\).
The capacity and localized estimates now follow from
\eqref{eq:terminal-capacity-expectation}, \eqref{eq:local_version}, and
the comparisons at the end of the proof of Theorem~\ref{thm:main}.
\end{proof}

\section{Cluster dimension and equality of critical values}
\label{sec:multiplicativity-to-dimension}
In Section~\ref{sec:dimension}, we prove
Theorem~\ref{thm:all-cluster-dimension}, which identifies \(3-\xi(\alpha)\) as the Hausdorff dimension of every cluster.
In Section~\ref{sec:strict-monotonicity}, we prove Proposition~\ref{prop:xi-linear-separation}, showing that \(\xi(\alpha)\) decreases at a uniform linear rate
on compact subintervals of \(\mathrm{I}_0\).
We then prove the bounds for small intensities and left
continuity of $\xi(\alpha)$, completing the proof of
Theorem~\ref{thm:xi-strict-monotone}.
Finally, Section~\ref{sec:continuum-thresholds} proves the
equality of critical values in Theorem~\ref{thm:continuum-thresholds}.

\subsection{Hausdorff dimension of clusters}
\label{sec:dimension}

To deal with the dimension of all clusters simultaneously, we will use Palm's formula (Lemma~\ref{lem:loop-soup-palm}) to mark a loop and
 then calculate the dimension of the cluster attached to it; see Lemma~\ref{lem:marked-loop-lower-bound}.
The main technical input is a localized first- and second-moment estimate for the cluster attached to a
positive-capacity set, which is given in Lemma~\ref{lem:seeded-moments} below.
Then, we use a standard second-moment method to prove a positive-probability lower bound for
the Hausdorff dimension of the cluster attached to a set with positive capacity; see Lemma~\ref{lem:localized-annular-trial}.
To upgrade the resulting positive-probability bound to an almost-sure
statement, we find linearly many
disjoint positive-capacity pieces along a marked Brownian loop (Lemma~\ref{lem:radial-capacity-boxes}), which enables us
to perform infinitely many trials and bootstrap the result.

We first give the localized first- and second-moment estimate,
which is a direct consequence of the up-to-constants one-arm estimate (Theorem~\ref{imp:thm:uniform-sharp}).
\begin{lemma}
\label{lem:seeded-moments}
For every \(\alpha\in \mathrm{I}_0\) and \(\rho,L>0\), there is
\(C=C(\alpha,\rho,L)<\infty\)
with the following property.  Let \(D\subset\R^3\) be a domain, let
\(K\subset D\) be compact and connected with \(\Capa(K)>0\), and let
\(U\subset D\setminus K\) be bounded and open.  Suppose that
\[
        \dist(\overline U,K)\ge\rho,\qquad
        \dist(\overline U,D^{\mathrm c})\ge\rho,\qquad
        \diam(U)\le L{.}
\]
Then, for all
\(x\in U\) and \(0<\varepsilon\le\frac\rho{100}\),
\begin{equation}
        \Pp_\alpha\bigl(B(x,\varepsilon)
        \xleftrightarrow{\LL_D^\alpha}K\bigr)
        \le
        C\,\varepsilon^\xi.
        \label{eq:seeded-first-moment}
\end{equation}
Moreover, for all \(x,y\in U\) and
\(0<\varepsilon_1,\varepsilon_2\le\frac\rho{100}\),
\begin{equation}
        \Pp_\alpha\bigl(
        B(x,\varepsilon_1)\xleftrightarrow{\LL_D^\alpha}K,\,
        B(y,\varepsilon_2)\xleftrightarrow{\LL_D^\alpha}K
        \bigr)
        \le
        C\,\varepsilon_1^\xi\varepsilon_2^\xi
        \bigl(|x-y|\vee\varepsilon_1\vee\varepsilon_2\bigr)^{-\xi}.
        \label{eq:seeded-second-moment}
\end{equation}
\end{lemma}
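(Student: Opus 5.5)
The first-moment bound \eqref{eq:seeded-first-moment} follows from a one-arm estimate centred at \(x\). If \(B(x,\varepsilon)\) is connected to \(K\) in \(\LL_D^\alpha\), the connecting cluster must exit \(B(x,\rho)\), because \(\dist(x,K)\ge\rho\). Hence the event is contained in \(\{\partial B(x,\varepsilon)\xleftrightarrow{\LL^\alpha}\partial B(x,\rho)\}\). By translation and scale invariance this has probability \(p^\alpha_{\rho/\varepsilon}\). Theorem~\ref{imp:thm:uniform-sharp} (or Theorem~\ref{thm:utc}) then gives the bound \(\lesssim\alpha(\rho/\varepsilon)^{-\xi}\le C(\alpha,\rho)\varepsilon^\xi\). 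The condition \(\varepsilon\le\rho/100\) ensures \(\rho/\varepsilon\ge2\). Since \(\alpha\) is fixed, the constants may depend on \(\alpha\). For this step we only need monotonicity: restricting to \(D\) can only decrease connection probabilities.

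For the second moment \eqref{eq:seeded-second-moment}, set \(d:=|x-y|\vee\varepsilon_1\vee\varepsilon_2\). We split into two cases.

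\textbf{Case \(d\ge\rho/10\).} The two balls are far apart on scale \(\rho\). We want the two local arms \(\partial B(x,\varepsilon_1)\leftrightarrow\partial B(x,r)\) and \(\partial B(y,\varepsilon_2)\leftrightarrow\partial B(y,r)\), with \(r:=\min(d,\rho)/4\), to be essentially independent. The bound we aim for is \(\lesssim p^\alpha_{r/\varepsilon_1}p^\alpha_{r/\varepsilon_2}\). Since \(d\le L\) and \(\rho\) is fixed, this gives \(C\varepsilon_1^\xi\varepsilon_2^\xi\lesssim C\varepsilon_1^\xi\varepsilon_2^\xi d^{-\xi}\).

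To prove independence, note that loops intersecting both \(B(x,r)\) and \(B(y,r)\) have finite mass \(O(1)\). If such a loop is used, we can handle it exactly as in the proof of Theorem~\ref{thm:submultiplicativity}. The loop must connect the clusters \(K_1\subset B(x,r)\) and \(K_2\subset B(y,r)\) grown from the two small balls. Its mass is at most \(\Bcal(K_1,\partial B(x,2r))\asymp\Capa(K_1)/r\) by \eqref{eq:splitting-capacity}, and \(\Ee\Capa(K_1)\asymp r^{1-\xi}\varepsilon_1^{\xi}\) (after scaling) by Theorem~\ref{imp:thm:uniform-sharp}. Otherwise the two arms are realised by disjoint loop collections, and BK (Lemma~\ref{lem:poisson-bk}) applies.

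\textbf{Case \(d<\rho/10\).} Here we use three arms, in the standard percolation way. Suppose \(|x-y|\ge\varepsilon_1\vee\varepsilon_2\), and set \(R:=\rho/2\). The event implies three crossings:
\begin{itemize}[label={}]
\item an arm from \(\partial B(x,\varepsilon_1)\) to \(\partial B(x,d/4)\);
\item an arm from \(\partial B(y,\varepsilon_2)\) to \(\partial B(y,d/4)\);
\item an arm from \(\partial B(x,2d)\) to \(\partial B(x,R)\), since the cluster must reach \(K\).
\end{itemize}
We bound the probability of all three occurring disjointly by the product of the one-arm probabilities, using BK. The non-disjoint contributions are controlled by the splitting argument: each shared loop crossing between the annuli costs a factor comparable to the welding loss. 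These factors are absorbed using Lemma~\ref{lem:splitting}, \eqref{eq:splitting-capacity} and the capacity bound \(\Ee\Capa(\mathcal K_r^\alpha)\asymp r^{1-\xi}\), as in Theorem~\ref{imp:thm:normalized-product}.

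Quasi-multiplicativity then yields
\[
p^\alpha_{d/(4\varepsilon_1)}\,p^\alpha_{d/(4\varepsilon_2)}\,p^\alpha_{R/(2d)}
\lesssim \varepsilon_1^\xi d^{-\xi}\,\varepsilon_2^\xi d^{-\xi}\,d^{\xi}
=\varepsilon_1^\xi\varepsilon_2^\xi d^{-\xi}.
\]
When \(|x-y|<\varepsilon_1\vee\varepsilon_2\), say \(\varepsilon_1\ge\varepsilon_2\), the second ball lies within \(B(x,2\varepsilon_1)\). We then use only the single arm from \(\partial B(y,\varepsilon_2)\) to \(\partial B(y,\rho)\), which gives \(\varepsilon_2^\xi\). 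This matches the claim because \(\varepsilon_1^\xi d^{-\xi}\asymp1\).

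The main obstacle is the non-disjoint (shared-loop) part of the three-arm BK decomposition. A single large loop may serve several arms at once, for instance crossing from near \(x\) all the way out. I would try the strategy of Theorem~\ref{thm:submultiplicativity} first: explore the interior clusters in \(B(x,d/4)\) and \(B(y,d/4)\) restricted to inner loops, and the exterior cluster of \(K\) restricted to loops avoiding \(B(x,d)\). Conditionally on these, the bridging loops form an independent Poisson process. Their mass is bounded through Lemma~\ref{lem:loop-cut} by products of capacities divided by scales, and then the capacity expectations are converted into one-arm probabilities via \eqref{eq:terminal-capacity-expectation}. Because \(\alpha\) is fixed, the factors \(\alpha^{-1}\) are harmless. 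The same locality argument as in Lemma~\ref{lem:local} handles the restriction to \(D\), using \(\dist(\overline U,D^c)\ge\rho\).
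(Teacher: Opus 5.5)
Your overall route is the paper's. The first moment comes from one arm of ratio $\rho/\varepsilon$, using only monotonicity in the domain. The second moment is split into three regimes. Nearby balls are handled by the first moment for the smaller ball. The intermediate regime uses two local arms at scale $|x-y|$ and one arm from scale $|x-y|$ out to scale $\rho$, glued by the splitting argument of Theorem~\ref{thm:submultiplicativity}. The far regime uses two arms at a fixed scale comparable to $\rho$, with $d^{-\xi}$ absorbed via $d\le L$. The paper treats the non-disjoint contributions at no greater level of detail than you do.

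\textbf{The far-case estimate as written does not give the claim.} You bound the mass of the shared loop by $\Bcal(K_1,\partial B(x,2r);\R^3)\asymp\Capa(K_1)/r$. Averaging this yields only $\lesssim\alpha(\varepsilon_1/r)^{\xi}$, so the factor $\varepsilon_2^\xi$ is lost, which is fatal when $\varepsilon_2\ll\varepsilon_1$. You need the two-sided factorization that Theorem~\ref{thm:submultiplicativity} actually uses:
\begin{itemize}
\item Take $r$ a small multiple of $\min(d,\rho)$ so that $|x-y|\ge 8r$. Then $K_2\subset B(y,r)\subset B(x,4r)^{\mathrm c}$.
\item Lemma~\ref{lem:splitting}, centred at $x$, gives $\Bcal(K_1,K_2;\R^3)\lesssim\Bcal(K_1,\partial B(x,3r);\R^3)\,\Bcal(\partial B(x,2r),K_2;\R^3)$.
\item The last factor is at most $\Bcal(K_2,\partial B(y,3r);\R^3)\asymp\Capa(K_2)/r$ by \eqref{eq:splitting-capacity}.
\item $K_1$ and $K_2$ are built from loops in disjoint balls, hence independent. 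This gives $\lesssim\alpha(\varepsilon_1/r)^\xi(\varepsilon_2/r)^\xi$, as required.
\end{itemize}

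\textbf{The same issue arises in the three-arm case.} A single loop may touch all three explored sets. Its mass must be bounded by the product of all three factors, not by a pairwise bound. That needs a three-set version of Lemma~\ref{lem:loop-cut}: mark the inner crossing piece or pieces hitting the two inner clusters, use the strong Markov property between the two hits if one piece hits both, and mark one outer piece.

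\textbf{Minor adjustments.}
\begin{itemize}
\item Build the exterior cluster from loops avoiding $B(x,2d)$ rather than $B(x,d)$. Since $B(y,d/4)\not\subset B(x,d)$, the loop classes would otherwise overlap and the independence fails.
\item Put the range $\varepsilon_1\vee\varepsilon_2\le|x-y|<8(\varepsilon_1\vee\varepsilon_2)$, where your local arms degenerate, into the single-arm case.
\item The appeal to Lemma~\ref{lem:local} is unnecessary. Every bound here is an upper bound, and $\LL_D^\alpha\subset\LL_{\R^3}^\alpha$.
\end{itemize}
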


\begin{proof}
A crossing in $\LL_D^\alpha$ from \(B(x,\varepsilon)\) to \(K\)
contains a subcrossing to distance \(\frac\rho2\).  Thus
Theorem~\ref{imp:thm:uniform-sharp} gives, uniformly over all the sets in the statement,
\[
        \Pp_\alpha\bigl(B(x,\varepsilon)
        \xleftrightarrow{\LL_D^\alpha}K\bigr)
        \lesssim \varepsilon^\xi .
\]
For the two-point estimate, put \(s=|x-y|\) and
\(\delta=\varepsilon_1\vee\varepsilon_2\). If \(s\le2\delta\), apply
the first bound to the smaller ball. If
\(2\delta<s\le\frac\rho4\), the
 splitting strategy used in the proof of
Theorem~\ref{thm:submultiplicativity}, followed by
Theorem~\ref{imp:thm:uniform-sharp}, bounds the joint probability by
\[
        C\left(\frac{\varepsilon_1}{s}\right)^\xi
        \left(\frac{\varepsilon_2}{s}\right)^\xi s^\xi
        =C\,\varepsilon_1^\xi\varepsilon_2^\xi s^{-\xi},
\]
with a constant depending only on \(\alpha,\rho,L\).  If \(s>\frac\rho4\),
the same splitting at a fixed scale comparable to \(\rho\) gives
\(C\varepsilon_1^\xi\varepsilon_2^\xi\). Since \(s\le L\), this is at most
\(C L^\xi\varepsilon_1^\xi\varepsilon_2^\xi s^{-\xi}\), which proves
\eqref{eq:seeded-second-moment}.
\end{proof}

We next prove a positive-probability
lower bound for the Hausdorff dimension of the cluster attached to a positive-capacity set.

\begin{lemma}
\label{lem:localized-annular-trial}
For every \(\alpha\in \mathrm{I}_0\) and \(\kappa>0\), there is
\(q=q(\alpha,\kappa)>0\) such that the following holds.
Let \(z\in\R^3\), \(r>0\), and let
\(K\subset A(z,2r,4r)\) be compact and connected, with
\(\Capa(K)\ge \kappa r\).  Let
\(\mathcal C_z^r(K)\) be the cluster attached to \(K\) using an independent
soup restricted to loops contained in \(B(z,8r)\).  Then
\[
        \Pp_\alpha\bigl(
        \dimH(\mathcal C_z^r(K)\cap B(z,\frac r4))\ge3-\xi
        \bigr)
        \ge q(\alpha,\kappa).
\]
\end{lemma}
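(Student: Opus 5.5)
By Brownian scaling and translation invariance, it suffices to take \(z=0\) and \(r=1\); the hypothesis becomes \(K\subset A(2,4)\) with \(\Capa(K)\ge\kappa\), and the soup is \(\LL^\alpha_{B_8}\). I would run the standard Frostman/second-moment argument. For \(n\ge1\), let \(\mathcal Q_n\) be the dyadic cubes of side \(2^{-n}\) inside \(B_{1/4}\), and let \(x_Q\) be the centre of \(Q\). Define the random measure
\[
\mu_n:=\sum_{Q\in\mathcal Q_n}2^{n\xi}\,\1\{B(x_Q,2^{-n})\xleftrightarrow{\LL^\alpha_{B_8}}K\}\,\mathrm{Leb}|_Q .
\]
I will take \(U=B_{1/2}\), \(D=B_8\), \(\rho=1\) and \(L=1\) in Lemma~\ref{lem:seeded-moments}. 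Then \eqref{eq:seeded-first-moment} gives \(\Ee[\mu_n(B_{1/4})]\le C\). Summing \eqref{eq:seeded-second-moment} over pairs of cubes gives, uniformly in \(n\),
\[
\Ee[\mu_n(B_{1/4})^2]\le C,
\qquad
\Ee\Bigl[\iint|x-y|^{-s}\,\mu_n(dx)\mu_n(dy)\Bigr]\le C_s
\quad\text{for every } s<3-\xi .
\]
The energy bound is finite because \(\int_{B_1}|x-y|^{-s-\xi}\,dy<\infty\) when \(s+\xi<3\).

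The step I expect to be the main obstacle is the matching first-moment lower bound:
\[
\Pp_\alpha\bigl(B(x,\varepsilon)\xleftrightarrow{\LL^\alpha_{B_8}}K\bigr)\ge c(\alpha,\kappa)\,\varepsilon^\xi,
\qquad x\in B_{1/4},\ \varepsilon\le 2^{-4}.
\]
This is where \(\Capa(K)\ge\kappa\) is used.
\begin{itemize}
\item First, apply Theorem~\ref{thm:main} (scaled and centred at \(x\), with a domain inside \(B_1\)) and Theorem~\ref{imp:thm:uniform-sharp}. With probability at least \(c\,\varepsilon^{\xi}\) there is a cluster of the soup restricted to \(B(x,1)\) that crosses from \(\partial B(x,\varepsilon)\) to scale \(1/8\). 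It contains a set \(J\subset A(x,1/16,1/4)\) with \(\Capa(J)\ge c\kappa_0\).
\item Second, condition on this configuration. The loops of \(B_8\) meeting both \(J\) and \(K\) but not contained in \(B(x,1)\) form an independent Poisson process.
\item The two sets sit at distance of order one from each other. So a Lemma~\ref{lem:loop-cut}-type decomposition, as in \eqref{eq:two-capacity-welding} but with closing paths confined to \(B_8\) as in Lemma~\ref{lem:splitting}, gives this process mass at least \(c\,\Capa(J)\Capa(K)\ge c\kappa_0\kappa\). Some bookkeeping is needed to fit the annuli.
\item Hence a welding loop exists with probability at least \(1-e^{-c\alpha\kappa_0\kappa}\), which yields the bound. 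The welded loop and the inner crossing use disjoint loop classes, so independence is legitimate.
\end{itemize}

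Consequently \(\Ee[\mu_n(B_{1/4})]\ge c\). Paley--Zygmund then gives
\[
\Pp\bigl(\mu_n(B_{1/4})\ge c/2\bigr)\ge q:=\frac{c^2}{4C}
\]
for every \(n\), and by reverse Fatou the event \(\{\mu_n(B_{1/4})\ge c/2\ \text{infinitely often}\}\) has probability at least \(q\). Fix a sequence \(s_k\uparrow3-\xi\). By Fatou, \(\liminf_n\iint|x-y|^{-s_k}\,d\mu_nd\mu_n<\infty\) almost surely for every \(k\).

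On the event above, choose a random subsequence along which the masses stay at least \(c/2\) and \(\mu_n\) converges weakly, by tightness, to some \(\mu\). Using truncated kernels to handle lower semicontinuity of energy, together with a diagonal argument in \(k\) and a further subsequence, \(\mu\) has nonzero mass and finite \(s_k\)-energy for all \(k\). Every point of \(\operatorname{supp}\mu\) is a limit of points within distance \(2^{-n+1}\) of the cluster \(\mathcal C^1_0(K)\), which is closed by definition, so \(\mu\) is supported on \(\mathcal C^1_0(K)\cap\overline{B_{1/4}}\). Frostman's lemma then gives dimension at least \(s_k\) for all \(k\), hence at least \(3-\xi\). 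The passage from \(\overline{B_{1/4}}\) to \(B_{1/4}\) is handled by running the same argument on \(B_{1/5}\).
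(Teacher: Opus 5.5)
Your route is the paper's. You scale to $z=0$, $r=1$. You get the one-point lower bound $\Pp_\alpha(B(x,\varepsilon)\leftrightarrow K)\gtrsim\varepsilon^{\xi}$ from Theorem~\ref{thm:main} together with Theorem~\ref{imp:thm:uniform-sharp}, plus a welding loop to $K$; this is legitimate because every loop hitting $K$ leaves the ball that carries the local crossing. You take the one- and two-point upper bounds from Lemma~\ref{lem:seeded-moments} in $D=B_8$, and you conclude with a second-moment/Frostman argument. The paper just cites the standard second-moment criterion (M\"orters--Peres, Theorem~10.43) for the last step. You re-derive it, and the re-derivation has a logical gap.

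\textbf{The gap.} It lies in your final paragraph, where you combine two separate almost-sure statements. Write $I_s(\mu)=\iint|x-y|^{-s}\mu(dx)\mu(dy)$. You know that $\Pp(\mu_n(B_{1/4})\ge c/2\text{ i.o.})\ge q$, and separately that $\liminf_n I_{s_k}(\mu_n)<\infty$ a.s. These do not give a subsequence along which the mass is at least $c/2$ and the energy stays bounded at the same time. The liminf of the energies can be attained along indices where $\mu_n$ is small or even zero, since the zero measure has zero energy. Along the indices where the mass is large, the energies may blow up, and the weak limit could in principle be a point mass. No diagonal argument recovers this: the needed information was lost when you applied Fatou to the energies alone.

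\textbf{The fix.} Control mass and energy jointly at each fixed $n$ before passing to the limsup.
\begin{enumerate}
\item By Markov, $\Pp\bigl(I_s(\mu_n)>2C_s/q\bigr)\le q/2$, so $\Pp\bigl(\mu_n(B_{1/4})\ge c/2,\ I_s(\mu_n)\le 2C_s/q\bigr)\ge q/2$ for every $n$.
\item Apply reverse Fatou to this joint event, then use tightness and lower semicontinuity of the truncated energies. This gives $\Pp\bigl(\dimH(\mathcal C_0^1(K)\cap B_{1/4})\ge s\bigr)\ge q/2$.
\item Since $q$ does not depend on $s$, let $s\uparrow 3-\xi$ along these decreasing events to get the lemma with constant $q/2$.
\end{enumerate}
Alternatively, apply Markov once to the weighted energy $\sum_k 2^{-k}I_{s_k}(\mu_n)/C_{s_k}$; this bounds all $I_{s_k}$ simultaneously along one subsequence.

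With this repair the rest of your argument is fine: the moment computations, the support of the limit measure lying in the closed cluster, and the shrinking to $B_{1/5}$. Just start at $n$ with $2^{-n}\le 1/100$, as Lemma~\ref{lem:seeded-moments} requires.
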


\begin{proof}
By translation and scaling, take \(z=0\) and \(r=1\), and fix
\(U=B(0,\frac18)\).  We first prove that there exists
\(c_{\alpha,\kappa}>0\), uniformly over the admissible \(K\), such that
\begin{equation}\label{eq:localized-seeded-lower-bound}
        \Pp_\alpha\bigl(B(x,\varepsilon)\cap\mathcal C_0^1(K)\ne\varnothing\bigr)
        \ge c_{\alpha,\kappa}\varepsilon^\xi
\end{equation}
for every \(x\in U\) and \(0<\varepsilon\le\frac1{100}\).
For these $x$ and $\varepsilon$, Theorem~\ref{imp:thm:uniform-sharp} shows that, with
probability at least \(a_\alpha\varepsilon^\xi\), there exist a
cluster
\(\mathcal C_{x,\varepsilon}\) of \(\LL^\alpha_{B(x,3/2)}\)
across \(A(x,\frac\varepsilon2,\frac12)\) and a compact
connected set \(J\) such that
\[
        J\subset \mathcal C_{x,\varepsilon}\cap A(x,\frac14,1),
        \qquad
        \diam(J)\ge\frac14,
        \qquad
        \Capa(J)\ge\frac{\kappa_0}2.
\]
Since \(x\in B(0,\frac18)\), we have
\(J\subset B(0,\frac98)\), whereas \(K\subset A(0,2,4)\).  Restricting the proof
of Lemma~\ref{lem:two-capacity-welding} to \(B(0,8)\), with cutting spheres
\(S_{5/4}\) and \(S_{3/2}\), therefore gives, uniformly in \(x,J,K\),
\[
        \muLoop(\ell:\ell\subset B(0,8),\,
        \ell\cap J\ne\varnothing,\,
        \ell\cap K\ne\varnothing)
        \ge c\,\Capa(J)\Capa(K)
        \ge \frac{c\,\kappa_0\kappa}{2} .
\]
Every loop counted here intersects \(K\), which is disjoint from \(B(x,\frac32)\), so
this loop class is independent of the local soup that determines
\(\mathcal C_{x,\varepsilon}\) and \(J\).
Hence, given \(\mathcal C_{x,\varepsilon}\) and \(J\), we can insert a loop joining
\(J\) to \(K\) with probability at least
\(1-\exp\{-\frac{\alpha c\kappa_0\kappa}{2}\}>0\).
Consequently, \eqref{eq:localized-seeded-lower-bound} holds with
\(c_{\alpha,\kappa}
:=a_\alpha\bigl(1-\exp\{-\frac{\alpha c\kappa_0\kappa}{2}\}\bigr)>0\).
Moreover, Lemma~\ref{lem:seeded-moments}, applied with
\(D=B(0,8)\) and \(\rho=L=1\), gives the required one- and two-point upper
bounds uniformly over the admissible \(K\).
By the standard second-moment criterion \cite[Theorem~10.43]{MP10}, for some
\(q=q(\alpha,\kappa)>0\),
\[
        \Pp_\alpha\bigl(
        \dimH(\mathcal C_0^1(K)\cap U)\ge3-\xi
        \bigr)\ge q,
\]
which proves the
claim.
\end{proof}

The next lemma obtains many disjoint positive-capacity pieces of
a bridge from crossings of separated annuli.

\begin{lemma}
\label{lem:radial-capacity-boxes}
For every \(\delta>0\) there exist \(\kappa_1>0\) and \(c_1>0\) such that, for
every \(x\in\R^3\) and \(t>0\),
\[
        \lim_{r\to0}\Pbb^{x,x}_t\bigl(\diam(\gamma)\ge\delta,\,
        N_r(\gamma)<\frac{c_1\delta}{r}\bigr)=0 .
\]
Here \(N_r(\gamma)\) is the largest cardinality of a collection of pairs
\((z_i,K_i)\) such that the balls \(B(z_i,8r)\) are pairwise disjoint,
\(K_i\subset\gamma\cap A(z_i,2r,4r)\), and \(\Capa(K_i)\ge\kappa_1r\).
\end{lemma}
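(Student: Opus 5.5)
We plan to prove Lemma~\ref{lem:radial-capacity-boxes} by a law-of-large-numbers argument over many approximately independent Brownian sub-segments, using only standard Brownian facts. By Brownian scaling and translation we may take \(t=1\) and \(x=0\); the limit is then uniform in the remaining parameters.

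\textbf{Reduction to a Brownian motion.} The bridge \(\gamma\) on \([0,1]\) is absolutely continuous with respect to Brownian motion on \([0,\frac12]\), with bounded density (this follows from the heat kernel). By time reversal, the same holds on \([\frac12,1]\). On the event \(\diam(\gamma)\ge\delta\), some point of \(\gamma\) lies at distance at least \(\delta/2\) from \(0\). Hence one of the two halves, say \(\gamma[0,\frac12]\), exits \(B(0,\delta/4)\). It therefore suffices to show the following for a Brownian motion \(W\) started at \(0\):
\[
\lim_{r\to0}\Pp\bigl(\tau<\tfrac12,\ N_r(W[0,\tau])<c_1\delta/r\bigr)=0,
\]
where \(\tau\) is the exit time of \(B(0,\delta/4)\).

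\textbf{Construction of the boxes.} Fix \(r\) and consider the radii \(\rho_k=16kr\) for \(1\le k\le M:=\lfloor \delta/(64r)\rfloor\). Let \(\sigma_k\) be the first hitting time of \(S_{\rho_k}\) and set \(z_k:=W(\sigma_k)\). We call stage \(k\) a \emph{success} if the path \(W[\sigma_k,\sigma_k+\cdot]\) contains a piece \(K_k\subset A(z_k,2r,4r)\) with \(\Capa(K_k)\ge\kappa_1 r\), and this happens before \(W\) exits \(B(z_k,6r)\). This is a scale-invariant event. By \cite[Theorem~1.1]{PPS}, as already used in \eqref{eq:terminal-middle-piece}, a Brownian crossing of a fixed annulus has capacity bounded below with positive probability. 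So for a suitable \(\kappa_1\) there is \(q>0\) such that, by the strong Markov property at \(\sigma_k\), each stage succeeds with conditional probability at least \(q\), regardless of the past.

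\textbf{Disjointness and counting.} Successful stages use balls \(B(z_k,8r)\). Since \(|z_k|=16kr\), these balls lie in disjoint shells \(\{16kr-8r<|y|<16kr+8r\}\) and are therefore pairwise disjoint. Each such stage thus contributes a valid pair \((z_k,K_k)\). The success indicators dominate i.i.d.\ Bernoulli\((q)\) variables. Therefore
\[
\Pp(\#\text{successes}<qM/2)\le e^{-cM}\to0.
\]
Taking \(c_1=q/200\) gives \(qM/2\ge c_1\delta/r\) for small \(r\), which completes the argument.

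\textbf{Main obstacle.} The main subtlety is ensuring that the piece \(K_k\) lies in the prescribed annulus \(A(z_k,2r,4r)\) while also keeping the stages Markov-separated. The stage-\(k\) event must be determined by the path between \(\sigma_k\) and the exit from \(B(z_k,6r)\). Since \(6r<16r\), this exit occurs before \(\sigma_{k+1}\), so the stages are nested correctly. We must also check that the PPS input, stated for a crossing between spheres, yields a sub-path in the open annulus \(A(z_k,2r,4r)\). To do this, we take the crossing of \(A(z_k,2.5r,3.5r)\) and use monotonicity of capacity.
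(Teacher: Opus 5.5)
Your proposal is correct and follows essentially the same route as the paper's proof. Both use concentric spheres spaced by a fixed multiple of $r$, centers $z_k$ at first hitting points, the crossing piece of $A(z_k,\frac52 r,\frac72 r)$ as $K_k$, the strong Markov property combined with \cite[Theorem~1.1]{PPS}, and a Chernoff bound. Your bridge-to-Brownian-motion reduction (bounded density on each half plus time reversal) is a slightly more careful version of the paper's absolute-continuity step.

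The only detail to tidy is the last stage. When $\delta/(64r)$ is an integer, $\rho_M=\delta/4$, so the stage-$M$ piece can lie after $\tau$. Restrict to $k\le M-1$, so that $\rho_k+6r<\delta/4$ and every $K_k$ lies in $W[0,\tau]$. The counting is unaffected for small $r$.
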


\begin{proof}
On \(\{\diam(\gamma)\ge\delta\}\), the law of the bridge before exiting \(B(x,\frac\delta3)\)
is absolutely continuous with respect to the law of \(Z([0,\tau_{\delta/3}])\), where \(Z\) is standard Brownian motion
started at \(x\) and stopped on first exiting \(B(x,\frac\delta3)\).
Here \(\tau_a:=\inf\{u\ge0:|Z_u-x|=a\}\) for \(a>0\).
It therefore suffices to prove the claim for
\(Z([0,\tau_{\delta/3}])\).
Fix \(M\ge40\). For
\(1\le j\le n(r):=\lfloor(\delta-24r)/(3Mr)\rfloor\), set
\(a_j=jMr\).  Define
\[
        v_j:=\inf\{s\ge\tau_{a_j}:\ |Z_s-Z_{\tau_{a_j}}|=\frac{7r}{2}\},
        \qquad
        u_j:=\sup\{s\le v_j:\ |Z_s-Z_{\tau_{a_j}}|=\frac{5r}{2}\}.
\]
Set \(K_j:=Z([u_j,v_j])\).  Then
\(v_j\le\tau_{a_j+4r}\le\tau_{\delta/3}\), and
\(K_j\subset Z([0,\tau_{\delta/3}])\cap A(Z_{\tau_{a_j}},2r,4r)\) is compact and connected.
By the strong Markov property, scaling, and~\cite[Theorem~1.1]{PPS},
there are \(\kappa_1,p_*>0\), independent of \(j,r\), such
that
\[
        \Pp\bigl(\Capa(K_j)\ge\kappa_1 r
        \,\bigm|\,\mathcal F_{\tau_{a_j}}\bigr)\ge p_* .
\]
The Chernoff bound gives
\[
        \Pp\left(
        \sum_{j=1}^{n(r)}
        \mathbf 1_{\{\Capa(K_j)\ge\kappa_1r\}}
        \le \frac{p_*}{2}n(r)
        \right)
        \le \exp\{-c(p_*)n(r)\}.
\]
Here \(n(r)\asymp\frac\delta{Mr}\), while the separation of the radii \(a_j\) and
\(M\ge40\) make the balls \(B(Z_{\tau_{a_j}},8r)\) disjoint.  Thus, after decreasing a
deterministic \(c_1>0\), the stopped Brownian path contains at least
\(\frac{c_1\delta}{r}\) admissible boxes with probability tending to one. This completes the proof.
\end{proof}

The loop soups restricted to the disjoint balls \(B(z_i,8r)\) are independent, so a fixed
positive success probability amplifies to an almost-sure statement by sending $r\to 0$.
For a compact set \(K\), let \(\mathcal C(K)\) be the closed union of \(K\)
and all loop-soup clusters that intersect \(K\).

\begin{lemma}
\label{lem:marked-loop-lower-bound}
Let \(\alpha\in \mathrm{I}_0\), and let \(\LL_{\R^3}^\alpha\) be an independent loop
soup.  Then, for
\(\muLoop\)-almost every Brownian loop \(\gamma\),
\[
        \Pp_\alpha\bigl(\dimH\mathcal C(\gamma)\ge 3-\xi
        \,\bigm|\,\gamma\bigr)=1 .
\]
\end{lemma}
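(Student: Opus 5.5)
The plan is to combine Lemma~\ref{lem:radial-capacity-boxes} with Lemma~\ref{lem:localized-annular-trial}, using independence of the soup restricted to disjoint balls. First I reduce to a countable family of thresholds. Since \(\muLoop\)-a.e.\ loop has positive diameter, it suffices to fix \(\delta>0\) and work on the set of loops with \(\diam(\gamma)\ge\delta\). The loop measure disintegrates as \(\int\int \frac1t p_t(x,x)\Pbb^{x,x}_t\,dx\,dt\), so it is enough to prove the claim for \(\Pbb^{x,x}_t\)-a.e.\ bridge \(\gamma\) with \(\diam(\gamma)\ge\delta\), for every fixed \(x\) and \(t\).

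Fix \(\delta\), and let \(\kappa_1,c_1\) be as in Lemma~\ref{lem:radial-capacity-boxes}. Set \(q:=q(\alpha,\kappa_1)\) from Lemma~\ref{lem:localized-annular-trial}. Take \(r=r_k\downarrow0\) along a sequence. Passing to a subsequence, the probabilities in Lemma~\ref{lem:radial-capacity-boxes} are summable, and Borel--Cantelli gives the following for a.e.\ bridge with \(\diam(\gamma)\ge\delta\): for all large \(k\), there exist \(N\ge c_1\delta/r_k\) pairs \((z_i,K_i)\) with the balls \(B(z_i,8r_k)\) pairwise disjoint, \(K_i\subset\gamma\cap A(z_i,2r_k,4r_k)\), and \(\Capa(K_i)\ge\kappa_1 r_k\). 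The \(K_i\) can be taken compact and connected, since in the construction they are Brownian path segments.

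Now condition on \(\gamma\) and fix such a \(k\). For each \(i\), let \(\mathcal C_i\) be the cluster attached to \(K_i\) using only loops of \(\LL_{\R^3}^\alpha\) contained in \(B(z_i,8r_k)\). These restricted soups are independent because the balls are disjoint, and they are independent of \(\gamma\). By Lemma~\ref{lem:localized-annular-trial}, each event \(\{\dimH(\mathcal C_i\cap B(z_i,r_k/4))\ge3-\xi\}\) has probability at least \(q\). Hence the probability that all of them fail is at most \((1-q)^{c_1\delta/r_k}\), which tends to \(0\). Each \(\mathcal C_i\) consists of loops of the full soup chained to \(K_i\subset\gamma\), so \(\mathcal C_i\subset\mathcal C(\gamma)\). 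Monotonicity of Hausdorff dimension then gives
\[
\Pp_\alpha\bigl(\dimH\mathcal C(\gamma)<3-\xi\,\bigm|\,\gamma\bigr)\le(1-q)^{c_1\delta/r_k}\quad\text{for all large }k .
\]
Letting \(k\to\infty\) yields conditional probability \(1\).

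The main obstacle is measurability and conditioning. The pairs \((z_i,K_i)\) must be selected as measurable functions of \(\gamma\) alone, for example via the explicit stopping-time construction \(K_j=Z([u_j,v_j])\) in the proof of Lemma~\ref{lem:radial-capacity-boxes}, transferred to the bridge by the absolute continuity used there. Only then is the independence of the soup from \(\gamma\) preserved, so that Lemma~\ref{lem:localized-annular-trial} can be applied with \(K_i\) treated as deterministic.

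A second point is that Lemma~\ref{lem:localized-annular-trial} requires its lower bound \(q\) to be uniform over admissible \(K\). This is exactly how it is stated, so the product bound above is legitimate.
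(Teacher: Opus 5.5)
Your proposal is correct and follows essentially the same route as the paper: the box count from Lemma~\ref{lem:radial-capacity-boxes}, the uniform trial probability \(q(\alpha,\kappa_1)\) from Lemma~\ref{lem:localized-annular-trial}, independence of the soups restricted to the disjoint balls \(B(z_i,8r)\), and then disintegration of \(\muLoop\) and a union over rational \(\delta\). The only difference is cosmetic: the paper avoids your Borel--Cantelli subsequence by bounding the joint probability \((\Pbb_t^{x,x}\otimes\Pp_\alpha)(\diam(\gamma)\ge\delta,\ \dimH\mathcal C(\gamma)<3-\xi)\) by \(\Pbb_t^{x,x}(\diam(\gamma)\ge\delta,\ N_r(\gamma)<c_1\delta/r)+(1-q)^{c_1\delta/r}\) and letting \(r\to0\). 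Your remarks on connectedness of the \(K_i\) and on measurable selection fill in points the paper leaves implicit.
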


\begin{proof}
Fix \(\delta>0\) and a bridge law \(\Pbb_t^{x,x}\).  Let
\(\kappa_1,c_1\) come from Lemma~\ref{lem:radial-capacity-boxes} and put
\(q=q(\alpha,\kappa_1)\) as in Lemma~\ref{lem:localized-annular-trial}.
Conditional on \(N_r(\gamma)\ge \frac{c_1\delta}{r}\), choose a collection
\((z_i,K_i)_{1\le i\le N_r(\gamma)}\) from Lemma~\ref{lem:radial-capacity-boxes}.  Independence of the soups restricted
to the disjoint balls \(B(z_i,8r)\), together with
Lemma~\ref{lem:localized-annular-trial}, gives
\[
\begin{aligned}
        &\Pp_\alpha\bigl(
        \dimH(\mathcal C_{z_i}^r(K_i)\cap B(z_i,\frac r4))<3-\xi
        \text{ for every }1\le i\le N_r(\gamma)
        \,\bigm|\,\gamma\bigr)\\
        &\qquad\le (1-q)^{N_r(\gamma)}
        \le (1-q)^{c_1\delta/r}.
\end{aligned}
\]
Consequently,
\[
\begin{aligned}
        &(\Pbb_t^{x,x}\otimes\Pp_\alpha)\bigl(\diam(\gamma)\ge\delta,\,
        \dimH\mathcal C(\gamma)<3-\xi\bigr)  \\
        &\qquad\le
        \Pbb^{x,x}_t\bigl(\diam(\gamma)\ge\delta,\,
        N_r(\gamma)<\frac{c_1\delta}{r}\bigr)
        +(1-q)^{c_1\delta/r}.
\end{aligned}
\]
Both terms vanish as \(r\to 0\).  Integrating the resulting bridge
statement in the definition of \(\muLoop\), then taking the union over
rational \(\delta>0\), proves the claim.
\end{proof}

Finally, we finish the proof of Theorem~\ref{thm:all-cluster-dimension} by using Palm's formula.

\begin{proof}[Proof of Theorem~\ref{thm:all-cluster-dimension}]
Fix \(\alpha\in \mathrm{I}_0\).
Since every cluster contains a loop,
Lemma~\ref{lem:marked-loop-lower-bound} and
Lemma~\ref{lem:loop-soup-palm} (Palm's formula; in particular \eqref{eq:palm} applied in \(\R^3\)) show that
\[
        \Pp_\alpha\left(
        \forall \mathcal C\in\mathscr C(\LL_{\R^3}^\alpha),\quad
        \dimH(\mathcal C)\ge 3-\xi
        \right)=1 .
\]

We turn to the upper bound.
For \(\muLoop\)-almost every \(\gamma\), its
trace is compact, connected, has positive capacity, and has Hausdorff
dimension \(2\).
Let \(\LL_{\R^3}^\alpha\) be an independent loop soup, and let \(\mathcal C(\gamma)\) be the cluster of \(\LL_{\R^3}^\alpha\) attached to \(\gamma\).
If \(U\Subset\R^3\setminus\gamma\) is bounded, the first moment bound
\eqref{eq:seeded-first-moment} gives \(\dimH(\mathcal C(\gamma)\cap U)\le3-\xi\) almost surely.
A countable
exhaustion of \(\R^3\setminus\gamma\) further yields \(\dimH(\mathcal C(\gamma)\setminus\gamma)\le3-\xi\).
Since \(\xi<1\), as recalled in the introduction,
\(\dimH\gamma=2<3-\xi\), and we conclude that
\[
        \Pp_\alpha\bigl(\dimH \mathcal C(\gamma)\le 3-\xi\,\bigm|\,\gamma\bigr)=1
        \qquad\text{for }\muLoop\text{-a.e. }\gamma .
\]
One more application of Lemma~\ref{lem:loop-soup-palm} gives the upper bound
simultaneously for all clusters, completing the proof.
\end{proof}

\subsection{Proof of Theorem~\ref{thm:xi-strict-monotone}}
\label{sec:strict-monotonicity}
\label{imp:sec:exponent}

We first prove the quantitative strict monotonicity in Proposition~\ref{prop:xi-linear-separation}.
We will work on a compact interval
\(I=[\alpha_-,\alpha_+]\subset \mathrm{I}_0\) and allow the constants to depend on \(I\).
Recall that \(p_r^\alpha=\Pp_\alpha(\mathcal E_r)\).

{

We first extend the definition~\eqref{eq:robust-event-def} in the following way.
\begin{definition}[Good pieces]\label{def:good-piece}
For \(0<a<b\), consider a compact connected set \(K\)
contained in a cluster across \(A(a,b)\).
We call \(K\) a good outer piece for \(\mathcal E_{a,b}\) if
\(K\subset A(\frac b2,2b)\), \(\diam(K)\ge \frac b2\), and
\(\Capa(K)\ge \kappa_0 b\).
We call \(K\) a good inner piece if
\(K\subset A(\frac a2,2a)\), \(\diam(K)\ge\frac a8\), and
\(\Capa(K)\ge\frac{\kappa_0a}{4}\).
We denote by \(\mathsf H^+(a,b)\) and \(\mathsf H^-(a,b)\) the events that
such a good outer piece and a good inner piece exist, respectively.
\end{definition}
Note that $\mathsf H^+(1,r)=\mathsf H_{r}^{2,\kappa_0}$ by definition \eqref{eq:robust-event-def}.

\begin{corollary}
\label{lem:localized-two-sided}
For every compact interval
\(I=[\alpha_-,\alpha_+]\subset\mathrm{I}_0\), there exists a constant
\(c>0\) such that for all
\(\alpha\in I\) and \(0<a\le \frac b2\),
\[
    \Pp_\alpha\bigl(\mathsf H^+(a,b)\bigr)
    \ge c\,p_{b/a}^{\alpha},
    \qquad
    \Pp_\alpha\bigl(\mathsf H^-(a,b)\bigr)
    \ge c\,p_{b/a}^{\alpha}.
\]
\end{corollary}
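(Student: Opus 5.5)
The outer statement comes directly from Theorem~\ref{thm:main} by scale invariance. The inner statement needs more work: I would run the same dyadic recursion in the reverse radial direction, on the outside of the inner sphere.

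\emph{Outer piece.} By scale invariance, take $a=1$ and $b=r\ge2$. Then $\mathsf H^+(1,r)=\mathsf H_r^{2,\kappa_0}$. Theorem~\ref{thm:main} with $D=\R^3$ and $\bar\alpha=\alpha_+$ gives $\Pp_\alpha(\mathsf H^+(1,r))\ge c\,p_r^{\alpha}$ uniformly in $0<\alpha\le\alpha_+$. This covers all of $I$.

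\emph{Inner piece.} Take $b=1$ and $a=1/r$ with $r\ge2$, so the crossing runs from $S_{1/r}$ outward to $S_1$. The plan is to mirror Section~\ref{subsec:robust-crossing} with the roles of inside and outside exchanged.

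First, define a seeded set from the outside:
\[
\mathcal K^{\rm out}_s:=S_1\cup\{\text{clusters of }\LL^\alpha_{B_{2/s}^{c}}\text{ meeting }S_1\}.
\]
Let $g^{\rm in}_s$ be the probability that some loop inside the annulus $A(1/(2s),2/s)$ hits $\mathcal K^{\rm out}_s$ and contains a piece crossing between two intermediate spheres with capacity at least $\kappa_0/(4s)$.

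Second, prove the analogue of Lemma~\ref{lem:one-sided-surgery}. The mass of such terminal loops should be comparable to $s\,\mathcal J(\mathcal K^{\rm out}_s)$, where $\mathcal J$ is the exterior factor $J$ from Lemma~\ref{lem:splitting}. The lower bound again uses the one-crossing decomposition of Lemma~\ref{lem:loop-cut} with the exterior piece marked, together with \cite[Theorem~1.1]{PPS} for the middle piece. The upper bound for loops of diameter larger than $\delta/s$ hitting the seed near $S_{1/s}$ again comes from a covering argument plus Lemma~\ref{lem:loop-cut}.

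Third, derive the analogues of Lemma~\ref{lem:robust-prop} and Proposition~\ref{lem:residual-extension}:
\[
p_s^{\alpha}\le C_\delta g^{\rm in}_s+\chi^{\alpha_+}_{a',b'}(\delta)\,p_{s/2}^{\alpha}.
\]
Here the small-loop part is split as an inner crossing of a fixed-ratio annulus at scale $1/s$ times the outer one-arm from scale about $2/s$ to $1$. By scale invariance the latter is $p_{s/2}^\alpha$. The factor $\chi$ tends to zero as $\delta\to0$ by Lemma~\ref{lem:fixed-ratio-bdelta}.

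Fourth, establish the monotonicity $g^{\rm in}_{2s}\ge c_\star g^{\rm in}_s$. This comes from $\mathcal K^{\rm out}_{s}\subset\mathcal K^{\rm out}_{2s}$ and monotonicity of hitting probabilities: the Poisson kernel from a sphere of radius comparable to $1/s$ into a larger set is larger. The dyadic iteration in the proof of Theorem~\ref{thm:main} then gives $g^{\rm in}_s\ge c\,p_s^{\alpha}$. Since the terminal event produces a piece in $A(1/(2s),2/s)$ with diameter at least $1/(8s)$ and capacity at least $\kappa_0/(4s)$, it lies inside $\mathsf H^-(1/s,1)$.

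\emph{Shortcut to try first.} Rather than redoing all of this, I would first attempt a gluing argument. Apply Theorem~\ref{thm:main} at unit scale around the inner sphere, giving a good piece near $S_{1/r}$ with probability comparable to $\alpha$ (Lemma~\ref{imp:lem:fixed-ratio}). Independently, take an outer crossing from $S_{4/r}$ to $S_1$. Weld the two using the lower bound in \eqref{imp:eq:expected-bridge-mass}, and invoke Theorem~\ref{imp:thm:normalized-product}. The resulting probability is of order $\alpha\cdot p_{r/4}^\alpha/\alpha\asymp p_r^\alpha$, uniformly on $I$, since $\alpha\ge\alpha_->0$.

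\emph{Main obstacle.} The difficulty in the shortcut is keeping the good inner piece after welding: the welding argument only controls expected bridge mass, not which local structure survives. I would handle this with an FKG/Poisson conditioning step. Condition on the local soup in $B_{2/r}$ realizing a good piece $J$. The weld is then a loop hitting $J$ itself, whose mass is at least $c\,\Capa(J)\cdot\mathcal J(K_2)$ by Lemma~\ref{lem:splitting}, with $K_2$ the outer seeded set. Taking expectations and using \eqref{imp:eq:inner-factor}-type comparisons for $K_2$ then gives the bound. The constants depend on $\alpha_-$ through the factor $\alpha^{-1}$, which explains the compact-interval hypothesis.
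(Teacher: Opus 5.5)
Your outer-piece step matches the paper's. For the inner piece the paper proves no new estimate and instead uses a symmetry. The spherical inversion $\iota_{a,b}(x)=ab\,x/|x|^2$ leaves the law of the soup invariant \cite[Lemma~4.1]{JL26}, preserves clusters, swaps $S_a$ and $S_b$, and maps $A(\frac b2,2b)$ onto $A(\frac a2,2a)$. On that annulus it satisfies $|\iota_{a,b}(x)-\iota_{a,b}(y)|=\frac{ab}{|x||y|}|x-y|\ge\frac a{4b}|x-y|$. So diameters and capacities shrink by at most $\frac a{4b}$, every good outer piece is mapped to a good inner piece, and $\Pp_\alpha(\mathsf H^-(a,b))\ge\Pp_\alpha(\mathsf H^+(a,b))$, uniformly for $0<\alpha\le\alpha_+$.

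Your welding shortcut is a genuinely different route, and it does prove the statement on a compact $I$. However, your count is off by one power of $\alpha$:
the local good piece $J$ costs $\asymp\alpha$;
given $J$ and $K_2$, the weld costs $1-e^{-\alpha\Bcal(J,K_2;\R^3)}\gtrsim\alpha\,\Bcal(S_{4/r},K_2;\R^3)$, by Lemma~\ref{lem:splitting} and $\Capa(J)\gtrsim 1/r$;
and $\Ee_\alpha[\Bcal(S_{4/r},K_2;\R^3)]\asymp p^\alpha_{r/8}/\alpha$.
By independence of the three loop classes you get $\gtrsim\alpha\,p_r^\alpha$, not $p_r^\alpha$. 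This suffices only because $\alpha\ge\alpha_->0$, and bounded $r$ needs a separate single-loop argument.

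More importantly, your input for $K_2$ is the exterior comparison $\Ee_\alpha[\Bcal(S_{2\rho},K_2;\R^3)]\gtrsim p^\alpha/\alpha$. The paper asserts this only by ``similarly'' in the proof of Theorem~\ref{imp:thm:normalized-product}, and it is precisely the inversion image of $p_t^\alpha\lesssim g_t^\alpha$. So your route does not avoid the inside/outside symmetry; it relocates it into that comparison. The inversion uses the symmetry once, loses no power of $\alpha$, and needs no welding.

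Your primary plan, the mirrored recursion, would not close as written. Since $\mathcal K^{\mathrm{out}}_s$ lies outside $B_{2/s}$, no loop inside $A(\frac1{2s},\frac2s)$ can meet it, so $g^{\mathrm{in}}_s$ as literally defined vanishes. The more serious problem is the uniform cutoff $\delta/s$. The first large loop of a crossing chain may then sit near $S_1$, and loops of diameter $>\delta/s$ hitting the seed there have mass at least of order $(s/\delta)^2$. So no analogue of \eqref{eq:large-loop-terminal-comparison} can hold.

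The small-loop restriction must act only near the inner sphere, for instance only on loops entering $B_{c/s}$, with the seed built from all other loops. That position-dependent cutoff, roughly $\delta s|x|^2$, is what the uniform cutoff $\delta s$ of Theorem~\ref{thm:main} becomes under $x\mapsto x/|x|^2$. This is why the paper can simply invert rather than redo the recursion.
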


\begin{proof}
After scaling \(a\) to \(1\), Theorem~\ref{thm:main} gives
$
        \Pp_\alpha(\mathsf H^+(a,b))\ge c\,p_{b/a}^{\alpha}
$.
For $\mathsf H^-(a,b)$, consider the spherical inversion
\(\iota_{a,b}(x):=\frac{ab}{\lvert x\rvert^2}\,x\).
It exchanges \(S_a\) and \(S_b\), and maps \(A(\frac b2,2b)\) onto
\(A(\frac a2,2a)\).  By the inversion invariance of Brownian loop measure
\cite[Lemma~4.1]{JL26}, the image of the Brownian loop soup under \(\iota_{a,b}\)
has the same law, while intersections, and hence crossing clusters, are
preserved.
{For \(x,y\in A(\frac b2,2b)\),
\[
 |\iota_{a,b}(x)-\iota_{a,b}(y)|
 =\frac{ab}{|x||y|}|x-y|
 \ge\frac{a}{4b}|x-y|.
\]
Hence, for every good outer piece \(K\),
\[
 \diam(\iota_{a,b}(K))\ge\frac{a}{4b}\diam(K)\ge\frac a8,
 \qquad
 \Capa(\iota_{a,b}(K))\ge\frac{a}{4b}\Capa(K)
 \ge\frac{\kappa_0a}{4}.
\]
Thus its image satisfies exactly the good inner piece conditions in
Definition~\ref{def:good-piece}, proving the second estimate.}
\end{proof}
}
For a domain
\(D\), let \(\mathsf H_D^\pm(a,b)\) denote \(\mathsf H^\pm(a,b)\) with
\(\LL_{\R^3}^\alpha\) replaced by \(\LL_D^\alpha\).
Note that
\(\mathsf H_D^+(1,s)=\mathsf H_{s,D}^{2,\kappa_0}\).
Then, we have the following localized version of Corollary~\ref{lem:localized-two-sided}.

\begin{lemma}\label{lem:localized-robust-crossing}
There is \(c>0\) such that for every \(\alpha\in I\),
\(s\ge16\) and \(r\ge64s\),
\[
        \Pp_\alpha\bigl(\mathsf H_{B_s}^+(1,\frac s2)\bigr)
        \ge c\,p_s^\alpha,
        \qquad
        \Pp_\alpha\bigl(\mathsf H_{B_{2s}^c}^-(4s,r)\bigr)
        \ge c\,p_{r/s}^\alpha.
\]
\end{lemma}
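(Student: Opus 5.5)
The plan is to reduce both estimates to Theorem~\ref{thm:main}, applied with \(\bar\alpha:=\alpha_+\) so that the constants are uniform over \(\alpha\in I\). The inner estimate will first be transported to an outer one by the spherical inversion used in the proof of Corollary~\ref{lem:localized-two-sided}.

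\emph{First estimate.} Put \(b:=s/2\ge 8\). The domain \(D=B_s\) equals \(B_{2b}\), so it is admissible in Theorem~\ref{thm:main} at scale \(r=b\). By Definition~\ref{def:good-piece}, a good outer piece for \(\mathcal E_{1,b}\) is a set \(J\subset A(\frac b2,2b)\) with \(\diam(J)\ge \frac b2\) and \(\Capa(J)\ge\kappa_0 b\). These are exactly the requirements in \(\mathsf H^{2,\kappa_0}_{b,D}\) from \eqref{eq:robust-event-def}, so \(\mathsf H^+_{B_s}(1,\frac s2)=\mathsf H^{2,\kappa_0}_{s/2,B_s}\). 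Theorem~\ref{thm:main} then gives
\[
\Pp_\alpha\bigl(\mathsf H^+_{B_s}(1,\tfrac s2)\bigr)\ge c\,p^\alpha_{s/2,B_s}.
\]
By Theorem~\ref{imp:thm:uniform-sharp}, \(p^\alpha_{s/2,B_s}\asymp p^\alpha_{s/2}\) uniformly in \(\alpha\in I\). By Lemma~\ref{imp:lem:fixed-ratio} with \(L=2\), \(p^\alpha_{s/2}\asymp p^\alpha_s\). Combining these gives the first estimate.

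\emph{Second estimate.} Use \(\iota:=\iota_{4s,r}\), \(\iota(x)=4sr\,x/|x|^2\). It exchanges \(S_{4s}\) and \(S_r\), maps \(A(2s,8s)\) onto \(A(\frac r2,2r)\), and maps \(B_{2s}^c\) onto \(B_{2r}\setminus\{0\}\). By inversion invariance of the Brownian loop measure \cite[Lemma~4.1]{JL26} together with the restriction property, \(\iota(\LL^\alpha_{B_{2s}^c})\) has the law of \(\LL^\alpha_{B_{2r}}\). Here a.s.\ no loop hits \(0\), and every loop in \(B_{2s}^c\) is bounded, so its image avoids \(0\). Since intersections and crossing clusters are preserved, it suffices to show that the image of every good outer piece for \(\mathcal E_{4s,r}\) in \(\LL^\alpha_{B_{2r}}\) is a good inner piece. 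For \(x,y\in A(\frac r2,2r)\) we have \(|\iota(x)-\iota(y)|\ge\frac{4s}{4r}|x-y|\). Hence the image of a piece with diameter at least \(\frac r2\) and capacity at least \(\kappa_0 r\) has diameter at least \(\frac{4s}{8}\) and capacity at least \(\frac{\kappa_0\cdot 4s}{4}\), exactly as in the proof of Corollary~\ref{lem:localized-two-sided}. The capacity lower bound under a map that expands distances by a factor at least \(\lambda\) follows from \(G(\iota x,\iota y)\le\lambda^{-1}G(x,y)\) and the variational formula \eqref{eq:relative-capacity}. This yields
\[
\Pp_\alpha\bigl(\mathsf H^-_{B_{2s}^c}(4s,r)\bigr)\ge \Pp_\alpha\bigl(\mathsf H^+_{B_{2r}}(4s,r)\bigr).
\]
Scaling by \(1/(4s)\) turns the right side into \(\Pp_\alpha(\mathsf H^{2,\kappa_0}_{R,B_{2R}})\) with \(R=r/(4s)\ge16\). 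Theorem~\ref{thm:main} then gives the lower bound \(c\,p^\alpha_{R,B_{2R}}\). Theorem~\ref{imp:thm:uniform-sharp} gives \(p^\alpha_{R,B_{2R}}\asymp p^\alpha_{R}\), and Lemma~\ref{imp:lem:fixed-ratio} with \(L=4\) gives \(p^\alpha_{R}\asymp p^\alpha_{r/s}\).

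The main point needing care is the second step: that inversion maps the restricted soup \(\LL^\alpha_{B_{2s}^c}\) exactly onto \(\LL^\alpha_{B_{2r}}\) in law, including the behaviour of loops near infinity and near \(0\). I would verify this by writing \(\LL^\alpha_{B_{2s}^c}\) as the restriction of the full soup and applying the full-space inversion invariance, which identifies the image as the full soup restricted to loops in \(\iota(B_{2s}^c)\).
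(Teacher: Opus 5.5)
Your proposal is correct and follows the paper's proof. The first estimate is Theorem~\ref{thm:main} at outer radius \(s/2\) in the domain \(B_s\), combined with a localization comparison and monotonicity in the radius; the second is the inversion \(\iota_{4s,r}\) of Corollary~\ref{lem:localized-two-sided}, which exchanges \(\LL^\alpha_{B_{2s}^c}\) and \(\LL^\alpha_{B_{2r}}\) and turns good outer pieces into good inner pieces. The differences are cosmetic: you take \(p^\alpha_{s/2,B_s}\asymp p^\alpha_{s/2}\) from the localized statement of Theorem~\ref{imp:thm:uniform-sharp}, where uniformity in \(\alpha\) is explicit, rather than from Lemma~\ref{lem:local}; and plain monotonicity (\(p^\alpha_{s/2}\ge p^\alpha_s\), \(p^\alpha_{r/(4s)}\ge p^\alpha_{r/s}\)) would replace your appeals to Lemma~\ref{imp:lem:fixed-ratio}.
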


\begin{proof}
The first estimate follows from
\eqref{eq:local_version}, applied with outer radius \(s/2\) and domain
\(B_s\), together with Lemma~\ref{lem:local} and monotonicity in the radius.
The comparison in Lemma~\ref{lem:local} is uniform for \(\alpha\in I\), as
is immediate from its proof.  The second estimate follows by the inversion
used in Corollary~\ref{lem:localized-two-sided}.
\end{proof}

Recall the definition of the pivotal event from \eqref{eq:pivotal}.
We next lower bound the mass of pivotal loops with diameter comparable to \(s\) for the one-arm event \(\mathcal E_r\).

\begin{lemma}\label{lem:annular-pivotal-lower}
There is \(c>0\) such that the following holds.  Define
\begin{equation}\label{eq:annular-pivotal-loop-class}
        \mathscr P_s:=\{\ell:\ell\subset B_{16s},\ s\le\diam(\ell)\le32s\}.
\end{equation}
Then, for every \(\alpha\in I\), \(r\ge1024\), and
\(16\le s\le \frac r{64}\),
\begin{equation}\label{eq:annular-pivotal-lower}
        \int_{\mathscr P_s}
        \Pp_\alpha(\operatorname{Piv}_\ell \mathcal E_r)\,
        \muLoop(\dint\ell)
        \ge c\,p_r^\alpha .
\end{equation}
\end{lemma}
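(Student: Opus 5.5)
We will construct pivotal configurations by gluing an inner crossing and an outer crossing through a single loop of scale \(s\). We use the two localized events of Lemma~\ref{lem:localized-robust-crossing}. The inner event is \(\mathsf H^+_{B_s}(1,\frac s2)\), which depends only on loops contained in \(B_s\). It provides a cluster \(\mathcal C_1\) across \(A(1,\frac s2)\) with a good outer piece \(K_1\subset A(\frac s4,s)\), where \(\diam K_1\ge\frac s4\) and \(\Capa(K_1)\ge\kappa_0 s/2\). The outer event is \(\mathsf H^-_{B_{2s}^c}(4s,r)\), which depends only on loops contained in \(B_{2s}^c\). It provides a cluster \(\mathcal C_2\) across \(A(4s,r)\) with a good inner piece \(K_2\subset A(2s,8s)\), where \(\Capa(K_2)\ge\kappa_0 s\). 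These two events use disjoint loop classes, so they are independent. By Lemma~\ref{lem:localized-robust-crossing} and Theorem~\ref{imp:thm:normalized-product}, their joint probability is at least \(c\,p_s^\alpha p_{r/s}^\alpha\gtrsim \alpha\, p_r^\alpha\ge c\alpha_- p_r^\alpha\). This holds uniformly for \(\alpha\in I\), using Lemma~\ref{imp:lem:fixed-ratio} to adjust the ratio \(r/s\) by bounded factors.

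To make a loop \(\ell\in\mathscr P_s\) pivotal, we additionally require that \(\mathcal E_r\) fails in the soup without \(\ell\). A natural sufficient condition is the decreasing event \(F\) that no loop of \(\LL^\alpha_{\R^3}\) intersects both \(S_{s}\) and \(S_{2s}\), together with no connection across the thin region. More precisely, we want that \(S_1\) is not connected to \(S_r\) using the loops not contained in \(B_s\) or \(B_{2s}^c\). Since the inner and outer events are increasing, the step to watch is their interaction with \(F\). To handle it cleanly, we define \(F\) as the absence of any loop intersecting both \(\overline{B_s}^c\cap B_{2s}\) and the complement of \(B_s\cup B_{2s}^c\)-contained classes. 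Concretely, \(F\) requires that no loop of the soup intersects both \(S_{s}\) and \(S_{2s}\). On \(\mathsf H^+_{B_s}\cap\mathsf H^-_{B_{2s}^c}\cap F\), a crossing from \(S_1\) to \(S_r\) would need some loop meeting both \(S_s\) and \(S_{2s}\); this is ruled out by \(F\). Moreover, \(F\) depends only on the loops crossing \(A(s,2s)\), a Poisson class independent of the two local soups, and it has probability \(e^{-\alpha\Bcal(S_s,S_{2s};\R^3)}\ge c\) by scale invariance. Hence
\[
\Pp_\alpha\bigl(\mathsf H^+_{B_s}(1,\tfrac s2)\cap\mathsf H^-_{B_{2s}^c}(4s,r)\cap F\bigr)\ge c\,p_r^\alpha .
\]
On this event, \(\mathcal E_r\) fails, since \(\mathcal C_1\subset B_s\) cannot be joined to anything outside \(B_{2s}\) without such a loop.

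Now we add a loop. Conditionally on \(K_1,K_2\) on the event above, any loop \(\ell\) meeting both \(K_1\) and \(K_2\) is pivotal for \(\mathcal E_r\). Adding \(\ell\) joins \(\mathcal C_1\), which reaches \(S_1\), to \(\mathcal C_2\), which reaches \(S_r\), so \(\mathcal E_r\) occurs. Therefore
\[
\int_{\mathscr P_s}\Pp_\alpha(\operatorname{Piv}_\ell\mathcal E_r)\,\muLoop(\dint\ell)\ \ge\ \Ee_\alpha\Bigl[\mathbf 1_{\mathsf H^+\cap\mathsf H^-\cap F}\,\muLoop\bigl(\ell\in\mathscr P_s:\ell\cap K_1\ne\varnothing,\ \ell\cap K_2\ne\varnothing\bigr)\Bigr].
\]
It remains to show that the inner mass is at least \(c\) uniformly. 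Here we apply \eqref{eq:two-capacity-welding} at scale \(\approx s\): \(K_1\subset A(\frac s4,s)\) and \(K_2\subset A(2s,8s)\) give mass \(\gtrsim s^{-2}\Capa(K_1)\Capa(K_2)\ge c\kappa_0^2\). This is the main obstacle, because we must restrict to loops in \(\mathscr P_s\), that is, loops contained in \(B_{16s}\) with diameter in \([s,32s]\). The lower bound in Lemma~\ref{lem:loop-cut} is built from one-crossing loops with closing paths in fixed annuli. Redoing it with all pieces confined to \(B_{9s}\), as in the restricted version used in Lemma~\ref{lem:localized-annular-trial}, gives loops contained in \(B_{16s}\). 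Such loops meet \(K_1\subset B_s\) and \(K_2\subset B_{2s}^c\), so they have diameter at least \(s\); being contained in \(B_{16s}\), they have diameter at most \(32s\). Multiplying gives \(\ge c\,p_r^\alpha\), with constants depending only on \(I\).
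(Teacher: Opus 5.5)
\textbf{Gap: your barrier event $F$ does not disconnect.} Most of your argument matches the paper: the inner and outer events $\mathsf H^+_{B_s}(1,\frac s2)$ and $\mathsf H^-_{B_{2s}^{\mathrm c}}(4s,r)$, their independence, the bound $c\,p_s^\alpha p_{r/s}^\alpha\gtrsim p_r^\alpha$, and the welding mass from \eqref{eq:two-capacity-welding} restricted to $B_{16s}$, including the diameter range. The problem is the barrier. $F$ only forbids a \emph{single} loop meeting both $S_s$ and $S_{2s}$. Your claim that on $\mathsf H^+\cap\mathsf H^-\cap F$ any crossing from $S_1$ to $S_r$ would need such a loop is false. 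The annulus $A(s,2s)$ can be crossed by a chain of loops, each of diameter much smaller than $s$. None of these loops is constrained by $F$, nor by the two increasing local events, which only involve loops contained in $B_s$ or in $B_{2s}^{\mathrm c}$. So $\mathcal E_r$ may occur on your event, and a loop meeting $K_1$ and $K_2$ need not be pivotal.

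A quick sanity check shows this. You have $\Pp_\alpha(F)=e^{-\alpha\Bcal(S_s,S_{2s};\R^3)}>0$ at every intensity. Your argument would therefore give $\Pp_\alpha(\mathcal E_r^{\mathrm c})>0$ even for $\alpha>\alpha_{\mathrm{B2B}}$, where $S_1$ is almost surely connected to every $S_r$. Subcriticality is never used in your barrier step, and that is exactly where it is needed. You also cannot simply intersect with $\mathcal E_r^{\mathrm c}$, since FKG goes the wrong way against the increasing events $\mathsf H^\pm$.

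\textbf{Missing ingredient: a small-loop barrier, which is where $\alpha_+\in\mathrm I_0$ enters.} The paper builds it in four steps.
\begin{enumerate}
\item Using Lemma~\ref{lem:fixed-ratio-bdelta}, monotonicity in the intensity, and scale invariance, choose $\delta<\frac1{16}$ such that, uniformly in $\alpha\in I$ and $s$, the following holds: the loops contained in $A(s,2s)$ with diameter at most $\delta s$ connect $S_{5s/4}$ to $S_{3s/2}$ with probability at most $\frac12$.
\item Let $\mathsf Z_s$ be the intersection of the failure of this small-loop crossing with the event that no loop of diameter larger than $\delta s$ meets $A(\frac{5s}4,\frac{3s}2)$. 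The two conditions concern disjoint loop classes, so $\Pp_\alpha(\mathsf Z_s)\ge\frac12e^{-\alpha_+M_\delta}$.
\item Both conditions involve only loops contained neither in $B_s$ nor in $B_{2s}^{\mathrm c}$. Hence $\mathsf Z_s$ is independent of the two local events.
\item On $\mathsf Z_s$, every loop meeting the shell has diameter at most $\delta s<\frac s{16}$, so it lies in $A(s,2s)$. Any chain crossing $A(\frac{5s}4,\frac{3s}2)$ would therefore realize the forbidden small-loop crossing. So $\mathsf Z_s$ disconnects $B_s$ from $B_{2s}^{\mathrm c}$.
\end{enumerate}
With $F$ replaced by $\mathsf Z_s$, the rest of your argument goes through as written.
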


\begin{proof}
For \(\delta>0\) and \(s\ge1\), let \(\mathsf F_s^\delta\) be the event that
\(S_{5s/4}\) is connected to \(S_{3s/2}\) using only loops of \(\LL_{\R^3}^\alpha\)
that are contained in \(A(s,2s)\) and have diameter at most \(\delta s\).
Since \(\alpha_+\in \mathrm{I}_0\), Lemma~\ref{lem:fixed-ratio-bdelta} and monotonicity
in the intensity allow us to choose \(\delta<\frac1{16}\) such that
\(\sup_{\alpha\in I}\Pp_\alpha(\mathsf F_s^\delta)\le\frac12\) for every
\(s\ge1\).
Define the event
\begin{equation}\label{eq:annular-barrier-event}
        \mathsf Z_s
        :=
        \{\text{no loop }\ell\in\LL_{\R^3}^\alpha\text{ with }
        \diam(\ell)>\delta s\text{ intersects }A(\frac{5s}{4},\frac{3s}{2})\}
        \cap(\mathsf F_s^\delta)^{\mathrm c}.
\end{equation}
By scale invariance and local finiteness, we get that
\[
        M_\delta:=
        \muLoop\{\ell:\diam(\ell)>\delta,\,
        \ell\cap A(\frac54,\frac32)\ne\varnothing\}<\infty .
\]
Since the two conditions in \eqref{eq:annular-barrier-event} use disjoint loop classes,
for every \(\alpha\in I\) and \(s\ge1\),
\begin{equation}\label{eq:annular-barrier-prob}
        \Pp_\alpha(\mathsf Z_s)
        \ge \frac12e^{-\alpha_+M_\delta}
        \ge c.
\end{equation}
By construction, \(\mathsf Z_s\) is independent of the loop collections in
\(B_s\) and \(B_{2s}^c\).
Now, we set
\[
        \mathsf G_r(s):=
        \mathsf H_{B_s}^+(1,\frac s2)
        \cap\mathsf Z_s\cap
        \mathsf H_{B_{2s}^c}^-(4s,r) .
\]
Lemma~\ref{lem:localized-robust-crossing},
\eqref{eq:annular-barrier-prob}, independence, and
Theorem~\ref{thm:utc} yield that
\[
        \Pp_\alpha(\mathsf G_r(s))
        \ge c\,p_s^\alpha p_{r/s}^\alpha
        \ge c\,p_r^\alpha .
\]

On the one hand, since \(\mathsf Z_s\) disconnects \(B_s\) from \(B_{2s}^c\), we have
\(\mathsf G_r(s)\subset \mathcal E_r^{\mathrm c}\).
On the other hand, we can choose a good outer piece \(J^+\) and
a good inner piece \(J^-\) on \(\mathsf G_r(s)\),
and any loop that intersects
both pieces creates \(\mathcal E_r\) and is therefore pivotal for \(\mathcal E_r\).
Restricting the Brownian pieces in the
proof of Lemma~\ref{lem:two-capacity-welding} to \(B_{16s}\) gives the lower bound
\[
        \muLoop\{\ell\subset B_{16s}:
        \ell\cap J^+\ne\varnothing,\ \ell\cap J^-\ne\varnothing\}
        \ge c .
\]
Such a loop has diameter between \(s\) and \(32s\), hence lies in
\(\mathscr P_s\) and is pivotal on \(\mathsf G_r(s)\).  Therefore, we obtain that
\[
\begin{aligned}
        \int_{\mathscr P_s}
        \Pp_\alpha(\operatorname{Piv}_\ell \mathcal E_r)\,\muLoop(\dint\ell)
        &\ge
        \Ee_\alpha\left[
        \1_{\mathsf G_r(s)}
        \muLoop\{\ell\in\mathscr P_s:
        \ell\cap J^+\ne\varnothing,\ \ell\cap J^-\ne\varnothing\}
        \right]\\
        &\ge c\,\Pp_\alpha(\mathsf G_r(s))
        \ge c\,p_r^\alpha ,
\end{aligned}
\]
as required.
\end{proof}

Now, we show that $\xi(\alpha)$ is strictly decreasing in $\alpha$.

\begin{proposition}\label{prop:xi-linear-separation}
For every compact interval \(I\subset\mathrm I_0\), there exists \(c>0\)
such that, for all \(\alpha<\beta\) in \(I\),
\(\xi(\alpha)-\xi(\beta)\ge c(\beta-\alpha)\).
\end{proposition}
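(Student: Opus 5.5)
The plan is to integrate a lower bound on the logarithmic derivative of $p_r^\lambda$ over $\lambda\in[\alpha,\beta]$. Fix $r=2^n\cdot 1024$ large and consider the dyadic scales $s_k=16\cdot 64^k$ with $16\le s_k\le r/64$. There are $N(r)\ge c\log r$ such scales. The loop classes $\mathscr P_{s_k}$ from \eqref{eq:annular-pivotal-loop-class} consist of loops contained in $B_{16s_k}$ with diameter in $[s_k,32s_k]$. Since the ratio between consecutive scales is $64>32$, the diameter windows are disjoint, so these classes are pairwise disjoint. Each class has finite $\muLoop$-mass, because such loops lie in a bounded ball and have diameter bounded below. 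Their union $\mathscr D_r:=\bigcup_k\mathscr P_{s_k}$ therefore has finite mass.

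\textbf{Step 1 (Russo).} The event $\mathcal E_r$ is increasing, and $\lambda\mapsto p_r^\lambda$ is nondecreasing, hence differentiable almost everywhere. At every differentiability point $\lambda\in I$, Lemma~\ref{lem:restricted-russo} with $\mathscr D=\mathscr D_r$ and Lemma~\ref{lem:annular-pivotal-lower} give
\[
\frac{\dint}{\dint\lambda}p_r^\lambda\ \ge\ \sum_k\int_{\mathscr P_{s_k}}\Pp_\lambda(\operatorname{Piv}_\ell\mathcal E_r)\,\muLoop(\dint\ell)\ \ge\ c\,N(r)\,p_r^\lambda\ \ge\ c'\log r\; p_r^\lambda .
\]

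\textbf{Step 2 (integration).} A monotone function satisfies $f(\beta)-f(\alpha)\ge\int_\alpha^\beta f'$, and the same holds for $\log p_r^\lambda$, which is monotone since $p_r^\lambda>0$. Integrating the bound from Step 1 therefore gives
\[
\log p_r^\beta-\log p_r^\alpha\ \ge\ c'(\beta-\alpha)\log r .
\]

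\textbf{Step 3 (exponents).} By Theorem~\ref{thm:utc}, $p_r^\lambda\asymp\lambda r^{-\xi(\lambda)}$ with constants uniform over $\lambda\le\alpha_+$. Hence
\[
\bigl(\xi(\alpha)-\xi(\beta)\bigr)\log r+\log(\beta/\alpha)+O(1)\ \ge\ c'(\beta-\alpha)\log r .
\]
Here $\log(\beta/\alpha)$ is bounded because $\alpha\ge\alpha_->0$. Dividing by $\log r$ and letting $r\to\infty$ gives $\xi(\alpha)-\xi(\beta)\ge c'(\beta-\alpha)$. Alternatively, one can avoid Theorem~\ref{thm:utc} and use \eqref{eq:one-arm-log-asymp} directly, since subpolynomial errors also vanish after dividing by $\log r$.

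\textbf{Main obstacle.} The delicate point is already handled by Lemma~\ref{lem:annular-pivotal-lower}: the pivotal mass at each scale is at least $c\,p_r^\lambda$ with $c$ uniform in $\lambda\in I$ and in $s$. What remains is bookkeeping. The disjointness of the classes $\mathscr P_{s_k}$ must make the per-scale contributions add rather than overlap. The sprinkling class must have finite mass so that Lemma~\ref{lem:restricted-russo} applies. Finally, the a.e.-differentiability integration requires care: one should use the monotone-function inequality above, not the fundamental theorem of calculus, which would require absolute continuity.
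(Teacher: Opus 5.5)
Your proposal is correct and follows the paper's proof essentially step for step. You apply Lemma~\ref{lem:restricted-russo} to the union of the disjoint classes $\mathscr P_s$ over order $\log r$ geometric scales, take the per-scale bound from Lemma~\ref{lem:annular-pivotal-lower}, integrate the resulting lower bound on $\frac{\mathrm d}{\mathrm d\lambda}\log p_r^\lambda$, and compare exponents through Theorem~\ref{thm:utc}. Your disjointness check, your use of the monotone-function inequality in place of the fundamental theorem of calculus, and your remark that \eqref{eq:one-arm-log-asymp} alone suffices for the final step make explicit details the paper leaves implicit.
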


\begin{proof}
Fix \(\alpha<\beta\) in \(I\).  For \(r\ge1024\), set
\(\mathcal N_r:=\{64^k:\ 16\le64^k\le \frac r{64}\}\).
Note that \(|\mathcal N_r|\ge c\log r\) for all large
\(r\).
Applying Lemma~\ref{lem:restricted-russo} to the finite union
\(\bigcup_{s\in\mathcal N_r}\mathscr P_s\), and then using
Lemma~\ref{lem:annular-pivotal-lower}, gives, for a.e. \(\lambda\in I\),
\[
        \frac{\dint}{\dint\lambda}p_r^\lambda
        \ge
        \sum_{s\in\mathcal N_r}
        \int_{\mathscr P_s}
        \Pp_\lambda(\operatorname{Piv}_\ell \mathcal E_r)\,\muLoop(\dint\ell)
        \ge
        c\,(\log r)\,p_r^\lambda .
\]
This implies \(\frac{\dint}{\dint\lambda}\log p_r^\lambda
        \ge c\log r\) for a.e. \(\lambda\in[\alpha,\beta]\).
Integration gives
\(r^{c(\beta-\alpha)}p_r^\alpha\le p_r^\beta\).
 Applying Theorem~\ref{thm:utc} at the two endpoints and letting
\(r\to\infty\) now yields
\(\xi(\alpha)-\xi(\beta)\ge c(\beta-\alpha)>0\), finishing the proof.
\end{proof}

We next give the results for small \(\alpha\).

\begin{theorem}\label{imp:thm:linear}
There exist \(\alpha_0>0\) and \(0<c<C<\infty\) such that, for
\(0<\alpha\le\alpha_0\),
\begin{equation}\label{imp:eq:linear}
 c\alpha\le1-\xi(\alpha)\le C\alpha.
\end{equation}
Moreover, uniformly for these intensities and all \(r\ge2\),
\begin{equation}\label{imp:eq:joint-one-arm}
 c\alpha r^{-1+c\alpha}\le p_r^{\alpha}
 \le C\alpha r^{-1+C\alpha}.
\end{equation}
\end{theorem}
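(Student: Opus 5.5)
The plan is to first reduce \eqref{imp:eq:joint-one-arm} to \eqref{imp:eq:linear}. Fix $\bar\alpha:=\alpha_0\in\mathrm I_0$; this is possible since $\mathrm I_0\supset(0,\frac12]$ by~\cite{CD25a}. Theorem~\ref{imp:thm:uniform-sharp} then gives, uniformly for $0<\alpha\le\alpha_0$ and $r\ge2$,
\[
p_r^\alpha\asymp\alpha r^{-\xi(\alpha)},\qquad \Ee_\alpha[\Capa(\mathcal K_r^\alpha)]\asymp r^{1-\xi(\alpha)} .
\]
Hence \eqref{imp:eq:joint-one-arm} follows once $c\alpha\le 1-\xi(\alpha)\le C\alpha$ is known, after adjusting constants. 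So everything reduces to estimating the growth rate of $E(r):=\Ee_\alpha[\Capa(\mathcal K_r^\alpha)]$ along dyadic scales. Because the comparison constants are uniform in $\alpha$, it suffices to prove that for all large $n$,
\[
(1+c\alpha)^{n}\lesssim \frac{E(2^n)}{E(2)}\lesssim (1+C\alpha)^{n}.
\]
Since $1-\xi=\lim_n\frac1n\log_2E(2^n)$, this yields $\log_2(1+c\alpha)\le1-\xi\le\log_2(1+C\alpha)$.

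For the upper bound I will prove a one-step recursion $E(2r)\le(1+C\alpha)E(r)$. Let $\mathcal K_{2r}^\alpha$ be obtained from $\mathcal K_r^\alpha$ by exploring the loops of $\LL^\alpha_{B_{2r}}$ not contained in $B_r$. Every new piece is attached through a first loop $\ell$ that intersects $\mathcal K_r^\alpha$ and exits $B_r$. I then use subadditivity of capacity: $\Capa(\mathcal K_{2r}^\alpha)\le\Capa(\mathcal K_r^\alpha)+\sum_\ell\Capa(\mathcal C_\ell)$, where $\mathcal C_\ell\subset B_{2r}$ is the portion of the cluster hung from $\ell$. Each such contribution is at most $\Capa(B_{2r})\lesssim r$. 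By Palm's formula (Lemma~\ref{lem:loop-soup-palm}) and \eqref{eq:splitting-capacity}, the expected number of attaching loops of diameter $\gtrsim r$ is $\lesssim\alpha\Capa(\mathcal K_r^\alpha)/r$. This gives a contribution $\lesssim\alpha E(r)$. Loops of smaller diameter that leave $B_r$ must be handled by a dyadic decomposition in the loop diameter $2^{-k}r$. Near $S_r$, the part of $\mathcal K_r^\alpha$ visible to such a loop has expected capacity controlled by \eqref{eq:large-loop-terminal-comparison} at scale $2^{-k}r$. The hung cluster then has expected capacity $\lesssim 2^{-k}r\cdot(2^{k})^{1-\xi}$ by the capacity estimate in Theorem~\ref{imp:thm:uniform-sharp}. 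The sum over $k$ should then be geometric because $\xi$ stays bounded away from $0$ on $(0,\alpha_0]$, giving again $\lesssim\alpha E(r)$.

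For the lower bound I will prove $E(2r)\ge(1+c\alpha)E(r)$. The mechanism is that with conditional probability $\asymp\alpha\Capa(\mathcal K_r^\alpha)/r$, a single loop from the class $\mathscr T_r^{\kappa_0}(\mathcal K_r^\alpha)$ of Lemma~\ref{lem:one-sided-surgery} attaches; this class is independent of $\mathcal K_r^\alpha$. Such a loop contains a piece $J\subset A(r,2r)$ with $\Capa(J)\ge\kappa_0 r$. I want this to increase the capacity by $\gtrsim r$ whenever $\Capa(\mathcal K_r^\alpha)\le\varepsilon r$. The natural tool is the energy characterization \eqref{eq:relative-capacity}: test the union with the mixture $\frac12(e_{\mathcal K}/\Capa\mathcal K)+\frac12(e_J/\Capa J)$. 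The cross term $\int\!\!\int G\lesssim 1/r$ at separated distances then yields $\Capa(\mathcal K\cup J)\ge\Capa(\mathcal K)+c\,r$ as long as $\Capa\mathcal K\le\varepsilon r$. To make this usable I must show that $\Capa(\mathcal K_r^\alpha)\le\varepsilon r$ carries most of the mass of $E(r)$. Here I use that $E(r)\asymp r^{1-\xi}$ while $\Capa(\mathcal K_r^\alpha)\le Cr$, together with the attachment probability bound $p_r^\alpha\asymp\alpha r^{-\xi}$. The contribution of $\{\Capa\mathcal K_r^\alpha>\varepsilon r\}$ is at most $Cr\,p_r^\alpha\asymp\alpha r^{1-\xi}$, which is only $O(\alpha)E(r)$. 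So for $\alpha\le\alpha_0$ small it is dominated by the main term.

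The main obstacle is making the $(1+c\alpha)$ gain survive: the error terms must be $o(\alpha)E(r)$ rather than merely $O(\alpha)E(r)$. In particular, in the lower bound the bad-event term above is of the same order $\alpha E(r)$ as the gain. I would first try to sharpen it using the conditional capacity bound from Theorem~\ref{thm:main}: on the one-arm event, capacity of order $r$ appears only with conditional probability $O(1)$, and it arises mostly through a single loop of scale $\asymp r$, whose total mass is explicitly $O(\alpha)$. Failing that, I would run the recursion over blocks of $k$ dyadic scales, with $k$ large but fixed. The attachment gain then accumulates to $ck\alpha$, while the boundary loss stays $O(\alpha)$ per block. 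This yields $E(2^k r)\ge(1+ck\alpha-C\alpha)E(r)$, and choosing $k$ large absorbs the loss.
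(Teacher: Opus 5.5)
\textbf{Verdict: there is a genuine gap, in the upper bound.} Your reduction of \eqref{imp:eq:joint-one-arm} to \eqref{imp:eq:linear} via Theorem~\ref{imp:thm:uniform-sharp} is fine, and so is your lower-bound mechanism (see below). What fails is the one-step recursion $E(2r)\le(1+C\alpha)E(r)$: your argument does not establish it.

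The large-loop part is correct. The small-loop part is not. A loop of diameter $\rho=2^{-k}r$ that leaves $B_r$ and meets $\mathcal K_r^\alpha$ must hit $\mathcal K_r^\alpha$ in the shell $\{r-2\rho<|x|<r\}$. By \eqref{eq:large-loop-terminal-comparison}, the expected number of such loops is of order $\alpha\,\Ee_\alpha[N_\rho]$, where $N_\rho$ counts the $\rho$-boxes of that shell met by $\mathcal K_r^\alpha$. This is not $O(\alpha\Capa(\mathcal K_r^\alpha)/r)$ uniformly in $k$. That lemma bounds loop mass by capacity; it says nothing about how much of $\mathcal K_r^\alpha$ sits near $S_r$.

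The only control the paper provides comes from the interior one-arm bounds, which give roughly $\Ee_\alpha[N_\rho]\lesssim p_r^\alpha(r/\rho)^{2-\xi}$. Multiplying by your hung-cluster bound $\rho(r/\rho)^{1-\xi}$ gives terms of size $\alpha\, r p_r^\alpha\,(r/\rho)^{2-2\xi}\asymp\alpha^2E(r)\,2^{k(2-2\xi)}$. Since $\xi<1$, these grow in $k$, so the dyadic sum is not geometric. Closing it would require a boundary-arm estimate for clusters of $\LL^\alpha_{B_r}$ near $S_r$, which is not available. Subadditivity also charges every tiny attached cluster its full capacity, which grossly overcounts the true increment.

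\textbf{How the paper avoids this.} It never estimates increments, and it uses a cumulative recursion rather than a one-step one.
\begin{itemize}
\item Upper bound: for $K\supset S_1$ one has $\Capa(K)\le\varkappa\operatorname{rad}(K)$, and $\Pp_\alpha(\operatorname{rad}(\mathcal K_{r_n}^\alpha)>t)\le p^\alpha_{r_j}$ for $t\in[r_j,r_{j+1}]$. Integrating the tail gives $A_n\le\varkappa r_0+\varkappa\sum_{j<n}r_jp^\alpha_{r_j}\le C_0+C_2\alpha\sum_{j<n}A_j$. The last step uses $r_jp^\alpha_{r_j}\asymp\alpha A_j$, from \eqref{eq:terminal-capacity-expectation} and $p\asymp g$.
\item Lower bound: on the event defining $g^\alpha_{r_j}$ one has $\Capa(\mathcal K^\alpha_{r_n})\ge\kappa_0r_j$. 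Integrating $\Pp_\alpha(\Capa(\mathcal K^\alpha_{r_n})>t)\ge g^\alpha_{r_j}$ over the disjoint windows $[\kappa_0r_j/2,\kappa_0r_j]$ gives $A_n\ge c_0+c_2\alpha\sum_{j<n}A_j$.
\item Induction on these two inequalities yields $c_0(1+c_2\alpha)^n\le A_n\le C_0(1+C_2\alpha)^n$.
\end{itemize}

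\textbf{Your one-step lower bound does work, and the obstacle you identify is not real.} The event $\{\Capa(\mathcal K_r^\alpha)>\varepsilon r\}$ forces $\operatorname{rad}(\mathcal K_r^\alpha)>\varepsilon r/\varkappa$. So it contributes only $O_\varepsilon(rp_r^\alpha)=O(\alpha)E(r)$ to $E(r)$. This is removed \emph{before} multiplying by the attachment rate $\asymp\alpha$, so it costs $O(\alpha^2)E(r)$ against a gain of $c\alpha E(r)$. No block recursion is needed.

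Two repairs are needed for the capacity step:
\begin{itemize}
\item The equal-weight energy test cannot give $\Capa(\mathcal K)+cr$: its energy is at least $1/(4\Capa\mathcal K)$, so the resulting lower bound is at most $4\Capa\mathcal K$.
\item The class $\mathscr T_r^{\kappa_0}$ does not keep $J$ away from $\overline B_r$, so the cross-term estimate is not justified as stated.
\end{itemize}
Both are avoided by monotonicity: on $\{\Capa(\mathcal K_r^\alpha)\le\kappa_0r/2\}$, one has $\Capa(\mathcal K_{2r}^\alpha)\ge\Capa(J)\ge\kappa_0r\ge\Capa(\mathcal K^\alpha_r)+\kappa_0r/2$.
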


\begin{proof}
Write \(\varkappa=\Capa(S_1)\), choose a fixed \(r_0\ge2\) so large that
\(\kappa_0r_0/2\ge\varkappa\), and put
\(r_n=2^nr_0\) and \(A_n=\Ee_\alpha[\Capa(\mathcal K_{r_n}^{\alpha})]\).
The proof bounds \(A_n\) by two geometric sequences.

For a set \(K\supset S_1\), write
\(\operatorname{rad}(K)=\sup_{x\in K}|x|\). Capacity monotonicity gives
\(\Capa(K)\le\varkappa\operatorname{rad}(K)\).
For \(j<n\) and \(t\in[r_j,r_{j+1}]\), we have
\(\Pp_\alpha(\operatorname{rad}(\mathcal K_{r_n}^{\alpha})>t)
\le p_{r_j}^{\alpha}\).
Hence,
\begin{equation}\label{imp:eq:capacity-upper-recursion}
 A_n\le\varkappa\int_0^{r_n}
       \Pp_\alpha\bigl(\operatorname{rad}(\mathcal K_{r_n}^{\alpha})>t\bigr)\,dt
     \le\varkappa r_0+\varkappa\sum_{j=0}^{n-1}r_jp_{r_j}^{\alpha}
     \le C_0+C_2\alpha\sum_{j=0}^{n-1}A_j.
\end{equation}
For the last inequality, \eqref{eq:terminal-capacity-expectation} and the
comparison \(p_r^{\alpha}\asymp g_r^{\alpha}\) from the proof of
Theorem~\ref{thm:main} give \(r_jp_{r_j}^{\alpha}\asymp\alpha A_j\).
We absorb the contribution \(\varkappa r_0\) below \(r_0\) into \(C_0\).

For the lower bound, fix \(j<n\) and consider the event in
\eqref{eq:terminal-event-probability} at scale \(r_j\), whose probability is
\(g_{r_j}^{\alpha}\). By \eqref{eq:terminal-loop-class}, on this event, \(\Capa(\mathcal K_{2r_j}^{\alpha})\ge\kappa_0r_j\).
Therefore for every \(0\le t<\kappa_0r_j\),
\(\Pp_\alpha(\Capa(\mathcal K_{r_n}^{\alpha})>t)\ge \Pp_\alpha(\Capa(\mathcal K_{2r_j}^{\alpha})>t)\ge g_{r_j}^{\alpha}\).
Integrating this bound over \([\kappa_0r_j/2,\kappa_0r_j]\), for \(0\le j<n\), we get
\begin{equation}\label{imp:eq:capacity-lower-recursion}
 A_n=\int_0^\infty
   \Pp_\alpha\bigl(\Capa(\mathcal K_{r_n}^{\alpha})>t\bigr)\,dt
 \ge\varkappa+\frac{\kappa_0}{2}\sum_{j=0}^{n-1}r_jg_{r_j}^{\alpha}
 \ge c_0+c_2\alpha\sum_{j=0}^{n-1}A_j.
\end{equation}
Here the last step uses \eqref{eq:terminal-capacity-expectation}.

Choose \(c_0,C_0\) so that \(c_0\le A_0\le C_0\). The recursions
\eqref{imp:eq:capacity-upper-recursion} and \eqref{imp:eq:capacity-lower-recursion} imply
\begin{equation}\label{imp:eq:capacity-gronwall}
 c_0(1+c_2\alpha)^n\le A_n\le C_0(1+C_2\alpha)^n,
 \qquad n\ge0.
\end{equation}
By \eqref{imp:eq:uniform-sharp}, for each fixed \(\alpha\),
\(\lim_{n\to\infty}\frac{\log A_n}{\log r_n}=1-\xi(\alpha)\).
Consequently, \eqref{imp:eq:capacity-gronwall} yields the more precise bounds
\begin{equation}\label{imp:eq:log-linear}
 \frac{\log(1+c_2\alpha)}{\log2}
 \le1-\xi(\alpha)
 \le\frac{\log(1+C_2\alpha)}{\log2}.
\end{equation}
This implies \eqref{imp:eq:linear} for sufficiently small \(\alpha\).
Finally, substituting \eqref{imp:eq:linear} into Theorem~\ref{imp:thm:uniform-sharp} proves
\eqref{imp:eq:joint-one-arm}.
\end{proof}

We next prove the left continuity of the one-arm probability before showing the same property for $\xi(\alpha)$.

\begin{lemma}\label{imp:lem:fixed-scale-left}
For every fixed \(R>1\) and deterministic domain \(D\),
\({\lambda\longmapsto\Pp_\lambda\bigl(S_1\xleftrightarrow{\LL_D^\lambda}S_R\bigr)}\)
is left-continuous at every positive intensity. This includes
\(p_R^{\lambda}\) when \(D=\R^3\).
\end{lemma}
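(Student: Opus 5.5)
The plan is to couple all intensities monotonically and write the left limit as an increasing union. Realize all soups simultaneously as a Poisson point process \(\mathcal P\) on \(\{\text{loops}\}\times[0,\infty)\) with intensity \(\mu_D^{\Loop}\otimes\mathrm{Leb}\), and set \(\LL_D^\lambda:=\{\ell:(\ell,u)\in\mathcal P,\ u\le\lambda\}\). Then \(\lambda\mapsto\Pp_\lambda(S_1\xleftrightarrow{\LL_D^\lambda}S_R)\) is nondecreasing. For \(\lambda_n\uparrow\lambda\), the events \(\{S_1\xleftrightarrow{\LL_D^{\lambda_n}}S_R\}\) increase, and their union is \(\{S_1\xleftrightarrow{\LL_D^{<\lambda}}S_R\}\), where \(\LL_D^{<\lambda}\) uses labels \(u<\lambda\). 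This union identity needs one point: a crossing in \(\LL_D^{<\lambda}\) is realized by a finite chain of loops. By definition of \(A\xleftrightarrow{}B\), some equivalence class meets both \(S_1\) and \(S_R\). The two loops doing so are linked by a finite chain, by the definition of \(\sim\). Finitely many labels \(u<\lambda\) have a maximum below \(\lambda\), so the chain already lies in \(\LL_D^{\lambda_n}\) for large \(n\). Almost surely no label equals \(\lambda\) exactly, so \(\LL_D^{<\lambda}=\LL_D^\lambda\) a.s. Continuity from below then gives \(\lim_n\Pp_{\lambda_n}(\cdot)=\Pp_\lambda(\cdot)\), which is left-continuity.

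The subtle point is whether the one-arm event as defined is really an event about finite chains. The cluster is defined as the closure \(\overline{\bigcup_{\ell\in E}\ell}\). However, the connection relation \(A\xleftrightarrow{\mathcal L}B\) is stated via ``the union of the loops in some equivalence class intersects both \(A\) and \(B\)'', which involves no closure. So the event is exactly a finite-chain event, and the argument above applies verbatim. I will state this explicitly, because with closures the left limit could fail: one could get limit points without a chain, and that mechanism is exactly what produces right-discontinuities.

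The main obstacle I expect is measurability and the a.s. bookkeeping rather than any estimate. I need the event \(\{S_1\xleftrightarrow{\LL_D^\lambda}S_R\}\) to be measurable. I will get this by writing it as a countable union over chain lengths \(n\), together with a countable dense family approximating loops. Alternatively, I will use that loops hitting both a compact annulus region and having diameter above a fixed size have finite mass, since only finitely many matter after truncation by diameter \(\ge 2^{-k}\). I will then take \(k\to\infty\) with monotone limits. Beyond that, the proof needs no input from earlier sections. In particular, it does not need \(\alpha\in\mathrm I_0\), and it applies to any domain \(D\) and any \(R>1\). Taking \(D=\R^3\) gives the statement for \(p_R^\lambda\).
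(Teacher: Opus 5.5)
Your proposal is correct and is essentially the paper's proof: the same mark-coupled Poisson process, the observation that the connection event (defined without closure) is witnessed by a finite chain whose marks have a maximum strictly below \(\lambda\), and continuity from below along the increasing events. The only differences are cosmetic. The paper keeps marks \(u<\lambda\) from the start, so your extra step that almost surely no mark equals \(\lambda\) is not needed there, and the paper leaves the measurability bookkeeping implicit.
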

\begin{proof}
Take a Poisson process on loops with intensity marks, of intensity measure
\(\mu_D^{\Loop}(d\ell)\,du\), and retain the loops whose marks satisfy
\(u<\lambda\) to form \(\LL_D^\lambda\).
If \(\lambda_n\uparrow\lambda\) and the connection occurs at intensity
 \(\lambda\) via a finite chain of loops, as required by the definition, then the maximum of these finitely
many marks is strictly less than \(\lambda\). Hence, the whole chain is present
at intensity \(\lambda_n\) for all sufficiently large \(n\),
and
\(\{S_1\xleftrightarrow{\mathcal L_D^{\lambda_n}}S_R\}
\uparrow\{S_1\xleftrightarrow{\mathcal L_D^{\lambda}}S_R\}\)
almost surely.
Monotone convergence proves the assertion.
\end{proof}

\begin{theorem}\label{imp:thm:left-continuity}
For every \(\alpha\in\mathrm I_0\), we have
\(\lim_{\beta\uparrow\alpha}\xi(\beta)=\xi(\alpha)\).
\end{theorem}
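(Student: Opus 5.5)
By monotonicity of \(p_r^\lambda\) in \(\lambda\) and Theorem~\ref{thm:utc}, \(\xi\) is nonincreasing, so \(\xi(\beta)\ge\xi(\alpha)\) for \(\beta<\alpha\). It remains to show \(\limsup_{\beta\uparrow\alpha}\xi(\beta)\le\xi(\alpha)\). The plan is to express \(\xi\) as an infimum of continuous-from-the-left functions using the up-to-constants multiplicativity, uniformly in the intensity.

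Fix \(\bar\alpha\in\mathrm I_0\) with \(\alpha\le\bar\alpha\); we may take \(\bar\alpha=\alpha\) itself. By Theorem~\ref{imp:thm:normalized-product}, \(q_\lambda(r):=p_r^\lambda/\lambda\) satisfies \(q_\lambda(rs)\le Cq_\lambda(r)q_\lambda(s)\) with \(C\) uniform over \(0<\lambda\le\alpha\). Hence \(Cq_\lambda\) is submultiplicative, and Fekete's lemma gives, for every \(r\ge2\),
\[
 r^{-\xi(\lambda)}=\lim_{n\to\infty}q_\lambda(r^n)^{1/n}\le C\,q_\lambda(r),
 \quad\text{i.e.}\quad
 \xi(\lambda)\ge-\frac{\log\bigl(C p_r^\lambda/\lambda\bigr)}{\log r}.
\]
The lower-bound direction \(q_\lambda(rs)\ge C^{-1}q_\lambda(r)q_\lambda(s)\) similarly gives
\[
 \xi(\lambda)\le-\frac{\log\bigl(C^{-1}p_r^\lambda/\lambda\bigr)}{\log r}
\]
for every \(r\ge2\) and \(\lambda\in(0,\alpha]\). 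This can also be read off directly from Theorem~\ref{thm:utc}, whose constants are uniform on \((0,\alpha]\).

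Now fix \(\varepsilon>0\). Using the upper bound at \(\lambda=\beta\) and the lower bound at \(\lambda=\alpha\),
\[
 \xi(\beta)\le\frac{\log C-\log(p_r^\beta/\beta)}{\log r},
 \qquad
 \frac{-\log(p_r^\alpha/\alpha)}{\log r}\le\xi(\alpha)+\frac{\log C}{\log r}.
\]
First choose \(r\) large so that \(2\log C/\log r<\varepsilon/2\). For this fixed \(r\), Lemma~\ref{imp:lem:fixed-scale-left} gives \(p_r^\beta\to p_r^\alpha>0\) as \(\beta\uparrow\alpha\), and also \(\beta\to\alpha\). Therefore \(\log(p_r^\beta/\beta)\to\log(p_r^\alpha/\alpha)\), and for \(\beta\) close enough to \(\alpha\) from the left,
\[
 \xi(\beta)\le\xi(\alpha)+\frac{2\log C}{\log r}+\frac{\varepsilon}{2}\le\xi(\alpha)+\varepsilon.
\]
Hence \(\limsup_{\beta\uparrow\alpha}\xi(\beta)\le\xi(\alpha)\), and combined with monotonicity this gives left-continuity.

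The main point to check is that the multiplicativity constant \(C\) in Theorem~\ref{thm:utc} is uniform over all \(\beta\le\alpha\). This holds because those constants depend only on \(\bar\alpha=\alpha\in\mathrm I_0\), and every \(\beta<\alpha\) lies in \(\mathrm I_0\). It is exactly this uniformity that lets a single fixed scale \(r\) control \(\xi(\beta)\) for all \(\beta\) near \(\alpha\). Right-continuity would fail to follow by the same argument, since intensities above \(\alpha\) are not covered by the uniform constants and fixed-scale probabilities are not right-continuous in general.
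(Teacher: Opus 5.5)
Your proof is correct and follows the paper's argument: combine the two-sided bound of Theorem~\ref{thm:utc}, whose constants are uniform on $(0,\alpha]$, with the fixed-scale left continuity of Lemma~\ref{imp:lem:fixed-scale-left}, and then let the scale tend to infinity. The only cosmetic difference is that the paper works with the localized probability $Q_R(\lambda)=\Pp_\lambda(S_1\xleftrightarrow{\LL_{B_{2R}}^\lambda}S_R)$ and passes to the limit $\beta\uparrow\alpha$ inside the two-sided bound to identify $\zeta=\lim_{\beta\uparrow\alpha}\xi(\beta)$, whereas you use $p_r^\lambda$ directly (which the lemma also covers) together with an explicit $\varepsilon$-argument.
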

\begin{proof}
Since \(\xi\) is nonincreasing, the limit
\(\zeta:=\lim_{\beta\uparrow\alpha}\xi(\beta)\) exists and satisfies
\(\zeta\ge\xi(\alpha)\).
For \(R\ge2\), set
\(Q_R(\lambda)=\Pp_\lambda(S_1\xleftrightarrow{\LL_{B_{2R}}^\lambda}S_R)\).
Theorem~\ref{imp:thm:uniform-sharp}, applied with \(\bar\alpha=\alpha\), gives
\begin{equation}\label{imp:eq:left-continuity-power-bounds}
 C^{-1}\lambda R^{-\xi(\lambda)}
 \le Q_R(\lambda)\le C\lambda R^{-\xi(\lambda)},
 \qquad 0<\lambda\le\alpha,
\end{equation}
where \(C\ge1\) depends only on \(\alpha\).
For each fixed \(R\), Lemma~\ref{imp:lem:fixed-scale-left} gives
\(Q_R(\beta)\uparrow Q_R(\alpha)\) as \(\beta\uparrow\alpha\).
Taking \(\lambda=\beta\) and letting \(\beta\uparrow\alpha\) in
\eqref{imp:eq:left-continuity-power-bounds}, we obtain
\[
 C^{-1}\alpha R^{-\zeta}\le Q_R(\alpha)\le C\alpha R^{-\zeta}.
\]
Comparing with \eqref{imp:eq:left-continuity-power-bounds} at \(\lambda=\alpha\)
and letting \(R\to\infty\) gives \(\zeta=\xi(\alpha)\), as claimed.
\end{proof}

\begin{proof}[Proof of Theorem~\ref{thm:xi-strict-monotone}]
Proposition~\ref{prop:xi-linear-separation} gives
\eqref{eq:xi-linear-separation}, and hence strict decrease of \(\xi\).
Theorem~\ref{imp:thm:linear} gives \eqref{eq:xi-0}, which also implies
continuity at zero with \(\xi(0)=1\).
Theorem~\ref{imp:thm:left-continuity} gives left continuity on
 \(\mathrm I_0\). This completes the proof.
\end{proof}

\subsection{Equality of critical values}
\label{sec:continuum-thresholds}
\label{sec:b2b-identification}
We prove Theorem~\ref{thm:continuum-thresholds} in this subsection.
We first establish the easy equality
\(\alpha_\ast=\alpha_{\mathrm{B2B}}\), which relies on the following
property in the subcritical regime.

\begin{lemma}\label{lem:fixed-ratio-bdelta-connection}
For \(\alpha<\alpha_\ast\), we have
\(\Pp_\alpha\bigl(S_1\xleftrightarrow{\LL_{\R^3}^{\alpha}}S_2\bigr)<1\).
\end{lemma}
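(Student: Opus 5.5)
The plan is to reuse the argument already used in Lemma~\ref{lem:robust-prop} to show \(p_2^{\bar\alpha}<1\). There, Lemma~\ref{lem:fixed-ratio-bdelta} supplied the small-loop input. Here the hypothesis \(\alpha<\alpha_\ast\) will supply it instead. The idea is to split the soup into small loops and large loops, force both to fail simultaneously, and conclude by independence.

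\emph{Step 1: choose a cutoff.} Since \(\alpha<\alpha_\ast\), there is some \(\alpha'\in(\alpha,\alpha_\ast]\) in the set defining \eqref{eq:alpha-star-def}. This gives \(\lim_{r\to\infty}\Pp_{\alpha'}(S_r\xleftrightarrow{\LL_{\R^3}^{\alpha',\le1}}S_{2r})=0\). By monotonicity in the intensity, the same limit holds at \(\alpha\). By scale invariance, \(\Pp_\alpha(S_r\xleftrightarrow{\LL^{\alpha,\le1}}S_{2r})=\Pp_\alpha(S_1\xleftrightarrow{\LL^{\alpha,\le1/r}}S_2)=\chi_{1,2}^\alpha(1/r)\). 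Hence we may fix \(\delta\in(0,1)\) with \(\chi_{1,2}^\alpha(\delta)<1\).

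\emph{Step 2: control the large loops.} Let \(\mathscr D_\delta:=\{\ell:\diam(\ell)>\delta,\ \ell\cap\overline A(1,2)\ne\varnothing\}\). By \eqref{eq:large-loop-terminal-comparison}, applied with \(s=4\), \(K=\overline A(1,2)\) and cutoff \(\delta/4\), we get \(\mu_{\R^3}^{\Loop}(\mathscr D_\delta)<\infty\).

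\emph{Step 3: combine by independence.} Suppose no loop of \(\mathscr D_\delta\) is present and the small-loop soup fails to connect \(S_1\) to \(S_2\). Then \(\{S_1\xleftrightarrow{\LL^\alpha_{\R^3}}S_2\}\) fails. A connecting chain could contain a large loop only if that loop meets \(\overline A(1,2)\), which is excluded. Large loops disjoint from \(\overline A(1,2)\) cannot help: a chain between \(S_1\) and \(S_2\) can be truncated to its portion inside \(\overline A(1,2)\), so every loop used there meets \(\overline A(1,2)\). The small-loop soup and the loops in \(\mathscr D_\delta\) are independent Poisson processes, since they live on disjoint loop classes. Therefore
\[
1-\Pp_\alpha\bigl(S_1\xleftrightarrow{\LL_{\R^3}^{\alpha}}S_2\bigr)\ge\bigl(1-\chi^\alpha_{1,2}(\delta)\bigr)e^{-\alpha\mu_{\R^3}^{\Loop}(\mathscr D_\delta)}>0 .
\]

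\emph{Main subtlety.} The only delicate point is the truncation argument in Step 3: a minimal chain from \(S_1\) to \(S_2\) must consist of loops meeting \(\overline A(1,2)\). To make this precise, I would take the first loop of the chain that reaches \(S_2\), stop the chain there, and note that every earlier loop meets \(\overline B_2\). A loop of such a chain contained in \(B_1\) would be irrelevant, and it cannot bridge anything without meeting \(S_1\). Everything else follows directly from results already established.
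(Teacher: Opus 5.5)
Your proof is correct and follows the paper's argument. You choose $\delta$ from the definition of $\alpha_\ast$ plus scaling, use \eqref{eq:large-loop-terminal-comparison} to make large loops meeting the annulus absent with positive probability, and note that this, together with failure of the small-loop crossing, rules out $S_1\leftrightarrow S_2$. The only differences are that the paper combines the two decreasing events via the FKG inequality (Lemma~\ref{lem:poisson-fkg}) whereas you use independence of the disjoint loop classes, as in the proof of Lemma~\ref{lem:robust-prop}, and that you spell out the chain-truncation step the paper leaves implicit.
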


\begin{proof}
By the definition of \(\alpha_\ast\) and scale invariance, there is \(\delta\in(0,1)\)
such that
\(\Pp_\alpha\bigl(S_1\xleftrightarrow{\LL_{\R^3}^{\alpha,\le\delta}}S_2\bigr)<1\).
By \eqref{eq:large-loop-terminal-comparison}, the loop measure of loops of
diameter larger than \(\delta\) that intersect \(A(1,2)\) is finite. Thus the
event \(F_\delta\) that no such loop occurs
has strictly positive probability.  The event \(F_\delta\) and the failure of
the displayed small-loop crossing are decreasing, and their intersection
precludes a crossing from \(S_1\) to \(S_2\).  Lemma~\ref{lem:poisson-fkg}
therefore yields
\[
\Pp_\alpha\bigl(\{S_1\xleftrightarrow{\LL_{\R^3}^{\alpha}}S_2\}^{\mathrm c}\bigr)
\ge \Pp_\alpha\bigl(\{S_1\xleftrightarrow{\LL_{\R^3}^{\alpha,\le\delta}}S_2\}^{\mathrm c}\bigr)
\Pp_\alpha(F_\delta)>0,
\]
implying the result.
\end{proof}

\begin{lemma}\label{lem:unit-truncated-b2b-threshold}
We have \(\alpha_\ast=\alpha_{\mathrm{B2B}}\).
\end{lemma}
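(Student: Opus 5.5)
We prove the two inequalities \(\alpha_\ast\le\alpha_{\mathrm{B2B}}\) and \(\alpha_{\mathrm{B2B}}\le\alpha_\ast\) separately. Recall that \(\alpha_{\mathrm{B2B}}=\sup\mathrm I_0\), where \(\mathrm I_0\) is the one-arm decay regime.

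\emph{Step 1: \(\alpha_{\mathrm{B2B}}\le\alpha_\ast\).} Let \(\alpha\in\mathrm I_0\). Lemma~\ref{lem:fixed-ratio-bdelta} with \(a=1\), \(b=2\) gives \(\chi^\alpha_{1,2}(\delta)\to0\) as \(\delta\downarrow0\). By scale invariance,
\[
\chi^\alpha_{1,2}(\delta)=\Pp_\alpha\bigl(S_{1/\delta}\xleftrightarrow{\LL_{\R^3}^{\alpha,\le1}}S_{2/\delta}\bigr),
\]
so with \(r=1/\delta\) the probability in the definition~\eqref{eq:alpha-star-def} tends to \(0\) as \(r\to\infty\). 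Hence \(\alpha\le\alpha_\ast\). Taking the supremum over \(\alpha\in\mathrm I_0\) gives the claim.

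\emph{Step 2: \(\alpha_\ast\le\alpha_{\mathrm{B2B}}\).} Fix \(\alpha<\alpha_\ast\); it suffices to show \(\alpha\in\mathrm I_0\). By Lemma~\ref{lem:fixed-ratio-bdelta-connection}, \(q:=\Pp_\alpha(S_1\leftrightarrow S_2)<1\). We then run a zero--one argument along the lines of the proof of Lemma~\ref{lem:fixed-ratio-bdelta}. Let \(E:=\bigcap_{r}\mathcal E_r\) be the event that \(S_1\) is connected to every \(S_r\). We show \(\Pp_\alpha(E)=0\), which gives \(p_r^\alpha\downarrow0\).

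By scale invariance, \(\Pp_\alpha(S_R\leftrightarrow S_{2R})=q<1\) for every \(R\). Inverting the roles of the scales, the event that the connections persist out to infinity should lie in the tail sigma-field generated by loops of large diameter. Alternatively, we can avoid inversion: by the inversion invariance \cite[Lemma~4.1]{JL26}, \(\Pp_\alpha(E)\) equals the probability that \(S_1\) is connected to \(S_\varepsilon\) for every \(\varepsilon>0\). This is a small-scale event; a finite covering argument reduces it to \(\bigcap_n\{B(x_i,\rho/2)\leftrightarrow S_b\}\) type events at shrinking cutoffs, as in Lemma~\ref{lem:fixed-ratio-bdelta}.

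A more robust route is monotone iteration via BK. On \(E\), for each \(k\) the crossings of the annuli \(A(2^{2k},2^{2k+1})\) occur. These crossings are not disjoint, because large loops can span several annuli. We therefore use the truncation: for \(\alpha<\alpha_\ast\), choose \(\delta\) with
\[
\Pp_\alpha\bigl(S_R\xleftrightarrow{\LL^{\alpha,\le\delta R}}S_{2R}\bigr)\le\tfrac12
\]
for all large \(R\). For a crossing of \(A(R,2R)\), either a loop of diameter \(>\delta R\) hits \(A(R,2R)\), or small loops realize the crossing. The probability that no large loop hits the annulus is at least \(e^{-\alpha M_\delta}>0\), uniformly in \(R\) by scaling. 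So each annulus has a "blocking" probability \(\ge c>0\) (as in \eqref{eq:annular-barrier-prob}), which stops the crossing. Choose widely separated scales \(R_k=\Lambda^k\). The blocking events are not exactly independent, since large loops are shared between scales. However, the mass of loops of diameter \(>\delta R_k\) hitting \(A(R_k,2R_k)\) which also hit \(A(R_j,2R_j)\) for \(j<k\) is at most \(C(R_j/R_k)\), by \eqref{eq:splitting-capacity}, and this is summable in \(j\) for large \(\Lambda\). A Borel--Cantelli or product argument then shows that almost surely some annulus is blocked. Blocking disconnects \(S_1\) from infinity, so \(\Pp_\alpha(E)=0\).

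The main obstacle is this decorrelation across scales: the block events at different scales share loops. I would handle it by splitting each block event into a small-loop part, which uses loops contained in the annulus and is independent across scales, and a ``no large loop hits annulus \(k\)'' part. The second part is a decreasing event, so FKG (Lemma~\ref{lem:poisson-fkg}) gives a lower bound on the probability that all of the first \(n\) annuli are free of large loops, and the harder direction is only needed in the form "some annulus is blocked". For that I would condition on the small-loop configurations and use that the large-loop events for sparse scales are almost independent Poisson events.
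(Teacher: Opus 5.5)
Your Step~1 is correct and is the paper's argument. Reducing Step~2 to $\Pp_\alpha(E)=0$ is also fine. The gap is that none of your three sketches actually proves $\Pp_\alpha(E)=0$.

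\textbf{The tail-field and inversion routes.} These fail as stated. The event $E=\bigcap_r\mathcal E_r$, and its inverted form $\{\forall\varepsilon\in(0,1),\ S_1\leftrightarrow S_\varepsilon\}$, has no diameter cutoff. It depends on loops of every size near $S_1$, so it is not measurable with respect to any diameter-tail $\sigma$-field. The zero--one trick of Lemma~\ref{lem:fixed-ratio-bdelta} relies on the cutoff $\le 2^{-n}$ being built into the events, so it does not transfer.

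\textbf{The blocking route.} Here the decorrelation across scales is the whole content, and it is not carried out. FKG points the wrong way. You need an \emph{upper} bound on the probability that every annulus $A(R_k,2R_k)$, $k\le n$, is unblocked. That is an intersection of increasing events, and Lemma~\ref{lem:poisson-fkg} only bounds such intersections from below. The lower bound you mention, on the probability that all annuli are free of large loops, decays geometrically in $n$ and is useless here. Summability in $j$ does not give a Borel--Cantelli argument either. It only shows that each scale $k$ is met by a cross-scale loop with probability of order $\alpha\Lambda^{-1}$, uniformly in $k$. So such loops occur at infinitely many scales almost surely, and there is no eventual independence to exploit.

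\textbf{How the paper closes Step~2.} For $\alpha<\alpha_\ast$, Lemma~\ref{lem:fixed-ratio-bdelta-connection} gives $p_2^\alpha<1$. If $\alpha>\alpha_{\mathrm{B2B}}$, then \cite[Lemma~6.2]{JL26} gives $\Pp_\alpha(\forall r>1,\ S_1\leftrightarrow S_r)=1$, a contradiction. Hence $\alpha\le\alpha_{\mathrm{B2B}}$.

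\textbf{Completing your route instead.} If you want a self-contained argument avoiding \cite[Lemma~6.2]{JL26}, the blocking route can be finished as follows.
\begin{enumerate}[label=(\roman*)]
\item With $R_k=\Lambda^k$, split the loops of diameter $>\delta R_k$ meeting $\overline{A}(R_k,2R_k)$ into local ones, contained in $A(\Lambda^{-1/2}R_k,\Lambda^{1/2}R_k)$, and nonlocal ones.
\item For large $\Lambda$, all large-loop classes are disjoint from all small-loop classes. The local classes are moreover disjoint across $k$. The nonlocal mass at each scale is at most $C\Lambda^{-1/2}$ by \eqref{eq:splitting-capacity}.
\item Condition on the small loops, and let $S_n$ be the set of $k\le n$ with no small-loop crossing. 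By independence, $|S_n|\to\infty$ almost surely.
\item Every unblocked annulus indexed by $S_n$ must be hit by a local or a nonlocal large loop. Set $\epsilon=\frac12e^{-\alpha M_\delta}$.
\item A Chernoff bound shows that local loops hit at least a fraction $1-\epsilon$ of these annuli with probability at most $e^{-c|S_n|}$.
\item Markov's inequality bounds the probability that nonlocal loops hit a fraction $\epsilon$ of them by $C\alpha\Lambda^{-1/2}/\epsilon$.
\item Letting $n\to\infty$ gives $\Pp_\alpha(E)\le C\alpha e^{\alpha M_\delta}\Lambda^{-1/2}$. Since $\delta$ does not depend on $\Lambda$, letting $\Lambda\to\infty$ yields $\Pp_\alpha(E)=0$.
\end{enumerate}
The bound $q<1$ that you record plays no role in this route.
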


\begin{proof}
For every \(\alpha\in \mathrm{I}_0\), Lemma~\ref{lem:fixed-ratio-bdelta} gives
\(\Pp_\alpha\bigl(S_r\xleftrightarrow{\LL_{\R^3}^{\alpha,\le1}}S_{2r}\bigr)
\to0\) as \(r\to\infty\).
Hence \(\alpha\le\alpha_\ast\).  Taking the supremum over \(\mathrm{I}_0\) and using
\(\sup \mathrm{I}_0=\alpha_{\mathrm{B2B}}\) gives
\(\alpha_{\mathrm{B2B}}\le\alpha_\ast\).

Conversely, fix \(0<\alpha<\alpha_\ast\).  By
Lemma~\ref{lem:fixed-ratio-bdelta-connection}, one has \(p_2^{\alpha}<1\).  If
\(\alpha>\alpha_{\mathrm{B2B}}\),
\cite[Lemma~6.2]{JL26} gives
\(\Pp_\alpha\bigl(\forall r>1,\,
S_1\xleftrightarrow{\LL_{\R^3}^{\alpha}}S_r\bigr)=1\), contradicting \(p_2^{\alpha}<1\).
Therefore \(\alpha\le\alpha_{\mathrm{B2B}}\).  Letting
\(\alpha\uparrow\alpha_\ast\) proves the reverse inequality.
\end{proof}

Now, we focus on the inequality \(\alpha_{\mathrm{tr}}\le\alpha_{\mathrm{B2B}}\),
which is the key to the remaining assertions of
Theorem~\ref{thm:continuum-thresholds}.
Its proof relies on  an almost-sure local connection property of Brownian
loops inside an independent Brownian loop soup; see Lemma~\ref{lem:threshold-attachment}.
To establish this property, we first generalize the one-arm lower bound below, and then
 establish some regularity properties of Brownian motion.
In what follows, we fix \(\varepsilon\in\mathrm I_0\) and write
\(\xi=\xi(\varepsilon)\in(0,1)\). All constants are allowed to depend on \(\varepsilon\).

\begin{lemma}
\label{lem:threshold-seed-estimates}
For \(\kappa>0\), there exists $c>0$ such that the following holds. Let \(M\subset B_1\) be compact with
\(\operatorname{cap}(M)\ge\kappa\), let \(|w|=3\), and let
\(x\in B(w,1/8)\). Uniformly for \(0<r\le10^{-3}\) and compact
\(A\subset B(x,r)\) with \(\operatorname{cap}(A)\ge\kappa r\),
\[
 \mathbb P_\varepsilon
 \bigl(A\xleftrightarrow{\mathcal L_{B_{16}}^\varepsilon}M\bigr)
 \ge c\,
r^{\xi(\varepsilon)}.
\]
\end{lemma}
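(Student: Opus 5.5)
The plan is to build a three-stage connection: a local crossing cluster at scale comparable to $1$ around $x$, glued to $A$ at the small end and to $M$ at the large end. Each gluing is done by an independent welding loop whose mass is bounded below via Lemma~\ref{lem:two-capacity-welding}.

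\textbf{Stage 1: the local crossing cluster.} Work with the soup $\mathcal L^\varepsilon_{B(x,1)}$ restricted to loops contained in $A(x,4r,1)$; more precisely, take loops contained in $B(x,1)$ that avoid $\overline{B(x,2r)}$. Consider the crossing event from $S(x,8r)$ to $S(x,1/4)$. I want a cluster that carries both a good inner piece $J^-\subset A(x,4r,16r)$ with $\Capa(J^-)\gtrsim r$ and a good outer piece $J^+\subset A(x,1/8,1/2)$ with $\Capa(J^+)\gtrsim 1$.

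\begin{itemize}
\item Corollary~\ref{lem:localized-two-sided}, applied at the inner and outer ends, gives each piece separately with probability $\gtrsim p^\varepsilon_{1/(32r)}\asymp r^{\xi}$, using Theorem~\ref{thm:utc} and Lemma~\ref{imp:lem:fixed-ratio}.
\item To obtain both pieces at once, I would use the product construction from the proof of Lemma~\ref{lem:annular-pivotal-lower}. On disjoint domains $B(x,\sqrt r)^{\mathrm c}\cap B(x,1)$ and $B(x,\sqrt r/2)\setminus \overline{B(x,2r)}$, take independent events $\mathsf H^+$ and $\mathsf H^-$ via Lemma~\ref{lem:localized-robust-crossing} (localized, translated, scaled).
\item Weld the two pieces with a loop at scale $\sqrt r$, using \eqref{eq:two-capacity-welding}; this loop has mass $\gtrsim 1$.
\item Theorem~\ref{imp:thm:normalized-product} then gives probability $\gtrsim p_{1/\sqrt r}\,p_{1/\sqrt r}/\varepsilon\asymp r^{\xi}$.
\end{itemize}

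This stage is the main obstacle. The delicate points are the domain bookkeeping (keeping all loops inside $B_{16}$ and disjoint from the welding classes) and the constant adjustment from $r$ to $\sqrt r$. If necessary I would instead pick the welding scale as a fixed fraction between $r$ and $1$, which also works via quasi-multiplicativity.

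\textbf{Stage 2: welding to $A$ and to $M$.} Conditionally on the Stage 1 configuration, add independent loops.

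\begin{itemize}
\item \emph{Loop to $A$.} Take a loop contained in $B(x,32r)$ hitting both $A\subset B(x,r)$ and $J^-\subset A(x,4r,16r)$. These sets are in the configuration of \eqref{eq:two-capacity-welding} after scaling: $A\subset A(x,r/4,r)$ fails only near the center, but monotonicity of $\Bcal$ and the proof of \eqref{eq:two-capacity-welding} only needs $A\subset B(x,r)$ via \eqref{eq:inner-path-capacity}. The mass is therefore $\gtrsim r^{-2}\Capa(A)\Capa(J^-)\gtrsim\kappa\kappa_0$.
\item \emph{Loop to $M$.} Take a loop contained in $B_{16}$ hitting both $J^+$ and $M$. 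Here $J^+\subset B(w,1)$ with $|w|=3$, and $M\subset B_1$, so the sets are separated at unit distance. The same welding bound, restricted to $B_{16}$ as in the proof of Lemma~\ref{lem:localized-annular-trial}, gives mass $\gtrsim\kappa\kappa_0$.
\end{itemize}

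Independence holds because these loop classes intersect $A$ or $M$, while the Stage 1 loops avoid $\overline{B(x,2r)}$ and are contained in $B(x,1)$, which is disjoint from $M$ since $|x|\ge 3-1/8$.

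\textbf{Conclusion.} Each welding happens with probability at least $1-e^{-c\varepsilon\kappa\kappa_0}$. The three loop classes are disjoint (if needed, restrict by diameter range), so by Poisson independence
\[
\mathbb P_\varepsilon\bigl(A\xleftrightarrow{\mathcal L^\varepsilon_{B_{16}}}M\bigr)\ \gtrsim\ r^{\xi}\,\bigl(1-e^{-c\varepsilon\kappa\kappa_0}\bigr)^2,
\]
with constants uniform in $r\le10^{-3}$, $x$, $w$, $A$ and $M$.
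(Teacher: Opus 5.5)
Stage~1 has a genuine gap, and it is not a bookkeeping issue. In the construction of Lemma~\ref{lem:annular-pivotal-lower}, the event $\mathsf H^+$ on the inner domain supplies a good piece only at the \emph{outer} end of the inner crossing. Likewise, $\mathsf H^-$ on the outer domain supplies one only at the \emph{inner} end of the outer crossing. These are exactly the pieces the welding loop consumes, so the welded cluster has no capacity guarantee at its extremities.

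You assign the events the other way: $\mathsf H^+$ on $B(x,1)\setminus\overline{B(x,\sqrt r)}$ and $\mathsf H^-$ on $B(x,\sqrt r/2)\setminus\overline{B(x,2r)}$. This does place $J^+$ at scale $1$ and $J^-$ at scale $r$. But then neither cluster is known to have capacity $\gtrsim\sqrt r$ near scale $\sqrt r$, so \eqref{eq:two-capacity-welding} gives no lower bound on the welding mass there. A crossing cluster need not carry capacity comparable to the scale near a given sphere; that is the whole content of Theorem~\ref{thm:main}.

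Either way, each half-crossing needs good pieces at \emph{both} of its ends. That is the very statement you set out to prove, over a still unbounded scale ratio, so the construction is circular. Moving the welding scale elsewhere between $r$ and $1$ does not help, since at least one half still spans an unbounded ratio and needs both ends. Nor do the one-sided results combine: $\Pp(\mathsf H^+\mid\text{crossing})\ge c$ and $\Pp(\mathsf H^-\mid\text{crossing})\ge c$ say nothing about their intersection, let alone about a common cluster.

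The missing idea, which is what the paper uses, is to avoid any two-sided event by seeding the recursion with $A$ itself. After rescaling around $x$ by $r^{-1}$, replace $S_1$ by $A_0:=r^{-1}(A-x)\subset B_1$ in \eqref{eq:seeded-cluster-def} and \eqref{eq:terminal-event-probability}. The proofs of Lemma~\ref{lem:robust-prop} and Proposition~\ref{lem:residual-extension} use only $A_0\subset B_1$ and $\Capa(A_0)\ge\kappa$; the latter gives the initial bound $g_\rho\gtrsim\varepsilon\kappa$. So the dyadic iteration of Theorem~\ref{thm:main} shows the following: the cluster of $A$ contains a piece at distance of order one from $x$ with capacity $\gtrsim1$, with probability comparable to $\Pp_\varepsilon(A\leftrightarrow S(x,\tfrac12))$.

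Your ``loop to $A$'' weld is exactly what supplies the remaining lower bound $\Pp_\varepsilon(A\leftrightarrow S(x,\tfrac12))\gtrsim r^{\xi}$. Use it to attach $A$ to a one-sided good inner piece from Lemma~\ref{lem:localized-robust-crossing}, suitably localized to a soup avoiding $\overline{B(x,2r)}$ inside $B_{16}$. Your ``loop to $M$'' weld, with its independence argument, then completes the proof.
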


\begin{proof}
In fact, the proof of
Theorem~\ref{thm:main} extends uniformly to compact sets
\(A_0\subset B_1\) with \(\operatorname{cap}(A_0)\ge\kappa\),
which allows us to conclude that with probability of order $r^{\xi(\varepsilon)}$,
$A$ is connected to some loop-piece $J\subset A(x,1/2,1)$ with $\operatorname{cap}(J)\ge\kappa_0$.
Clearly, $J$ can
be further connected to $M$ by some loop in $\mathcal L_{B_{16}}^\varepsilon$ with positive probability depending only on $\kappa,\kappa_0,\varepsilon$.
 The FKG inequality then
yields the lower bound.
\end{proof}

Let \(W\) be the standard Brownian motion in $\mathbb R^3$ started at zero and
\(\tau_r=\inf\{t\ge0:|W_t|=r\}\).
There exists
\(\kappa_1>0\) such that
\begin{equation}\label{eq:bm-stopped-capacity-positive}
 p:=\mathbb P\left(
 \operatorname{cap}(W[0,\tau_1])>2\kappa_1
 \right)>0.
\end{equation}
For any
compact \(K\subset\mathbb R^3\), define the set of good points in \(K\) by
\begin{equation}\label{eq:good-point}
 G(K)=\left\{x\in K:
 \limsup_{n\to\infty}
 2^n\operatorname{cap}\bigl(K\cap B(x,2^{-n})\bigr)
 \ge2\kappa_1\right\}.
\end{equation}
For $j=1,2,3$, let \(\pi_j\) be the \(j\)-th coordinate projection.
Write \(\operatorname{Leb}(\cdot)\) for Lebesgue measure on \(\mathbb R\).
We will show that, almost surely, for every subsegment \(K=W([s,t])\), \(0\le s\le t<\infty\), the set of good points is ``full'' in each coordinate projection.

\begin{lemma}
\label{lem:threshold-bm-good-levels}
Almost surely,
all subsegments \(K=W([s,t])\), \(0\le s\le t<\infty\), simultaneously satisfy
\begin{equation}\label{eq:bm-good-projections}
 \operatorname{Leb}\bigl(\pi_j(K)\setminus\pi_j(G(K))\bigr)=0,
 \qquad j=1,2,3.
\end{equation}
\end{lemma}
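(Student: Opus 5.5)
The plan is to reduce the uncountable family of subsegments to a countable one, and then handle each fixed segment by a Fubini argument over times combined with the strong Markov property and scaling. First, note monotonicity: if \(K\subset K'\), then \(G(K)\subset G(K')\), since capacity is monotone. So it suffices to show that almost surely, for all rational \(0\le s<t\), the set \(W([s,t])\) satisfies \eqref{eq:bm-good-projections} with \(G\) replaced by the set \(G_{s,t}\) of times \(u\in[s,t]\) such that \(W_u\in G(W([s,t]))\). Indeed, a general segment \(W([s,t])\) with \(s<t\) is an increasing union of segments with rational endpoints in \((s,t)\), and the image of the endpoints is a null set in each projection. The degenerate case \(s=t\) gives a point, whose projection is trivially null.

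Fix rational \(s<t\). The key claim is that for Lebesgue-a.e.\ time \(u\in(s,t)\), almost surely \(W_u\in G(W([s,t]))\). To prove this, look at the forward piece \(W([u,u+h])\). By the Markov property at the fixed time \(u\), together with Brownian scaling and \eqref{eq:bm-stopped-capacity-positive}, the events
\[
E_n:=\bigl\{\operatorname{cap}\bigl(W([u,\tau^{(u)}_{2^{-n}}])\bigr)>2\kappa_1 2^{-n}\bigr\},
\qquad \tau^{(u)}_\rho:=\inf\{v\ge u:|W_v-W_u|=\rho\},
\]
each have probability \(p\). Each \(W([u,\tau^{(u)}_{2^{-n}}])\) lies in \(B(W_u,2^{-n})\). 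It also lies inside \(W([s,t])\) once \(\tau^{(u)}_{2^{-n}}<t\), which holds for all large \(n\). To conclude that infinitely many \(E_n\) occur, I would use a zero--one law: the event \(\{E_n \text{ i.o.}\}\) lies in the germ \(\sigma\)-field of \(W_{u+\cdot}-W_u\), so Blumenthal's law makes its probability \(0\) or \(1\). Since \(\Pp(E_n\text{ i.o.})\ge\limsup\Pp(E_n)=p>0\), it equals \(1\). This gives \(\limsup_n 2^n\operatorname{cap}(K\cap B(W_u,2^{-n}))\ge 2\kappa_1\), so \(W_u\in G(K)\).

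By Fubini, almost surely the set \(T_{\rm bad}:=\{u\in[s,t]: W_u\notin G(K)\}\) has zero Lebesgue measure. It remains to pass from a null set of times to null projections: we need \(\operatorname{Leb}(\pi_j(W(T_{\rm bad})))=0\). This is the step I expect to be the main obstacle. Time-null sets need not map to projection-null sets for an arbitrary function, so something specific to Brownian motion is required. The one-dimensional coordinate \(\pi_j W\) is a Brownian motion, and its occupation measure is absolutely continuous with local time \(L^x\). I would use the occupation-times formula in the reverse direction: for every level \(x\) except a null set, the level set \(\{u:\pi_jW_u=x\}\) carries local time \(L^x_{[s,t]}>0\) whenever \(x\) lies in the interior of \(\pi_j(K)\). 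Since \(T_{\rm bad}\) has zero Lebesgue measure, \(\int L^x(T_{\rm bad})\,dx=\operatorname{Leb}(T_{\rm bad})=0\), so \(L^x(T_{\rm bad})=0\) for a.e.\ \(x\). Hence, for a.e.\ level \(x\), the level set has positive local-time mass outside \(T_{\rm bad}\), and therefore contains some good time \(u\) with \(\pi_jW_u=x\). This shows \(x\in\pi_j(G(K))\).

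The delicate points are measurability of \(T_{\rm bad}\), which I would handle by writing \(G(K)\) as a \(G_\delta\)-type limsup condition, and the fact that local-time measures are supported on level sets. Both are standard. Intersecting over the countably many rational pairs \((s,t)\) and over \(j=1,2,3\) completes the argument.
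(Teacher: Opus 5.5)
Your proposal is correct, but it takes a genuinely different route from the paper. The main difference is which variable you apply Fubini in.

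\textbf{The paper's route.} The paper runs Fubini over \emph{levels} rather than times. For a fixed level $u$ and rationals $0<a<b$, it applies the strong Markov property and Blumenthal's law at the stopping time $H^j_{a,u}$, the first time after $a$ at which $\pi_j(W)$ equals $u$. The exceptional set $N_j(W)$ is therefore a set of levels by construction, and $\operatorname{Leb}(N_j(W))=0$ follows directly from $\Pbb(u\in N_j(W))=0$ for each fixed $u$. Arbitrary segments $W([s,t])$ are then handled without any rational-endpoint reduction. For a non-exceptional level $u$ in the interior of $\pi_j(K)$, one has $H^j_{a,u}=H^j_{s,u}$ for every rational $a\in(s,H^j_{s,u})$. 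Hence the hit point is good for $W([a,b])\subset K$.

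\textbf{Your route.} You prove goodness at each fixed time by the simple Markov property, which gives a time-null bad set $T_{\rm bad}$. You then have to convert ``time-null'' into ``level-null'' using the local time of $X=\pi_j(W)$. That conversion rests on three standard but nontrivial facts, which you should state precisely rather than call a reverse occupation-times formula:
\begin{itemize}
\item The space--time occupation formula $\int_s^t h(u,X_u)\,du=\int_{\R}dx\int_s^t h(u,x)\,dL^x_u$ must hold almost surely, simultaneously for all Borel $h\ge0$. This simultaneity is what allows you to plug in the random set $T_{\rm bad}$, which is Borel for each fixed sample path. It follows from the ordinary formula on rational rectangles together with a $\pi$--$\lambda$ argument.
\item $L^x_t-L^x_s>0$ for a.e.\ $x$ in the interior of $X([s,t])$. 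This comes from the strong Markov property at the first hit of $x$ after $s$, followed by Fubini in $x$.
\item The measure $dL^x$ is carried by the level set $\{u:X_u=x\}$.
\end{itemize}

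\textbf{Comparison.} The paper's route is more economical: because the exceptional object is indexed by levels from the outset, the projection statement comes for free and no local times are needed. Your route gives a more general principle: any property holding at Lebesgue-a.e.\ fixed time is realized at some time on almost every level set of each coordinate. The price is heavier one-dimensional machinery. The joint-measurability issues needed for Fubini are the same in both arguments.
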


\begin{proof}
By Brownian scaling, with \(\kappa_1,p\) in \eqref{eq:bm-stopped-capacity-positive}, we have
\[
 \mathbb P\left(
 \operatorname{cap}(W[0,\tau_{2^{-n}}])>2\kappa_1\,2^{-n}
 \right)=p,\qquad n\ge1.
\]
By Fatou's lemma, these inequalities hold infinitely often with probability at least
\(p\). Hence, Blumenthal's zero--one law gives that
for every \(t>0\),
\begin{equation}\label{eq:GW}
 \mathbb P\bigl(0\in G(W[0,t])\text{ for every }t>0\bigr)=1.
\end{equation}
For \(a\ge0\) and \(u\in\mathbb R\), define the first hitting time at level $u$ in the \(j\)-th coordinate
direction
\[
 H^j_{a,u}=\inf\{r\ge a:\pi_j(W_r)=u\},
 \qquad \inf\varnothing=\infty,
\]
and collect the exceptional levels over all rational intervals:
\begin{equation}\label{eq:Nj}
 N_j(W)
 =\bigcup_{\substack{a,b\in\mathbb Q\\0<a<b<\infty}}
 \left\{u:H^j_{a,u}<b,\ 
 W_{H^j_{a,u}}\notin G(W([a,b]))\right\}.
\end{equation}

By the strong Markov property and \eqref{eq:GW},
on the event \(\{H^j_{a,u}<b\}\), \(W_{H^j_{a,u}}\) is almost surely good for
\(W([a,b])\). For each fixed \(u\), the union over rational \(a,b\)
 is countable. Hence \(\mathbb P(u\in N_j(W))=0\), and we get
\[
 \mathbb E\bigl[\operatorname{Leb}(N_j(W))\bigr]
 =\int_{\mathbb R}\mathbb P(u\in N_j(W))\,du=0.
\]
Thus \(\operatorname{Leb}(N_j(W))=0\) for
\(j=1,2,3\) simultaneously, almost surely.

 Now we treat an arbitrary time interval $[s,t]$ using rational subintervals.
Fix a continuous sample path \(W\) satisfying
\(\operatorname{Leb}(N_j(W))=0\) for all \(j=1,2,3\), and take
\(K=W([s,t])\), \(0\le s<t<\infty\).
 For any level \(u\) in the interior of $\pi_j(K)$ with
\(u\notin N_j(W)\cup\{\pi_j(W_s)\}\), we have
\(H^j_{s,u}\in(s,t)\).
Choose rational \(a,b\) with \(s<a<H^j_{s,u}<b<t\). Since \(H^j_{s,u}\) is the first hit after
\(s\), we have \(H^j_{a,u}=H^j_{s,u}\). The definition of \(N_j(W)\) and
monotonicity of capacity then give
\[
 W_{H^j_{s,u}}\in G(W([a,b]))\subset G(K),
 \qquad \pi_j(W_{H^j_{s,u}})=u.
\]
It follows that
\[
 \pi_j(K)\setminus\pi_j(G(K))
 \subset N_j(W)\cup\{\pi_j(W_s)\}\cup\partial\pi_j(K).
\]
The right-hand side is null, since the last two sets contain at most
three levels. The case \(s=t\) is immediate, completing the proof.
\end{proof}

Now, we extend the result to the Brownian loop soup.
\begin{lemma}
\label{lem:threshold-good-points}
Let \(\kappa_1\) and \(G\) be as in Lemma~\ref{lem:threshold-bm-good-levels}.
For \(\mu_{\mathbb R^3}^{\mathrm{loop}}\)-almost every loop
\(\ell:[0,T]\to\mathbb R^3\), all subsegments
\(K=\ell([s,t])\), \(0\le s\le t\le T\), simultaneously satisfy
\begin{equation}\label{eq:loop-good-projections}
 \operatorname{Leb}\bigl(\pi_j(K)\setminus\pi_j(G(K))\bigr)=0,
 \qquad j=1,2,3.
\end{equation}
Moreover, for every fixed \(\alpha>0\), almost surely
\eqref{eq:loop-good-projections} holds simultaneously for every connected union
\(K=\bigcup_{i=1}^{q}\ell_i([s_i,t_i])\) of finitely many subsegments
of loops \(\ell_i\in\mathcal L_{\mathbb R^3}^{\alpha}\).
\end{lemma}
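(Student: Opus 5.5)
The plan is to transfer Lemma~\ref{lem:threshold-bm-good-levels} from Brownian motion to Brownian loops by absolute continuity, and then pass to the loop soup by Palm's formula together with a locality property of good points.

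\textbf{Single loops.} Disintegrating \(\muLoop\), it suffices to treat \(\Pbb_T^{x,x}\)-almost every bridge for fixed \(x\) and \(T\). Since the root of an unrooted loop can be moved, we may parametrize by a root time and split \([0,T]\) into the halves \([0,T/2]\) and \([T/2,T]\). On \([0,T-\eta]\), the bridge law is absolutely continuous with respect to Brownian motion from \(x\) run for time \(T-\eta\), with a bounded density. By time reversal, the same holds for the final piece. Hence, almost surely, Lemma~\ref{lem:threshold-bm-good-levels} applies to every subsegment contained in \([0,T-\eta]\), and also to every subsegment contained in \([\eta,T]\). Taking \(\eta=1/m\) and a countable union gives \eqref{eq:loop-good-projections} for every subsegment that avoids a neighbourhood of one endpoint.

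A general subsegment \(\ell([s,t])\) is the union of two subsegments, each contained in one of these two ranges. It is also possible that a segment wraps around the root, in which case it is a union of \(\ell([s,T])\) and \(\ell([0,t])\).

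\textbf{Unions.} The key observation is monotonicity. If \(K'\subset K\), then \(G(K')\subset G(K)\), because the quantity \(\Capa(K\cap B(x,2^{-n}))\) increases with \(K\). Therefore, for \(K=K_1\cup\cdots\cup K_q\),
\[
\pi_j(K)\setminus\pi_j(G(K))\subset\bigcup_i\bigl(\pi_j(K_i)\setminus\pi_j(G(K_i))\bigr).
\]
A finite union of null sets is null. So it suffices to know \eqref{eq:loop-good-projections} for each piece separately, and connectedness of the union plays no role. This settles the first assertion, since every subsegment splits into at most finitely many pieces of the admissible types.

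\textbf{Loop soup.} For the soup statement, apply Palm's formula \eqref{eq:palm} with \(A(\gamma,\mathcal L)\) taken to be the event that \(\gamma\) satisfies the single-loop property. Because that property depends only on \(\gamma\), it holds almost surely for every \(\gamma\in\LL_{\R^3}^\alpha\) simultaneously. The monotonicity reduction above then handles every finite union \(\bigcup_i\ell_i([s_i,t_i])\) at once, since the exceptional set is already controlled loop by loop.

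The main obstacle is the bridge step. Lemma~\ref{lem:threshold-bm-good-levels} is stated for Brownian motion started at \(0\) on the whole time axis, so the absolute continuity has to be arranged so that the almost-sure statement "for all subsegments" survives. Under a mutually absolutely continuous change of measure, almost-sure events are preserved. So the step should be fine once the bridge is cut strictly away from its endpoint, with translation handling the starting point \(x\).
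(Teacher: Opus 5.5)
Your proposal is correct and follows essentially the same route as the paper: local absolute continuity of the bridge with respect to Brownian motion transfers Lemma~\ref{lem:threshold-bm-good-levels}, Palm's formula \eqref{eq:palm} passes to all loops of the soup, and the monotonicity $G(K_i)\subset G(K)$ reduces finite unions to single subsegments. Your extra care in the bridge step simply spells out what the paper compresses into the phrase ``locally absolutely continuous'': you cut at distance $\eta$ from the endpoint, where the density is bounded, use time reversal for the final piece, and split segments that wrap around the root.
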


\begin{proof}
Since the Brownian bridge is locally absolutely continuous with respect to
the standard Brownian motion, Lemma~\ref{lem:threshold-bm-good-levels}
gives the first assertion.
By Lemma~\ref{lem:loop-soup-palm} (Palm's formula), almost surely
\eqref{eq:loop-good-projections} holds simultaneously for all subsegments
of every loop in \(\mathcal L_{\mathbb R^3}^{\alpha}\).
Write \(K_i=\ell_i([s_i,t_i])\) and \(K=\bigcup_{i=1}^qK_i\). Since $G(K_i)\subset G(K)$ and
\[
\pi_j(K)\setminus\pi_j(G(K))
 \subset\bigcup_{i=1}^q
 \bigl(\pi_j(K_i)\setminus\pi_j(G(K_i))\bigr),
\]
 the result follows immediately.
\end{proof}

We next use the good-point property in \eqref{eq:good-point} to obtain a uniform lower bound
on connection probabilities.

\begin{lemma}
\label{lem:threshold-attachment}
Fix \(\kappa>0\). Let \(M\subset B_1\) be compact with
\(\operatorname{cap}(M)\ge\kappa\), and let \(|w|=3\).
Let \(K\subset B(w,1/16)\) be a compact connected set
satisfying \eqref{eq:loop-good-projections}
and \(\operatorname{diam}(K)\ge3/64\).
For a Brownian loop soup \(\mathcal L_{B_{16}}^\varepsilon\) of intensity
\(\varepsilon\) in \(B_{16}\), we have
\[
 \mathbb P_\varepsilon
 \bigl(K\xleftrightarrow{\mathcal L_{B_{16}}^\varepsilon}M\bigr)
 \ge c(\varepsilon,\kappa)>0,
\]
uniformly over \(K,M,w\).
\end{lemma}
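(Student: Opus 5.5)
The plan is a second-moment argument over dyadic scales. The idea is to place many candidate small balls along $K$ at good points and attach each to $M$ using Lemma~\ref{lem:threshold-seed-estimates}. The key difficulty is that $K$ itself may have capacity as small as we like, a priori. However, \eqref{eq:loop-good-projections} forces good points to be spread out: since $\diam(K)\ge 3/64$, some coordinate projection $\pi_j(K)$ is an interval of length at least $c:=3/(64\sqrt3)$, and $\pi_j(G(K))$ has full measure in it.

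\textbf{Step 1: many disjoint seeds.} Fix a small scale $r=2^{-n}$. For each good point $x$ there are infinitely many $m$ with $2^m\Capa(K\cap B(x,2^{-m}))\ge 2\kappa_1$. We need this at a common scale, so we use a Vitali/pigeonhole argument in the projection. For $N$ large, let $E_N\subset\pi_j(G(K))$ be the set of levels $u$ admitting a good point $x$ with $\pi_j(x)=u$ for which the capacity condition holds at some $m\in[N,2N]$. Then $\mathrm{Leb}(E_N)\to\mathrm{Leb}(\pi_j(K))\ge c$ as $N\to\infty$, since this is a limsup condition. Pigeonholing over $m\in[N,2N]$ gives a single scale $r=2^{-m}$ and levels of measure at least $c/(2N)$. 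From these we extract points $x_1,\dots,x_k$ with $\pi_j$-separation at least $4r$. This gives $k\gtrsim 2^m/N$ pairwise $r$-separated balls $B(x_i,r)$, and $A_i:=K\cap B(x_i,r)$ has $\Capa(A_i)\ge\kappa_1 r$.

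\textbf{Step 2: first and second moments.} Let $Z=\sum_i \1\{A_i\xleftrightarrow{\mathcal L^\varepsilon_{B_{16}}}M\}$. Each $x_i$ lies in $B(w,1/8)$, so Lemma~\ref{lem:threshold-seed-estimates} with $\kappa$ replaced by $\min(\kappa,\kappa_1)$ gives $\Ee Z\gtrsim k r^{\xi}$. For the second moment, use \eqref{eq:seeded-second-moment} of Lemma~\ref{lem:seeded-moments} with $D=B_{16}$, $K=M$ and $\rho$ fixed. We need $\dist(K,M)\ge 1$, which holds, and a ball $B(x_i,r)$ replaces $B(x,\varepsilon)$. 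This yields
\[
\Ee Z^2\lesssim \sum_{i,i'} r^{2\xi}(|x_i-x_{i'}|\vee r)^{-\xi}.
\]
Since the $\pi_j$-coordinates are $4r$-separated, for each $i$ the number of $i'$ with $|x_i-x_{i'}|\le t$ is at most $Ct/r$. Because $\xi<1$, we get $\sum_{i'}(|x_i-x_{i'}|\vee r)^{-\xi}\lesssim r^{-1}$, so $\Ee Z^2\lesssim k r^{2\xi-1}$. Paley--Zygmund then gives
\[
\Pp(Z>0)\ge (\Ee Z)^2/\Ee Z^2\gtrsim k r\asymp 1/N.
\]

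\textbf{Step 3: removing the loss.} The factor $1/N$ is the main obstacle, and the pigeonholing above is too crude. What I would try instead is to avoid the common scale: use seeds at mixed dyadic scales $r_i$, chosen so that the balls $B(x_i,r_i)$ are disjoint and their $\pi_j$-shadows cover a set of measure at least $c/2$ (Vitali covering in one dimension). Weight each indicator by $r_i^{1-\xi}$, i.e.\ set $Z=\sum_i r_i^{1-\xi}\1_i$. Then $\Ee Z\gtrsim\sum_i r_i\gtrsim c$. The second moment becomes
\[
\sum_{i,i'} r_i r_{i'}(|x_i-x_{i'}|\vee r_i\vee r_{i'})^{-\xi},
\]
which is bounded by an energy integral $\iint |u-v|^{-\xi}\,du\,dv$ over the projected intervals. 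This is finite because $\xi<1$. Hence $\Pp(Z>0)\ge c(\varepsilon,\kappa)$ uniformly. The remaining check is that Lemma~\ref{lem:seeded-moments}'s two-point bound applies with unequal radii, which it does as stated with $\varepsilon_1\ne\varepsilon_2$, and that $r_i\le 10^{-3}$, which we ensure by starting from $m\ge N_0$.
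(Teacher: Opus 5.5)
Your Step~3 is correct and is essentially the paper's argument. The paper takes a coordinate projection $J=\pi_j(K)$ of length at least $3/(64\sqrt3)$ and assigns to almost every level $u\in J$ a good point $x(u)$ with its own dyadic radius $r(u)\le 10^{-3}$. It then applies Paley--Zygmund to $Z=|J|^{-1}\int r(u)^{-\xi}\1_{E_u}\,du$, with $\Ee Z\ge c$ from Lemma~\ref{lem:threshold-seed-estimates} and $\Ee Z^2\lesssim\iint|u-v|^{-\xi}\,du\,dv$ from \eqref{eq:seeded-second-moment} and $|x(u)-x(v)|\ge|u-v|$. Your Vitali selection just discretizes this integral: your weight $r_i^{1-\xi}$ is, up to a constant, $\int_{I_i}r_i^{-\xi}\,du$. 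The energy comparison needs the projected intervals to be disjoint, not merely the balls, which one-dimensional Vitali provides; Steps~1--2 can be dropped.
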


\begin{proof}
Since \(\operatorname{diam}(K)\ge3/64\),
we can choose a coordinate projection
\(J=\pi_j(K)\) with
\(|J|\ge\operatorname{diam}(K)/\sqrt3\ge3/(64\sqrt3)\).
By \eqref{eq:loop-good-projections} and the definition of \(G(K)\),
there is a set \(J_0\subset J\) with \(\operatorname{Leb}(J\setminus J_0)=0\) satisfying
the following property.
 For any $u\in J_0$, there are
\(x(u)\in G(K)\) and a dyadic \(r(u)\in(0,10^{-3}]\) such that
\(\pi_j(x(u))=u\) and \(\operatorname{cap}(A_u)>\kappa_1r(u)/4\),
where \(A_u=K\cap B(x(u),r(u)/2)\).
Set
\[
 E_u=\bigl\{A_u\xleftrightarrow{\mathcal L_{B_{16}}^\varepsilon}M\bigr\},
 \qquad
 Z=\frac1{|J|}\int_{J_0}r(u)^{-\xi}\mathbf 1_{E_u}\,du.
\]
Applying Lemma~\ref{lem:threshold-seed-estimates}, we obtain that
\[
 \mathbb E_\varepsilon Z
 =\frac1{|J|}\int_{J_0}r(u)^{-\xi}\mathbb P_\varepsilon(E_u)\,du\ge c.
\]
For \(u\ne v\), we have \(|x(u)-x(v)|\ge|u-v|\).
By the two-point estimate \eqref{eq:seeded-second-moment},
\[
 \frac{\mathbb P_\varepsilon(E_u\cap E_v)}{r(u)^\xi r(v)^\xi}
 \le C\bigl(|x(u)-x(v)|\vee r(u)\vee r(v)\bigr)^{-\xi}
 \le C|u-v|^{-\xi}.
\]
Moreover, since
\(\xi<1\), we get
\[
 \mathbb E_\varepsilon Z^2
 \le\frac C{|J|^2}\int_J\int_J|u-v|^{-\xi}\,du\,dv
 =\frac{2C |J|^{-\xi}}{(1-\xi)(2-\xi)}\le C'.
\]
Now the Paley--Zygmund inequality gives that
\[
 \mathbb P_\varepsilon(K\leftrightarrow M)
 \ge\mathbb P_\varepsilon(Z>0)
 \ge\frac{(\mathbb E_\varepsilon Z)^2}{\mathbb E_\varepsilon Z^2}
 \ge c'>0.
\]
The constants are independent of \(K,M,w\)
and of the chosen radii.
\end{proof}

We also need the following local connectedness property for nonempty open sets
when \(\alpha>\alpha_{\mathrm{B2B}}\).

\begin{lemma}
\label{lem:local-open-set-connections}
Fix \(\alpha>\alpha_{\mathrm{B2B}}\), \(z\in\mathbb R^3\), and \(h>0\).
 Almost surely, all pairs of nonempty open sets
\(U,V\subset B(z,h)\) are simultaneously connected in $\mathcal L_{B(z,10h)}^\alpha$.
\end{lemma}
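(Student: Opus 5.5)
Fix \(\alpha>\alpha_{\mathrm{B2B}}\), and by scaling and translation take \(z=0\), \(h=1\).
It suffices to prove that for every fixed pair of rational balls \(B(x,\eta)\), \(B(y,\eta)\subset B_1\), almost surely \(B(x,\eta)\xleftrightarrow{\mathcal L_{B_{10}}^\alpha}B(y,\eta)\).
Indeed, every nonempty open \(U\subset B_1\) contains such a rational ball, there are countably many pairs, and connection of subsets implies connection of the sets.
So the statement reduces to a fixed-pair almost-sure connection.

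The input is \cite[Lemma~6.2]{JL26}, already used in Lemma~\ref{lem:unit-truncated-b2b-threshold}: for \(\alpha>\alpha_{\mathrm{B2B}}\), almost surely \(S_1\xleftrightarrow{\LL^\alpha_{\R^3}}S_r\) for all \(r>1\).
By scale and translation invariance, each small sphere \(\partial B(x,\eta')\) is then almost surely connected to \(\partial B(x,R)\) for every \(R\).
The difficulty is twofold.
First, we need connections inside the bounded domain \(B_{10}\).
Second, we need the two arms to lie in the same cluster.

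I would handle both through a multiscale trial argument near \(x\).
Let \(\alpha=\alpha'+\varepsilon\) with \(\alpha'>\alpha_{\mathrm{B2B}}\), and split the soup into independent parts \(\LL^{\alpha'}\) and \(\LL^{\varepsilon}\).
Consider dyadic scales \(2^{-k}\eta\).
On scale \(k\), define a good event: inside \(B(x,2^{-k}\eta)\) there is a loop \(\gamma_k\) of diameter comparable to \(2^{-k}\eta\), contained in the ball.
Its trace has capacity \(\gtrsim 2^{-k}\eta\) with positive probability (as in Lemma~\ref{lem:radial-capacity-boxes}, via \cite{PPS}).
These events use disjoint loop classes at well-separated scales, so infinitely many occur almost surely.

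The harder task is the uniform positive-probability estimate for joining such a piece to \(B(y,\eta)\).
Take a loop-piece \(A\) near \(x\) and a loop-piece \(M\) near \(y\), each with capacity comparable to its scale.
I want the probability that they lie in a common cluster of \(\mathcal L_{B_{10}}^{\alpha}\) to be bounded below uniformly.
For this I would argue by contradiction, exploiting \(\alpha'>\alpha_{\mathrm{B2B}}=\alpha_\ast\) (Lemma~\ref{lem:unit-truncated-b2b-threshold}).
Combined with Theorem~\ref{thm:unit-truncated-sharpness}(ii) and scaling, this shows that the soup of loops of diameter at most \(\delta\) contained in \(B_{10}\) percolates at every scale.
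Moreover, almost surely every sphere \(\partial B(x,\eta')\) meets a cluster crossing to \(S_5\) that stays inside \(B_{10}\).
The restriction to \(B_{10}\) costs only loops of diameter \(\le\delta\), which stay inside if they start in \(B_9\).

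Uniqueness of the resulting crossing clusters within \(B_{10}\) would come from \(\alpha\ge\alpha_u\)-type arguments.
Since those are only known for the full-space soup, I would instead glue with \(\LL^\varepsilon\).
Given two crossing clusters of the \(\alpha'\)-soup inside \(B_{10}\) (one from near \(x\), one from near \(y\)), each contains many disjoint positive-capacity pieces at small scales along its loops.
Lemma~\ref{lem:threshold-attachment} and Lemma~\ref{lem:two-capacity-welding} show that an independent \(\LL^\varepsilon\) loop welds a given pair of pieces at macroscopic separation with probability bounded below.
Infinitely many independent trials at disjoint locations (as in Lemma~\ref{lem:marked-loop-lower-bound}) then make the welding almost sure.

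The main obstacle is proving that the clusters reaching \(B(x,\eta)\) and \(B(y,\eta)\) come within macroscopic distance of each other in many independent places.
I would first try to get this from the sharp supercritical statement of Theorem~\ref{thm:unit-truncated-sharpness}(ii), applied to small loops in every sub-box of a fine grid of \(B_9\).
That makes the small-loop clusters from \(x\) and \(y\) each visit every grid box, so their pieces are adjacent in infinitely many boxes and the welding trials are independent.
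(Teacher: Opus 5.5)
There is a genuine gap, at exactly the step you flag as the main obstacle. Theorem~\ref{thm:unit-truncated-sharpness}(ii), applied box by box, only says that each grid box is almost surely hit by \emph{some} small-loop cluster reaching far away. It does not say that the particular cluster of $B(x,\eta)$ and the particular cluster of $B(y,\eta)$ both visit that box. Getting the latter is a local-uniqueness statement for the truncated soup in a bounded domain. That is essentially the lemma you are proving: the paper uses this lemma precisely to obtain local uniqueness and $\alpha_{\mathrm{tr}}\le\alpha_{\mathrm{B2B}}$. So the argument is circular there. Without it you have no infinite supply of independent locations where the two clusters carry pieces with capacity comparable to their mutual distance, and the welding claim fails as stated. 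Lemma~\ref{lem:two-capacity-welding} gives a welding mass of order $\operatorname{cap}(K_1)\operatorname{cap}(K_2)/d^2$, which is small for small pieces at macroscopic separation $d$, not bounded below. The preliminary construction of loops $\gamma_k$ near $x$ plays no role.

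The missing idea is a symmetry-plus-FKG trick that avoids uniqueness altogether. Choose $B(x,a)\subset U$ and $B(y,b)\subset V$ with $R=|x-y|$ and $\max(a,b)<R/4$.

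First localize. By \cite[Lemma~6.2]{JL26}, $B(x,a)\xleftrightarrow{\mathcal L^\alpha_{\mathbb R^3}}S(x,R)$ almost surely. The event that no loop meets both $B(x,R)$ and $\partial B(x,4h)$ has positive probability by Lemma~\ref{lem:one-sided-surgery}, since $R<2h$. This event is independent of $\mathcal L^\alpha_{B(x,4h)}$. Together with the full-space connection, it forces a minimal connecting chain to lie inside $B(x,4h)$. Hence the connection of $B(x,a)$ to $S(x,R)$ in $\mathcal L^\alpha_{B(x,4h)}$ has probability one.

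Then cover $S(x,R)$ by $N$ rotations of $B(y,b)$ about $x$. By rotation invariance of $\mathcal L^\alpha_{B(x,4h)}$ about $x$, $B(x,a)$ connects to each copy with the same probability $p$. The non-connection events are decreasing, and almost surely at least one copy is hit. FKG therefore gives $0\ge(1-p)^N$, so $p=1$.

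This puts $B(x,a)$ and $B(y,b)$ in the same cluster directly. It needs no splitting $\alpha=\alpha'+\varepsilon$, no welding by an independent soup, and no appeal to sharpness; your localization via Theorem~\ref{thm:unit-truncated-sharpness}(ii) is workable but unnecessary. Since $B(x,4h)\subset B(z,10h)$, your countable reduction to rational balls then completes the proof.
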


\begin{proof}
For fixed nonempty open \(U,V\subset B(z,h)\), choose
\(B(x,a)\subset U\) and \(B(y,b)\subset V\), with
\(R=|x-y|>0\) and \(\max(a,b)<R/4\).
By \cite[Lemma~6.2]{JL26},
\(B(x,a)\) and \(S(x,R)\)  are connected in $\mathcal L_{\mathbb R^3}^{\alpha}$ almost surely.
By Lemma~\ref{lem:one-sided-surgery} and \(R\le2h\), with positive probability, there is no loop connecting $B(x,R)$ and \(\partial B(x,4h)\).
Independence therefore gives
\begin{equation}\label{eq:Bxa}
 \mathbb P_\alpha\bigl(
 B(x,a)\xleftrightarrow{\mathcal L_{B(x,4h)}^\alpha}S(x,R)
 \bigr)=1.
\end{equation}
Now, we choose $N$ rotations of \(B(y,b)\) about \(x\) that
cover \(S(x,R)\). By rotation invariance, the probabilities of
connecting \(B(x,a)\) to these targets within \(B(x,4h)\)
 are equal; denote their common value by \(p\).
Since at least one of the targets is connected to \(B(x,a)\) almost surely by \eqref{eq:Bxa},
the FKG inequality (Lemma~\ref{lem:poisson-fkg}) gives \(0\ge(1-p)^N\).
Thus \(p=1\).
Since \(B(x,4h)\subset B(z,10h)\), this proves
the desired connection for fixed \(U,V\).
Taking a countable intersection over pairs of balls in a countable
basis of \(B(z,h)\) proves the assertion simultaneously for all
nonempty open subsets.
\end{proof}

 Finally, we combine the above ingredients to complete the proof.
\begin{proof}[Proof of Theorem~\ref{thm:continuum-thresholds}]
The identity \(\alpha_\ast=\alpha_{\mathrm{B2B}}\) follows from
Lemma~\ref{lem:unit-truncated-b2b-threshold}.
According to \eqref{eq:four-critical}, it remains to show \(\alpha_{\mathrm{tr}}\le\alpha_{\mathrm{B2B}}\).
Fix \(\beta>\alpha_{\mathrm{B2B}}\), choose \(\varepsilon\in\mathrm I_0\)
 sufficiently small that \(\alpha=\beta-\varepsilon>\alpha_{\mathrm{B2B}}\),
and write \(\mathcal L_D^\beta=\mathcal L_D^\alpha\cup\mathcal L_D^\varepsilon\),
where \(\mathcal L_D^\alpha\) and \(\mathcal L_D^\varepsilon\) are independent
in a fixed nonempty connected open set \(D\).
Fix a ball \(B(z,h)\) with \(B(z,10h)\subset D\).
By Lemma~\ref{lem:local-open-set-connections}, almost surely every two
nonempty open subsets of \(B(z,h)\) are connected in \(\mathcal L_{B(z,10h)}^\alpha\).
We aim to show that, for \((\mu_D^{\Loop})^{\otimes2}\)-almost every pair
\((\gamma_1,\gamma_2)\) of disjoint loops meeting \(B(z,h)\), it holds that
\begin{equation}\label{eq:fixed-anchor-connection}
 \mathbb P_\beta\bigl(
 \gamma_1\xleftrightarrow{\mathcal L_D^\beta}\gamma_2
 \bigr)=1.
\end{equation}
The probability is taken over \(\mathcal L_D^\beta\), with
\(\gamma_1,\gamma_2\) fixed.

Before proving \eqref{eq:fixed-anchor-connection}, we first explain
how to get Theorem~\ref{thm:continuum-thresholds} from it.
Applying the Palm formula (Lemma~\ref{lem:loop-soup-palm})
to \(\mathcal L_D^\beta\), we get from \eqref{eq:fixed-anchor-connection} that almost surely all loops of
\(\mathcal L_D^\beta\) meeting \(B(z,h)\) belong to the same cluster.
Covering \(D\) by countably many such balls shows that all loops of \(\mathcal L_D^\beta\) belong to a single
cluster.
Since this holds for every fixed \(D\) and
\(\beta>\alpha_{\mathrm{B2B}}\), the local-uniqueness characterization
in \cite[Theorem~1.5]{JL26} gives
\(\alpha_{\mathrm{tr}}\le\alpha_{\mathrm{B2B}}\).
This proves all the claimed
equalities.

To prove \eqref{eq:fixed-anchor-connection}, we
use Lemma~\ref{lem:threshold-good-points} to select sufficiently
many pairs of good points on \(\gamma_1\) and \(\gamma_2\), respectively.
For each pair, we choose two small balls with centers slightly displaced
from the respective good points, and these balls are connected in
\(\mathcal L_{B(z,10h)}^\alpha\) by Lemma~\ref{lem:local-open-set-connections}.
Then, by Lemma~\ref{lem:threshold-attachment}, we can use the additional independent loop soup $\mathcal L_D^\varepsilon$
to provide the remaining desired connection to $\gamma_j$:
\begin{equation}\label{eq:trial-connection}
 \gamma_1
 \xleftrightarrow{\mathcal L_D^\varepsilon}K_{1,n}
 \xleftrightarrow{\mathcal L_{B(z,10h)}^\alpha}K_{2,n}
 \xleftrightarrow{\mathcal L_D^\varepsilon}\gamma_2,
\end{equation}
where \(K_{1,n}\) and \(K_{2,n}\) are unions of segments of loops near
\(\gamma_1\) and \(\gamma_2\), respectively, chosen from the same finite
loop chain in \(\mathcal L_{B(z,10h)}^\alpha\), as detailed below.
See Figure~\ref{fig:local-attachment-trial} for an illustration.

\begin{figure}[!t]
\centering
\begin{tikzpicture}[x=1cm,y=1cm,
  every node/.style={font=\small},
  anchorloop/.style={black!80,line width=0.9pt},
  middlechain/.style={blue!65!black,line width=1.1pt},
  localpiece/.style={blue!65!black,line width=2.0pt},
  anchorpiece/.style={green!45!black,line width=2.2pt},
  attachment/.style={orange!85!black,line width=1.4pt},
  neighborhood/.style={black!45,dashed,line width=0.6pt}]

\fill[black!2] (2.1,0) circle [radius=2.05];
\fill[black!2] (10.5,0) circle [radius=2.05];
\draw[neighborhood] (2.1,0) circle [radius=2.05];
\draw[neighborhood] (10.5,0) circle [radius=2.05];
\node[black!65] at (2.1,-2.36) {\(B(x_{1,n},16r_{1,n})\)};
\node[black!65] at (10.5,-2.36) {\(B(x_{2,n},16r_{2,n})\)};

\draw[anchorloop]
  (2.1,0) .. controls (1.86,0.31) and (2.12,0.88) .. (1.48,0.96)
  .. controls (0.84,1.04) and (0.87,0.75) .. (0.55,0.75)
  .. controls (0.23,0.75) and (-0.45,0.87) .. (-0.45,0.55)
  .. controls (-0.45,0.23) and (0.05,0.15) .. (0.05,-0.05)
  .. controls (0.05,-0.25) and (-0.40,-0.27) .. (-0.40,-0.55)
  .. controls (-0.40,-0.83) and (-0.15,-1.05) .. (0.30,-1.05)
  .. controls (0.75,-1.05) and (1.05,-0.85) .. (1.45,-0.65)
  .. controls (1.85,-0.45) and (2.34,-0.31) .. (2.1,0);
\draw[anchorloop]
  (10.5,0) .. controls (10.67,0.50) and (10.14,0.96) .. (9.77,0.78)
  .. controls (9.34,0.63) and (9.54,1.22) .. (8.92,1.14)
  .. controls (8.18,1.04) and (7.64,0.70) .. (7.91,0.23)
  .. controls (8.10,-0.06) and (8.94,0.15) .. (8.80,-0.38)
  .. controls (8.59,-1.18) and (9.40,-1.23) .. (9.78,-0.88)
  .. controls (10.15,-0.59) and (10.05,-0.11) .. (10.5,0);
\node at (-0.55,1.48) {\(\gamma_1\)};
\node at (8.40,1.28) {\(\gamma_2\)};

\draw[green!45!black,densely dotted,line width=0.8pt]
  (2.1,0) circle [radius=0.56];
\draw[green!45!black,densely dotted,line width=0.8pt]
  (10.5,0) circle [radius=0.56];
\begin{scope}
\clip (2.1,0) circle [radius=0.56];
\draw[anchorpiece]
  (2.1,0) .. controls (1.86,0.31) and (2.12,0.88) .. (1.48,0.96)
  .. controls (0.84,1.04) and (0.87,0.75) .. (0.55,0.75)
  .. controls (0.23,0.75) and (-0.45,0.87) .. (-0.45,0.55)
  .. controls (-0.45,0.23) and (0.05,0.15) .. (0.05,-0.05)
  .. controls (0.05,-0.25) and (-0.40,-0.27) .. (-0.40,-0.55)
  .. controls (-0.40,-0.83) and (-0.15,-1.05) .. (0.30,-1.05)
  .. controls (0.75,-1.05) and (1.05,-0.85) .. (1.45,-0.65)
  .. controls (1.85,-0.45) and (2.34,-0.31) .. (2.1,0);
\end{scope}
\begin{scope}
\clip (10.5,0) circle [radius=0.56];
\draw[anchorpiece]
  (10.5,0) .. controls (10.67,0.50) and (10.14,0.96) .. (9.77,0.78)
  .. controls (9.34,0.63) and (9.54,1.22) .. (8.92,1.14)
  .. controls (8.18,1.04) and (7.64,0.70) .. (7.91,0.23)
  .. controls (8.10,-0.06) and (8.94,0.15) .. (8.80,-0.38)
  .. controls (8.59,-1.18) and (9.40,-1.23) .. (9.78,-0.88)
  .. controls (10.15,-0.59) and (10.05,-0.11) .. (10.5,0);
\end{scope}
\node[green!45!black] at (1.25,-0.32) {\(M_{1,n}\)};
\node[green!45!black] at (9.50,-0.30) {\(M_{2,n}\)};
\draw[green!45!black,line width=0.45pt] (1.65,-0.32) -- (2.16296,-0.16744);
\draw[green!45!black,line width=0.45pt] (9.90,-0.30) -- (10.22,-0.18);
\fill (2.1,0) circle [radius=1.5pt];
\fill (10.5,0) circle [radius=1.5pt];
\node at (2.55,-0.88) {\(x_{1,n}\)};
\node at (10.95,-0.88) {\(x_{2,n}\)};

\draw[blue!50!black,densely dashed,line width=0.65pt]
  (3.3,0) circle [radius=0.34];
\draw[blue!50!black,densely dashed,line width=0.65pt]
  (11.7,0) circle [radius=0.34];
\draw[localpiece]
  (3.21,-0.2175) .. controls (3.345,-0.1125) and (3.195,0.06) .. (3.3375,0.2175);
\draw[localpiece]
  (11.61,-0.2175) .. controls (11.745,-0.1125) and (11.595,0.06) .. (11.7375,0.2175);
\node[blue!65!black] at (3.45,-0.65) {\(K_{1,n}\)};
\node[blue!65!black] at (11.85,-0.65) {\(K_{2,n}\)};

\draw[middlechain]
  (3.3375,0.2175) .. controls (4.15,1.34) and (5.45,0.75) .. (6.55,1.20)
  .. controls (7.75,1.82) and (8.73,1.60) .. (9.63,1.64)
  .. controls (10.64,1.69) and (11.95,1.30) .. (11.7375,0.2175);
\node[blue!65!black] at (6.90,2.02)
  {\(\mathcal L_{B(z,10h)}^\alpha\)};

\draw[attachment]
  (1.991875,0.269531) .. controls (2.35,0.50) and (2.51,-0.25) .. (2.84,0.03)
  .. controls (2.98,0.16) and (3.15,0.03) .. (3.2775,-0.045);
\draw[attachment]
  (10.52488,0.2904) .. controls (10.82,0.52) and (10.91,-0.25) .. (11.24,0.03)
  .. controls (11.38,0.16) and (11.55,0.03) .. (11.6775,-0.045);
\node[orange!85!black] at (2.72,0.83) {\(E_{1,n}\)};
\node[orange!85!black] at (11.12,0.83) {\(E_{2,n}\)};
\end{tikzpicture}
\caption{Illustration of \eqref{eq:trial-connection}. The blue curve represents a finite
chain of loops in \(\mathcal L_{B(z,10h)}^\alpha\) joining \(K_{1,n}\) and \(K_{2,n}\).
The green portions are \(M_{j,n}=\gamma_j\cap B(x_{j,n},r_{j,n})\),
with these balls indicated by green dotted circles.
 The orange curves represent connections in \(\mathcal L_D^\varepsilon\) that join \(M_{j,n}\) to
\(K_{j,n}\) inside disjoint gray balls $B(x_{j,n},16r_{j,n})$.
The blue circles indicate \(B(w_{j,n},r_{j,n}/16)\).}
\label{fig:local-attachment-trial}
\end{figure}

Concretely, Lemma~\ref{lem:threshold-good-points}
allows us to choose, for \(j=1,2\) and \(n\ge1\), centers
\(x_{j,n}\in\gamma_j\cap B(z,h)\) and dyadic radii \(r_{j,n}\) such that
all the balls \(B(x_{j,n},100r_{j,n})\) are disjoint
and contained in \(B(z,h)\), while
\[
 M_{j,n}:=\gamma_j\cap B(x_{j,n},r_{j,n}),
 \qquad \operatorname{cap}(M_{j,n})\ge\kappa_1r_{j,n}.
\]
Put \(w_{j,n}=x_{j,n}+3r_{j,n}e_1\), where \(e_1=(1,0,0)\).
By Lemma~\ref{lem:local-open-set-connections}, $\mathbb P_\alpha$-almost surely, for each \(n\), there is a finite chain of loops  in \(\mathcal L_{B(z,10h)}^\alpha\) joining
\(B(w_{1,n},r_{1,n}/256)\) to \(B(w_{2,n},r_{2,n}/256)\).
Hence, we can find along this chain two connected sets $(K_{j,n})_{j=1,2}$ made of finitely many segments
of loops in \(\mathcal L_{B(z,10h)}^\alpha\),  which satisfy
\[
 K_{j,n}\subset B(w_{j,n},r_{j,n}/16),
 \qquad \operatorname{diam}(K_{j,n})\ge3r_{j,n}/64,
\]
and belong to the same finite chain.
Next, we condition on \(\mathcal L_D^\alpha\).
For \(j=1,2\) and \(n\ge1\), define the connection event
\[
 E_{j,n}:=\left\{
 M_{j,n}\xleftrightarrow{\mathcal L_{B(x_{j,n},16r_{j,n})}^\varepsilon}K_{j,n}
 \right\}.
\]
By scaling and Lemma~\ref{lem:threshold-attachment},
\(\mathbb P_\beta(E_{j,n}\mid\mathcal L_D^\alpha)\ge c>0\),
uniformly in \(j,n\).
Since the balls \(B(x_{j,n},16r_{j,n})\) are disjoint, the events
\(E_{j,n}\) are conditionally independent.
On \(E_{1,n}\cap E_{2,n}\), the two loops \(\gamma_1\) and \(\gamma_2\)
are connected in \(\mathcal L_D^\beta\). Hence,
\[
 \mathbb P_\beta\left(
 \bigcap_{n=1}^{N}(E_{1,n}\cap E_{2,n})^{\mathrm c}
 \,\middle|\,\mathcal L_D^\alpha\right)
 \le(1-c^2)^N\longrightarrow0
 \qquad\text{as }N\to\infty.
\]
 Taking expectations over \(\mathcal L_D^\alpha\) gives the claim \eqref{eq:fixed-anchor-connection} and finishes the proof.
\end{proof}

\section{Sharpness for the unit-truncated model}
\label{sec:unit-truncated-sharpness}

This section presents the proof of Theorem~\ref{thm:unit-truncated-sharpness} as promised.
In Section~\ref{ut:sec:osss}, we use the standard OSSS inequality  (see, e.g.,
\cite{OSSS05,DRT17,DRT18}) to prove the differential inequality for the loop soup with a lower cutoff in Lemma~\ref{ut:prop:diff-cutoff}.
In Section~\ref{ut:sec:cutoff-sharpness}, we remove the lower cutoff and then prove the sharpness of the unit-truncated threshold in Proposition~\ref{ut:prop:analytic-application}.
Finally, Section~\ref{ut:sec:threshold-identification} proves Theorem~\ref{thm:unit-truncated-sharpness} by using ergodicity and the sharpness result.

\subsection{Influence, revealment, and the OSSS inequality}
\label{ut:sec:osss}

We regard every loop as rooted and split the loop soup according to the root
and diameter of each loop.
Write \(\ell(0)\) for the root of \(\ell\).  For \(k\ge0\) and
\(z\in\Z^3\), let \(Q_{k,z}:=2^{-k}(z+[0,1)^3)\), and define
\begin{equation}\label{ut:eq:dyadic-coordinate}
        \Gamma_{k,z}
        :=
        \Bigl\{\ell:
        2^{-(k+1)}<\diam(\ell)\le 2^{-k},\ 
        \ell(0)\in Q_{k,z}\Bigr\}.
\end{equation}
For \(N\ge0\), write $\LL_{\R^3}^{\alpha,N}
        :=
        \{\ell\in\LL_{\R^3}^\alpha:2^{-N-1}<\diam(\ell)\le1\}$  for the loop soup truncated from above and below.
For \(N\ge0\) and \(r>4\), we define the one-arm event and probability associated with $\LL_{\R^3}^{\alpha,N}$ by
\[
        \mathcal E_r^N
        :=
        \{S_1\xleftrightarrow{\LL_{\R^3}^{\alpha,N}}S_r\},
        \qquad
        \theta_r^N(\alpha):=\Pp_\alpha(\mathcal E_r^N).
\]
Define the finite
index set
\[
        \mathscr I_{N,r}
        :=
        \bigl\{(k,z):0\le k\le N,\ z\in\Z^3,
        Q_{k,z}\cap B_{r+2}\ne\varnothing\bigr\}.
\]
For \((k,z)\in\mathscr I_{N,r}\), set
\(\omega_{k,z}:=\LL_{\R^3}^\alpha\cap\Gamma_{k,z}\) and
\(\omega:=(\omega_{k,z})_{(k,z)\in\mathscr I_{N,r}}\).
Note that all \(\muLoop(\Gamma_{k,z})\) are the same and finite.  Since the classes
\(\Gamma_{k,z}\) are disjoint, these coordinates are independent.  Moreover,
\(\mathcal E_r^N\) is determined by $\omega$, the collection of loops in $\LL_{\R^3}^{\alpha,N}$
that intersect the ball $B_{r+2}$.

Let \(\omega_{k,z}'\) be an
independent copy of \(\omega_{k,z}\), independent of \(\omega\), and let
\(\omega^{(k,z)}\) be obtained by replacing \(\omega_{k,z}\) with
\(\omega_{k,z}'\) while leaving all other coordinates unchanged.  For an
increasing event \(A\) determined by these coordinates, write its indicator as
\(A(\omega)\in\{0,1\}\) and define the influence of coordinate \((k,z)\) by
\[
        \operatorname{Inf}_{k,z}^\alpha(A)
        :=
        \Pp_\alpha(A(\omega)\ne A(\omega^{(k,z)})).
\]
The next estimate relates this influence to the pivotal probabilities appearing
in Russo's formula.

\begin{lemma}\label{ut:lem:poisson-poincare}
For every increasing event \(A\) determined by $\omega$ and every
\((k,z)\in\mathscr I_{N,r}\),
\begin{equation}\label{ut:eq:influence-pivotal}
        \operatorname{Inf}_{k,z}^\alpha(A)
        \le
        2\alpha
        \int_{\Gamma_{k,z}}
        \Pp_\alpha(\operatorname{Piv}_\ell A)\,\muLoop(\dint\ell).
\end{equation}
\end{lemma}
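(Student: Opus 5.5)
The plan is to write the event that the resampled coordinate changes the outcome as a disjoint union over the two directions of change, and to bound each direction by a pivotal insertion through Palm's formula. Since \(\omega_{k,z}\) and \(\omega'_{k,z}\) are exchangeable given the remaining coordinates, the events \(\{A(\omega)=1,\ A(\omega^{(k,z)})=0\}\) and \(\{A(\omega)=0,\ A(\omega^{(k,z)})=1\}\) have the same probability. Hence
\[
\operatorname{Inf}^\alpha_{k,z}(A)=2\,\Pp_\alpha\bigl(A(\omega)=1,\ A(\omega^{(k,z)})=0\bigr)\le 2\,\Pp_\alpha\bigl(A(\omega)=1,\ A(\omega^{(k,z)_\varnothing})=0\bigr),
\]
where \(\omega^{(k,z)_\varnothing}\) denotes \(\omega\) with the coordinate \(\omega_{k,z}\) replaced by the empty configuration. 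The inequality holds because \(A\) is increasing: if \(A(\omega^{(k,z)})=0\), then \(A(\omega^{(k,z)_\varnothing})=0\) as well.

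On the event \(\{A(\omega)=1,\ A(\omega^{(k,z)_\varnothing})=0\}\), the set \(\omega_{k,z}\) is nonempty and finite, since \(\muLoop(\Gamma_{k,z})<\infty\). Enumerate its loops in some measurable order and add them one at a time to \(\omega^{(k,z)_\varnothing}\). By monotonicity, the indicator of \(A\) jumps from \(0\) to \(1\) at exactly one step. Let \(\ell\) be the loop added at that step, and let \(\eta\) be the configuration before it is added, so \(\eta\) consists of \(\omega\) outside \(\Gamma_{k,z}\) together with a subset of \(\omega_{k,z}\setminus\{\ell\}\). Then \(A(\eta+\delta_\ell)=1\) and \(A(\eta)=0\). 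Because \(A\) is increasing, \(A(\eta)=0\) implies \(A(\omega^{(k,z)_\varnothing})=0\), but it gives no information about \(\omega-\delta_\ell\).

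To get a clean pivotal event, I will instead bound by the sum over all \(\ell\in\omega_{k,z}\) of the indicator that \(\ell\) is pivotal for the configuration \(\omega-\delta_\ell\). For this I need: if \(A(\omega)=1\) and \(A(\omega^{(k,z)_\varnothing})=0\), then some \(\ell\in\omega_{k,z}\) satisfies \(A(\omega-\delta_\ell)=0\). This fails if two loops are redundant for each other. That redundancy is the main obstacle.

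To handle it, I will use the standard Poisson argument: compare \(\omega_{k,z}\) with a thinning. Write \(\omega_{k,z}\) as a Poisson process with a uniform mark \(u\in[0,1]\) attached to each loop, and consider \(\omega_t\), which keeps only the loops in \(\Gamma_{k,z}\) with mark below \(t\). The function \(t\mapsto A(\omega_t)\) is \(0\) at \(t=0\) and \(1\) at \(t=1\), so it has a unique jump time, with jump loop \(\ell\). At the jump, \(\ell\) is pivotal for \(\omega_{t^-}\). By the Mecke equation (Lemma~\ref{lem:loop-soup-palm}) applied to the marked process, the probability of this jump is at most
\[
\int_0^1\int_{\Gamma_{k,z}} \alpha\,\Pp\bigl(\operatorname{Piv}_\ell A \text{ for } \omega_t\bigr)\,\muLoop(\dint\ell)\,dt.
\]
Here \(\omega_t\) is a loop soup of intensity \(\alpha\) outside \(\Gamma_{k,z}\) and intensity \(t\alpha\) inside \(\Gamma_{k,z}\).

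The remaining step is to compare the pivotal probability at intensity \(t\alpha\) on the coordinate class with the one at full intensity \(\alpha\). Plain monotonicity does not give this, since pivotality is not monotone. I would use the product structure to handle it: the event that \(\ell\) is pivotal for \(\omega_t\) equals the event that \(\ell\) is pivotal for \(\omega\) intersected with the event that no loop of mark in \([t,1]\) is decisive. Equivalently, I would write the pivotal probability for \(\omega\) as the integral from Russo's formula (Lemma~\ref{ut:lem:russo}) applied to the intensity on \(\Gamma_{k,z}\) alone. The factor \(2\) in the statement suggests the intended route is the Poisson–Poincaré/Efron–Stein bound: \(\operatorname{Inf}\le 2\,\Ee|A(\omega)-A(\omega^{(k,z)_\varnothing})|\) together with the Mecke-based identity \(\Ee\bigl[A(\omega)-A(\omega^{(k,z)_\varnothing})\bigr]\le\alpha\int_{\Gamma_{k,z}}\Pp(\operatorname{Piv}_\ell A)\,\muLoop(\dint\ell)\), via the Poincaré inequality for Poisson functionals (Last–Penrose), which I would cite. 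I expect making this last comparison rigorous to be the delicate point.
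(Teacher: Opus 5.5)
There is a genuine gap, right after your first reduction. The bound \(\operatorname{Inf}^\alpha_{k,z}(A)\le 2\,\Pp_\alpha\bigl(A(\omega)=1,\ A(\omega^{(k,z)_\varnothing})=0\bigr)\) is correct. The inequality you then need,
\[
\Ee_\alpha\bigl[A(\omega)-A(\omega^{(k,z)_\varnothing})\bigr]\le\alpha\int_{\Gamma_{k,z}}\Pp_\alpha(\operatorname{Piv}_\ell A)\,\muLoop(\dint\ell),
\]
is false in general, so neither the thinning argument nor a citation can supply it.

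Take the increasing event \(A=\{\omega_{k,z}\text{ contains at least two loops}\}\) and put \(\lambda=\alpha\muLoop(\Gamma_{k,z})\). The left side equals \(1-(1+\lambda)e^{-\lambda}\). A loop \(\ell\in\Gamma_{k,z}\) is pivotal exactly when \(\omega_{k,z}\) has one loop, so the right side equals \(\lambda^2e^{-\lambda}\). For \(\lambda=2\) the left side is about \(0.59\) and the right side about \(0.54\); as \(\lambda\to\infty\) they tend to \(1\) and \(0\). So for \(\lambda\ge2\) your chain gives at best \(2\bigl(1-(1+\lambda)e^{-\lambda}\bigr)\), which exceeds the target \(2\lambda^2e^{-\lambda}\). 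Meanwhile the true influence \(2p(1-p)\), with \(p=1-(1+\lambda)e^{-\lambda}\), tends to \(0\). Emptying the coordinate instead of resampling it is exactly what loses the information.

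The same example defeats the thinning comparison. That \(\ell\) is pivotal for \(\omega_t\) only says that \(\omega_t\) has one loop in \(\Gamma_{k,z}\); it says nothing about \(\omega\). Your claimed identity of events is wrong: only \(\{\ell\text{ pivotal for }\omega_t\}\cap\{A(\omega)=0\}\subset\{\ell\text{ pivotal for }\omega\}\) holds. Consequently \(\int_0^1\Pp(\ell\text{ pivotal for }\omega_t)\,dt\) can be much larger than \(\Pp_\alpha(\operatorname{Piv}_\ell A)\).

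The missing idea is to keep the independent copy and recognize the influence as a variance. Conditionally on the other coordinates \(\omega_{-i}\), the function \(F(\eta):=A(\omega_{-i},\eta)\) is \(\{0,1\}\)-valued. For independent copies \(\eta,\eta'\) of \(\omega_{k,z}\) this gives \(\Pp(F(\eta)\ne F(\eta'))=2p(1-p)=2\operatorname{Var}(F(\eta))\), which is where the factor \(2\) comes from. The Poisson Poincar\'e inequality \cite[Theorem~18.7]{LP17} bounds precisely this variance by \(\alpha\int_{\Gamma_{k,z}}\Ee[(F(\eta+\delta_\ell)-F(\eta))^2]\,\muLoop(\dint\ell)\). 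Since \(F\) is increasing and \(\{0,1\}\)-valued, the integrand equals the conditional probability of \(\operatorname{Piv}_\ell A\). Averaging over \(\omega_{-i}\) then gives \eqref{ut:eq:influence-pivotal}. Your last sentence names the right tool, but it controls \(\operatorname{Var}F\), not \(\Ee[A(\omega)-A(\omega^{(k,z)_\varnothing})]\).
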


\begin{proof}
Fix \(i=(k,z)\), and let \(\omega_{-i}\) denote the collection of all
coordinates other than \(i\).  Condition on \(\omega_{-i}\).  For a finite
loop configuration \(\eta\) on \(\Gamma_{k,z}\), set
\[
        F_{\omega_{-i}}(\eta)
        :=
        A(\omega_{-i},\eta).
\]
With \(\omega_{-i}\) fixed, write \(F=F_{\omega_{-i}}\).  Then \(F\) is an
increasing function of the finite loop soup \(\eta\) on \(\Gamma_{k,z}\), with
intensity \(\alpha\muLoop|_{\Gamma_{k,z}}\).  If \(\eta'\) is an independent
copy, then
\[
        \Pp_\alpha(F(\eta)\ne F(\eta'))
        =
        2\,\operatorname{Var}(F(\eta)).
\]
By the Poisson Poincar\'e inequality
\cite[Theorem~18.7]{LP17},
\[
        \operatorname{Var}(F(\eta))
        \le
        \alpha\int_{\Gamma_{k,z}}
        \Ee\bigl[(F(\eta+\delta_\ell)-F(\eta))^2\bigr]\,
        \muLoop(\dint\ell).
\]
Since \(F\) is an increasing \(\{0,1\}\)-valued function,
\[
        (F(\eta+\delta_\ell)-F(\eta))^2
        =
        F(\eta+\delta_\ell)-F(\eta).
\]
Averaging over the outside coordinates yields \eqref{ut:eq:influence-pivotal}.
\end{proof}

Applying Lemma~\ref{ut:lem:russo} (Russo's formula) to
\(\mathscr D=\bigcup_{(k,z)\in\mathscr I_{N,r}}\Gamma_{k,z}\) and summing
Lemma~\ref{ut:lem:poisson-poincare} over \((k,z)\in\mathscr I_{N,r}\), we obtain
\begin{equation}\label{ut:eq:sum-inf-russo}
        \sum_{(k,z)\in\mathscr I_{N,r}}
        \operatorname{Inf}_{k,z}^\alpha(A)
        \le
        2\alpha\,\theta_A'(\alpha),
\end{equation}
where we set \(\theta_A(\alpha):=\Pp_\alpha(A)\).

\medskip

We next use a random-sphere exploration as a randomized decision algorithm.
An algorithm \(T\) determining \(\mathcal E_r^N\) takes \(\omega\) as input and
reveals its coordinates \(\omega_{k,z}\) one by one.  The next coordinate may
depend on the coordinates revealed so far and on auxiliary randomness.  The
algorithm stops as soon as the indicator of \(\mathcal E_r^N\) is fixed,
regardless of the values of the remaining coordinates.  The probability below
averages over both the loop soup and the auxiliary randomness of \(T\).  The
revealment of coordinate \((k,z)\) is defined by
\[
        \delta_{k,z}(T)
        :=
        \Pp_\alpha\bigl(T\text{ reveals }\omega_{k,z}\bigr).
\]
We also extend \(\theta_r^N(\alpha)\) to \(0\le r\le4\) by setting
\(\theta_r^N(\alpha)=1\), and define
\[
        \Sigma_r^N(\alpha):=\sum_{j=0}^{\lfloor r\rfloor}\theta_j^N(\alpha).
\]

\begin{lemma}\label{ut:lem:revealment}
There is a universal constant \(C<\infty\) such that, for every \(N\ge0\) and
every \(r\ge8\), there is an algorithm \(T_{N,r}\) determining
\(\mathcal E_r^N\) for which the revealment of every coordinate
\((k,z)\in\mathscr I_{N,r}\)
satisfies
\begin{equation}\label{ut:eq:revealment}
        \delta_{k,z}(T_{N,r})
        \le
        C\frac{\Sigma_r^N(\alpha)}{r}.
\end{equation}
\end{lemma}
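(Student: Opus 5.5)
Following the Duminil-Copin--Raoufi--Tassion random-radius exploration, I will choose \(R\) uniformly in \([1,r]\) (more conveniently, an integer \(j\) uniform in \(\{1,\dots,\lfloor r\rfloor\}\), which only affects constants) and explore the clusters of \(\LL_{\R^3}^{\alpha,N}\) that intersect the sphere \(S_R\). Concretely, \(T_{N,r}\) first reveals every coordinate \(\omega_{k,z}\) whose class \(\Gamma_{k,z}\) can contain a loop meeting \(S_R\). Since loops in \(\Gamma_{k,z}\) have diameter at most \(2^{-k}\le1\) and root in \(Q_{k,z}\), this means \(Q_{k,z}\) lies within distance \(2\) of \(S_R\). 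The algorithm then iterates. Whenever a revealed loop is found to belong to a cluster meeting \(S_R\), it reveals all coordinates \((k,z)\in\mathscr I_{N,r}\) with \(Q_{k,z}\) within distance \(2\) of that loop. It stops when no new coordinates are added. At that point the union of clusters meeting \(S_R\) inside \(B_{r+2}\) is fully determined, and \(\mathcal E_r^N\) holds iff one of them meets both \(S_1\) and \(S_r\). The reason is that any crossing cluster from \(S_1\) to \(S_r\) must cross \(S_R\). So \(T_{N,r}\) determines \(\mathcal E_r^N\). The set \(\mathscr I_{N,r}\) is finite because \(k\le N\), and so the exploration terminates.

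Next comes the revealment bound. Fix \((k,z)\) and pick a point \(x\in Q_{k,z}\). The coordinate \((k,z)\) is revealed only if \(Q_{k,z}\) is within distance \(2\) of \(S_R\), or within distance \(2\) of some loop of a cluster meeting \(S_R\). In the second case some loop in a cluster meeting \(S_R\) comes within distance \(3\) of \(x\), so \(B(x,3)\) is connected to \(S_R\) in \(\LL_{\R^3}^{\alpha,N}\). By translation invariance and a union bound over a bounded number of unit balls (a crossing from \(B(x,3)\) to \(S(x,\rho)\) forces a crossing of the annulus \(A(y,1,\rho-4)\) around one of \(O(1)\) centres \(y\)), this probability is at most \(C\,\theta^N_{(|\,|x|-R\,|-4)\vee0}(\alpha)\). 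Here I use the convention \(\theta^N_\rho=1\) for \(\rho\le4\) and monotonicity of \(\theta^N_\rho\) in \(\rho\). The key point is that for unit-scale crossings at distance \(\rho\) the one-arm probability \(\theta^N_\rho\) applies uniformly in the centre. This uses that \(\LL_{\R^3}^{\alpha,N}\) is translation invariant, which holds because its law is defined via diameters and not via the dyadic partition.

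Finally, I average over \(R\). This gives
\[
\delta_{k,z}(T_{N,r})\le \frac{C}{r}\sum_{j=1}^{\lfloor r\rfloor}\theta^N_{(|\,|x|-j\,|-4)\vee0}(\alpha)\le \frac{C'}{r}\sum_{m=0}^{\lfloor r\rfloor}\theta^N_m(\alpha)=C'\frac{\Sigma_r^N(\alpha)}{r}.
\]
For each \(m\) at most \(O(1)\) values of \(j\) give \(\lfloor\,|\,|x|-j\,|-4\rfloor=m\), and the values with \(m\le4\) contribute \(O(1)\le O(\Sigma_r^N)\) because \(\theta_0^N=1\). The step needing the most care is the uniform union bound comparing connection of a small neighbourhood of \(x\) to a distant sphere with \(\theta^N\), together with the justification that the algorithm reveals only coordinates satisfying this connection condition. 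I would handle the latter by checking that each revealed coordinate is adjacent to an explored loop, which is connected to \(S_R\).
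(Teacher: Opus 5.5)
Your proposal is correct and is essentially the paper's own proof. Both use a uniformly random integer sphere, explore the clusters meeting it through neighbourhoods of the cubes $Q_{k,z}$, bound $\Pp_\alpha\bigl((k,z)\text{ revealed}\mid R=j\bigr)\le C\,\theta^N_{(\lvert |x|-j\rvert-4)\vee0}(\alpha)$ by translation invariance and a union bound, and then average over the radius. One harmless slip: for an arbitrary $x\in Q_{k,z}$, a loop within distance $2$ of $Q_{k,z}$ is only guaranteed to lie within $2+\sqrt3$ of $x$, so you should either take $x$ to be the centre of $Q_{k,z}$ (as the paper does) or use $B(x,4)$ in place of $B(x,3)$.
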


\begin{proof}
Choose \(J\) uniformly in \(\{2,\ldots,\lfloor r\rfloor-2\}\), independently
of the soup.  For each \((k,z)\), set
\[
        Q_{k,z}^+
        :=
        \{x:\dist(x,Q_{k,z})\le 2^{-k}\}.
\]
Every loop in \(\Gamma_{k,z}\) lies in \(Q_{k,z}^+\).  The algorithm proceeds
as follows.  First reveal every coordinate \((k,z)\) with
\(Q_{k,z}^+\cap S_J\ne\varnothing\), and mark exactly the revealed loops that
intersect \(S_J\).  Thereafter, whenever an unrevealed coordinate \((k,z)\) has
\(Q_{k,z}^+\) intersecting a marked loop, reveal it and mark every loop that is
connected to the marked set by a chain of loops
revealed so far.  Stop when no further coordinate can be revealed.  Since
\(\mathscr I_{N,r}\) is finite, the algorithm terminates and outputs \(1\) exactly when one
marked component intersects both \(S_1\) and \(S_r\), which determines
\(\mathcal E_r^N\).

Fix \((k,z)\) and condition on \(J=j\).  Let \(x_{k,z}\) be the center of
\(Q_{k,z}\), and let \(d_j=\dist(x_{k,z},S_j)\).  If \((k,z)\) is revealed,
then either \(d_j\le4\), or \(\LL_{\R^3}^{\alpha,N}\) connects
\(B(x_{k,z},3)\) to \(S_j\).  Hence, by the convention \(\theta_r^N=1\) for
\(r\le4\), translation invariance, and a union bound,
\[
        \Pp_\alpha((k,z)\text{ is revealed}\mid J=j)
        \le C\,\theta_{(d_j-4)_+}^N(\alpha).
\]
Averaging over \(J\) and using the monotonicity
of \(r\mapsto\theta_r^N(\alpha)\) yield
\[
        \delta_{k,z}(T_{N,r})
        \le
        \frac{C}{r}\sum_{j=2}^{\lfloor r\rfloor-2}
        \theta_{(d_j-4)_+}^N(\alpha)
        \le
        C\frac{\Sigma_r^N(\alpha)}{r},
\]
which proves \eqref{ut:eq:revealment}.
\end{proof}

Combining the previous results, we can now derive the following differential inequality.

\begin{lemma}
\label{ut:prop:diff-cutoff}
There is \(c>0\) such that, for every \(N\ge0\), every \(r\ge8\), and every
\(\alpha>0\),
\begin{equation}\label{ut:eq:diff-cutoff}
        \frac{\dint}{\dint\alpha}\theta_r^N(\alpha)
        \ge
        \frac{c}{\alpha}\,
        \frac{r}{\Sigma_r^N(\alpha)}
        \theta_r^N(\alpha)
        \bigl(1-\theta_r^N(\alpha)\bigr).
\end{equation}
\end{lemma}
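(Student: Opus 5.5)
The plan is to combine three earlier ingredients in the standard way: the OSSS inequality for product measures, the revealment bound of Lemma~\ref{ut:lem:revealment}, and the influence--Russo comparison \eqref{ut:eq:sum-inf-russo}. The first step is to verify that the OSSS inequality applies. The coordinates \(\omega=(\omega_{k,z})_{(k,z)\in\mathscr I_{N,r}}\) are independent, since they are Poisson restrictions to disjoint loop classes. There are finitely many of them, and each takes values in a (Polish) space of finite loop configurations. The event \(\mathcal E_r^N\) is determined by \(\omega\). The OSSS inequality is valid for general product probability spaces with a randomized algorithm, as in \cite{OSSS05,DRT17}. Applied to \(f=\mathbf 1_{\mathcal E_r^N}\) and the algorithm \(T_{N,r}\), it gives
\[
 \theta_r^N(\alpha)\bigl(1-\theta_r^N(\alpha)\bigr)
 =\operatorname{Var}(f)
 \le\sum_{(k,z)\in\mathscr I_{N,r}}\delta_{k,z}(T_{N,r})\operatorname{Inf}_{k,z}^\alpha(\mathcal E_r^N).
\]
The influence here matches the resampling definition used in the paper, namely \(\Pp_\alpha(f(\omega)\ne f(\omega^{(k,z)}))\). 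The auxiliary randomness of \(T\), which comes only from the uniform variable \(J\), is handled by the randomized version of OSSS, or equivalently by applying the deterministic inequality for each fixed \(J=j\) and then averaging over \(j\).

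The second step inserts the uniform revealment bound \(\delta_{k,z}(T_{N,r})\le C\Sigma_r^N(\alpha)/r\) from Lemma~\ref{ut:lem:revealment}. This bound holds for every coordinate in \(\mathscr I_{N,r}\), which yields
\[
 \theta_r^N(1-\theta_r^N)\le C\frac{\Sigma_r^N(\alpha)}{r}\sum_{(k,z)}\operatorname{Inf}_{k,z}^\alpha(\mathcal E_r^N).
\]
The third step bounds the sum of influences by \(2\alpha\,\frac{\dint}{\dint\alpha}\theta_r^N(\alpha)\) using \eqref{ut:eq:sum-inf-russo}. That bound is legitimate for two reasons. First, \(\mathcal E_r^N\) is increasing and depends only on the restriction of the soup to \(\mathscr D=\bigcup_{\mathscr I_{N,r}}\Gamma_{k,z}\), which has finite loop measure because there are finitely many classes, each of finite and equal mass. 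Second, Lemma~\ref{ut:lem:russo} then shows that \(\theta_r^N\) is \(C^1\) and equals the pivotal integral, which makes the derivative in the statement meaningful at every \(\alpha>0\). Rearranging gives \eqref{ut:eq:diff-cutoff} with \(c=1/(2C)\), and this constant is independent of \(N\), \(r\) and \(\alpha\) because the constant in Lemma~\ref{ut:lem:revealment} is universal.

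The main point needing care is the first step: justifying OSSS in this non-Boolean setting, and checking that \(T_{N,r}\) genuinely determines \(\mathcal E_r^N\) with the stated revealment. If a Boolean version of OSSS is required, the fallback is to discretize each \(\omega_{k,z}\) as a measurable function of finitely many i.i.d. uniform variables. The simpler route is to cite the general product-space OSSS statement (as in \cite{DRT18}), which needs only independence of the coordinates. A smaller issue is the \(1/\alpha\) factor, which comes precisely from the factor \(\alpha\) in the Poisson Poincar\'e inequality in Lemma~\ref{ut:lem:poisson-poincare}. Its explicit presence is why the bound stays uniform in \(\alpha\) without any restriction to a compact range.
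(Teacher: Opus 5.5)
Your proposal is correct and is essentially the paper's proof: apply the product-space OSSS inequality (in the form of \cite{DRT18}) to the algorithm \(T_{N,r}\), insert the revealment bound of Lemma~\ref{ut:lem:revealment}, bound the total influence by \(2\alpha\,\frac{\dint}{\dint\alpha}\theta_r^N(\alpha)\) via \eqref{ut:eq:sum-inf-russo}, and rearrange. Your additional justifications are correct details that the paper leaves implicit: averaging the deterministic inequality over the independent variable \(J\), and using the finite loop mass of \(\mathscr D\) to get differentiability from Lemma~\ref{ut:lem:russo}.
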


\begin{proof}
Applying the OSSS inequality~\cite{OSSS05}, in the general product-space form of
\cite[Theorem~1.9]{DRT18}, to the algorithm \(T_{N,r}\) constructed in
Lemma~\ref{ut:lem:revealment}, we obtain that
\[
        \operatorname{Var}(\1_{\mathcal E_r^N})
        \le
        \sum_{(k,z)\in\mathscr I_{N,r}}\delta_{k,z}(T_{N,r})
        \operatorname{Inf}_{k,z}^\alpha(\mathcal E_r^N).
\]
By Lemma~\ref{ut:lem:revealment} and \eqref{ut:eq:sum-inf-russo},
\[
        \theta_r^N(1-\theta_r^N)
        \le C\frac{\Sigma_r^N}{r}
        \sum_{(k,z)\in\mathscr I_{N,r}}
        \operatorname{Inf}_{k,z}^\alpha(\mathcal E_r^N)
        \le 2\,C\,\alpha\,\frac{\Sigma_r^N}{r}
        \frac{\dint}{\dint\alpha}\theta_r^N(\alpha).
\]
Rearranging gives \eqref{ut:eq:diff-cutoff}.
\end{proof}

 Integrating the differential inequality compares the one-arm probabilities
at different intensities for the truncated loop soup.

\begin{lemma}
\label{ut:cor:diff-N}
For every \(N\ge0\), every \(r\ge8\), and every \(0<\alpha<\beta\),
\begin{equation}\label{ut:eq:integrated-N}
        \theta_r^N(\alpha)
        \le
        \theta_r^N(\beta)
        \exp\left\{
        -\,c\log\left(\frac\beta\alpha\right)\,
        \frac{r}{\Sigma_r^N(\beta)}
        \bigl(1-\theta_r^N(\beta)\bigr)
        \right\}.
\end{equation}
\end{lemma}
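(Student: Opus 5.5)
The idea is to integrate the differential inequality of Lemma~\ref{ut:prop:diff-cutoff} in \(\log\theta_r^N\) over \([\alpha,\beta]\), freezing the \(\lambda\)-dependent coefficients at their values at \(\beta\) by monotonicity. Fix \(N\ge0\), \(r\ge8\) and \(0<\alpha<\beta\).

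The first step is to check that \(\theta_r^N\) is differentiable in the intensity. The event \(\mathcal E_r^N\) depends only on the restricted soup on \(\mathscr D=\bigcup_{(k,z)\in\mathscr I_{N,r}}\Gamma_{k,z}\), which has finite loop measure because \(\mathscr I_{N,r}\) is finite and each \(\muLoop(\Gamma_{k,z})\) is finite. Lemma~\ref{ut:lem:russo} therefore shows that \(\lambda\mapsto\theta_r^N(\lambda)\) is continuously differentiable on \((0,\infty)\). The same lemma also gives that \(\theta_j^N\) is nondecreasing in \(\lambda\) for every \(j\), and this holds trivially for \(j\le4\).

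Next we bound the coefficients. For \(\lambda\in[\alpha,\beta]\), monotonicity gives \(\Sigma_r^N(\lambda)\le\Sigma_r^N(\beta)\) and \(1-\theta_r^N(\lambda)\ge1-\theta_r^N(\beta)\).

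Then we integrate. If \(\theta_r^N(\alpha)=0\), there is nothing to prove. Otherwise \(\theta_r^N(\lambda)\ge\theta_r^N(\alpha)>0\) on \([\alpha,\beta]\), so we may divide \eqref{ut:eq:diff-cutoff} by \(\theta_r^N(\lambda)\) and insert the coefficient bounds:
\[
\frac{\dint}{\dint\lambda}\log\theta_r^N(\lambda)\ge\frac{c}{\lambda}\,\frac{r}{\Sigma_r^N(\beta)}\bigl(1-\theta_r^N(\beta)\bigr).
\]
Integrating from \(\alpha\) to \(\beta\) with \(\int_\alpha^\beta\lambda^{-1}\dint\lambda=\log(\beta/\alpha)\) gives
\[
\log\theta_r^N(\beta)-\log\theta_r^N(\alpha)\ge c\log\left(\frac\beta\alpha\right)\frac{r}{\Sigma_r^N(\beta)}\bigl(1-\theta_r^N(\beta)\bigr).
\]
Exponentiating this is exactly \eqref{ut:eq:integrated-N}.

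There is no real obstacle. The only points needing care are the \(C^1\) regularity, which holds because the cutoff soup has finite intensity, and the monotonicity of \(\Sigma_r^N\) used to freeze the coefficient at \(\beta\).
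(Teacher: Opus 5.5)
Your proposal is correct and follows the paper's proof: divide \eqref{ut:eq:diff-cutoff} by \(\theta_r^N\), use monotonicity to bound \(\Sigma_r^N(\lambda)\le\Sigma_r^N(\beta)\) and \(1-\theta_r^N(\lambda)\ge1-\theta_r^N(\beta)\) on \([\alpha,\beta]\), integrate \(\dint\lambda/\lambda\), and exponentiate. The paper leaves two points implicit that you spell out. The \(C^1\) regularity comes from Lemma~\ref{ut:lem:russo} applied to the finite-measure class \(\bigcup_{(k,z)\in\mathscr I_{N,r}}\Gamma_{k,z}\). The case \(\theta_r^N(\alpha)=0\) is trivial.
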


\begin{proof}
Dividing
\eqref{ut:eq:diff-cutoff} by \(\theta_r^N\) and integrating from \(\alpha\) to
\(\beta\), while using \(\Sigma_r^N(t)\le\Sigma_r^N(\beta)\) and
\(1-\theta_r^N(t)\ge1-\theta_r^N(\beta)\) for
\(t\in[\alpha,\beta]\), gives
\[
        \log\frac{\theta_r^N(\beta)}
                 {\theta_r^N(\alpha)}
        \ge
        c\log\left(\frac\beta\alpha\right)
        \frac{r}{\Sigma_r^N(\beta)}
        \bigl(1-\theta_r^N(\beta)\bigr).
\]
Exponentiating and rearranging give \eqref{ut:eq:integrated-N}.
\end{proof}

\subsection{Cutoff removal and sharpness}
\label{ut:sec:cutoff-sharpness}

We now remove the lower diameter cutoff by monotone convergence.
Let
\[
        \mathcal E_r^{\le1}
        :=
        \{S_1\xleftrightarrow{\LL_{\R^3}^{\alpha,\le1}}S_r\},
        \qquad
        \bar\theta_r(\alpha):=\Pp_\alpha(\mathcal E_r^{\le1}).
\]
For the one-arm sums we use the same small-radius convention as above:
\begin{equation}\label{ut:eq:theta-sigma-full}
        \theta_r(\alpha):=
        \begin{cases}
        1, & 0\le r\le4,\\
        \bar\theta_r(\alpha), & r>4,
        \end{cases}
        \qquad
        \Sigma_r(\alpha):=\sum_{j=0}^{\lfloor r\rfloor}\theta_j(\alpha).
\end{equation}
Then, we have the following estimate without the lower diameter cutoff.

\begin{lemma}\label{ut:lem:cutoff-limit}
For \(0<\alpha<\beta\) and \(r\ge8\),
\begin{equation}\label{ut:eq:integrated-full}
        \theta_r(\alpha)
        \le
        \theta_r(\beta)
        \exp\left\{
        -\,c\log\left(\frac\beta\alpha\right)\,
        \frac{r}{\Sigma_r(\beta)}
        \bigl(1-\theta_r(\beta)\bigr)
        \right\},
\end{equation}
with the same universal \(c>0\) as in
Lemma~\ref{ut:prop:diff-cutoff}.
\end{lemma}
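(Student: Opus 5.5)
The plan is to pass to the limit \(N\to\infty\) in Lemma~\ref{ut:cor:diff-N}, using monotone convergence of the truncated one-arm probabilities.

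\textbf{Step 1: convergence at each fixed radius.} Fix \(r\ge8\) and \(\lambda\in\{\alpha,\beta\}\). The soups \(\LL_{\R^3}^{\lambda,N}\) increase in \(N\) to \(\LL_{\R^3}^{\lambda,\le1}\), because every loop has positive diameter. A crossing from \(S_1\) to \(S_j\) in the sense of the definition uses a finite chain of loops. Hence
\[
\mathcal E_j^N\uparrow\mathcal E_j^{\le1}\quad\text{as }N\to\infty,\qquad j>4.
\]
Continuity from below then gives \(\theta_j^N(\lambda)\uparrow\bar\theta_j(\lambda)=\theta_j(\lambda)\). For \(j\le4\) both sides equal \(1\) by convention.

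\textbf{Step 2: convergence of the sums.} \(\Sigma_r^N(\beta)\) is a finite sum over \(0\le j\le\lfloor r\rfloor\) of the \(\theta_j^N(\beta)\). It therefore converges to \(\Sigma_r(\beta)\), and \(\Sigma_r(\beta)\ge1>0\) because the \(j=0\) term equals \(1\).

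\textbf{Step 3: pass to the limit.} The right-hand side of \eqref{ut:eq:integrated-N} is a continuous function of the pair \(\bigl(\theta_r^N(\beta),\Sigma_r^N(\beta)\bigr)\) on \([0,1]\times[1,\infty)\). Letting \(N\to\infty\) on both sides of \eqref{ut:eq:integrated-N} therefore yields \eqref{ut:eq:integrated-full}, with the same constant \(c\). That constant is uniform in \(N\) by Lemma~\ref{ut:prop:diff-cutoff}, which is the only place where uniformity matters.

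The one point that needs care is Step 1: checking that the truncated events really increase to the full event, with no loss coming from closures or accumulation of arbitrarily small loops. Connectivity is defined through finite chains of loops whose union meets both spheres, and a finite chain has a minimal loop diameter, which resolves this.
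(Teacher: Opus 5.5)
Your proof is correct and follows the same route as the paper: let $N\to\infty$ in Lemma~\ref{ut:cor:diff-N}, using $\mathcal E_r^{\le1}=\bigcup_{N}\mathcal E_r^N$ (finite chains have a positive minimal loop diameter), with the constant $c$ uniform in $N$. The only difference is in your Step~2. The paper first replaces $\Sigma_r^N(\beta)$ and $\theta_r^N(\beta)$ in the exponent by the larger quantities $\Sigma_r(\beta)$ and $\theta_r(\beta)$, which only weakens the bound, and so needs the monotone limit only at radius $r$ rather than convergence at every $j\le\lfloor r\rfloor$ together with continuity.
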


\begin{proof}
Using
\(\Sigma_r^N(\beta)\le\Sigma_r(\beta)\) and
\(\theta_r^N(\beta)\le\theta_r(\beta)\) in \eqref{ut:eq:integrated-N}, we obtain
\[
        \theta_r^N(\alpha)
        \le
        \theta_r^N(\beta)
        \exp\left\{
        -\,c\log\left(\frac\beta\alpha\right)\,
        \frac{r}{\Sigma_r(\beta)}
        \bigl(1-\theta_r(\beta)\bigr)
        \right\}.
\]
Since \(\mathcal E_r^{\le1}=\bigcup_{N\ge0}\mathcal E_r^N\), letting
\(N\to\infty\) gives \eqref{ut:eq:integrated-full}.
\end{proof}

The following lemma adapts the analytic argument of
\cite[Lemma~3.1]{DRT17}.
However, in order to remove the cutoff, we need to work with the integrated estimate
\eqref{ut:eq:integrated-full} rather than the pointwise differential inequality \eqref{ut:eq:diff-cutoff}
as in \cite{DRT17}. Hence, the proof details differ from the original argument,
and we include a complete proof in Appendix~\ref{app:analytic-proof} for the reader's convenience.

\begin{lemma}\label{ut:lem:analytic}
For \(n\ge1\), let \(f_n:\R\longrightarrow[0,1]\) be increasing functions.
Assume that \(f_n(s)\) is nonincreasing in \(n\) and converges as
\(n\to\infty\) for every \(s\in\R\). Put
\(\Sigma_n(s):=\sum_{k=0}^{n-1}f_k(s)\), where \(f_0\equiv1\).
Assume that there are constants \(c_1>0\) and \(n_0<\infty\) such that, for
every \(n\ge n_0\) and every \(s<t\),
\begin{equation}\label{ut:eq:analytic-integrated}
        f_n(s)
        \le
        f_n(t)
        \exp\left\{
        -c_1(t-s)\frac{n}{\Sigma_n(t)}(1-f_n(t))
        \right\}.
\end{equation}
Then there exists \(s_1\in[-\infty,\infty]\) such
that:
\begin{enumerate}
\item if \(s<s_1\), then there exists \(c_s>0\) such that
\(f_n(s)\le e^{-c_s n}\) for all large \(n\);
\item if \(s>s_1\), then \(\lim_{n\to\infty}f_n(s)>0\).
\end{enumerate}
\end{lemma}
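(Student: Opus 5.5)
We adapt the argument of \cite[Lemma~3.1]{DRT17}, working with the integrated inequality \eqref{ut:eq:analytic-integrated} rather than a derivative. Define
\[
s_1:=\inf\Bigl\{s:\ \limsup_{n\to\infty}\frac{\log\Sigma_n(s)}{\log n}\ge1\Bigr\},
\]
or, more conveniently, \(s_1:=\inf\{s:\lim_n f_n(s)>0\}\) combined with an intermediate threshold where \(\Sigma_n\) stops being polynomially smaller than \(n\). Since each \(f_n\) is increasing in \(s\), both sets are up-sets in \(s\), so they are intervals. Part~2 is then nearly a tautology if we take \(s_1\) to be the infimum of \(\{s:\lim_n f_n(s)>0\}\). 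The work is part~1: for \(s<s_1\) we have \(f:=\lim f_n(s')=0\) at every \(s'<s_1\), and we must upgrade this to exponential decay.

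\textbf{Step 1 (from vanishing limit to polynomial decay).} Fix \(s<t<s_1\). Then \(f_n(t)\to0\), so \(1-f_n(t)\ge\frac12\) for large \(n\). Moreover \(\Sigma_n(t)=o(n)\), since it is a Cesàro sum of a null sequence. Plugging into \eqref{ut:eq:analytic-integrated} gives \(f_n(s)\le f_n(t)\exp\{-\tfrac{c_1}{2}(t-s)\,n/\Sigma_n(t)\}\), and in particular \(f_n(s)\le \exp\{-c\,n/\Sigma_n(t)\}\). This alone does not yet yield exponential decay, so we bootstrap over a decreasing sequence of intensities.

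\textbf{Step 2 (bootstrap).} Choose \(s<u<t<s_1\). First use \(t\to u\): since \(\Sigma_n(t)\le n\), in the worst case we get \(f_n(u)\le \exp\{-c(t-u)\,n/\Sigma_n(t)\}\). To gain, we show \(\Sigma_n(t)\le C n^{1-\epsilon}\) or better \(\Sigma_n(t)\le C\log n\). We iterate the DRT scheme: if \(\Sigma_n(t)\le n^{\gamma}\) for large \(n\), then \(f_n(u)\le \exp(-c n^{1-\gamma})\), which is summable, hence \(\Sigma_n(u)\le C\). Applying \eqref{ut:eq:analytic-integrated} once more from \(u\) to \(s\) then gives \(f_n(s)\le \exp(-c(u-s)n/C)\), which is part~1.

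\textbf{Main obstacle.} It remains to get some \(\gamma<1\) at some \(t<s_1\). This is where the definition of \(s_1\) must be chosen carefully, as in DRT17. We set \(\tilde s_1:=\inf\{s:\limsup_n \log\Sigma_n(s)/\log n\ge1\}\). Below \(\tilde s_1\), Step~2 applies and yields exponential decay. Above \(\tilde s_1\), we have \(\Sigma_n(t)\ge n^{1-o(1)}\) along a subsequence. We must then show \(\lim f_n(s)>0\) for \(s>\tilde s_1\), which identifies \(\tilde s_1=s_1\). For this, take \(\tilde s_1<t'<t<s\) and suppose \(f:=\lim f_n(s)=0\). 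By monotonicity in \(n\) and in \(s\), \(f_n(t')\to0\) as well, and \eqref{ut:eq:analytic-integrated} with \(1-f_n\ge\frac12\) yields \(\log(f_n(t')/f_n(t))\le -c\,n/\Sigma_n(t)\). Summing a telescoping family over a partition of \([t',t]\) into pieces, in the spirit of DRT's derivation of \(\Sigma_n\ge n^{1-o(1)}\Rightarrow\) a lower bound \(f_n\gtrsim (t-t')\), we aim to show that \(\Sigma_n(t)\ge n^{1-o(1)}\) is incompatible with \(f_n\to0\) at a slightly larger parameter. Concretely, we use the convexity trick \(\Sigma_n(t)/n\ge\) the average of the \(f_k\), combined with \(f_k(t')\le f_k(t)\exp\{-ck/\Sigma_k(t)\}\). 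This forces \(\Sigma_n(t)\) to grow at least like \(n\cdot\min_k f_k(s)\), which must stay bounded away from \(0\). Here the nonincreasing-in-\(n\) hypothesis replaces differentiability. I expect this case analysis, replacing DRT's ODE integration by repeated use of the integrated bound on nested intervals, to be the delicate part.
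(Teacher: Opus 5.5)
Your Step~2 is correct. It is exactly the paper's proof of the first assertion, with \(s_1\) taken to be your \(\tilde s_1=\inf\{s:\limsup_n\log\Sigma_n(s)/\log n\ge1\}\), which is the paper's definition. The bound \(1-f_n\ge\frac12\) comes from \(f_n(t)\le\Sigma_n(t)/n\le n^{\gamma-1}\), by monotonicity in \(n\).

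\textbf{The gap is the second assertion for this threshold.} This is the real content of the lemma. Whichever definition of \(s_1\) you adopt, you must show \(\lim_nf_n(s)>0\) for every \(s>\tilde s_1\), and your sketch does not do this.
\begin{itemize}
\item The bound \(f_n(t')\le f_n(t)e^{-cn/\Sigma_n(t)}\), or a chain of such bounds over a partition of \([t',t]\), compares \(f_n(t')\) with \(f_n(t)\) multiplicatively for one index \(n\) at a time. Both may tend to \(0\), so this gives no contradiction, and the lower bound \(\Sigma_n(t)\ge n^{1-o(1)}\) only weakens it.
\item Your ``convexity trick'' is circular. Since \(f_k(s)\) is nonincreasing in \(k\), \(\min_kf_k(s)=\lim_kf_k(s)\), which you assumed to be \(0\). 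So ``must stay bounded away from \(0\)'' is the statement to be proved.
\item Monotonicity in \(n\) does not replace differentiability: the paper still extracts an a.e.\ derivative bound from the integrated hypothesis.
\end{itemize}

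\textbf{The missing idea is to sum over the index with logarithmic weights, so that \(f_i/\Sigma_i\) telescopes.}
\begin{enumerate}
\item Assume \(\lim_nf_n(s)=0\). Fix \(s'\in(\tilde s_1,s)\) and \(i_0\ge n_0\) with \(f_i(u)\le f_i(s)\le\frac12\) for all \(i\ge i_0\), \(u\le s\).
\item Set \(T_n(u):=\frac1{\log n}\sum_{i=i_0}^n\frac{f_i(u)}i\). By logarithmic Ces\`aro summability, \(T_n(u)\to0\) for every \(u\le s\).
\item Apply the hypothesis to \(u-h<u\) and let \(h\downarrow0\). At a.e.\ \(u\) this gives \(f_i'(u)\ge c_1\frac{i}{\Sigma_i(u)}f_i(u)(1-f_i(u))\ge\frac{c_1}2\frac{i\,f_i(u)}{\Sigma_i(u)}\).
\item The weight \(1/i\) cancels the factor \(i\). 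Since \(\Sigma_{i+1}=\Sigma_i+f_i\) and \(\log(1+x)\le x\), we have \(\frac{f_i}{\Sigma_i}\ge\log\Sigma_{i+1}-\log\Sigma_i\). Hence \(T_n'(u)\ge\frac{c_1}{2\log n}\bigl(\log\Sigma_{n+1}(u)-\log i_0\bigr)\).
\item Since \(T_n\) is nondecreasing, \(T_n(s)-T_n(s')\ge\int_{s'}^sT_n'(u)\,\mathrm{d}u\ge\frac{c_1}{2}(s-s')\frac{\log\Sigma_{n+1}(s')-\log i_0}{\log n}\).
\item Because \(s'>\tilde s_1\), there is a subsequence with \(\log\Sigma_{n_j+1}(s')/\log n_j\to1\). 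Along it the right side eventually exceeds \(\frac{c_1}{4}(s-s')>0\). This contradicts \(T_n(s)-T_n(s')\to0\).
\end{enumerate}
Without this argument, or an equivalent telescoping over the index, the proposal proves only the first half of the lemma.
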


Finally, we apply the above lemma to the unit-truncated one-arm probabilities.

\begin{proposition}\label{ut:prop:analytic-application}
There exists \(\alpha_1\in[0,\infty]\) such that:
\begin{enumerate}
\item if \(\alpha<\alpha_1\), then there exist \(c_\alpha,C_\alpha>0\) such that for every \(r\ge1\)
\[
\bar\theta_r(\alpha)=
\Pp_\alpha(S_1\xleftrightarrow{\LL_{\R^3}^{\alpha,\le1}}S_r)
\le C_\alpha e^{-c_\alpha r} ;
\]
\item if \(\alpha>\alpha_1\), then
\(\lim_{r\to\infty}\bar\theta_r(\alpha)>0\).
\end{enumerate}
\end{proposition}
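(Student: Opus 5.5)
The plan is to apply Lemma~\ref{ut:lem:analytic} after a change of variables that turns the multiplicative factor \(\log(\beta/\alpha)\) in \eqref{ut:eq:integrated-full} into an additive one. Set \(s=\log\alpha\) and
\[
f_n(s):=\theta_{n}(e^s)
\]
for \(n\ge1\), with \(f_0\equiv1\). This is consistent with \eqref{ut:eq:theta-sigma-full}, since \(\theta_j=1\) for \(j\le4\).

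First I will check the structural hypotheses of Lemma~\ref{ut:lem:analytic}.
\begin{enumerate}
\item Each \(f_n\) takes values in \([0,1]\).
\item Each \(f_n\) is increasing in \(s\), because the event \(\mathcal E_n^{\le1}\) is increasing and the soups couple monotonically in the intensity.
\item \(f_n(s)\) is nonincreasing in \(n\), because \(\mathcal E_{n+1}^{\le1}\subset\mathcal E_n^{\le1}\) and \(\theta_n=1\) for small \(n\). Being monotone and bounded, it converges.
\end{enumerate}
Next I compare the sums. With \(\Sigma_n(s)=\sum_{k=0}^{n-1}f_k(s)\) as in the lemma, we have \(\Sigma_n(s)\le\Sigma_{r}(e^s)\) for \(r=n\), with \(\Sigma_r\) as in \eqref{ut:eq:theta-sigma-full}. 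The two sums differ by the single extra term \(\theta_n\le1\), while both are at least \(5\). Hence \(\Sigma_n(\log\beta)\ge\Sigma_n(\beta)/2\) in any case. Then Lemma~\ref{ut:lem:cutoff-limit}, applied with \(r=n\ge8\), \(\alpha=e^s\) and \(\beta=e^t\), gives \eqref{ut:eq:analytic-integrated} with \(c_1=c/2\) and \(n_0=8\). The comparison of sums only enlarges the denominator, which weakens the exponent, so the direction of the inequality is correct.

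Lemma~\ref{ut:lem:analytic} then produces \(s_1\in[-\infty,\infty]\), and I set \(\alpha_1:=e^{s_1}\in[0,\infty]\). For \(\alpha>\alpha_1\), item 2 of the lemma gives \(\lim_n\theta_n(\alpha)>0\). Monotonicity in \(r\) transfers this to real \(r\to\infty\), because \(\bar\theta_r\ge\bar\theta_{\lceil r\rceil}\).

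For \(\alpha<\alpha_1\), item 1 gives \(\bar\theta_n(\alpha)\le e^{-c_\alpha n}\) for all \(n\ge n_1(\alpha)\). For real \(r\) I use \(\bar\theta_r\le\bar\theta_{\lfloor r\rfloor}\), which gives a bound \(e^{c_\alpha}e^{-c_\alpha r}\). The finitely many remaining \(r\in[1,n_1]\) are absorbed by enlarging \(C_\alpha\), for instance \(C_\alpha:=e^{c_\alpha(n_1+1)}\), since \(\bar\theta_r\le1\).

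The only delicate step is the index bookkeeping between \(\Sigma_n\) in the lemma and \(\Sigma_r\) in \eqref{ut:eq:integrated-full}. The two differ by an off-by-one in the range of summation and by the convention \(\theta_r=1\) for \(r\le4\) versus \(\bar\theta_r\). I will handle this by the crude comparison within a factor \(2\) and by shrinking the constant \(c_1\), which the lemma allows. Everything else is direct substitution.
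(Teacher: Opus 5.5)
Your proposal is correct and follows essentially the same route as the paper. It uses the substitution $f_n(s)=\theta_n(e^s)$, the bound $\sum_{k=0}^{n}\theta_k(e^t)\le 2\Sigma_n(t)$ giving $c_1=c/2$ and $n_0=8$, Lemma~\ref{ut:lem:analytic} with $\alpha_1:=e^{s_1}$, and monotonicity in $r$ to pass from integer to real radii.
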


\begin{proof}
For integers \(n\ge1\), using the convention
\eqref{ut:eq:theta-sigma-full}, set
\(f_n(s):=\theta_n(e^s)\) and
\(\Sigma_n(s):=\sum_{k=0}^{n-1}f_k(s)\), with \(f_0\equiv1\).  Since
\(0\le f_n(t)\le1\le\Sigma_n(t)\), we have
\[
        \sum_{k=0}^{n}\theta_k(e^t)
        =\Sigma_{n+1}(t)
        =\Sigma_n(t)+f_n(t)
        \le 2\Sigma_n(t).
\]
Applying Lemma~\ref{ut:lem:cutoff-limit} with \(\alpha=e^s\),
\(\beta=e^t\), and \(r=n\), and using this bound for
\(\Sigma_r(\beta)\), yields \eqref{ut:eq:analytic-integrated} for every
\(n\ge8\) and \(s<t\), with \(c_1=\frac c2\).

The functions \(f_n\) are increasing in \(s\), nonincreasing in \(n\), and
take values in \([0,1]\); in particular, they converge as \(n\to\infty\).
Lemma~\ref{ut:lem:analytic} therefore yields \(s_1\in[-\infty,\infty]\)
such that \(f_n(s)\) decays exponentially for \(s<s_1\), whereas
\(\lim_{n\to\infty}f_n(s)>0\) for \(s>s_1\).  Define
\(\alpha_1:=e^{s_1}\).
For \(0<\alpha<\alpha_1\), the first conclusion of
Lemma~\ref{ut:lem:analytic}, applied at \(s=\log\alpha\), gives
\(c_\alpha>0\) such that
\(\bar\theta_n(\alpha)=f_n(\log\alpha)\le e^{-c_\alpha n}\) for all
sufficiently large integers \(n\).  By monotonicity in the radius,
this estimate extends to every real
\(r\ge1\), proving the first assertion.

Finally, if \(\alpha>\alpha_1\), then \(\log\alpha>s_1\), so the second
conclusion of Lemma~\ref{ut:lem:analytic} gives
\(\lim_{n\to\infty}f_n(\log\alpha)>0\).  Since
\(\bar\theta_r(\alpha)\) is nonincreasing in \(r\) and
\(\theta_n=\bar\theta_n\) for \(n>4\),
\[
        \lim_{r\to\infty}\bar\theta_r(\alpha)
        =\lim_{n\to\infty}f_n(\log\alpha)
        >0.
\]
This completes the proof of Proposition~\ref{ut:prop:analytic-application}.
\end{proof}

\subsection{Identification of the sharpness threshold}
\label{ut:sec:threshold-identification}

\begin{proof}[Proof of Theorem~\ref{thm:unit-truncated-sharpness}]
Proposition~\ref{ut:prop:analytic-application} gives exponential decay below
\(\alpha_1\) and a positive limiting probability above it.  It remains to
upgrade the latter to one and prove \(\alpha_1=\alpha_\ast\).
For \(a,r>0\), let \(\mathcal A_r^{(a)}\) be the event that \(S_r\) is connected
to infinity by the loop soup restricted to loops of diameter at most \(a\), namely
\[
        \mathcal A_r^{(a)}
        :=
        \{\forall R>r,\,
        S_r\xleftrightarrow{\LL_{\R^3}^{\alpha,\le a}}S_R\}.
\]
For the unit-truncated loop soup, write the limiting one-arm probability as
\begin{equation}\label{ut:eq:theta-unit-limit}
        \theta^{\le1}(\alpha)
        :=
        \lim_{R\to\infty}
        \Pp_\alpha(S_1\xleftrightarrow{\LL_{\R^3}^{\alpha,\le1}}S_R)
        =
        \Pp_\alpha(\mathcal A_1^{(1)}).
\end{equation}
For \(\alpha>\alpha_1\), Proposition~\ref{ut:prop:analytic-application} gives
\(\theta^{\le1}(\alpha)>0\).  Hence it suffices to prove that
\(\theta^{\le1}(\alpha)\in\{0,1\}\).
For every \(r\ge1\), \(\mathcal A_1^{(1)}\subseteq\mathcal A_r^{(1)}\),
whereas Brownian scaling and monotonicity give
\(\Pp_\alpha(\mathcal A_r^{(1)})
=\Pp_\alpha(\mathcal A_1^{(1/r)})
\le\Pp_\alpha(\mathcal A_1^{(1)})\).  It follows that, for every \(r\ge1\),
\begin{equation}\label{ut:eq:all-radii-one-arm}
        \Pp_\alpha(\mathcal A_r^{(1)})
        =\theta^{\le1}(\alpha).
\end{equation}
Let \(\widetilde{\mathcal A}
        :=\bigcup_{m=1}^{\infty}\mathcal A_m^{(1)}\),
which is translation invariant. The ergodicity result in~\cite[Lemma~6.1]{JL26}
then gives \(\Pp_\alpha(\widetilde{\mathcal A})\in\{0,1\}\).  Since
\(\mathcal A_m^{(1)}\) increases with \(m\), by
\eqref{ut:eq:all-radii-one-arm}, we have
\[
        \Pp_\alpha(\widetilde{\mathcal A})
        =\lim_{m\to\infty}\Pp_\alpha(\mathcal A_m^{(1)})
        =\theta^{\le1}(\alpha)\in\{0,1\}.
\]
Therefore, for every \(\alpha>\alpha_1\),
\begin{equation}\label{ut:eq:theta-unit-one}
        \theta^{\le1}(\alpha)=1.
\end{equation}

It remains to show  \(\alpha_1=\alpha_\ast\).  Suppose that
\(\alpha<\alpha_1\), and cover \(S_r\) by a deterministic \(\frac12\)-net
\(\mathcal N_r\) of size at most \(Cr^2\).  Every crossing from \(S_r\) to
\(S_{2r}\) intersects \(B(x,1)\) for some \(x\in\mathcal N_r\).  Since
\(\operatorname{dist}(x,S_{2r})\ge r\), the same crossing intersects both
\(\partial B(x,1)\) and \(\partial B(x,r)\).  Translation invariance, the
exponential estimate in Proposition~\ref{ut:prop:analytic-application}, and a
union bound yield
\[
        \Pp_\alpha(
        S_r\xleftrightarrow{\LL_{\R^3}^{\alpha,\le1}}S_{2r})
        \le C\,r^2\,e^{-cr}\to0 \quad \text{ as } r\to\infty.
\]
Thus \(\alpha_\ast\ge\alpha_1\).

If \(\alpha>\alpha_1\), then \eqref{ut:eq:theta-unit-limit} and
\eqref{ut:eq:theta-unit-one} give
\(\lim_{R\to\infty}
\Pp_\alpha(S_1\xleftrightarrow{\LL_{\R^3}^{\alpha,\le1}}S_R)=1\).
It follows that
\[
        \lim_{r\to\infty}
        \Pp_\alpha(
        S_r\xleftrightarrow{\LL_{\R^3}^{\alpha,\le1}}S_{2r})=1.
\]
Thus \(\alpha_\ast\le\alpha_1\).  Combined, this gives
\(\alpha_\ast=\alpha_1\).  The two conclusions of
Proposition~\ref{ut:prop:analytic-application} now prove
Theorem~\ref{thm:unit-truncated-sharpness}.
\end{proof}

\section{Random walk and metric-graph estimates}
\label{sec:rwls-companions}

This section proves the lattice and metric-graph results stated in
Section~\ref{sec:rwls-results}.
Section~\ref{rwls:sec:inputs} recalls the definition of loop soups on discrete graphs and metric graphs.
Section~\ref{rwls:sec:metric-small-loops} proves the  equality of critical values
in Theorem~\ref{thm:metric-critical-values} for \(d=3,4,5\).
In Section~\ref{rwls:sec:robust-reduction}, we obtain
the capacity lower bound for the crossing cluster in Theorem~\ref{rwls:cor:robust-crossing}
in dimensions \(d=3,4,5\),
which implies Theorem~\ref{thm:rwls-robust-summary}.
We also prove the uniform
small-intensity estimates of Theorem~\ref{thm:rwls-strict-summary}
in dimension three. Finally, we prove the strict comparison in Theorem~\ref{thm:rwls-strict-summary}
in Section~\ref{rwls:sec:strict-comparison} by using
the capacity estimate and Russo's formula.

\subsection{Loop soups and potential theory}
\label{rwls:sec:inputs}

We now give a more detailed account of the lattice and metric-graph loop soups
introduced in Section~\ref{sec:rwls-results}.  We fix
\(d\ge3\) and
\(\mathbb X\in\{\mathbb Z^d,\widetilde{\mathbb Z}^{d}\}\) below.

We first recall the loop measure on \(\mathbb Z^d\).  A rooted lattice loop is a
nearest-neighbor path
\(\dot\ell=(x_0,\ldots,x_n)\), with \(n\ge2\) and \(x_n=x_0\).
The rooted random walk loop measure assigns it mass
\(n^{-1}(2d)^{-n}\), and
\(\mu_{\mathbb Z^d}^{\Loop}\) is the pushforward of this measure after quotienting
by circular time-shift~\cite{LTF07}.

For the metric graph, let
\(\mathrm m\) be length measure, let \(\widetilde q_t\) be the transition
density of Brownian motion on \(\widetilde{\mathbb Z}^{d}\), and let
\(\widetilde{\mathbb P}_t^{x,x}\) be its bridge law from \(x\) to \(x\) in time
\(t\).   Following~\cite[Section~2]{Lup16}, we take loops modulo
time shift and define the loop measure
\[
 \mu_{\widetilde{\mathbb Z}^{d}}^{\Loop}(\dint\ell)
 =
 \int_{\widetilde{\mathbb Z}^{d}}\mathrm m(\dint x)
 \int_0^\infty \frac1t \widetilde q_t(x,x)
 \widetilde{\mathbb P}_t^{x,x}(\dint\ell)\,\dint t.
\]
Let \(\mathcal L_{\mathbb X}^\alpha\) be the loop soup on \(\mathbb X\),
which is the Poisson point
process on unrooted loops with intensity
\(\alpha\mu_{\mathbb X}^{\Loop}\).
Projecting the metric-graph loop soup onto \(\mathbb Z^d\) produces the random
walk loop soup at the same intensity, after loops with trivial projection are
discarded.
For \(D\subseteq\mathbb X\), let
\(\mu_D^{\Loop}\) and \(\mathcal L_D^\alpha\) be the loop measure
\(\mu_{\mathbb X}^{\Loop}\) and the loop soup \(\mathcal L_{\mathbb X}^\alpha\)
restricted to loops contained in
\(D\), respectively.

For \(D\subseteq\mathbb X\), let \(G_D\) be the Green function of the
underlying process killed on leaving \(D\).  For a nonempty set \(K\subset D\),
define
\begin{equation}\label{eq:capX}
 \operatorname{cap}_D(K)
 :=\left[
 \inf_{\nu}\iint G_D(x,y)\,\nu(\dint x)\nu(\dint y)
 \right]^{-1},
\end{equation}
where the infimum is over probability measures supported on \(K\).
We write \(\operatorname{cap}_{\mathbb X}\) when \(D=\mathbb X\).

\subsection{Equality of critical values}
\label{rwls:sec:metric-small-loops}

Fix \(d\in\{3,4,5\}\).  We prove
Theorem~\ref{thm:metric-critical-values} by first obtaining uniform
subcritical decay from OSSS, and then
using the supercritical result in~\cite{CDL24} to deduce
\(\alpha_\ast^{\widetilde{\mathbb Z}^{d}}\le\frac12\).

\begin{lemma}
\label{rwls:lem:uniform-cable-thinning}
For every \(0<\alpha<\frac12\) and \(L_0\in(0,1)\), there are \(c,C>0\) such
that, for every \(L\ge L_0\),
\(x\in\widetilde{\mathbb Z}^{d}\), and integer \(n\ge1\),
\[
\Pp_\alpha\bigl(
 \mathsf S(x,L)
 \xleftrightarrow{
 \mathcal L_{\widetilde{\mathbb Z}^{d}}^{\alpha,\le L}}
 \mathsf S(x,nL)\bigr)
 \le Ce^{-cn}.
\]
\end{lemma}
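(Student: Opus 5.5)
The plan is to rerun the OSSS machinery of Section~\ref{sec:unit-truncated-sharpness} on the metric graph with upper cutoff \(L\), and to use the critical crossing estimates at \(\alpha=\frac12\) from \cite{CD24} as the input that forces the subcritical regime. First reduce to \(L\) large. For \(L_0\le L\le L_1\) with \(L_1\) fixed, the loops of diameter at most \(L\) are dominated by those of diameter at most \(L_1\), and \(\mathsf S(x,L_1\lceil nL/L_1\rceil)\) may be reached only if \(\mathsf S(x,L_1 m)\) is, with \(m\asymp n\). So it suffices to treat the uniform-in-\(L\) statement for \(L\ge L_1\). By translation invariance (up to lattice rounding, absorbed by enlarging sets by a unit) we may take \(x=0\).

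Next, define \(\theta_n^L(\alpha):=\Pp_\alpha(\mathsf S_L\leftrightarrow\mathsf S_{nL})\) in \(\mathcal L^{\alpha,\le L}\). Run the random-sphere exploration of Lemma~\ref{ut:lem:revealment}: choose \(J\) uniform in \(\{2,\dots,n-2\}\), explore from \(\mathsf S_{JL}\), and index coordinates by loops rooted in boxes of side \(L\) (with a lower diameter cutoff first, removed later as in Lemma~\ref{ut:lem:cutoff-limit}). Combining the Poisson Poincar\'e/Russo bound \eqref{ut:eq:sum-inf-russo} with OSSS gives, exactly as in Lemma~\ref{ut:prop:diff-cutoff}, the bound \(\frac{d}{d\alpha}\theta_n^L\ge \frac{c}{\alpha}\frac{n}{\Sigma_n^L}\theta_n^L(1-\theta_n^L)\), with \(c\) independent of \(L\). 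Revealment of a box requires a connection from a box of scale \(L\) to \(\mathsf S_{jL}\), which is bounded by \(C\theta^L_{(d_j-4)_+}\) via a union over \(O(1)\) translates. The integrated form and Lemma~\ref{ut:lem:analytic}, applied with \(f_n(s)=\sup_{L\ge L_1}\theta_n^L(e^s)\), give a threshold \(\alpha_1\). Below it, \(\sup_L\theta_n^L\le Ce^{-cn}\); above it, \(\inf_n\sup_L\theta^L_n>0\). I take the supremum over \(L\) to get uniformity. This is the step I expect to need care: the differential inequality must be stable under taking \(\sup_L\). I would instead work with \(f_n(s)=\theta_n^{L}(e^s)\) for each fixed \(L\), extract an \(L\)-independent conclusion from the explicit integrated inequality, and also use monotonicity in \(L\), since the law at \(L\) is comparable to scaling.

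It then remains to show \(\alpha_1\ge\frac12\), that is, to rule out \(\lim_n\theta_n^L(\alpha)>0\) uniformly for \(\alpha<\frac12\). If \(\alpha>\alpha_1\) and \(\alpha<\frac12\), then crossings of \(\mathsf A(L,nL)\) by loops of diameter at most \(L\) would have probability bounded below by \(c>0\) for all \(n\) and all large \(L\). By monotonicity this persists at intensity \(\frac12\), giving \(\Pp_{1/2}(\mathsf S_L\leftrightarrow\mathsf S_{nL})\ge c\) uniformly in \(L\) and \(n\). But \cite[Theorem~1.2]{CD24} bounds the critical crossing probability \(\Pp_{1/2}(\mathsf S_L\leftrightarrow\mathsf S_{nL})\) by \(C n^{-\rho}\) (for \(d=3,4,5\), via the one-arm upper bounds \(r^{-1/2}\), \(r^{-1}\) up to logs, \(r^{-1}\) together with the two-point/capacity decoupling), uniformly in \(L\), which is a contradiction for large \(n\). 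Hence \(\alpha_1\ge\frac12\), and for \(\alpha<\frac12\) we are in the exponential regime, which gives the lemma. The main obstacle is this uniformity in \(L\) of both the OSSS dichotomy and the \cite{CD24} input, since the annulus crossing must be scale-uniform \emph{without} the continuum scale invariance.
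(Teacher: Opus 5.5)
Your route is the paper's: OSSS with a random-sphere exploration at cutoff $L$, a lower diameter cutoff removed by monotone limits, Lemma~\ref{ut:lem:analytic} applied to an envelope over $L$, the input from \cite[Theorem~1.2]{CD24} to push the threshold to at least $\frac12$, and a reduction of small $L$ to a fixed cutoff (the paper uses $L'=\max\{1,L\}$). The gap is the step you flag yourself, uniformity in $L$. You leave open whether the argument survives $\sup_L$, and your fallback does not work as stated. On $\widetilde{\mathbb Z}^{d}$ there is no scaling, so $\theta^L_n$ has no reason to be monotone in $L$. Applying Lemma~\ref{ut:lem:analytic} to each $f_n(s)=\theta^L_n(e^s)$ separately gives a threshold, a decay rate and a range of $n$ that all depend on $L$, which is exactly what the lemma must avoid.

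The missing observation is that you never need a differential inequality for the supremum. Fix $L$ and the lower cutoff $N$, and bound the revealment of a coordinate $i$ by the envelope
$\Theta_r(\lambda):=\sup_{L'\ge1}\Pp_\lambda\bigl(\mathsf S_{C_0L'}\xleftrightarrow{\mathcal L^{\lambda,\le L'}}\mathsf S_{rL'}\bigr)$.
Translating by a lattice vertex $y_i$ with $Q_i^+\subset\mathsf B(y_i,C_0L/2)$ gives $\Pp_\lambda(i\text{ revealed}\mid J=j)\le\Theta_{r_i(j)}(\lambda)$; the radius $C_0L$ is there so that only integer translations are used. OSSS and Russo then yield
$\frac{\dint}{\dint\lambda}\theta^{L,N}_n\ge\frac{c}{\lambda}\frac{n}{\Sigma_n(\lambda)}\theta^{L,N}_n(1-\theta^{L,N}_n)$,
where $\Sigma_n=\sum_{j<n}\Theta_j$ is built from the envelope rather than from $\theta^{L,N}$. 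Integrate from $\lambda$ to $\gamma$ and use three bounds for $t\le\gamma$: $\Sigma_n(t)\le\Sigma_n(\gamma)$, $1-\theta^{L,N}_n(t)\ge1-\Theta_n(\gamma)$, and $\theta^{L,N}_n(\gamma)\le\Theta_n(\gamma)$. Let $N\to\infty$, and only then take $\sup_L$ of the left-hand side. The result is \eqref{rwls:eq:uniform-envelope-integrated}, i.e.\ hypothesis \eqref{ut:eq:analytic-integrated} for $f_n(s)=\Theta_n(e^s)$, which is all Lemma~\ref{ut:lem:analytic} needs; it survives the supremum because every quantity on the right is already an envelope quantity.

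With this in place your CD24 step works after one correction. Above the threshold you only get $\inf_n\sup_L\theta^L_n>0$, not a lower bound for all large $L$. The contradiction still goes through, since it uses only $\Theta_n(\tfrac12)\to0$.
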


\begin{proof}
We first adapt the OSSS argument of
Section~\ref{sec:unit-truncated-sharpness} to obtain sharpness,
and then use
\cite[Theorem~1.2]{CD24} to locate the resulting threshold.
Take \(L\ge1\) and $x=0$.

 We begin with the necessary preparations as in Section~\ref{ut:sec:osss},
but omit repetitive details.
Fix \(\lambda>0\), \(L\ge1\), and an integer
\(N\ge1\), and introduce the lower-cutoff soup
\[
 \mathcal L_{\widetilde{\mathbb Z}^{d}}^{\lambda,N,L}
 :=\bigl\{\ell\in
 \mathcal L_{\widetilde{\mathbb Z}^{d}}^{\lambda}:
 2^{-N}L<\diam(\ell)\le L\bigr\}.
\]
For
\(0\le k<N\) and \(z\in\mathbb Z^d\), we set
\[
 \begin{aligned}
 Q_{k,z}^L
 &:=\widetilde{\mathbb Z}^{d}
 \cap2^{-k}L(z+[0,1)^d),\\
 \Gamma_{k,z}^L
 &:=\bigl\{\ell:2^{-(k+1)}L<\diam(\ell)\le2^{-k}L,
 \ \ell(0)\in Q_{k,z}^L\bigr\},\\
 Q_{(k,z)}^+
 &:=\bigl\{y\in\widetilde{\mathbb Z}^{d}:
 d(y,Q_{k,z}^L)
 \le2^{-k}L\bigr\}.
 \end{aligned}
\]
Then, we define the finite index set as before
\[
 \mathscr I_{N,L,n}
 :=\bigl\{i=(k,z):0\le k<N,\ Q_{k,z}^L\ne\varnothing,
 \ Q_i^+\cap\mathsf B_{(n+1)L}\ne\varnothing\bigr\}.
\]
For \(i=(k,z)\in\mathscr I_{N,L,n}\), define the Poisson coordinate
\(\omega_i:=\mathcal L_{
\widetilde{\mathbb Z}^{d}}^{\lambda,N,L}
\cap\Gamma_{k,z}^L\).  Since the classes \(\Gamma_{k,z}^L\) are disjoint, the
\(\omega_i\) are independent Poisson point processes.

Choose an integer \(C_0=C_0(d)\) large enough that
each \(Q_i^+\) lies in \(\mathsf B(y_i,C_0L/2)\) for some lattice
vertex \(y_i\).
We construct a randomized decision algorithm, as in
Lemma~\ref{ut:lem:revealment}, to determine the event
\(\{
 \mathsf S_{C_0L}
 \xleftrightarrow{
 \mathcal L_{\widetilde{\mathbb Z}^{d}}^{\lambda,N,L}}
 \mathsf S_{nL}\}\). Choose \(J\) uniformly
from \(\{2C_0,\ldots,n-2C_0\}\), independently of the loop soup.  First reveal
\(\omega_i\) for every \(i\in\mathscr I_{N,L,n}\) such that
\(Q_i^+\cap\mathsf S_{JL}\ne\varnothing\), and mark the revealed loops connected
to $\mathsf S_{JL}$.  Then, in a fixed order, repeatedly reveal every unrevealed
\(\omega_i\) such that \(Q_i^+\) meets the current marked union, adding all
newly connected revealed loops to that union.
The procedure
terminates after finitely many steps, since $\mathscr I_{N,L,n}$ is finite.
It outputs one exactly when a connected component of the final
marked union meets both \(\mathsf S_{C_0L}\) and
\(\mathsf S_{nL}\).

Let $\Theta_n(\lambda)=1$ when $0\le n\le4C_0$, and set
\begin{equation}\label{eq:theta}
\Theta_n(\lambda):=\sup_{L\ge1}
 \Pp_\lambda\bigl(
 \mathsf S_{C_0L}
 \xleftrightarrow{
 \mathcal L_{\widetilde{\mathbb Z}^{d}}^{\lambda,\le L}}
 \mathsf S_{nL}\bigr) \qquad \text{ when } n>4C_0.
\end{equation}
Put
\(r_i(j):=\left\lfloor
 (L^{-1}|\|y_i\|_\infty-jL|-C_0)_+
 \right\rfloor\).
As in Lemma~\ref{ut:lem:revealment}, integer translation to \(y_i\) gives
\(\Pp_\lambda(i\text{ is revealed}\mid J=j)
 \le\Theta_{r_i(j)}(\lambda)\).
For small indices, this uses the convention \(\Theta_k=1\) for
\(k\le4C_0\).
 Writing \(\Sigma_n(\lambda):=\sum_{j=0}^{n-1}\Theta_j(\lambda)\) and averaging over \(J\), we obtain
\begin{equation}
\label{rwls:eq:uniform-envelope-revealment}
 \delta_i(\lambda)
 :=\Pp_\lambda(i\text{ is revealed})
 \le\frac{C}{n-4C_0+1}
 \sum_{r=0}^{n+1}\Theta_r(\lambda)
 \le C\frac{\Sigma_n(\lambda)}n.
\end{equation}
Combining \eqref{rwls:eq:uniform-envelope-revealment} with
Russo's formula in Lemma~\ref{ut:lem:russo} and the OSSS inequality
\cite[Theorem~1.9]{DRT18}, and arguing as in Lemma~\ref{ut:lem:cutoff-limit},
we obtain constants \(c>0\) and \(n_0>0\) such that, for all \(n>n_0\),
\begin{equation}
\label{rwls:eq:uniform-envelope-integrated}
 \Theta_n(\lambda)
 \le \Theta_n(\gamma)
 \exp\left\{
 -c\log(\frac\gamma\lambda)
 \frac n{\Sigma_n(\gamma)}
 \bigl(1-\Theta_n(\gamma)\bigr)
 \right\}, \quad 0<\lambda<\gamma.
\end{equation}
By \eqref{rwls:eq:uniform-envelope-integrated},
Lemma~\ref{ut:lem:analytic}, applied to
\(f_n(s):=\Theta_n(e^s)\), yields a threshold
\(\widehat\alpha\in[0,\infty]\): below it \(\Theta_n\) decays exponentially,
while above it \(\Theta_n\) has a positive limit.

By \cite[Theorem~1.2]{CD24}, we have $\Theta_n(\tfrac12)\to0$ as $n\to\infty$.
This implies that \(\widehat\alpha\ge\frac12\).
Hence, for every
\(\alpha<\frac12\), we have $\Theta_n(\alpha)\le Ce^{-cn}$.
For arbitrary \(x\) and \(L\ge L_0\), we can compare with a nearest lattice
vertex and cutoff \(L'=\max\{1,L\}\). For all sufficiently large \(n\),
the probability in the statement is at most
\(\Theta_{\lfloor nL/L'\rfloor-2}(\alpha)\).
Since \(L/L'\ge L_0\), the result follows, with bounded \(n\)
absorbed in \(C\).
\end{proof}

The remaining lemma is specific to the metric graph and covers the cutoff
range that is not seen by the preceding estimate.

\begin{lemma}
\label{rwls:lem:cable-edge-barrier}
For every \(0<a<b<\infty\), there are \(L_0\in(0,1)\) and \(c,C>0\) such
that, for every \(0<\alpha\le\frac12\), \(0<L\le L_0\), \(r\ge2\), and
\(x\in\widetilde{\mathbb Z}^{d}\),
\[
 \Pp_\alpha\bigl(
 \mathsf S(x,ar)
 \xleftrightarrow{
 \mathcal L_{\widetilde{\mathbb Z}^{d}}^{\alpha,\le L}}
 \mathsf S(x,br)\bigr)
 \le C r^{d-1}e^{-cr/L}.
\]
\end{lemma}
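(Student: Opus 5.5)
Throughout, \(L\le L_0<1\) is small. Since every loop in \(\mathcal L_{\widetilde{\mathbb Z}^{d}}^{\alpha,\le L}\) has diameter at most \(L<1\), it cannot contain a full unit edge together with both its endpoints' neighbourhoods in a nontrivial way. In particular, it is a small loop living in the union of at most one vertex star (the ball of radius \(L\) around a lattice vertex) or inside the interior of a single edge. The plan is to show that such loops rarely let a cluster \emph{traverse} an edge, and that a crossing of \(\mathsf A(x,ar,br)\) must traverse of order \(r\) consecutive edges along some lattice path.

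First, the single-edge barrier. Fix an edge \(e=[u,v]\) and consider its middle third \(I_e\) (length \(1/3\)). For a cluster to pass from \(u\) to \(v\) along \(e\), the loops intersecting \(I_e\) must chain across \(I_e\). Any loop meeting \(I_e\) is contained in \(e\) (as \(L<1/3\)), so the loops relevant to crossing \(I_e\) form a one-dimensional loop soup on an interval. The cut points between consecutive chains are positions \(t\in I_e\) not covered by any loop. I will show the probability that \(I_e\) is entirely covered by a chain of loops of diameter \(\le L\) is at most \(e^{-c/L}\), uniformly in \(\alpha\le\frac12\). To see this, split \(I_e\) into \(\asymp 1/L\) disjoint subintervals \(I_j\) of length \(2L\). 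The events ``the midpoint region of \(I_j\) is uncovered'' depend on disjoint loop classes (loops of diameter \(\le L\) meeting a given point lie within distance \(L\) of it), so they are independent. Each event has probability bounded below uniformly in \(L\) and in \(\alpha\le\frac12\), by Brownian scaling on the cable: the mass of loops of diameter in \([\epsilon L,L]\) covering a point is scale-free and finite. The remaining subtlety is ruling out arbitrarily small loops densely covering a point. Since a point of a one-dimensional interval is covered by some loop with probability strictly less than one (for \(\alpha\le\frac12\) these are the local-time/Bessel-zero statements, or directly from \cite{Lup16}), this holds. This is the step I expect to be the main obstacle: one needs a uniform positive probability that a fixed short window avoids being bridged by the small-loop soup. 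My first attempt is to use the explicit one-dimensional structure, namely Lupu's description of clusters on an interval at intensity \(\le\frac12\) via zeros of Bessel-type processes, together with monotonicity in \(\alpha\).

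Second, the union bound. If \(\mathsf S(x,ar)\) is connected to \(\mathsf S(x,br)\), then the cluster contains a self-avoiding lattice path \(u_0,u_1,\dots,u_m\) with \(m\ge (b-a)r-2\), each of whose edges \(e_k\) has its middle third bridged. These bridging events use disjoint loop classes (loops inside distinct edges), hence are independent, giving probability at most \(e^{-cm/L}\) for a fixed path. Summing over starting vertices near \(\mathsf S(x,ar)\) (at most \(Cr^{d-1}\) of them) and over paths of length \(m\) (at most \((2d)^m\)) gives
\[
C r^{d-1}\sum_{m\ge (b-a)r-2}(2d)^m e^{-cm/L}\le C r^{d-1} e^{-c' r/L},
\]
once \(L_0\) is small enough that \(2d\,e^{-c/L_0}\le e^{-c/(2L_0)}\). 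It suffices to take only the first \(\lceil (b-a)r\rceil-2\) steps of the path, which keeps the count within the stated form.
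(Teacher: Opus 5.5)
\textbf{Main gap.} Your overall architecture matches the paper's: independent barriers built from disjoint loop classes along each edge, then a union bound over lattice paths with $L_0$ chosen so that $(2d)e^{-c/L_0}$ is small. But the estimate at its core is false as you state it.

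A fixed point $t$ in the interior of an edge is almost surely covered by loops of $\mathcal L_{\widetilde{\mathbb Z}^{d}}^{\alpha,\le L}$, for every $\alpha>0$. By one-dimensional Brownian scaling, the loop mass of loops through $t$ with diameter in $[2^{-k-1}L,2^{-k}L]$ is a positive constant independent of $k$. So the total mass of loops of diameter at most $L$ through $t$ is infinite. The finiteness of the mass for diameters in $[\epsilon L,L]$ does not help, since it grows like $\log(1/\epsilon)$. Equivalently, the uncovered set is the zero set of a Bessel-type process, which is Lebesgue-null, so fixed points are a.s.\ covered. Hence your events ``the midpoint region of $I_j$ is uncovered'' have probability zero, not probability bounded below.

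What is true, and what you actually need, is that a \emph{window} of length comparable to $L$ fails to be bridged by a chain of small loops with probability bounded below. You name this event in your ``main obstacle'' sentence, but your argument never establishes it. The paper gets it from Lupu's exact two-point formula on an interval. With $J=(-2,2)$, $u=-\frac12$, $v=\frac12$, one has $\Pp_{1/2}\bigl(u\xleftrightarrow{\mathcal L_J^{1/2}}v\bigr)=\frac2\pi\arcsin(\frac35)<1$. Any chain of loops of diameter at most $L$ covering $[Lu,Lv]$ uses only loops contained in $LJ$. Scaling and monotonicity in $\alpha\le\frac12$ then give a uniform bound $q<1$ per gate. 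Gates at mutual distance greater than $L$ involve disjoint loop classes and are therefore independent. Your fallback to Lupu's description works only after you reformulate the events this way (window connection rather than point coverage) and space the windows so the classes of loops meeting them are disjoint.

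\textbf{Second gap: small $r$.} Your path count needs $m\ge(b-a)r-2\ge1$. For $2\le r\lesssim 3/(b-a)$, an annular crossing can stay within one or two edges without traversing any full edge. Then no middle third is forced to be bridged, and your sum starts at $m\le 0$, so the bound is of order $Cr^{d-1}$ instead of $Ce^{-c/L}$. The paper handles this by packing gates along the whole edge, so that traversing any closed subsegment $I$ costs at most $Ce^{-c|I|/L}$. It then treats separately the crossing paths that visit at most one lattice vertex; these must traverse an edge segment of length at least $(b-a)r/2$.
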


\begin{proof}
Let \(J=(-2,2)\), \(u=-\frac12\), and \(v=\frac12\).  Since
\(G_J(s,t)=\frac12(s+2)(2-t)\) for \(-2<s\le t<2\),
\[
 \frac{G_J(u,v)}{\sqrt{G_J(u,u)G_J(v,v)}}
 =\frac{(u+2)(2-v)}
 {\sqrt{(u+2)(2-u)(v+2)(2-v)}}
 =\frac35.
\]
By~\cite[Proposition~5.2]{Lup16},
\begin{equation}
\label{rwls:eq:cable-gate-probability}
 q:=\Pp_{1/2}\bigl(u\xleftrightarrow{\mathcal L_J^{1/2}}v\bigr)
 =\frac2\pi\arcsin\left(\frac35\right)<1.
\end{equation}

For every unit edge \(e\) of \(\widetilde{\mathbb Z}^{d}\), choose a family
\((J_{e,k},u_{e,k},v_{e,k})_k\) of disjoint copies of \((LJ,Lu,Lv)\), with
pairwise distances greater than \(L\), and with each \(J_{e,k}\) at distance
greater than \(L\) from both endpoints of \(e\).  For every nondegenerate
closed subsegment \(I\subset e\), write \(|I|\) for its length and put
\(N_L(I):=\#\{k:J_{e,k}\subset I\}\).  The family may be chosen so that,
simultaneously for all such \(I\) and \(k\),
\begin{equation}
\label{rwls:eq:cable-gate-packing}
 N_L(I)\ge \left(c_0\frac{|I|}{L}-2\right)_+,
 \qquad
 d\bigl([u_{e,k},v_{e,k}],J_{e,k}^{\mathrm c}\bigr)=\frac{3L}{2}.
\end{equation}

Let \(G_{e,k}\) be the event that \(u_{e,k}\) and \(v_{e,k}\) are connected
by loops from \(\mathcal L_{J_{e,k}}^{\alpha,\le L}\).  By restriction,
Brownian scaling, monotonicity, and
\eqref{rwls:eq:cable-gate-probability},
\begin{equation}
\label{rwls:eq:cable-gate-open}
 \sup_{0<\alpha\le1/2}\Pp_\alpha(G_{e,k})\le q.
\end{equation}
The family \((G_{e,k})_{e,k}\) is independent.  Call a closed subsegment
\(I\subset e\) traversed if it lies in the range of a finite chain of loops
from \(\mathcal L_{\widetilde{\mathbb Z}^{d}}^{\alpha,\le L}\).  By
\eqref{rwls:eq:cable-gate-packing}, traversing \(I\) forces \(G_{e,k}\) for
every \(J_{e,k}\subset I\).  Hence
\begin{equation}
\label{rwls:eq:cable-segment-traversal}
 \sup_{0<\alpha\le1/2}\Pp_\alpha(I\text{ is traversed})
 \le q^{N_L(I)}\le Ce^{-c|I|/L}.
\end{equation}

Set \(\Delta=(b-a)r\).  Every annular crossing contains a simple path
\(\gamma\) in \(\widetilde{\mathbb Z}^{d}\), whose initial edge has at most
\(C_dr^{d-1}\) possible
choices.  Let \(E_{\le1}\) and \(E_{\ge2}\) be the events that such a path
exists and visits at most one or at least two lattice vertices, respectively.
On \(E_{\le1}\), the path traverses a closed edge segment \(I\) with
\(|I|\ge\frac\Delta2\).  Thus \eqref{rwls:eq:cable-segment-traversal} gives
\begin{equation}
\label{rwls:eq:cable-one-vertex}
 \Pp_\alpha(E_{\le1})\le C_dr^{d-1}e^{-c\Delta/L}.
\end{equation}

On \(E_{\ge2}\), each path \(\gamma\) contains a simple lattice path
\(\widehat\gamma\) of length \(m\ge\frac\Delta3\).  From a fixed initial edge
there are at most \(C_d(2d-1)^m\) choices for \(\widehat\gamma\).  Moreover, each
such path implies \(\bigcap_{e\in\widehat\gamma}\bigcap_kG_{e,k}\), whose
probability is at most \(e^{-c_1m/L}\) by
\eqref{rwls:eq:cable-gate-open} and independence.  Choose
\(L_0\in(0,1)\) so that
\((2d-1)e^{-c_1/L_0}<\frac12\).  Then
\begin{equation}
\label{rwls:eq:cable-many-vertices}
 \Pp_\alpha(E_{\ge2})
 \le C_dr^{d-1}
 \sum_{m\ge\Delta/3}((2d-1)e^{-c_1/L})^m
 \le C_dr^{d-1}e^{-c\Delta/L}.
\end{equation}
Combining \eqref{rwls:eq:cable-one-vertex} and
\eqref{rwls:eq:cable-many-vertices} proves the lemma.
\end{proof}

\begin{proof}[Proof of Theorem~\ref{thm:metric-critical-values}]
Fix \(0<\alpha<\frac12\) and \(0<a<b<\infty\), and let \(L_0\) be given by
Lemma~\ref{rwls:lem:cable-edge-barrier}.  Constants below may depend on
\(d,\alpha,a,b\).  Put \(c_{a,b}=\frac{b-a}{4}\).  It suffices to consider
\(0<\delta\le\min\{1,\frac{c_{a,b}}2\}\), since the remaining values are absorbed
by increasing \(C\).  Put \(L=\delta r\).  If \(L\ge L_0\), cover
\(\mathsf S_{ar}\) by at most
\(C_d\delta^{-(d-1)}\) balls of radius \(L\).  Every annular crossing then yields,
from one of these balls, a connection to distance at least \(c_{a,b}r\).
Taking \(n=\lfloor \frac{c_{a,b}}\delta\rfloor\),
Lemma~\ref{rwls:lem:uniform-cable-thinning} and a union bound give
\[
 \sup_{r:\,\delta r\ge L_0}
 \Pp_\alpha\bigl(
 \mathsf S_{ar}
 \xleftrightarrow{
 \mathcal L_{\widetilde{\mathbb Z}^{d}}^{\alpha,\le\delta r}}
 \mathsf S_{br}\bigr)
 \le C\delta^{-(d-1)}e^{-c/\delta}.
\]
If \(L<L_0\), then \(r<\frac{L_0}\delta\), and
Lemma~\ref{rwls:lem:cable-edge-barrier} gives the same bound.
\begingroup
After adjusting constants for \(\delta\) bounded away from zero, we obtain
\begin{equation}
\label{rwls:eq:metric-small-loop-bound}
 \chi_{a,b}^{\widetilde{\mathbb Z}^{d},\alpha}(\delta)
 \le C\delta^{-(d-1)}e^{-c/\delta},
 \qquad 0<\delta\le1.
\end{equation}
\endgroup
Since \(\alpha<\frac12\) and \(a<b\) were arbitrary,
we have
\(\alpha_\ast^{\widetilde{\mathbb Z}^{d}}\ge\frac12\).

\begingroup
For the reverse inequality, fix \(\alpha>\frac12\), \(0<a<b<\infty\),
and \(\delta>0\).
Let \(E_r\) be the event that \(\mathsf S_{ar}\) is connected to
\(\mathsf S_{br}\) in \(\mathcal L_{\widetilde{\mathbb Z}^{d}}^{\alpha}\), and let
\(E_r^\delta\) be the corresponding event in
\(\mathcal L_{\widetilde{\mathbb Z}^{d}}^{\alpha,\le\delta r}\).  Set
\[
 V_r:=\left\{\nexists\,\ell\in\mathcal L_{\widetilde{\mathbb Z}^{d}}^{\alpha}:
 \operatorname{diam}(\ell)>\delta r,\,
 \ell\cap\mathsf B_{br}\ne\varnothing\right\}.
\]
By~\cite[Lemma~2.7(a)]{CS16} (the estimate there applies to the metric graph as well),
there is a constant \(c\in(0,1)\) such that
\(\Pp_\alpha(V_r)\ge c\), uniformly in
\(r\ge2\).  Note that \(V_r\cap E_r\subseteq E_r^\delta\),
and the events \(V_r\) and \(E_r^\delta\) are independent.  Consequently,
\begin{equation}
\label{rwls:eq:supercritical-void-comparison}
 1-\Pp_\alpha(E_r^\delta)
 \le c^{-1}
 \bigl(1-\Pp_\alpha(E_r)\bigr).
\end{equation}
By~\cite{CDL24}, the full loop soup percolates at intensity \(\alpha\),
and hence \(\lim_{r\to\infty}\Pp_\alpha(E_r)=1\).
Keeping \(\delta\) fixed in
\eqref{rwls:eq:supercritical-void-comparison} gives
\(\Pp_\alpha(E_r^\delta)\to1\), and therefore
\(\chi_{a,b}^{\widetilde{\mathbb Z}^{d},\alpha}(\delta)=1\).
Since this holds for every \(\alpha>\frac12\), we obtain
\(\alpha_\ast^{\widetilde{\mathbb Z}^{d}}\le\frac12\).  Together with the lower bound,
this proves the theorem for \(d=3,4,5\).
\endgroup
\end{proof}

\subsection{\texorpdfstring{Capacity lower bounds}{Capacity lower bounds}}
\label{rwls:sec:robust-reduction}

Throughout this subsection, fix \(d\in\{3,4,5\}\) and
\(\mathbb X\in\{\mathbb Z^d,\widetilde{\mathbb Z}^{d}\}\).  Recall from
Theorem~\ref{thm:metric-critical-values} that
\(\alpha_\ast^{\widetilde{\mathbb Z}^{d}}=\frac12\), while
the corresponding lattice thresholds remain unknown.

\begingroup
Recall \(a_d(r)\) from \eqref{eq:rwls-capacity-scale}.  For
\(\eta,\kappa>0\) and \(r\ge2\), we define the event
\begin{equation}\label{rwls:eq:rooted-robust-event-def}
\mathsf H_r^{\mathbb X,\eta,\kappa}
:=
\left\{
\begin{gathered}
\text{the cluster }\mathcal C(0)\text{ in }\mathcal L_{\mathbb X}^{\alpha}
\text{ crosses }\mathsf A(1,r)\text{ and contains }J \\
\text{ such that }J\subset\mathsf A(0,\frac r2,2r),\,
\diam(J)\ge\eta r,\,
\operatorname{cap}_{\mathbb X}(J)\ge\kappa a_d(r)
\end{gathered}
\right\}.
\end{equation}
\endgroup

\begin{theorem}[Capacity lower bound]
\label{rwls:cor:robust-crossing}
\begingroup
There are \(\eta_0,\kappa_0\in(0,1)\) such that, for every compact interval
\(I=[\alpha_-,\alpha_+]\subset\mathrm{I}_0^{\mathbb X}\), there is
\(c>0\) such that for all
\(\alpha\in I\) and \(r\ge2\),
\begin{equation}\label{rwls:eq:robust-crossing}
 \Pp_\alpha\bigl(\mathsf H_r^{\mathbb X,\eta_0,\kappa_0}\bigr)
 \ge c p_r^{\mathbb X,\alpha}.
\end{equation}
\endgroup
\end{theorem}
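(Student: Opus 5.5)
We will follow the continuum proof of Theorem~\ref{thm:main} line by line, replacing Brownian potential theory by its lattice or metric-graph counterpart. The main change is that capacity now scales like \(a_d(r)\) rather than \(r\).

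Let \(\mathcal K_s\) be the union of \(\{0\}\) and the clusters of \(\mathcal L_{\mathsf B_s}^\alpha\) meeting \(0\). Define the terminal loop class \(\mathscr T_s^{\kappa}(K)\) as the set of loops \(\ell\subset\mathsf B_{2s}\) that hit \(K\) and contain a piece \(J\subset\mathsf A(s,2s)\) joining \(\mathsf S_{5s/4}\) to \(\mathsf S_{7s/4}\) with \(\operatorname{cap}_{\mathbb X}(J)\ge\kappa a_d(s)\). Set \(g_s:=\Pp_\alpha(\mathcal L^\alpha_{\mathbb X}\cap\mathscr T_s^{\kappa_0}(\mathcal K_s)\neq\varnothing)\).

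The first step is the discrete analogue of Lemma~\ref{lem:one-sided-surgery}:
\[
\mu^{\Loop}_{\mathsf B_{2s}}(\mathscr T_s^{\kappa_0}(K))\asymp \operatorname{cap}_{\mathbb X}(K)\,s^{2-d},
\qquad
\mu^{\Loop}_{\mathbb X}(\ell\cap K\ne\varnothing,\ \diam\ell>\delta s)\le C_\delta\operatorname{cap}_{\mathbb X}(K)\,s^{2-d}
\]
for \(K\subset\mathsf B_s\). These will come from the crossing-loop decomposition (the analogue of Lemma~\ref{lem:loop-cut} for random walk/cable loops, via last-exit decompositions and Harnack's inequality), together with \(\Pp_x(\tau_K<\tau_{\mathsf B_{2s}^c})\asymp s^{2-d}\operatorname{cap}(K)\) for \(x\) at distance \(\asymp s\). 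For the middle piece, we need that a random walk crossing \(\mathsf A(9s/8,7s/4)\) has capacity \(\ge\kappa_0a_d(s)\) with probability bounded below uniformly in \(s\). This replaces \cite{PPS}. In \(d=3\) it follows from the standard lower bound \(\operatorname{cap}\gtrsim s\) for random walk paths of diameter \(s\). For \(d=4,5\) it follows from the known capacity asymptotics of the random walk range (\(n/\log n\) and \(n\) for a range of \(n\asymp s^2\) steps), via Paley--Zygmund, while keeping the path inside the annulus with positive probability.

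With these estimates, the analogue of Lemma~\ref{lem:robust-prop} follows as before. First, \(g_s\asymp\alpha s^{2-d}\Ee[\operatorname{cap}(\mathcal K_s)]\). Monotonicity of \(\mathcal K_s\) in \(s\) gives \(g_{2s}\ge c_\star g_s\). Also \(g_\rho\gtrsim\alpha\rho^{2-d}\) and \(p_\rho\le1\lesssim g_\rho/\alpha\) at bounded scales \(\rho\in[2,4)\), with constants uniform for \(\alpha\in I\) since \(\alpha\ge\alpha_->0\). The recursion of Proposition~\ref{lem:residual-extension} is proved identically:
\[
p_s^{\mathbb X,\alpha}\le C_\delta g_s+\chi^{\mathbb X,\alpha_+}_{5/8,3/4}(\delta)\,p^{\mathbb X,\alpha}_{s/2}.
\]
If the origin cluster built from loops of diameter \(\le\delta s\) reaches \(\mathsf S_s\), then it contains disjoint crossings of \(\mathsf A(1,s/2)\) and \(\mathsf A(5s/8,3s/4)\). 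BK then gives the product bound, and the supremum over \(r\) built into the definition of \(\chi^{\mathbb X}\) absorbs the scale. Otherwise a loop of diameter \(>\delta s\) hits the small-loop cluster, which lies in \(\mathsf B_s\), and the large-loop bound above converts its mass into \(C_\delta g_s\). Since \(\alpha_+\in\mathrm I_0^{\mathbb X}\), we can choose \(\delta\) with \(\chi^{\mathbb X,\alpha_+}_{5/8,3/4}(\delta)<c_\star/2\). The dyadic iteration from the proof of Theorem~\ref{thm:main} then yields \(p_r^{\mathbb X,\alpha}\le Cg_r\). Finally, the event defining \(g_r\) is contained in \(\mathsf H_r^{\mathbb X,\eta_0,\kappa_0}\) with \(\eta_0=1/2\), because \(J\) joins \(\mathsf S_{5r/4}\) to \(\mathsf S_{7r/4}\).

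The main obstacle is the uniform middle-piece capacity estimate and the loop-measure surgery in the discrete setting. Box-shaped spheres and lattice effects require care with Harnack constants. In \(d=4\) the logarithmic correction must come out exactly as \(a_4(r)=r^2/\log r\) uniformly, and for the metric graph one must check that cable excursions do not spoil the capacity comparisons. I would first handle \(\mathbb Z^d\), then transfer to \(\widetilde{\mathbb Z}^d\) using the fact that the trace on vertices of a cable path is a random walk path with comparable capacity.
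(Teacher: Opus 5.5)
Your proposal is correct and follows the paper's proof essentially step for step. It uses the terminal loop class with capacity threshold $\kappa_0 a_d(r)$, the comparison $m_r^{\mathbb X}(K)\asymp r^{2-d}\operatorname{cap}_{\mathbb X}(K)$ with the large-loop bound, the propagation $g_{2r}\ge a\,g_r$, the BK/large-loop recursion with $\delta$ chosen via $\alpha_+\in\mathrm I_0^{\mathbb X}$, and the dyadic induction of Theorem~\ref{thm:main}. The differences are cosmetic: the paper omits the terminal-loop comparison (deferring to adaptations of \cite{CS16} and Lupu's construction) where you sketch the random-walk-range capacity input, and it starts the induction at scales $r\ge 32$ rather than $[2,4)$ to avoid lattice effects at tiny radii.
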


\begin{proof}[Proof of Theorem~\ref{thm:rwls-robust-summary}]
Applying Theorem~\ref{rwls:cor:robust-crossing} for \(\alpha\in\mathrm{I}_0^{\mathbb X}\), we get
\(\Pp_\alpha(\mathsf H_r^{\mathbb X,\eta_0,\kappa_0})
\ge c p_r^{\mathbb X,\alpha}\).  The monotonicity of capacity gives
\(\operatorname{cap}_{\mathbb X}(\mathcal C(0)\cap\mathsf B_{2r})
\ge\operatorname{cap}_{\mathbb X}(J)\ge\kappa_0a_d(r)\), and the claim
\eqref{eq:rwls-robust-summary} follows.
\end{proof}

We first give the counterpart of Lemma~\ref{lem:one-sided-surgery}.
For \(\kappa>0\), \(r\ge32\), and \(K\subset\mathsf B_r\), define
\begin{equation}
\label{rwls:eq:terminal-loop-class}
 \mathscr T_r^{\mathbb X,\kappa}(K)
 :=
 \left\{
 \begin{gathered}
 \ell\subset\mathsf B_{2r}:\ \ell\cap K\ne\varnothing\text{ and there is }
 J\subset\ell\cap\mathsf A(0,r,2r)\\
 \text{joining }
 \mathsf S_{5r/4}\text{ to }\mathsf S_{7r/4}
 \text{ such that }\operatorname{cap}_{\mathbb X}(J)\ge\kappa a_d(r)
 \end{gathered}
 \right\}.
\end{equation}

\begin{lemma}
\label{rwls:lem:terminal-loop-comparison}
There are constants \(\kappa_0,c,C>0\) such that, for every \(r\ge32\) and
\(K\) as above,
\begin{equation}
\label{rwls:eq:terminal-loop-comparison}
 c r^{2-d}\operatorname{cap}_{\mathbb X}(K)
 \le
 m_r^{\mathbb X}(K)
 :=\mu_{\mathsf B_{2r}}^{\Loop}
 \bigl(\mathscr T_r^{\mathbb X,\kappa_0}(K)\bigr)
 \le
 C r^{2-d}\operatorname{cap}_{\mathbb X}(K).
\end{equation}
Moreover, for every \(\delta\in(0,\frac12)\), there is
\(C_\delta<\infty\) such that, uniformly for \(r\ge32\) and every \(K\) as
above,
\begin{equation}
\label{rwls:eq:large-loop-terminal-comparison}
 \mu_{\mathbb X}^{\Loop}\bigl(
 \ell\cap K\ne\varnothing,\ \diam(\ell)>\delta r
 \bigr)
 \le C_\delta r^{2-d}\operatorname{cap}_{\mathbb X}(K)
 \le C_\delta m_r^{\mathbb X}(K).
\end{equation}
\end{lemma}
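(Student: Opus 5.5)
We follow the proof of Lemma~\ref{lem:one-sided-surgery} line by line, replacing Brownian path measures in \(\R^3\) by random-walk (or metric-graph Brownian) path measures on \(\mathbb X\). The continuum scaling is replaced by uniform estimates over \(r\ge32\). The inputs are standard potential theory on \(\mathbb Z^d\) and \(\widetilde{\mathbb Z}^d\). First, \(G(x,y)\asymp (1+|x-y|)^{2-d}\). Second, the Harnack inequality and uniform lower bounds on killed Green functions and Poisson kernels hold in annuli of comparable radii. Third, the hitting identity \(P_x(\tau_K<\tau_{\partial D})=\sum_K G_D(x,z)e_K^D(z)\) holds, together with \(\operatorname{cap}_D\ge\operatorname{cap}_{\mathbb X}\). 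Finally, the discrete crossing-loop decomposition at two boxes (the analogue of \cite[Proposition~4.7]{JL26}) holds, with boxes \(\mathsf B_{br}\) replaced by spheres; on \(\mathbb Z^d\) it is a decomposition at first/last visits to boundary layers. The surface factor \(r^{-2}\) in \(d=3\) becomes \(r^{1-d}\cdot r^{d-1}\) normalization, which yields the overall factor \(r^{2-d}\).

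For the lower bound in \eqref{rwls:eq:terminal-loop-comparison}, we restrict to loops that cross once, of the form \([\omega^{\rm in}\oplus\omega^{\rm mid}\oplus\omega^{\rm out}]\) with cutting layers \(\mathsf S_{9r/8},\mathsf S_{5r/4},\mathsf S_{7r/4}\). The inner piece contributes \(\sum_y\nu(\omega^{\rm in}\cap K\neq\varnothing)=P_x(\tau_K<\tau_{\mathsf S_{5r/4}})\ge c r^{2-d}\operatorname{cap}_{\mathbb X}(K)\), uniformly in \(x\in\mathsf S_{9r/8}\), since \(G_{\mathsf B_{5r/4}}(x,w)\gtrsim r^{2-d}\). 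The outer closing kernel sums to order one after normalization. The key new input is the middle piece. We need that a walk from \(\mathsf S_{5r/4}\) to \(\mathsf S_{7r/4}\) inside \(\mathsf A(9r/8,7r/4)\) has, with probability at least \(q>0\) uniformly in \(r\), a connected subpath \(J\) with \(\operatorname{cap}_{\mathbb X}(J)\ge\kappa_0a_d(r)\). This replaces \cite[Theorem~1.1]{PPS}.

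We prove this by a second-moment bound on capacity. If \(\nu\) is normalized occupation measure on \(J\), then \(\operatorname{cap}(J)\ge(\iint G\,d\nu\,d\nu)^{-1}\). The occupation time is of order \(r^2\) with probability bounded below. The expected double sum \(\sum_{s,t\le T}G(X_s,X_t)\) is of order \(r^2\cdot\sum_{t\le r^2}t^{(2-d)/2}\). This is \(\asymp r^3\) for \(d=3\), \(r^2\log r\) for \(d=4\), and \(r^2\) for \(d=5\). So \(\iint G\,d\nu\,d\nu\lesssim r^{-4}\cdot(\text{that})\), giving capacity \(\gtrsim r\), \(r^2/\log r\), and \(r^2\) respectively, which matches \(a_d(r)\). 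Markov's inequality on the double sum, intersected with the event of a macroscopic-duration crossing (invariance principle), then gives probability \(\ge q\). For the metric graph, local time replaces occupation counts, and the same computation works since \(\widetilde G\asymp G\) off the diagonal and is bounded on it. We expect this step to be the main obstacle, especially the logarithm in \(d=4\) and the need for uniformity in the starting point. If needed, we would condition on the crossing staying in a tube and apply Harnack.

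For \eqref{rwls:eq:large-loop-terminal-comparison} and the upper bound, we cover \(\mathsf B_r\) by \(O_\delta(1)\) boxes of radius \(\delta r/32\). A loop of diameter \(>\delta r\) meeting \(K_i\) exits \(\mathsf B(x_i,\delta r/4)\). By the one-set cut decomposition with summable crossing numbers (the return probability across the annulus is \(\le b^{2-d}<1\)), its mass is at most \(C(\delta r)^{1-d}\sum_{x\in\mathsf S(x_i,\delta r/16)}P_x(\tau_{K_i}<\infty)\lesssim(\delta r)^{2-d}\operatorname{cap}_{\mathbb X}(K_i)\). Here we use \(G\lesssim(\delta r)^{2-d}\) and the fact that the surface has size \((\delta r)^{d-1}\). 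Summing over \(i\) gives the first inequality. Since every loop in \(\mathscr T_r^{\mathbb X,\kappa_0}\) has diameter \(\ge r/2\), taking \(\delta=1/4\) gives the upper bound in \eqref{rwls:eq:terminal-loop-comparison}. The lower bound then yields the last inequality.
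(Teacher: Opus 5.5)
Your proposal is correct in outline, but it is more explicit than, and partly different from, what the paper does. The paper gives no argument beyond saying that the lattice estimates follow by adapting \cite[Lemma~2.7]{CS16} and the metric-graph ones from Lupu's construction \cite[Section~2]{Lup16}.

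You instead transcribe the continuum proof of Lemma~\ref{lem:one-sided-surgery} on both graphs at once: the one-crossing decomposition, the hitting identity with the relative equilibrium measure, and Green-function and exit-kernel bounds in annuli. You then replace \cite[Theorem~1.1]{PPS} by an energy bound for the normalized occupation measure of the middle piece. That last step is precisely the part the paper leaves implicit. CS16-type estimates compare loop masses with $\operatorname{cap}_{\mathbb X}(K)$, but say nothing about the capacity of the crossing subpath $J$. Your computation $\sum_{s,t\le Ar^2}\mathbb E\,G(X_s,X_t)\asymp r^2\sum_{u\le Ar^2}(1+u)^{1-d/2}$ is what produces $a_d(r)$ in $d=3,4,5$, including the logarithm in $d=4$. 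The paper's route, on the other hand, spares you from setting up the crossing decomposition and potential theory directly on $\widetilde{\mathbb Z}^{d}$, since the metric-graph case is reduced to the lattice one.

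A few points to tighten in your version:
\begin{itemize}
\item On $\{\epsilon r^2\le T\le Ar^2\}$, bound the random-horizon double sum by the one with deterministic horizon $Ar^2$ before applying Markov's inequality. The expectation is then that of the free walk, and uniform in the starting point.
\item The return probability across a box annulus is not $b^{2-d}$, but any uniform bound $<1$ suffices.
\item Since the sets $\mathsf B$ are $\ell^\infty$-boxes, shrink the radii $\delta r/32$ and $\delta r/4$ by a $d$-dependent factor. Otherwise a loop of diameter $>\delta r$ meeting the small box need not exit the large one.
\item Treat bounded scales directly using $\operatorname{cap}_{\mathbb X}(\{x\})\asymp 1$. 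This covers the choice of $\kappa_0$ at bounded $r$ and the range $r\lesssim 1/\delta$ in the covering argument.
\end{itemize}
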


\begin{proof}
For the lattice case, the estimates  can be obtained by adapting the argument of
\cite[Lemma~2.7]{CS16}. On the metric graph, they also follow directly from
Lupu's construction~\cite[Section~2]{Lup16}. We omit the details.
\end{proof}

The next two proofs follow the same strategy as the preceding  arguments for
the Brownian loop soup in \(\mathbb R^3\), so we keep the exposition brief.

\begin{proof}[Proof of Theorem~\ref{rwls:cor:robust-crossing}]
We can follow exactly the same route as in the proof of Theorem~\ref{thm:main}.
Fix a compact interval
\(I=[\alpha_-,\alpha_+]\subset\mathrm{I}_0^{\mathbb X}\).
For \(r\ge32\), let \(\mathcal K_r(0)\) be the union of the origin and all
clusters in \(\mathcal L_{\mathsf B_r}^{\alpha}\) that intersect it, and set
\begin{equation}\label{rwls:eq:rooted-terminal-probability}
 g_r^{\mathbb X,\alpha}
 :=\Pp_\alpha\bigl(
 \mathcal L_{\mathbb X}^{\alpha}\cap
 \mathscr T_r^{\mathbb X,\kappa_0}(\mathcal K_r(0))\ne\varnothing
 \bigr).
\end{equation}
The proof of Lemma~\ref{lem:robust-prop}, with
Lemma~\ref{rwls:lem:terminal-loop-comparison} in place of
Lemma~\ref{lem:one-sided-surgery}, gives an \(a\in(0,1)\) such that,
uniformly in \(\alpha\in I\) and \(r\ge32\),
\begin{equation}\label{rwls:eq:rooted-terminal-propagation}
 g_{2r}^{\mathbb X,\alpha}\ge ag_r^{\mathbb X,\alpha}
 \quad \text{and} \quad
 \inf_{32\le\rho<64}g_\rho^{\mathbb X,\alpha}>0.
\end{equation}
Similarly, the proof of Proposition~\ref{lem:residual-extension} gives, for
every \(\delta\in(0,\frac18)\), a constant \(C_\delta\ge1\) such that,
uniformly in \(\alpha\in I\) and \(r\ge32\),
\begin{equation}\label{rwls:eq:rooted-residual-recursion}
 p_r^{\mathbb X,\alpha}
 \le C_\delta g_r^{\mathbb X,\alpha}
 +\chi_{5/8,3/4}^{\mathbb X,\alpha_+}(\delta)
 p_{r/2}^{\mathbb X,\alpha}.
\end{equation}
Since \(\alpha_+\in\mathrm{I}_0^{\mathbb X}\), we can choose
\(\delta\in(0,\frac18)\) so that
\(\chi_{5/8,3/4}^{\mathbb X,\alpha_+}(\delta)<a/2\).
The remainder of the proof is the same as the dyadic induction in the proof of
Theorem~\ref{thm:main}.
\end{proof}

\begin{proof}[Proof of \eqref{rwls:eq:small-intensity-bounds} in Theorem~\ref{thm:rwls-strict-summary}]
Fix \(\bar\alpha\in(0,\frac12)\). By
Theorem~\ref{thm:metric-critical-values} and domination,
 \(\bar\alpha\in\mathrm I_0^{\mathbb X}\) for both choices of
\(\mathbb X\in\{\mathbb Z^3,\widetilde{\mathbb Z}^{3}\}\).
All constants below are uniform in the model and in
\(0<\alpha\le\bar\alpha\). Retain \(\mathcal K_r(0)\) and
\(g_r^{\mathbb X,\alpha}\) from \eqref{rwls:eq:rooted-terminal-probability},
and write
\[
 A(r):=\Ee_\alpha[\operatorname{cap}_{\mathbb X}(\mathcal K_r(0))],
 \qquad p_r:=p_r^{\mathbb X,\alpha},
 \qquad g_r:=g_r^{\mathbb X,\alpha}.
\]
Repeating the proof of Lemma~\ref{lem:robust-prop}, with
Lemma~\ref{rwls:lem:terminal-loop-comparison} in place of
Lemma~\ref{lem:one-sided-surgery} and using
\(\operatorname{cap}_{\mathbb X}(\{0\})>0\), we have
\begin{equation}\label{rwls:eq:uniform-terminal-capacity}
 g_r\asymp\frac{\alpha A(r)}r,
 \qquad g_{2r}\ge a g_r,
 \qquad r\ge32,
\end{equation}
for some fixed \(a\in(0,1)\), together with
\(p_2\le C\alpha\) and \(p_\rho\le Cg_\rho\) for \(32\le\rho<64\).
Choosing the cutoff at intensity \(\bar\alpha\) and applying the dyadic
iteration from the proof of Theorem~\ref{thm:main}, we obtain
\begin{equation}\label{rwls:eq:uniform-arm-capacity}
 p_r\asymp g_r\asymp\frac{\alpha A(r)}r,
 \qquad r\ge32.
\end{equation}

Choose a fixed \(r_0\ge32\) such that
\(\kappa_0r_0/2\ge\operatorname{cap}_{\mathbb X}(\{0\})\),
and set \(r_n=2^nr_0\) and \(A_n=A(r_n)\).
Applying the tail integrations in
\eqref{imp:eq:capacity-upper-recursion}--\eqref{imp:eq:capacity-lower-recursion}
with seed \(\{0\}\) and
\(\operatorname{cap}_{\mathbb X}(\mathsf B_t)\le C(1+t)\),
and using
\(\operatorname{cap}_{\mathbb X}(\mathcal K_{2r_j}(0))\ge\kappa_0r_j\) together with
\eqref{rwls:eq:uniform-arm-capacity}, we conclude that
\begin{equation}\label{rwls:eq:small-intensity-capacity-recursion}
 c_0+c_2\alpha\sum_{j=0}^{n-1}A_j
 \le A_n\le
 C_0+C_2\alpha\sum_{j=0}^{n-1}A_j.
\end{equation}
As in \eqref{imp:eq:capacity-gronwall}, choosing
\(c_0\le A_0\le C_0\) and applying induction gives
\[
 c_0(1+c_2\alpha)^n\le A_n\le C_0(1+C_2\alpha)^n.
\]
Substituting into \eqref{rwls:eq:uniform-arm-capacity} and using
\(\log(1+t)\asymp t\) as \(t\downarrow0\) proves
\eqref{rwls:eq:small-intensity-bounds} at the scales \(r_n\) for sufficiently
small \(\alpha\). We conclude the proof by possibly adjusting the constants.
\end{proof}

\subsection{One-arm comparisons}
\label{rwls:sec:strict-comparison}
From this point onward, we return to \(d=3\) and fix
\(\mathbb X\in\{\mathbb Z^3,\widetilde{\mathbb Z}^{3}\}\).
Throughout, we fix \(I=[\alpha_-,\alpha_+]\subset(0,\alpha_\ast^{\mathbb X})\)
as in Section~\ref{rwls:sec:robust-reduction}, and allow constants to depend on \(I\).
Our goal is to prove the following
quantitative version of \eqref{eq:rwls-strict-summary}.

\begin{theorem}
\label{rwls:thm:lacunary-comparison}
There exist \(c,C>0\) such that, for every
\(\alpha<\beta\) in \(I\) and $r\ge2$,
\begin{equation}\label{rwls:eq:lacunary-comparison}
 p_r^{\mathbb X,\alpha}
 \le C
 r^{-c(\beta-\alpha)}p_r^{\mathbb X,\beta}.
\end{equation}
\end{theorem}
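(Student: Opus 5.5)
We mirror the continuum argument of Proposition~\ref{prop:xi-linear-separation}: produce, at each of \(\asymp\log r\) well-separated dyadic scales \(s\), a class of loops of diameter comparable to \(s\) whose total pivotal mass for \(\mathcal E_r^{\mathbb X}\) is at least \(c\,p_r^{\mathbb X,\lambda}\). Then Lemma~\ref{lem:restricted-russo} gives \(\frac{\dint}{\dint\lambda}\log p_r^{\mathbb X,\lambda}\ge c\log r\) for a.e.\ \(\lambda\in I\), and integrating from \(\alpha\) to \(\beta\) gives \eqref{rwls:eq:lacunary-comparison}. Small \(r\) is absorbed into \(C\). Russo's formula on \(\mathbb X\) applies because \(\mu_{\mathbb X}^{\Loop}\) of loops in a bounded box with diameter \(\ge s\) is finite.

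The first step is the discrete quasi-multiplicativity \(p_s^{\mathbb X,\lambda}p_{r/s}^{\mathbb X,\lambda}\gtrsim p_r^{\mathbb X,\lambda}\) for \(16\le s\le r/64\). We adapt Theorem~\ref{thm:submultiplicativity}: BK combined with the splitting bound gives \(p_r^{\mathbb X}\lesssim p_s^{\mathbb X}\cdot P(\mathsf S_{4s}\leftrightarrow \mathsf S_r)\). The discrete analogue of Lemma~\ref{lem:splitting} is obtained from the last-exit/first-entrance decomposition of lattice and metric-graph loops, as in \cite{CS16}, together with Lemma~\ref{rwls:lem:terminal-loop-comparison}. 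The annulus crossing \(\mathsf S_{4s}\leftrightarrow\mathsf S_r\) is not rooted, but because the capacity scale is linear in \(d=3\) we can compare it to \(p_{r/s}^{\mathbb X}\). We do this by a union over \(O(s^2)\) starting boxes on \(\mathsf S_{4s}\), each weighted via its capacity contribution. More simply, we can work with an unrooted annulus event \(\mathcal E_{s,r}\) throughout and use the rooted–unrooted comparison \(p_r^{\mathbb X}\asymp p_{1,r}\cdot O(1)\), which holds since \(\mathrm{cap}(\{0\})>0\) and loops of bounded size connect the origin to \(\mathsf S_1\) with positive probability.

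The second step builds the pivotal configuration \(\mathsf G_r(s)\). It consists of three pieces:
\begin{itemize}
\item an inner event from Theorem~\ref{rwls:cor:robust-crossing}, localized to \(\mathsf B_s\) via the discrete version of Lemma~\ref{lem:local}: \(\mathcal C(0)\) reaches \(\mathsf S_{s/2}\) and contains a piece \(J^+\subset\mathsf A(s/4,s)\) with \(\mathrm{cap}_{\mathbb X}(J^+)\ge\kappa_0 s\);
\item a barrier event \(\mathsf Z_s\) in \(\mathsf A(s,2s)\): no loop of diameter \(>\delta s\) meets \(\mathsf A(5s/4,3s/2)\), and there is no small-loop crossing. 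This has probability \(\ge c\) by choosing \(\delta\) with \(\chi^{\mathbb X,\alpha_+}_{5/4,3/2}(\delta)\le\frac12\), since \(\alpha_+\in\mathrm I_0^{\mathbb X}\), and using \cite[Lemma~2.7]{CS16} for the large loops;
\item an outer event in \(\mathsf B_{2s}^c\) crossing from \(\mathsf S_{4s}\) to \(\mathsf S_r\), with a good inner piece \(J^-\subset\mathsf A(2s,8s)\) of capacity \(\gtrsim s\).
\end{itemize}
The three pieces are independent because they use disjoint loop classes, so \(\mathbb P(\mathsf G_r(s))\gtrsim p_s p_{r/s}\gtrsim p_r\). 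On \(\mathsf G_r(s)\) the event \(\mathcal E_r^{\mathbb X}\) fails, while any loop in \(\mathsf B_{16s}\) hitting both \(J^+\) and \(J^-\) is pivotal. The mass of such loops is \(\gtrsim s^{-2}\mathrm{cap}(J^+)\mathrm{cap}(J^-)\ge c\), by the discrete analogue of \eqref{eq:two-capacity-welding}. That analogue follows from last-exit decompositions and Green function bounds \(G(x,y)\asymp|x-y|^{-1}\).

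The main obstacle is the outer good inner piece. In the continuum we obtained it by inversion (Corollary~\ref{lem:localized-two-sided}), which is unavailable on \(\mathbb X\). Our first attempt is to rerun the recursion of Theorem~\ref{thm:main} in reverse, seeding from \(\mathsf S_r\) inward. Define \(\tilde g\) via loops hitting the cluster of \(\mathsf S_r\) in \(\mathsf B_{2s}^c\) and containing a capacity-\(\gtrsim s\) piece in \(\mathsf A(2s,8s)\). The analogue of \eqref{eq:residual-extension} uses small-loop crossings of \(\mathsf A(5s'/4,3s'/2)\) at scales \(s'\) decreasing from \(r\), again controlled by \(\chi^{\mathbb X,\alpha_+}(\delta)\). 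The capacity–mass comparison for outer sets, analogous to \eqref{eq:terminal-loop-comparison} with \(\mathrm{cap}\) replaced by a harmonic-measure-from-outside quantity, holds in \(d=3\) by the same decomposition. The resulting propagation \(\tilde g_{s/2}\ge c_\star\tilde g_s\) and the dyadic induction then give the required bound \(\gtrsim p_{r/s}\). Finally, we sum over \(s\in\{64^k\}\), which contributes \(\asymp\log r\) disjoint loop classes, to finish.
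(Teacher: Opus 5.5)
Your overall architecture is the paper's. You sum Russo's formula over $\asymp\log r$ disjoint scale classes. The pivotal configuration is an inner good piece, a barrier and an outer good piece on disjoint loop classes. You use the welding mass $\gtrsim s^{-2}\operatorname{cap}_{\mathbb X}(J^+)\operatorname{cap}_{\mathbb X}(J^-)$ and a BK/splitting upper bound for $p_r^{\mathbb X,\alpha}$.

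The step that fails as written is the passage to $p^{\mathbb X,\alpha}_{r/s}$. This means the claims $p_s^{\mathbb X,\alpha}p_{r/s}^{\mathbb X,\alpha}\gtrsim p_r^{\mathbb X,\alpha}$ and $\Pp_\alpha(\text{outer good piece})\gtrsim p^{\mathbb X,\alpha}_{r/s}$. In the continuum, this conversion is the identity $\Pp_\alpha(\mathcal E_{4s,r})=p^{\alpha}_{r/(4s)}$ from scale invariance; that is why Corollary~\ref{lem:localized-two-sided} and Lemma~\ref{lem:localized-robust-crossing} can be phrased with $p^\alpha_{r/s}$. On $\mathbb X$ there is no such identity. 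The crossing of $\mathsf A(4s,r)$ involves the soup at scales between $s$ and $r$, while the rooted arm $p^{\mathbb X,\alpha}_{r/s}$ involves scales between $1$ and $r/s$. Nothing available compares the two up to constants.

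Your suggested justifications do not perform this rescaling:
\begin{itemize}
\item A union over $O(s^2)$ unit boxes on $\mathsf S_{4s}$ produces arms of length $\asymp r$ from those boxes. These are quantities of size $p^{\mathbb X,\alpha}_r$, not $p^{\mathbb X,\alpha}_{r/s}$, and you only get an upper bound.
\item The rooted--unrooted comparison $p_r^{\mathbb X,\alpha}\asymp\Pp_\alpha(\mathsf S_1\leftrightarrow\mathsf S_r)$ is harmless, but it only moves the inner radius from $0$ to $1$. It does not turn $\mathsf A(4s,r)$ into $\mathsf A(1,r/s)$.
\end{itemize}
Your ``more simply'' alternative is the right one only if it means never converting the annulus crossing into a rooted arm. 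Your later chain $\Pp(\mathsf G_r(s))\gtrsim p_sp_{r/s}\gtrsim p_r$ and the target $\gtrsim p_{r/s}$ for the outer piece still rely on the conversion.

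The detour is unnecessary, and removing it gives the paper's proof. Your BK/splitting step bounds $p_r^{\mathbb X,\alpha}$ by an inner crossing probability times the crossing probability of $\mathsf A(4s,r)$. Your reverse recursion, seeded at $\mathsf S_r$, bounds the outer good-piece event from below by that same annulus crossing. So keep the annulus crossing throughout. This is what the paper does:
\begin{itemize}
\item Lemma~\ref{rwls:lem:localized-splitting} gives $p_r^{\mathbb X,\alpha}\le C\,\Pp_\alpha(\mathcal E_{2s}^{+})\,\Pp_\alpha(\mathcal E_{16s,r}^{-})$ for the localized events.
\item Theorem~\ref{thm:rwls-localized-robust-summary} compares each good-piece event with the matching localized crossing.
\item Proposition~\ref{rwls:prop:localized-pivotal} and Russo's formula then finish as in your plan.
\end{itemize}

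The paper states the exterior half of Theorem~\ref{thm:rwls-localized-robust-summary} without proof. Your reverse recursion is the natural discrete substitute for inversion, but one point needs care. At scale $s'$, the large-loop alternative must consist of loops of diameter $>\delta s'$ that meet $\mathsf B_{2s'}$, not all loops of diameter $>\delta s'$. Loops of diameter $>\delta s'$ hitting the seed near $\mathsf S_r$ have mass unbounded in $r/s'$, so they cannot be charged to $\tilde g_{s'}$.

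With the restricted class, the complementary crossing still splits off, by BK, a small-loop crossing of $\mathsf A(5s'/4,3s'/2)$. Write $\tilde p_{s'}$ for the probability that $\mathsf S_r$ is connected to $\mathsf S_{s'}$. This gives $\tilde p_{s'}\le C_\delta\tilde g_{s'}+\chi^{\mathbb X,\alpha_+}_{5/4,3/2}(\delta)\,\tilde p_{2s'}$, and the dyadic induction applies as you describe.
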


\begingroup
\begin{proof}[Proof of \eqref{eq:rwls-strict-summary}]
For \(0<\alpha<\beta<\alpha_\ast^{\mathbb X}\), we apply
Theorem~\ref{rwls:thm:lacunary-comparison} on \(I=[\alpha,\beta]\) and conclude the proof.
\end{proof}
\endgroup

The proof of Theorem~\ref{rwls:thm:lacunary-comparison} is based on the same strategy
as that of Proposition~\ref{prop:xi-linear-separation}.
We consider two localized one-arm events with capacity bounds
in disjoint annuli. A loop joining the corresponding good outer and inner pieces is pivotal for the one-arm event
\(\mathcal E_r^{\mathbb X}\), and
 Russo's formula accumulates its mass over a number of disjoint scales of order \(\log r\).
This will yield \eqref{rwls:eq:lacunary-comparison}.

We begin by introducing the localized one-arm events
\(\mathcal E_s^{+}\) and \(\mathcal E_{s,r}^{-}\),
and, following Section~\ref{sec:strict-monotonicity}, denote
the events with additional diameter and capacity requirements by
\(\mathsf H_{\mathsf B_{2s}}^{+}(s)\) and
\(\mathsf H_{\mathsf B_{s/2}^{\rm c}}^{-}(s,r)\).
After decreasing the constants \(\eta_0,\kappa_0\) from
Theorem~\ref{rwls:cor:robust-crossing} if necessary, we use the same constants
for these localized events.

\begin{definition}[Localized one-arm events with good pieces]
Fix \(2\le s\le \frac r2\).  Let \(\mathcal E_s^{+}\) be the event
that \(0\) is connected to \(\mathsf S_s\) in
\(\mathcal L_{\mathsf B_{2s}}^\alpha\), and let
\(\mathcal E_{s,r}^{-}\) be the event that \(\mathsf S_s\) is
connected to \(\mathsf S_r\) in
\(\mathcal L_{\mathsf B_{s/2}^{\rm c}}^\alpha\).
Let \(\mathsf H_{\mathsf B_{2s}}^{+}(s)\) be the event that
\(\mathcal E_s^{+}\) is realized by one cluster containing a compact
connected set \(J^+\subset\mathsf A(0,\frac s2,2s)\) with diameter at least
\(\eta_0s\) and capacity at least \(\kappa_0s\).  Let
\(\mathsf H_{\mathsf B_{s/2}^{\rm c}}^{-}(s,r)\) be the event that
\(\mathcal E_{s,r}^{-}\) is realized by one cluster containing a compact
connected set
\(J^-\subset\mathsf A(0,\frac s2,2s)\) satisfying the same two conditions.
We call \(J^+\) and \(J^-\) a good outer piece and a good inner piece, respectively.
\end{definition}

We record the following localized version of Theorem~\ref{rwls:cor:robust-crossing},
which can be proved in a similar fashion by restricting the loop soup to
the corresponding inner and exterior domains. We hence omit its proof for brevity.

\begin{theorem}[Localized capacity lower bounds]
\label{thm:rwls-localized-robust-summary}
There is \(c>0\) such that, for every \(\alpha\in I\) and
\(2\le s\le \frac r2\),
\[
\Pp_\alpha(\mathsf H_{\mathsf B_{2s}}^{+}(s))
\ge c\Pp_\alpha(\mathcal E_s^{+}),\qquad
\Pp_\alpha(\mathsf H_{\mathsf B_{s/2}^{\rm c}}^{-}(s,r))
\ge c\Pp_\alpha(\mathcal E_{s,r}^{-}).
\]
\end{theorem}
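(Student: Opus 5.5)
The plan is to rerun the dyadic recursion from the proof of Theorem~\ref{thm:main}, now localized to the domains $\mathsf B_{2s}$ and $\mathsf B_{s/2}^{\rm c}$, with Lemma~\ref{rwls:lem:terminal-loop-comparison} supplying the capacity input.

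\textbf{Outer piece.} For $\mathsf H^+_{\mathsf B_{2s}}(s)$, work with the soup $\mathcal L^\alpha_{\mathsf B_{2s}}$ and set $P_m:=\Pp_\alpha(0\xleftrightarrow{\mathcal L^\alpha_{\mathsf B_{2s}}}\mathsf S_{\rho2^m})$ along the dyadic scales $\rho2^m\le s$, $\rho\in[32,64)$. At each such scale $t$ we have $\mathsf B_{2t}\subset\mathsf B_{2s}$, and the terminal loops in $\mathscr T_t^{\mathbb X,\kappa_0}$ live in $\mathsf B_{2t}$. Hence the quantity $g_t^{\mathbb X,\alpha}$ of \eqref{rwls:eq:rooted-terminal-probability} may equally be computed inside $\mathsf B_{2s}$, exactly as $g_s^\alpha$ was domain-independent for $D\supset B_{2s}$. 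The proof of Proposition~\ref{lem:residual-extension} then applies verbatim with $D=\mathsf B_{2s}$, using \eqref{rwls:eq:large-loop-terminal-comparison} for $\mu_D^{\Loop}\le\mu_{\mathbb X}^{\Loop}$, and gives
\[
P_M\le C_\delta g_s^{\mathbb X,\alpha}+\chi^{\mathbb X,\alpha_+}_{5/8,3/4}(\delta)P_{M-1}.
\]
Choose $\delta$ at $\alpha_+$ so that the coefficient is $<a/2$. Iterating with \eqref{rwls:eq:rooted-terminal-propagation} yields $\Pp_\alpha(\mathcal E_s^+)\le Cg_s^{\mathbb X,\alpha}$. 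Every terminal loop produces a good outer piece: it lies in $\mathsf A(0,s/2,2s)$, has diameter at least $\eta_0 s$ after shrinking $\eta_0$, and has capacity at least $\kappa_0 a_3(s)=\kappa_0 s$. This gives the first inequality. Small $s$ ($2\le s<64$) is handled directly, since both probabilities are then comparable to $\alpha$ up to constants.

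\textbf{Inner piece.} For $\mathsf H^-$ there is no inversion symmetry on the lattice, so I would redo the recursion outward from the target. Work in $D=\mathsf B_{s/2}^{\rm c}$. Explore the cluster $\mathcal K^-$ of $\mathsf S_r$ using only loops in $\mathsf B_{s/2}^{\rm c}$ that avoid $\mathsf B_{s'}$, over dyadic scales $s'\in[s,r/2]$ running from $r$ downward, and look for a terminal loop near scale $s$: a loop in $\mathsf A(0,s/2,2s)$ hitting $\mathcal K^-$ and carrying a piece of capacity $\gtrsim s$.

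The needed exterior analogue of Lemma~\ref{rwls:lem:terminal-loop-comparison} is that, for $K\subset \mathsf B_{s}^{\rm c}$, the mass of such loops is comparable to $s^{-1}$ times the capacity of $K$ seen from $\mathsf S_s$, e.g.\ $s\,\Pp_{\text{unif on }\mathsf S_{s}}(\tau_K<\infty)$. It should also dominate the mass of all loops of diameter $>\delta s$ hitting $K$ near $\mathsf B_{2s}$. The recursion reads
\[
\Pp(\mathcal E^-_{s,r})\le C_\delta g^-_s+\chi\,\Pp(\mathcal E^-_{2s,r}).
\]
For the monotonicity $g^-_{s}\gtrsim g^-_{2s}$, I would use $\Pp_x(\tau_K<\infty)\asymp$ harmonic measure comparisons (Harnack) between $\mathsf S_s$ and $\mathsf S_{2s}$. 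The small-loop crossing term is again controlled by $\chi^{\mathbb X,\alpha_+}_{5/8,3/4}(\delta)$ via scale and translation invariance of the annulus crossing. The initial scale $s'\approx r/2$ is bounded directly.

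\textbf{Main obstacle.} The exterior-side comparison is the step I expect to be hardest: it must give the terminal-loop mass versus large-loop mass estimate for sets outside a ball, with constants uniform in $r/s$, in place of the inversion used in Corollary~\ref{lem:localized-two-sided}. I would attempt it via a last-exit decomposition of loops at $\mathsf S_{5s/4}$ and $\mathsf S_{7s/4}$, mirroring Lemma~\ref{lem:loop-cut}, combined with the random-walk estimates in \cite[Lemma~2.7]{CS16}.
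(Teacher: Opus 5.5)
\textbf{The outer piece.} Your argument for $\mathsf H^+_{\mathsf B_{2s}}(s)$ is the intended one. The paper omits this proof, saying only that one restricts to the inner and exterior domains. What you do is run the recursion behind Theorem~\ref{rwls:cor:robust-crossing} in $D=\mathsf B_{2s}$. This is legitimate because $g_t^{\mathbb X,\alpha}$ only involves loops in $\mathsf B_{2t}\subset\mathsf B_{2s}$.

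\textbf{The inner piece: the exterior input is false as stated.} An inward recursion seeded at $\mathsf S_r$ is also the right idea for $\mathsf H^-$. The trouble is that the exterior comparison you formulate is not merely hard but false as written.
\begin{itemize}
\item The seed $\mathcal K^-$ extends out to $\mathsf S_r$. In the large-loop step, the loop that completes the crossing may attach to $\mathcal K^-$ far outside $\mathsf B_{2s}$ and still dive down to $\mathsf S_s$.
\item The split itself must be localized: ``large'' should mean diameter $>\delta s$ \emph{and} meeting $\mathsf B_{cs}$. A global diameter split is useless, since loops of diameter $>\delta s$ hitting $\mathsf S_r$ have mass $\gg1$ when $r\gg s$.
\item Terminal loops contained in $\mathsf A(0,s/2,2s)$ cannot see the far parts of $\mathcal K^-$. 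For $K=\mathsf S_{3s}$ their mass vanishes, while the localized large loops hitting $K$ have mass of order one. So the domination you need fails.
\item Your normalization is also off. For a piece of $K$ of capacity $\rho$ at distance $R\gg s$, one has $\Pp_{\mathrm{unif}\,\mathsf S_s}(\tau_K<\infty)\asymp\rho/R$. The mass of loops meeting both $\mathsf B_s$ and that piece is $\asymp s\rho/R^2$. The missing factor $s/R$ is the cost of a loop coming back.
\end{itemize}

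\textbf{The fix.} Mirror Lemma~\ref{rwls:lem:terminal-loop-comparison} literally.
\begin{itemize}
\item Take as exterior terminal loops those contained in $\mathsf B_{s/2}^{\rm c}$ that hit $K$; they are free to go arbitrarily far out.
\item Localize only the good piece, e.g.\ $J\subset\ell\cap\mathsf A(0,s/2,s)$ joining $\mathsf S_{5s/8}$ to $\mathsf S_{7s/8}$ with $\operatorname{cap}_{\mathbb X}(J)\ge\kappa s$.
\item Compare their mass with the hit-and-return quantity $Q_s(K):=\Pp_{\mathrm{unif}\,\mathsf S_s}(\text{hit }K\text{, then return to }\mathsf B_s)$. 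In $\R^3$ this is comparable to $\Capa(\iota K)/s$, with $\iota$ the inversion in $S_s$.
\end{itemize}
With this class, the alternating-crossing decomposition you mention gives two things, uniformly in $r/s$: the terminal mass is $\asymp Q_s(K)$, and it dominates the localized large-loop mass. Harnack between $\mathsf S_s$ and $\mathsf S_{2s}$, together with $\mathcal K^-_{2s}\subset\mathcal K^-_s$ and monotonicity of $Q_s$ in $K$, gives $g^-_s\gtrsim g^-_{2s}$.

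Such a terminal loop lies in $\mathcal L^\alpha_{\mathsf B_{s/2}^{\rm c}}$, meets $\mathsf S_s$, and carries $J\subset\mathsf A(0,s/2,2s)$. It therefore realizes $\mathsf H^-_{\mathsf B_{s/2}^{\rm c}}(s,r)$. With this correction the rest of your iteration goes through: choose $\chi<a/2$ at $\alpha_+$, and at the base scale $\approx r/2$ use $g^-\gtrsim\alpha_-$.
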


To compare the two localized one-arm probabilities with the full one-arm
probability, we need a lattice and metric-graph version of the splitting estimate in Lemma~\ref{lem:splitting}.
For \(K_1,K_2\subseteq D\subseteq\mathbb X\), write
\[
 \mathcal B(K_1,K_2;D)
 :=\mu_D^{\Loop}
 (\ell\cap K_1\ne\varnothing,\ell\cap K_2\ne\varnothing).
\]

\begin{lemma}
\label{rwls:lem:deterministic-factorization}
There is \(C<\infty\) such that, for every \(s\ge2\) and all sets
\(K_1,K_2\), finite in the lattice case and compact in the metric-graph case, with
\(K_1\subset \mathsf{B}_{2s}\) and
\(K_2\subset\mathsf B_{16s}^{\rm c}\),
\begin{equation}\label{rwls:eq:deterministic-factorization}
 \mathcal B(K_1,K_2;\mathbb X)
 \le C\,\mathcal B(K_1,\mathsf S_{3s};\mathsf B_{4s})
 \mathcal B
 (\mathsf S_{12s},K_2;\mathsf B_{8s}^{\rm c}).
\end{equation}
\end{lemma}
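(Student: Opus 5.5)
The plan is to mimic the proof of Lemma~\ref{lem:splitting} (together with the two-set estimate \eqref{eq:loop-cut-two-set} of Lemma~\ref{lem:loop-cut}), replacing the Brownian crossing decomposition of \cite[Proposition~4.7]{JL26} by its discrete analogue. Only the upper bound is needed, which simplifies matters.

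\emph{Step 1: decomposition.} Every loop hitting both $K_1\subset\mathsf B_{2s}$ and $K_2\subset\mathsf B_{16s}^{\rm c}$ crosses the annulus between two intermediate shells, say $\mathsf S_{a_1s}$ and $\mathsf S_{a_2s}$ with $4<a_1<a_2<8$, at least once. Cut the loop at its successive hitting times of these two shells. This gives an alternating sequence of $2n$ pieces: odd pieces run inside $\mathsf B_{a_2s}$ from $\mathsf S_{a_1s}$ to $\mathsf S_{a_2s}$, and even pieces run in $\mathsf B_{a_1s}^{\rm c}$ from $\mathsf S_{a_2s}$ back to $\mathsf S_{a_1s}$. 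The loop measure then factorizes as a product of path measures with weight $1/n$. For random walk loops this is the standard rooted-loop decomposition: root at a crossing, and the $1/n$ comes from the $n$ choices of root. On the metric graph it follows from Lupu's excursion decomposition in the same way.

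\emph{Step 2: union bound and summation.} Mark one odd piece hitting $K_1$ and one even piece hitting $K_2$; summing over positions cancels the $1/n$ up to a factor $n$, which is harmless. The unmarked pieces contribute a geometric series. The reason is that an even piece started on $\mathsf S_{a_2s}$ returns to $\mathsf S_{a_1s}$ with probability at most $\sup_y\Pp_y(\tau_{\mathsf B_{a_1s}}<\infty)\le q<1$ uniformly in $s$, by the Green function asymptotics $G(x,y)\asymp|x-y|^{2-d}$. The bound is therefore $C\sum_n n q^{n}<\infty$ times
\[
\sup_{x,y}\Pp_x(\text{hit }K_1\text{ before }\mathsf S_{a_2s})\cdot\sum_{y,x}\text{(mass of excursions from }\mathsf S_{a_2s}\text{ to }\mathsf S_{a_1s}\text{ in }\mathsf B_{a_1s}^{\rm c}\text{ hitting }K_2).
\]
Here the Harnack inequality, which holds for discrete and metric-graph harmonic functions in annuli uniformly in $s$, is used to replace pointwise boundary entrances by averages over the shells, exactly as in Lemma~\ref{lem:loop-cut}. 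The boundary normalizations (such as $r^{-4}$ in $d=3$ becoming $s^{-2(d-1)}$) must be tracked, but they cancel in the final comparison.

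\emph{Step 3: closing pieces into local loops.} For the inner factor, close each marked path from $x$ to $y$ by a path from $y$ back to $x$ inside $\mathsf A(0,a_1s/2,4s)$. For this to be in range, the first cutting pair should lie inside $\mathsf B_{4s}$, so I will actually cut at shells $\mathsf S_{5s/2},\mathsf S_{3s}$ for the inner piece; the annulus decomposition allows different cutting shells for the two marks, as in \eqref{eq:loop-cut-two-set}, provided gaps remain for Harnack. The inner closure gives a loop in $\mathsf B_{4s}$ hitting $K_1$ and $\mathsf S_{3s}$; the closing mass is bounded below by the discrete Poisson kernel estimate $H\asymp s^{1-d}$ in a fixed-ratio annulus. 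This yields (inner factor)$\lesssim\mathcal B(K_1,\mathsf S_{3s};\mathsf B_{4s})$. Symmetrically, cut the outer piece at shells $\mathsf S_{10s},\mathsf S_{12s}$ and close inside $\mathsf A(0,9s,14s)\subset\mathsf B_{8s}^{\rm c}$. This produces loops in $\mathsf B_{8s}^{\rm c}$ hitting $\mathsf S_{12s}$ and $K_2$.

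\emph{Main obstacle.} I expect the main difficulty to be the discrete Harnack and Poisson kernel lower bounds uniform in $s\ge2$ at small scales. On the lattice, shells of radius $\asymp s$ with $s$ near $2$ are coarse, and the lower bounds on the closing masses need to be checked. I would handle small $s$ directly, since all quantities are then comparable to constants by finiteness, and invoke \cite[Lemma~2.7]{CS16}-type estimates for large $s$.
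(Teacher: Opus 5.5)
Your plan is essentially the paper's argument: the paper cuts once at $\mathsf S_{10s},\mathsf S_{12s}$ using the alternating-entrance decomposition \cite[Equation~(3.7)]{Vog25} to get $\mathcal B(K_1,K_2;\mathbb X)\lesssim s^{-1}\operatorname{cap}_{\mathbb X}(K_1)\,\mathcal B(\mathsf S_{12s},K_2;\mathsf B_{8s}^{\rm c})$, with the outer marked piece compared to local loops in $\mathsf B_{8s}^{\rm c}$ as in your Step~3, and then quotes \cite[Lemma~2.7]{CS16} for $s^{-1}\operatorname{cap}_{\mathbb X}(K_1)\lesssim\mathcal B(K_1,\mathsf S_{3s};\mathsf B_{4s})$. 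The one step to repair is the change of shells for the inner piece: \eqref{eq:loop-cut-two-set} uses a single common pair of shells, and no single pair can serve both closures here (your first choice $4<a_1<a_2<8$ fails for both), so instead bound the inner marked mass in the cut at $\mathsf S_{10s},\mathsf S_{12s}$ by $\Pp_x(\tau_{K_1}<\infty)\lesssim s^{-1}\operatorname{cap}_{\mathbb X}(K_1)$ and recover $\mathcal B(K_1,\mathsf S_{3s};\mathsf B_{4s})$ from this capacity by a separate one-crossing construction at $\mathsf S_{5s/2},\mathsf S_{3s}$, i.e.\ the change of scale must pass through capacity rather than through the decomposition.
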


\begin{proof}
\begingroup
As in Lemma~\ref{lem:splitting}, the alternating-entrance decomposition
\cite[Equation~(3.7)]{Vog25} at \(\mathsf S_{10s},\mathsf S_{12s}\) gives
\begin{equation}\label{rwls:eq:global-loop-mass-upper-bound}
 \mathcal B(K_1,K_2;\mathbb X)
 \lesssim\frac{\operatorname{cap}_{\mathbb X}(K_1)}s\,
 \mathcal B(\mathsf S_{12s},K_2;\mathsf B_{8s}^{\rm c}).
\end{equation}
It follows from \cite[Lemma~2.7]{CS16} that
\begin{equation}\label{rwls:eq:confined-factorization-lower-bound}
 \mathcal B(K_1,\mathsf S_{4s};\mathbb X)
 \lesssim\frac{\operatorname{cap}_{\mathbb X}(K_1)}s
 \lesssim\mathcal B(K_1,\mathsf S_{3s};\mathsf B_{4s}).
\end{equation}
Combining \eqref{rwls:eq:global-loop-mass-upper-bound} and
\eqref{rwls:eq:confined-factorization-lower-bound} proves
\eqref{rwls:eq:deterministic-factorization}.
\endgroup
\end{proof}

We now use the preceding events at scales \(2s\) and \(16s\), and combine the deterministic
factorization with Poisson BK.

\begin{lemma}
\label{rwls:lem:localized-splitting}
There is \(C<\infty\) such that for all $\alpha\in I$ and $64\le s\le \frac r{64}$,
\[
 p_r^{\mathbb X,\alpha}
 \le C\Pp_\alpha(\mathcal E_{2s}^{+})
 \Pp_\alpha(\mathcal E_{16s,r}^{-}).
\]
\end{lemma}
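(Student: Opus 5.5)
We follow the proof of Theorem~\ref{thm:submultiplicativity}, now rooted at the origin and with the annulus between scales \(2s\) and \(16s\) serving as the welding region. The first step is to note that \(\mathcal E_r^{\mathbb X}\subset\mathcal E_{2s,D_1}\cap\mathcal E_{16s,r,D_2}\), where the two events are taken in the full soup. We then split according to whether these two events are realized disjointly.

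For the disjoint part, the BK inequality gives
\[
\Pp_\alpha\bigl(\{0\leftrightarrow\mathsf S_{2s}\}\circ\{\mathsf S_{16s}\leftrightarrow\mathsf S_r\}\bigr)\le p_{2s}^{\mathbb X,\alpha}\,\Pp_\alpha(\mathsf S_{16s}\leftrightarrow\mathsf S_r).
\]
Both factors are full-space probabilities, whereas the lemma asks for the localized events \(\mathcal E^+_{2s}\) and \(\mathcal E^-_{16s,r}\). We need the lattice analogue of Lemma~\ref{lem:local} to compare them: \(\Pp_\alpha(0\leftrightarrow\mathsf S_{2s})\lesssim\Pp_\alpha(\mathcal E_{2s}^+)\), and similarly on the exterior side. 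The argument is the same as before. Let \(K\) be the cluster in the restricted soup. If the connection uses a loop leaving the domain, that loop hits \(K\) and travels to distance comparable to \(s\). Its mass is at most \(C s^{-1}\operatorname{cap}_{\mathbb X}(K)\) by \eqref{rwls:eq:large-loop-terminal-comparison} (or \cite[Lemma~2.7]{CS16}). By \eqref{rwls:eq:confined-factorization-lower-bound}, this is comparable to the mass of confined loops joining \(K\) to the target sphere. Since \(\alpha\in I\) is bounded below, such a confined loop is present with comparable probability. The exterior side is handled in the same way, using the exterior factor in Lemma~\ref{rwls:lem:deterministic-factorization}.

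For the non-disjoint part, define \(K_1\) as the origin together with the clusters of \(\mathcal L^\alpha_{\mathsf B_{2s}}\) containing it, and \(K_2\) as \(\mathsf S_r\) together with the clusters of \(\mathcal L^\alpha_{\mathsf B_{16s}^{\rm c}}\) meeting it. On this event some loop crossing \(\mathsf A(2s,16s)\) must meet both \(K_1\) and \(K_2\), and this loop is independent of \(K_1,K_2\). The conditional probability is therefore at most \(\alpha\mathcal B(K_1,K_2;\mathbb X)\). By Lemma~\ref{rwls:lem:deterministic-factorization}, this is bounded by
\[
C\,\mathcal B(K_1,\mathsf S_{3s};\mathsf B_{4s})\,\mathcal B(\mathsf S_{12s},K_2;\mathsf B_{8s}^{\rm c}).
\]
Each factor is bounded because it is at most \(\mathcal B(\mathsf B_{2s},\mathsf S_{3s};\mathbb X)<\infty\) uniformly in \(s\). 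The inequality \(1-e^{-\alpha m}\ge e^{-\alpha_+M}\alpha m\) then converts each factor into \(\alpha^{-1}\) times a conditional connection probability. The first becomes \(\Pp_\alpha(K_1\leftrightarrow\mathsf S_{3s}\text{ in }\mathsf B_{4s}\mid K_1)\), which is contained in the event that \(0\) connects to \(\mathsf S_{3s}\) within \(\mathsf B_{4s}\). The second is handled in the same way. Integrating over \(K_1,K_2\) gives a bound of
\[
C\alpha^{-1}\,\Pp_\alpha\bigl(0\leftrightarrow\mathsf S_{3s}\text{ in }\mathsf B_{4s}\bigr)\,\Pp_\alpha\bigl(\mathsf S_{12s}\leftrightarrow\mathsf S_r\text{ in }\mathsf B_{8s}^{\rm c}\bigr),
\]
and \(\alpha^{-1}\le\alpha_-^{-1}\).

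The remaining task is to compare these probabilities with \(\Pp_\alpha(\mathcal E^+_{2s})\) and \(\Pp_\alpha(\mathcal E^-_{16s,r})\). This is the step we expect to be the main obstacle. The first probability is at most \(\Pp_\alpha(0\leftrightarrow\mathsf S_{2s}\text{ in }\mathsf B_{4s})\). We would like this to be comparable to the probability of the same connection in \(\mathsf B_{4s}\), and then to \(\mathcal E^+_{2s}\). We would prove this with the localization argument above, applied at fixed ratios of scales: restricting loops to \(\mathsf B_{4s}\) versus \(\mathsf B_{2s}\) changes only loops of diameter of order \(s\) that touch \(K\), and each such loop can be replaced by a confined one at bounded cost. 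On the exterior side, \(\mathsf S_{12s}\leftrightarrow\mathsf S_r\) in \(\mathsf B_{8s}^{\rm c}\) implies \(\mathsf S_{16s}\leftrightarrow\mathsf S_r\) in \(\mathsf B_{8s}^{\rm c}\). This in turn is compared with the event in the smaller domain \(\mathsf B_{8s}^{\rm c}\) defining \(\mathcal E^-_{16s,r}\) (note \(\mathsf B_{(16s)/2}^{\rm c}=\mathsf B_{8s}^{\rm c}\)), so it is directly dominated. Adding the two parts proves the lemma.
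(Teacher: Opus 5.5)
Your proof is correct, but it is organized differently from the paper's. You transcribe the continuum proof of Theorem~\ref{thm:submultiplicativity}:
\begin{itemize}
\item you split $\mathcal E_r^{\mathbb X}$ according to whether the full-space arms $\{0\leftrightarrow\mathsf S_{2s}\}$ and $\{\mathsf S_{16s}\leftrightarrow\mathsf S_r\}$ occur disjointly;
\item you use BK on the disjoint part, and a single welding loop together with Lemma~\ref{rwls:lem:deterministic-factorization} on the rest;
\item you then need a separate lattice analogue of Lemma~\ref{lem:local} to replace the full-space arms by $\mathcal E_{2s}^+$ and $\mathcal E_{16s,r}^-$.
\end{itemize}

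The paper instead splits directly according to whether the localized events $\mathcal E_{2s}^+$ and $\mathcal E_{16s,r}^-$ occur.
\begin{itemize}
\item Case~(1) is plain independence.
\item Cases~(2)--(3) are exactly your localization comparisons: one defect loop whose mass satisfies $\widehat\lambda_\pm\lesssim\lambda_\pm$, by \eqref{rwls:eq:confined-factorization-lower-bound} and \eqref{rwls:eq:outer-confined-factorization}.
\item Case~(4) uses BK when the two defect loops are distinct, and the factorization when they coincide. The coincident subcase is your non-disjoint case.
\end{itemize}

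So the ingredients are identical. The paper's bookkeeping never involves full-space arm probabilities and needs no standalone localization lemma. Your route is more modular, reuses the continuum template verbatim, and applies BK to the natural arm events rather than to defect loops.

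Two minor points.
\begin{itemize}
\item The step you flag as the main obstacle is not one. By definition, $\mathcal E_{2s}^+$ is the connection of $0$ to $\mathsf S_{2s}$ in $\mathcal L^\alpha_{\mathsf B_{4s}}$, so $\{0\leftrightarrow\mathsf S_{3s}\text{ in }\mathsf B_{4s}\}\subset\mathcal E_{2s}^+$ directly. No comparison between $\mathsf B_{4s}$ and $\mathsf B_{2s}$ is needed.
\item In your localization step, the seed must be generated by loops contained in $\mathsf B_{2s}$ (your $K_1$, the paper's $K^+$). On the exterior side it must likewise be generated by loops in $\mathsf B_{16s}^{\rm c}$. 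Only then do the confined replacement loops form a Poisson class independent of the seed. If the seed were the cluster in $\mathcal L^\alpha_{\mathsf B_{4s}}$ itself, those replacement loops would be absent on the bad event by definition.
\end{itemize}
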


\begin{proof}
We split \(\mathcal E_r^{\mathbb X}\) into four cases according to whether
\(\mathcal E_{2s}^{+}\) and
\(\mathcal E_{16s,r}^{-}\) occur.

\smallskip
\noindent\textbf{Case (1):}
\(\mathcal E_r^{\mathbb X}\cap\mathcal E_{2s}^{+}
\cap\mathcal E_{16s,r}^{-}\).
Since the two localized events depend on disjoint loop classes, by independence, we have
\[
 \Pp_\alpha(\mathcal E_r^{\mathbb X}\cap\mathcal E_{2s}^{+}
 \cap\mathcal E_{16s,r}^{-})
 \le\Pp_\alpha(\mathcal E_{2s}^{+})
 \Pp_\alpha(\mathcal E_{16s,r}^{-}).
\]

For Cases~(2)--(4), let \(K^+\) be the cluster of \(0\) generated by loops
contained in \(\mathsf B_{2s}\), and let \(K^-\) be \(\mathsf S_r\) together
with the clusters that intersect it and are generated by loops contained in
\(\mathsf B_{16s}^{\rm c}\).
On \(\mathcal E_r^{\mathbb X}\setminus\mathcal E_{2s}^{+}\),
there is a loop \(\ell_+\) intersecting both \(K^+\) and \(\mathsf S_{4s}\),
while on \(\mathcal E_r^{\mathbb X}\setminus\mathcal E_{16s,r}^{-}\),
there is a loop \(\ell_-\) intersecting both \(K^-\) and \(\mathsf S_{8s}\).
Conditionally on \(K^\pm\), we put
\[
\begin{aligned}
 \widehat\lambda_+
 &:=\mathcal B(K^+,\mathsf S_{4s};\mathbb X),
 &\lambda_+
 &:=\mathcal B(K^+,\mathsf S_{3s};\mathsf B_{4s}),\\
 \widehat\lambda_-
 &:=\mathcal B(\mathsf S_{8s},K^-;\mathbb X),
 &\lambda_-
 &:=\mathcal B
 (\mathsf S_{12s},K^-;\mathsf B_{8s}^{\rm c}).
\end{aligned}
\]
Equation~\eqref{rwls:eq:confined-factorization-lower-bound} gives
\(\widehat\lambda_+\lesssim\lambda_+\). Marking only the outer hit in the
same decomposition as in~\eqref{rwls:eq:global-loop-mass-upper-bound} gives
\begin{equation}\label{rwls:eq:outer-confined-factorization}
 \widehat\lambda_-
 =\mathcal B(\mathsf S_{8s},K^-;\mathbb X)
 \lesssim\mathcal B(\mathsf S_{12s},K^-;\mathsf B_{8s}^{\rm c})
 =\lambda_-.
\end{equation}
Since these masses are uniformly bounded, we have, uniformly for \(\alpha\in I\),
\begin{equation}\label{rwls:eq:defect-loop-comparison}
 1-e^{-\alpha\widehat\lambda_\pm}
 \le C(1-e^{-\alpha\lambda_\pm}).
\end{equation}
The four base loop classes are disjoint, and a local attachment realizes the
corresponding event.  Hence,
\begin{equation}\label{rwls:eq:defect-loop-expectations}
 \Ee_\alpha[1-e^{-\alpha\lambda_+}]
 \le\Pp_\alpha(\mathcal E_{2s}^{+}),
 \qquad
 \Ee_\alpha[1-e^{-\alpha\lambda_-}]
 \le\Pp_\alpha(\mathcal E_{16s,r}^{-}).
\end{equation}

\smallskip
\noindent\textbf{Case (2):}
\((\mathcal E_r^{\mathbb X}\setminus\mathcal E_{2s}^{+})
\cap\mathcal E_{16s,r}^{-}\).
On \(\mathcal E_r^{\mathbb X}\setminus\mathcal E_{2s}^{+}\),
there is a loop \(\ell_+\) intersecting both \(K^+\) and \(\mathsf S_{4s}\).
By independence, \eqref{rwls:eq:defect-loop-comparison} and \eqref{rwls:eq:defect-loop-expectations},
\[
 \Pp_\alpha\bigl((\mathcal E_r^{\mathbb X}\setminus\mathcal E_{2s}^{+})\cap\mathcal E_{16s,r}^{-}\bigr)
 \le \Pp_\alpha(\mathcal E_{16s,r}^{-})\Ee_\alpha[1-e^{-\alpha\widehat\lambda_+}]
 \le C\Pp_\alpha(\mathcal E_{2s}^{+})\Pp_\alpha(\mathcal E_{16s,r}^{-}).
\]

\smallskip
\noindent\textbf{Case (3):}
\((\mathcal E_r^{\mathbb X}\setminus\mathcal E_{16s,r}^{-})
\cap\mathcal E_{2s}^{+}\).
On \(\mathcal E_r^{\mathbb X}\setminus\mathcal E_{16s,r}^{-}\),
there is a loop \(\ell_-\) intersecting both \(K^-\) and \(\mathsf S_{8s}\).
As in Case (2), we conclude that
\[
 \Pp_\alpha\bigl((\mathcal E_r^{\mathbb X}\setminus\mathcal E_{16s,r}^{-})\cap\mathcal E_{2s}^{+}\bigr)
 \le \Pp_\alpha(\mathcal E_{2s}^{+})\Ee_\alpha[1-e^{-\alpha\widehat\lambda_-}]
 \le C\Pp_\alpha(\mathcal E_{2s}^{+})\Pp_\alpha(\mathcal E_{16s,r}^{-}).
\]

\smallskip
\noindent\textbf{Case (4):}
\(\mathcal E_r^{\mathbb X}\setminus
(\mathcal E_{2s}^{+}\cup\mathcal E_{16s,r}^{-})\).
Given \(K^+\) and \(K^-\), if we can find distinct \(\ell_+\) and \(\ell_-\)
that satisfy the previous conditions in Cases (2) and (3), then
the BK inequality (Lemma~\ref{lem:poisson-bk}, which holds for general Poisson point processes as mentioned)
and \eqref{rwls:eq:defect-loop-comparison} bound their disjoint occurrence by
\[
 C(1-e^{-\alpha\lambda_+})(1-e^{-\alpha\lambda_-}).
\]
If they coincide, they intersect both \(K^+\) and \(K^-\).  By
Lemma~\ref{rwls:lem:deterministic-factorization},
\(\mathcal B(K^+,K^-;\mathbb X)\le C\lambda_+\lambda_-\), and the probability of this event is bounded by
\[
 1-e^{-\alpha\mathcal B(K^+,K^-;\mathbb X)}
 \le C(1-e^{-\alpha\lambda_+})(1-e^{-\alpha\lambda_-}).
\]
By independence and
\eqref{rwls:eq:defect-loop-expectations}, together with the same upper bound
for each subcase above, we obtain that
\[
 \Pp_\alpha\bigl(\mathcal E_r^{\mathbb X}\setminus
 (\mathcal E_{2s}^{+}\cup\mathcal E_{16s,r}^{-})\bigr)
 \le C \Ee_\alpha[(1-e^{-\alpha\lambda_+})(1-e^{-\alpha\lambda_-})] \le C\Pp_\alpha(\mathcal E_{2s}^{+})
 \Pp_\alpha(\mathcal E_{16s,r}^{-}).
\]
Summing the four cases proves the result.
\end{proof}

Let \(\mathscr P_{4s}^{\mathbb X}\) be the
\(\mathbb X\)-analogue of \(\mathscr P_{4s}\) in
\eqref{eq:annular-pivotal-loop-class}; that is,
\[
 \mathscr P_{4s}^{\mathbb X}
 :=\{\ell:\operatorname{range}(\ell)
 \subset \mathsf B_{64s},\ 4s\le\diam(\ell)\le128s\}.
\]
Then, we have the following key estimate, which is parallel to
Lemma~\ref{lem:annular-pivotal-lower}.
\begin{proposition}
\label{rwls:prop:localized-pivotal}
There is \(c>0\) such that, uniformly for \(\alpha\in I\) and
\(64\le s\le \frac r{64}\),
\[
 \int_{\mathscr P_{4s}^{\mathbb X}}
 \Pp_\alpha(\operatorname{Piv}_\ell \mathcal E_r^{\mathbb X})
 \,\mu_{\mathbb X}^{\Loop}(\dint\ell)
 \ge cp_r^{\mathbb X,\alpha}.
\]
\end{proposition}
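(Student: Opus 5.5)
The plan mirrors the proof of Lemma~\ref{lem:annular-pivotal-lower}: build a configuration in which the origin's cluster reaches a good outer piece near scale \(2s\), a separate cluster joins a good inner piece near scale \(16s\) to \(\mathsf S_r\), and the annulus between them is closed. Any loop touching both pieces is then pivotal. Concretely, I take the good-piece events \(\mathsf H_{\mathsf B_{4s}}^{+}(2s)\) (loops in \(\mathsf B_{4s}\), piece \(J^+\subset\mathsf A(0,s,4s)\)) and \(\mathsf H_{\mathsf B_{8s}^{\rm c}}^{-}(16s,r)\) (loops in \(\mathsf B_{8s}^{\rm c}\), piece \(J^-\subset\mathsf A(0,8s,32s)\)). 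Theorem~\ref{thm:rwls-localized-robust-summary} bounds their probabilities below by \(c\Pp_\alpha(\mathcal E_{2s}^{+})\) and \(c\Pp_\alpha(\mathcal E_{16s,r}^{-})\). These two events use disjoint loop classes and are therefore independent.

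Next I insert a barrier event \(\mathsf Z_s\), playing the role of \eqref{eq:annular-barrier-event}. It asks that no loop of \(\mathcal L_{\mathbb X}^\alpha\) with diameter \(>\delta s\) meets \(\mathsf A(0,5s,6s)\), and that \(\mathsf S_{5s}\) is not connected to \(\mathsf S_{6s}\) by loops of diameter \(\le\delta s\) contained in \(\mathsf A(0,4s,8s)\). Since \(\alpha_+<\alpha_\ast^{\mathbb X}\), the definition \eqref{eq:I0X} (with scale invariance replaced by the \(\sup_{r}\) in \eqref{rwls:eq:rescaled-metric-crossing}) lets me choose \(\delta\) so that the small-loop crossing has probability \(\le\frac12\) uniformly in \(s\) and \(\alpha\in I\). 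The large-loop mass is bounded uniformly in \(s\) by \eqref{rwls:eq:large-loop-terminal-comparison} applied to \(K=\mathsf A(0,5s,6s)\), using \(\operatorname{cap}_{\mathbb X}(\mathsf B_{6s})\asymp s\). The two parts of \(\mathsf Z_s\) depend on disjoint loop classes, so \(\Pp_\alpha(\mathsf Z_s)\ge c\).

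The delicate point is that \(\mathsf Z_s\) is not quite independent of the two good-piece events. Its large-loop part concerns loops that may lie in \(\mathsf B_{4s}\) or \(\mathsf B_{8s}^{\rm c}\). I would handle this either by taking \(\delta<1/8\) so that a loop of diameter \(\le\delta s\) in \(\mathsf A(0,4s,8s)\) meeting \(\mathsf A(0,5s,6s)\) avoids the other domains, or by using FKG (Lemma~\ref{lem:poisson-fkg}): \(\mathsf Z_s\) is decreasing while the \(\mathsf H\) events are increasing, so conditioning is less clean. The cleanest route is to restrict the large-loop condition to loops not contained in \(\mathsf B_{4s}\) or in \(\mathsf B_{8s}^{\rm c}\). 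Loops meeting \(\mathsf A(0,5s,6s)\) are never contained in \(\mathsf B_{4s}\), and those contained in \(\mathsf B_{8s}^{\rm c}\) cannot meet it either. So \(\mathsf Z_s\) already depends only on loops intersecting both \(\mathsf B_{8s}\) and \(\mathsf B_{4s}^{\rm c}\), and full independence holds. With \(\mathsf G:=\mathsf H^+\cap\mathsf Z_s\cap\mathsf H^-\), I get \(\Pp_\alpha(\mathsf G)\ge c\,\Pp_\alpha(\mathcal E_{2s}^{+})\Pp_\alpha(\mathcal E_{16s,r}^{-})\ge c\,p_r^{\mathbb X,\alpha}\), the last step by Lemma~\ref{rwls:lem:localized-splitting}.

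On \(\mathsf G\), the event \(\mathcal E_r^{\mathbb X}\) fails, since \(\mathsf Z_s\) separates \(\mathsf B_{5s}\) from \(\mathsf B_{6s}^{\rm c}\). Adding any loop that meets both \(J^+\) and \(J^-\) creates \(\mathcal E_r^{\mathbb X}\), so such a loop is pivotal. It remains to show \(\mu_{\mathbb X}^{\Loop}\{\ell\subset\mathsf B_{64s}:\ell\cap J^+\ne\varnothing,\ \ell\cap J^-\ne\varnothing\}\ge c\) whenever \(\operatorname{cap}_{\mathbb X}(J^\pm)\ge\kappa_0 s\). This is the \(d=3\) lattice/metric analogue of \eqref{eq:two-capacity-welding}, a two-set welding bound \(\gtrsim s^{-2}\operatorname{cap}(J^+)\operatorname{cap}(J^-)\). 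I would prove it via the alternating-entrance decomposition \cite{Vog25} with two marked pieces, using hitting probabilities \(\asymp\operatorname{cap}/s\) and the Harnack inequality, as in Lemma~\ref{lem:two-capacity-welding}. I expect this to be the main technical obstacle, though only a routine transcription. Such loops meet \(\mathsf B_{4s}\) and \(\mathsf B_{8s}^{\rm c}\), so their diameter is at least \(4s\); they lie in \(\mathsf B_{64s}\), so their diameter is at most \(128s\); hence they belong to \(\mathscr P_{4s}^{\mathbb X}\). Integrating the pivotal mass over \(\mathsf G\) gives the claim.
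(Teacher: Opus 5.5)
Your proposal is correct and follows essentially the same route as the paper: the same three independent events $\mathsf H_{\mathsf B_{4s}}^{+}(2s)$, a small-loop/no-large-loop barrier supported in $\mathsf A(0,4s,8s)$, and $\mathsf H_{\mathsf B_{8s}^{\rm c}}^{-}(16s,r)$ are combined via Theorem~\ref{thm:rwls-localized-robust-summary} and Lemma~\ref{rwls:lem:localized-splitting}, and pivotal loops are counted through a two-set welding bound. The differences are cosmetic. Your barrier annulus is $\mathsf A(0,5s,6s)$ rather than $\mathsf A(0,5s,7s)$, and your disjointness check spells out what the paper asserts in one line. Finally, the paper simply cites \cite[Lemma~3.5]{Vog25} for $\mathcal B(J^+,J^-;\mathsf B_{64s})\ge cs^{-2}\operatorname{cap}_{\mathbb X}(J^+)\operatorname{cap}_{\mathbb X}(J^-)$ instead of re-deriving it.
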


\begin{proof}
We follow the same proof strategy as in Lemma~\ref{lem:annular-pivotal-lower},
which uses an explicit construction of pivotal loops.
Let \(\mathsf F_s^{\delta,\mathbb X}\) be the event that
\(\mathsf S_{5s}\) is connected to \(\mathsf S_{7s}\) using only loops of \(\LL^{\alpha,\le\delta s}_{\mathbb X}\)
that are contained in \(\mathsf A(0,4s,8s)\).
Since \(\alpha_+<\alpha_\ast^{\mathbb X}\), we can choose
\(0<\delta<\frac14\) such that
\(\sup_{s\ge2}\Pp_{\alpha_+}(\mathsf F_s^{\delta,\mathbb X})\le\frac12\).
By monotonicity, the same bound holds for every \(\alpha\in I\).
Now, define the \(\mathbb X\)-analogue of \(\mathsf Z_s\) in
\eqref{eq:annular-barrier-event} by
\[
 \mathsf Z_s^{\mathbb X}
 :=(\mathsf F_s^{\delta,\mathbb X})^{\rm c}
 \cap
 \left\{\nexists\,\ell\in\mathcal L_{\mathbb X}^{\alpha}:
 \diam(\ell)>\delta s,
 \ell\cap \mathsf A(0,5s,7s)
 \ne\varnothing\right\}.
\]
By \eqref{rwls:eq:large-loop-terminal-comparison}, we have
\[
 m_{\delta,s}^{\mathbb X}
 :=\mu_{\mathbb X}^{\Loop}\bigl(
 \diam(\ell)>\delta s,\,
 \ell\cap \mathsf A(0,5s,7s)
 \ne\varnothing\bigr),
 \qquad
 M_{\delta}^{\mathbb X}:=\sup_{s\ge2}m_{\delta,s}^{\mathbb X}<\infty.
\]
Note that the two events involved in the definition of $\mathsf Z_s^{\mathbb X}$ are independent.
This leads to
\begin{equation}\label{rwls:eq:annular-barrier-prob}
 \Pp_\alpha(\mathsf Z_s^{\mathbb X})
 =(1-\Pp_\alpha(\mathsf F_s^{\delta,\mathbb X}))
 e^{-\alpha m_{\delta,s}^{\mathbb X}}
 \ge\frac12e^{-\alpha_+M_{\delta}^{\mathbb X}}.
\end{equation}
Next, consider the following event
\[
 \mathsf G_r^{\mathbb X}(s)
 :=\mathsf H_{\mathsf B_{4s}}^{+}(2s)
 \cap\mathsf Z_s^{\mathbb X}
 \cap\mathsf H_{\mathsf B_{8s}^{\rm c}}^{-}(16s,r).
\]
The three events defining \(\mathsf G_r^{\mathbb X}(s)\) depend on disjoint loop
classes.  Hence, by independence,
Theorem~\ref{thm:rwls-localized-robust-summary}, \eqref{rwls:eq:annular-barrier-prob},
and Lemma~\ref{rwls:lem:localized-splitting}, we obtain that
\[
\begin{aligned}
 \Pp_\alpha(\mathsf G_r^{\mathbb X}(s))
 &=\Pp_\alpha\bigl(\mathsf H_{\mathsf B_{4s}}^{+}(2s)\bigr)
 \Pp_\alpha(\mathsf Z_s^{\mathbb X})
 \Pp_\alpha\bigl(\mathsf H_{\mathsf B_{8s}^{\rm c}}^{-}(16s,r)\bigr)\\
 &\ge c\Pp_\alpha(\mathcal E_{2s}^{+})
 \Pp_\alpha(\mathcal E_{16s,r}^{-})
 \ge cp_r^{\mathbb X,\alpha}.
\end{aligned}
\]
Note that \(\mathsf G_r^{\mathbb X}(s){\subseteq (\mathcal E_r^{\mathbb X})^{\mathrm c}}\).
Moreover, on \(\mathsf G_r^{\mathbb X}(s)\), any loop intersecting both the good outer piece
\(J^+\) and the good inner piece \(J^-\), given by $\mathsf H_{\mathsf B_{4s}}^{+}(2s)$ and
$\mathsf H_{\mathsf B_{8s}^{\rm c}}^{-}(16s,r)$, respectively, is pivotal for the one-arm event
$\mathcal E_r^{\mathbb X}$.
By \cite[Lemma~3.5]{Vog25}, we have
\[
 \mathcal B(J^+,J^-;\mathsf B_{64s})
 \ge cs^{-2}\operatorname{cap}_{\mathbb X}(J^+)
 \operatorname{cap}_{\mathbb X}(J^-)\ge c.
\]
Such a pivotal loop has diameter between \(4s\) and \(128s\), hence lies in
\(\mathscr P_{4s}^{\mathbb X}\).  Therefore,
\[
\begin{aligned}
 \int_{\mathscr P_{4s}^{\mathbb X}}
 \Pp_\alpha(\operatorname{Piv}_\ell\mathcal E_r^{\mathbb X})
 \,\mu_{\mathbb X}^{\Loop}(\dint\ell)
 &\ge \Ee_\alpha\left[
 \1_{\mathsf G_r^{\mathbb X}(s)}\,
 \mu_{\mathbb X}^{\Loop}\{\ell\in\mathscr P_{4s}^{\mathbb X}:
 \ell\cap J^+\ne\varnothing,\ \ell\cap J^-\ne\varnothing\}
 \right]\\
 &\ge c\Pp_\alpha(\mathsf G_r^{\mathbb X}(s))
 \ge cp_r^{\mathbb X,\alpha},
\end{aligned}
\]
as required.
\end{proof}

\begin{proof}[Proof of Theorem~\ref{rwls:thm:lacunary-comparison}]
Set \(s_k=64^k\).  The loop collections
\(\mathscr P_{4s_k}^{\mathbb X}\) are disjoint, and the number of indices with
\(64\le s_k\le \frac r{64}\) is at least \(c_0\log r-C_0\).
By Lemma~\ref{lem:restricted-russo} (which applies to \(\mu_{\mathbb X}^{\Loop}\))
and Proposition~\ref{rwls:prop:localized-pivotal}, we have for
almost every \(\lambda\in[\alpha,\beta]\),
\[
\begin{aligned}
 \frac{\dint}{\dint\lambda}p_r^{\mathbb X,\lambda}
 &\ge\sum_{k:64\le s_k\le r/64}
 \int_{\mathscr P_{4s_k}^{\mathbb X}}
 \Pp_\lambda(\operatorname{Piv}_\ell \mathcal E_r^{\mathbb X})
 \,\mu_{\mathbb X}^{\Loop}(\dint\ell)\\
 &\ge(c_0\log r-C_0)p_r^{\mathbb X,\lambda}.
\end{aligned}
\]
Integrating over \([\alpha,\beta]\), we obtain
\[
 p_r^{\mathbb X,\alpha}
 \le e^{C_0(\beta-\alpha)}r^{-c_0(\beta-\alpha)}
 p_r^{\mathbb X,\beta}
 \le C
 r^{-c_0(\beta-\alpha)}p_r^{\mathbb X,\beta},
\]
where \(C=e^{C_0\operatorname{diam}(I)}\).  This
proves the result with \(c=c_0\).
\end{proof}
\appendix
\section{\texorpdfstring{Proof of the analytic lemma}{Proof of the analytic lemma}}
\label{app:analytic-proof}

\begin{proof}[Proof of Lemma~\ref{ut:lem:analytic}]
Define
\[
        s_1:=
        \inf\left\{
        s\in\R:
        \limsup_{n\to\infty}
        \frac{\log \Sigma_n(s)}{\log n}\ge1
        \right\},
\]
with the convention \(\inf\varnothing=+\infty\).
Since \(\Sigma_n\) is increasing in \(s\), every
\(s>s_1\) belongs to it unless \(s_1=+\infty\).

For the first assertion, let \(t<s_1\).  Choose \(\delta>0\) so small that
\(s:=t+2\delta<s_1\), and
set \(s':=t+\delta\).
Since \(s\) is outside the defining set of \(s_1\), there are \(a\in(0,1)\) and
\(N<\infty\) such that, for every \(n\ge N\),
\[
        \Sigma_n(s)\le n^{1-a}.
\]
For every \(n\ge N\), the monotonicity of \(f_n(s)\) in \(n\) gives
\[
        f_n(s)
        \le \frac1n\sum_{k=0}^{n-1}f_k(s)
        =
        \frac{\Sigma_n(s)}{n}
        \le n^{-a}.
\]
For all large \(n\) and all \(u\le s\),
\[
        f_n(u)\le f_n(s)\le\frac12,
        \qquad
        \Sigma_n(u)\le\Sigma_n(s)\le n^{1-a}.
\]
Applying \eqref{ut:eq:analytic-integrated} between \(s'\) and \(s\) gives
\[
        f_n(s')
        \le f_n(s)\exp\left\{-\frac{c_1\delta n^a}{2}\right\}
        \le \exp\left\{-\frac{c_1\delta n^a}{2}\right\}.
\]
This stretched-exponential bound is summable in \(n\), so
\[
        \Sigma_*:=\sup_n\Sigma_n(s')<\infty .
\]
For all sufficiently large \(n\), applying
\eqref{ut:eq:analytic-integrated} between \(t\) and \(s'\), and using
\(\Sigma_n(s')\le\Sigma_*\) and \(1-f_n(s')\ge\frac12\), gives
\[
        f_n(t)\le
        \exp\left\{-\frac{c_1\delta n}{2\Sigma_*}\right\}.
\]
This proves the exponential estimate for every \(t<s_1\).

For the second assertion, fix \(s>s_1\), put
\(f(u):=\lim_{n\to\infty}f_n(u)\), and suppose for a
contradiction that \(f(s)=0\).  Choose \(s'\in(s_1,s)\).  Since
\(f_i(s)\downarrow0\), there is \(i_0\ge n_0\) such that, for every
\(i\ge i_0\) and \(u\le s\),
\begin{equation}\label{ut:eq:fi-half}
        f_i(u)\le f_i(s)\le\frac12.
\end{equation}
Define, for \(n>i_0\),
\[
        T_n(u):=\frac1{\log n}\sum_{i=i_0}^n\frac{f_i(u)}{i}.
\]
For every \(u\le s\), monotonicity gives
\(0\le f(u)\le f(s)=0\), and logarithmic Ces\`aro summability therefore yields
\(T_n(u)\to 0\).

We recover a differential estimate from the integrated hypothesis.  At a
point \(u\le s\) where the monotone function \(f_i\) is differentiable, apply
\eqref{ut:eq:analytic-integrated} to \(u-h<u\) and let \(h\downarrow0\).   If
\(f_i(u)=0\), the resulting lower bound is trivial; otherwise, combining it
with \eqref{ut:eq:fi-half} gives
\[
        f_i'(u)
        \ge
        c_1\,\frac{i}{\Sigma_i(u)}f_i(u)(1-f_i(u))
        \ge
        \frac{c_1}{2}\,\frac{i f_i(u)}{\Sigma_i(u)}.
\]
Hence, for a.e. \(u\le s\),
\[
        T_n'(u)
        \ge
        \frac{c_1}{2\log n}\sum_{i=i_0}^n
        \frac{f_i(u)}{\Sigma_i(u)} .
\]
Since \(\Sigma_{i+1}(u)=\Sigma_i(u)+f_i(u)\) and
\(\log(1+x)\le x\),
\[
        \frac{f_i(u)}{\Sigma_i(u)}
        \ge
        \log\Sigma_{i+1}(u)-\log\Sigma_i(u).
\]
Therefore
\begin{equation}\label{ut:eq:Tn-derivative-lower}
        T_n'(u)
        \ge
        \frac{c_1}{2\log n}
        \bigl(\log\Sigma_{n+1}(u)-\log\Sigma_{i_0}(u)\bigr).
\end{equation}
Since \(s'\) belongs to the defining set of \(s_1\),
\begin{equation}\label{ut:eq:s1-limsup}
        \limsup_{n\to\infty}
        \frac{\log\Sigma_{n+1}(s')}{\log n}\ge1 .
\end{equation}
Since \(T_n\) is non-decreasing, it is differentiable almost everywhere and
satisfies
\[
        T_n(s)-T_n(s')\ge\int_{s'}^s T_n'(u)\,\dint u .
\]
Substituting \eqref{ut:eq:Tn-derivative-lower} into this integral and using
\(\Sigma_{n+1}(u)\ge\Sigma_{n+1}(s')\) and
\(\Sigma_{i_0}(u)\le i_0\), we obtain
\[
        T_n(s)-T_n(s')
        \ge
        \frac{c_1}{2}(s-s')
        \frac{\log\Sigma_{n+1}(s')-\log i_0}{\log n}.
\]
By \eqref{ut:eq:s1-limsup}, there is a subsequence \(n_j\to\infty\) such that
\[
        \liminf_{j\to\infty}
        \frac{\log\Sigma_{n_j+1}(s')-\log i_0}{\log n_j}\ge1.
\]
Along this subsequence, the right-hand side has limit inferior at least
\(\frac{c_1(s-s')}{2}>0\), whereas
\(T_{n_j}(s)-T_{n_j}(s')\to0\) by the convergence established above.  This
contradiction proves \(f(s)>0\) and completes the proof.
\end{proof}

\bibliographystyle{alpha}
\bibliography{references}

\end{document}